\documentclass[10pt, a4paper, final]{article}

\usepackage{mathtools}
\usepackage{amsmath, amsthm, amssymb}
\usepackage{lmodern}
\usepackage[utf8]{inputenc}
\usepackage[T1]{fontenc}
\usepackage[margin=3cm]{geometry}
\usepackage{enumitem}
\usepackage{comment}
\usepackage[notcite, notref  ]{showkeys}

\usepackage[normalem]{ulem}%%%
\usepackage{cases}%%%
\usepackage{hyperref}
\usepackage{bbm}
\usepackage{float}
\usepackage{tikz}

\usepackage[backend=biber, sorting=nyt, style=numeric, url=false, isbn=false, eprint=false, maxbibnames=4, giveninits=true]{biblatex}
\renewbibmacro{in:}{\ifentrytype{article}{}{\printtext{\bibstring{in}\intitlepunct}}}
\DeclareFieldFormat
[article,inbook,incollection,inproceedings,patent,thesis,unpublished]
{title}{#1\isdot}
\DeclareFieldFormat[article]{volume}{\mkbibbold{#1}}
\DeclareFieldFormat[article]{number}{\mkbibbold{#1}}
\newenvironment{system}[1]
{%before
\left\{\begin{aligned}{#1}} 
{ \end{aligned}\right.}

\newcommand\dd{\mathrm{d}}
\newcommand\vect[1]{{#1}}

\newtheorem{thm}{Theorem}[section]
\newtheorem{theorem}[thm]{Theorem}
\newtheorem{lem}[thm]{Lemma}
\newtheorem{lemma}[thm]{Lemma}
\newtheorem{prop}[thm]{Proposition}
\newtheorem{proposition}[thm]{Proposition}

\theoremstyle{definition}
\newtheorem{defi}{\textbf{Definition}}
\newtheorem{defn}[defi]{\textbf{Definition}}
\newtheorem{definition}[defi]{\textbf{Definition}}

\theoremstyle{remark}
\newtheorem{rem}[thm]{Remark}
\newtheorem{remark}[thm]{Remark}
\newtheorem{assumption}{\textbf{Assumption}}

\newcommand\ep{\varepsilon}
\newcommand\R{\mathbb{R}}

\newcounter{stepnum}

\allowdisplaybreaks

\newcommand{\writefoot}[1]{
    \renewcommand{\thefootnote}{}
    \footnotetext{\hspace{-16.5pt}\scriptsize#1}
    \renewcommand{\thefootnote}{\arabic{footnote}}
}
\numberwithin{equation}{section}

\begin{document}
\writefoot{\small \textbf{AMS subject classifications (2020).} Primary: 35K40, 35C07, 35K57, 35K58; Secondary: 92D25. \smallskip}
\writefoot{\small \textbf{Keywords.} 
Reaction-diffusion system, spreading speed, pulsating traveling wave, anisotropic propagation, stability analysis, singular limit, Lyapunov function.\smallskip
}
\writefoot{\small \textbf{Acknowledgements:} 
The initial part of the research was conducted when QG was a JSPS International Research Fellow (FY2017; Graduate School of Mathematical Sciences, The University of Tokyo and MIMS, Meiji University). The research was partially supported by JSPS KAKENHI 16H02151 and 21H00995. QG was partially supported by a PEPS-JCJC grant from CNRS (2019) and by ANR grant “Indyana” Number ANR-21-CE40-0008.
}

\begin{center}
    \begin{minipage}{0.9\textwidth}
	\centering
    \LARGE{\bf Front propagation in hybrid reaction-diffusion epidemic models with spatial heterogeneity. \\ Part II: Pulsating traveling waves }\bigskip
    \end{minipage}

    \Large
    Quentin Griette\medskip \\
    \normalsize
    {\it Laboratoire de Math\'ematiques Appliqu\'ees du Havre, Universit\'e Le Havre Normandie, \\
    EA 3821, FR CNRS 3335 \\ 
    25, rue Philippe Lebon, 76600 Le Havre, France}\\
    {\tt quentin.griette@univ-lehavre.fr}
    \bigskip

    \Large
    Hiroshi Matano \medskip \\ 
    \normalsize
    {\it Meiji Institute for Advanced Study of Mathematical Sciences, Meiji University, \\
    4-21-1 Nakano, Tokyo 164-8525, Japan 
    }\\
    {\tt matano@meiji.ac.jp} \\ 
	\bigskip

	\today
\end{center}

\begin{abstract}
    We consider a two-species reaction-diffusion system in one space dimension that is derived from an epidemiological model in a spatially periodic environment with two types of pathogens: the wild type and the mutant. 
    The system is of a hybrid nature, partly cooperative and partly competitive, but neither of these entirely. 
As a result, the comparison principle does not hold.
In the previous work, we studied the propagation properties of the solutions to the Cauchy problem for this system and showed, among other things,  that the spreading speeds of the fronts to the right and to the left directions, denoted  by $ c^*_R$ and  $ c^*_L$, can be characterized by using certain principal eigenvalues, and studied the homogenization limit as the spatial period $L$ tends to $0$, and also discussed the long-time behavior of solutions behind the fronts. 

	In the present paper   we prove the existence of pulsating traveling waves in the right direction (resp. left direction) with speed $c$ for any  $c\geq c^*_R$ (resp. $c\geq c^*_L$), where $c^*_R$ and $c^*_L$ denote the aforementioned spreading speeds in the right and left directions.  We also prove that the leading edge of any traveling wave has the exponential decay rate that is anticipated from formal linear analysis, thus extending part of the results of Hamel 2008 to systems of equations. 
	Finally, we present an example in which the two speeds $c^*_R$ and $c^*_L$ are different. This is done by considering a multi-scale singular limit problem. This result highlights a marked difference between our system and scalar KPP type equations. 
\end{abstract}

%\maketitle

\section{Introduction}

In this paper we consider the following reaction-diffusion system:
\begin{equation}\label{eq:main-sys}
	\begin{system}
		\relax &u_t=\big(\sigma(x) u_{x}\big)_x+\big(r_u(x)-\kappa_u(x)(u+v)\big)u+\mu_v(x)v-\mu_u(x)u, & t>0, x\in\R, \\
		\relax &v_t=\big(\sigma(x) v_{x}\big)_x+\big(r_v(x)-\kappa_v(x)(u+v)\big)v+\mu_u(x)u-\mu_v(x)v, &t>0, x\in\R.
	\end{system}
\end{equation}
Here $u(t, x)$, $v(t,x) $ stand for the density of a population of individuals living in  a periodically heterogeneous environment, 
  $r_u(x)\in\mathbb{R} $ and $ r_v(x) \in\mathbb{R}$ represent the intrinsic growth rate (which are not necessarily positive),  $\kappa_u(x)>0$ and $\kappa_v(x)>0$ 
  represent the intensity of the competition, and  $\mu_u(x)>0$, $\mu_v(x)>0$ denote the mutation rates between the two populations.  
 We assume that these coefficients, including the diffusion coefficient $\sigma(x)$, are all $L$-periodic.  

The above problem is motivated partly by the study of SIS epidemiological models describing the propagation of pathogens that are subject to mutations, such as the following:
\begin{equation} \label{eq:SIS}
	\left\{\begin{aligned}\relax
		&\partial_t S_t = \partial_{x} (\sigma(x)\partial_x S) - \big(\beta_1(x) I_1+\beta_2(x) I_2\big)S + \gamma_1(x) I_1 + \gamma_2(x) I_2, && t>0, x\in\R,  \\
		&\partial_t I_1 = \partial_x(\sigma(x) \partial_{x}I_1) + \beta_1(x) S I_1 -\gamma_1(x) I_1 + \mu_2(x) I_2 - \mu_1(x) I_1, && t>0, x\in\R,   \\ 
		&\partial_t I_2 = \partial_x(\sigma(x) \partial_{x} I_2)+\beta_2(x) S I_2 - \gamma_2(x) I_2+\mu_1(x) I_1 - \mu_2(x) I_2,  && t>0, x\in\R  . 
	\end{aligned}\right.
\end{equation}
Here the $\beta_1,\beta_2$ denote the infection rates, $\gamma_1,\gamma_2$ the recovery rates, and $\mu_1,\mu_2$ stand for the mutation rates between the pathogens.

It is not difficult to show that the quantity $N(t, x):=S(t,x)+I_1(t,x)+I_2(t,x)$ satisfies a pure diffusion equation
$\partial_t N=\partial_x\left(\sigma(x)\partial_x N\right)$. Therefore, if we assume
that $ N(0, x)$ is constant in $x$, 
then $N(t, x)$ remains constant in $t$ and $x$. 
Thus we obtain that  $u=I_1$, $v=I_2$ satisfy \eqref{eq:main-sys} with 
$r_{*}(x) := N\beta_{i}(x)-\gamma_{i}(x)$ and $\kappa_{i}(x):=\beta_{i}(x)$, where $i=1,2$ and  $*=u,v$ .  
Hence the propagation dynamics of \eqref{eq:SIS} is equivalent to that of \eqref{eq:main-sys}.

System \eqref{eq:SIS} describes the propagation of a genetically unstable pathogen in a population of hosts which exhibit heterogeneity in space. This heterogeneity represents the spatially heterogeneous environment that affects the behavior of individuals depending on where they are. Spatial heterogeneity in the use of antibiotics, fungicides or insecticides affects the transmission of pathogens and pests and is explored as a way to minimize the risk of emergence of drug resistance \cite{Deb-Len-Gan-09}.    
Beaumont et al \cite{Bea-Bur-Duc-Zon-12} study a related model of propagation of salmonella in an industrial hen house. In their study the heterogeneity comes from the alignment of cages separated by free space that allow farmers to take care of the animals. Griette et al \cite{Griette-Alfaro-Raoul-Gandon} studied the propagation properties of a closely related model in the context of the evolution of drug resistance.   

The propagation speed of the solutions of reaction-diffusion equations is often linked to special solutions called \textit{traveling wave solutions}, that are  particular solutions that propagate at a prescribed speed. There exists a large literature on traveling wave solutions and the propagation dynamics of solutions to reaction-diffusion equations and systems, see \cite{Kol-Pet-Pis-1937, Fis-1937, Aro-Wei-75,Aro-Wei-78,Wei-82, Lui-89, Vol-Vol-Vol-94, Lia-Zha-07, Lia-Zha-10} among others. 
When the coefficients depend periodically on the spatial variable such as \eqref{eq:main-sys}, the traveling waves are often  called \textit{pulsating traveling waves}, see \cite{Shi-Kaw-Ter-86, Xin-00, Ber-Ham-02, Wei-02} among others.

Our system has a rather intriguing character in the sense that it is cooperative when $(u, v)$ is small while the competitive nature becomes dominant when $(u, v)$ is large. Therefore the standard comparison principle does not apply to the entire system. Such a system has been studied by Wang \cite{Wan-11}, Wang and Castillo-Chavez \cite{Wang-Castillo--Chavez-2012}, Griette and Raoul \cite{Gri-Rao-16}, Girardin \cite{Gir-18, Gir-18-MMMAS}, and Morris, B\"orger and Crooks \cite{Mor-Bor-Cro-19}, when the coefficients are homogeneous in space. 
However, in 
our case, the coefficients are spatially periodic. 
As far as scalar equations are concerend, there 
is a large literature on equations with periodic coefficients, notably \cite{Shi-Kaw-Ter-86, Xin-00, Ber-Ham-02, Wei-02, Lia-Zha-10}. 
As for systems, 
Alfaro and Griette \cite{Alf-Gri-18} constructed a traveling wave for a related system that travels at the expected minimal speed. %Apart from this last result, to the best of our knowledge, little  is known for systems of hybrid nature with spatially periodic coefficients.
The special case where the coefficients are spatially homogeneous has been treated in \cite{Wan-11, Wang-Castillo--Chavez-2012, Gri-Rao-16, Gir-18, Gir-18-MMMAS, Mor-Bor-Cro-19}, 

In our previous work \cite{Griette-Matano-2025}, we considered system \eqref{eq:main-sys} under the initial condition
\begin{equation}\label{main-initial}
u(0,x)=u_0(x),\ \ v(0,x)=v_0(x),\quad\ x\in\R ,
\end{equation}
where $u_0, v_0$ are bounded nonnegative functions on $\R $ whose supports are, in most cases, localized in space. 
Among other things, we proved that the solutions fronts propagate to the right and to the left asymptotically at constant speeds, that can be computed from a family of linearized eigenproblems around the leading edge. We also studied the behavior of the solution far behind the propagation front and proved the convergence to the unique steady state in the case where the coefficients are spatially homogeneous or rapidly oscillating. Finally, we studied the homogenization problem as $L\to 0$ and determined the effective coefficients of the homogenized limit. 

In the present paper, we focus on traveling waves and study their existence and qualitative properties.
We first prove the existence of pulsating traveling waves in the right (resp. left) direction for any speed $c\geq c^*_R$ (resp. $c\geq c^*_L$). 
We also show that the leading edge of any traveling wave decays exponentially 
as $x\to\infty$ (or quasi-exponentially in the critical case where $c=c^*_R$ or $c=c^*_L$). This result implies that all the pulsating traveling waves fulfill the ansatz that is
anticipated from formal linear analysis. This extends part of the results of Hamel \cite{Ham-08} to systems of equations. We note that our existence proof for the critical case $c=c^*_R$ or $c=c^*_L$ does not rely on the usual limiting argument $c\to c^*_R, c^*_L$. We instead give a direct proof of the existence for the critical case by constructing  super- and sub-solutions that are different from the non-critical case $c>c^*_R, c^*_L$.  

Next, we  consider the case where the coefficients are spatially homogeneous or rapidly oscillating, and prove that the profile of the traveling wave far behind its front converges to the unique periodic (or homogeneous) steady state. Finally, we present an example where $c^*_R \neq c^*_L$. This is done by a multi-scale singular limit analysis. We remark that, in the scalar KPP equation, the left and right critical wave speeds are always equal, even if the coefficients are not symmetric in $x$, which is a consequence of the Fredholm alternative. This last result highlights a marked difference between scalar equations and systems.

Our paper is organized as follows. In section \ref{s:main} we first recall key mathematical notions such as principal eigenvalues of various kinds, left and right spreading speeds, and state their basic properties. This will be done in subsection \ref{ss:eigenproblem}. Then, in subsections \ref{ss:traveling-waves} and \ref{ss:asymmetric}, we present our main results including the existence of critical and non-critical traveling waves (Theorem \ref{thm:TW1}), decay properties along the leading edge (Theorem \ref{thm:TW2}), convergence to the steady states in the spatially homogeneous and rapidly oscillating cases (Theorem \ref{thm:asymptotic-behavior}), and the construction of an example with asymmetric propagation speeds (Theorem \ref{thm:asymmetric-speeds}).
The proofs of  Theorems \ref{thm:TW1},  \ref{thm:TW2}, \ref{thm:asymptotic-behavior} and \ref{thm:asymmetric-speeds} will be given in sections \ref{s:existence}, \ref{s:decay}, \ref{sec:behind} and \ref{s:proof-asym-speed}, respectively.
%In section \ref{s:existence} we prove Theorem \ref{thm:TW1}; in section \ref{sec:behind} we prove Theorem \ref{thm:asymptotic-behavior}; in section \ref{s:decay} we prove Theorem \ref{thm:TW2}; and finally in section \ref{s:proof-asym-speed} we prove Theorem \ref{thm:asymmetric-speeds}. 

%%%%%%%%%%%%%%%%%%%%%
%%%%%%%%%%%%%%%%%%%%%
\section{Main results}
\label{s:main}
Throughout this article we make the following assumption on the coefficients of \eqref{eq:main-sys}.
\begin{assumption}[Cooperative-competitive system]\label{as:coop-comp}
	The coefficients $\sigma(x)>0$,   $\kappa_u(x)>0$, $\kappa_v(x)>0$, $\mu_v(x)> 0$, $\mu_u(x)> 0$, are $L$-periodic positive continuous functions and  $r_u(x)$, $r_v(x)$ are $L$-periodic continuous functions of arbitrary sign.  We assume moreover that $\sigma\in C^1(\R )$.
\end{assumption}
\begin{remark}
	In formulating our problem, we chose to use the same diffusion coefficient $\sigma(x)$ for $u$ and $v$, in order to make the connection between \eqref{eq:main-sys} and \eqref{eq:SIS} clearer. However, most of the results in the present paper remain to hold even if the diffusion coefficients for $u$ and $v$ are different;  in particular, the existence result for traveling waves (Theorem \ref{thm:TW1}) and the result on the exponential decay (Theorem \ref{thm:TW2}) can easily be extended to the case of unequal diffusion coefficients.
\end{remark}
To facilitate the statement of our results, we introduce the following notations: 
\begin{equation}\label{eq:maxmincoeff}
	\begin{gathered}
		r_{\max}:=\sup_{x\in\R}\max\big(r_u(x), r_v(x)\big),\ \
		\kappa_{\max}:=\sup_{x\in\R} \max\big(\kappa_u(x), \kappa_v(x)\big),\\ 
		\mu_{\max}:=\sup_{x\in\R}\max\big(\mu_u(x), \mu_v(x)\big), \ \ 
		\sigma_{\max}:=\sup_{x\in\mathbb{R}} \sigma(x), \\ 
		r_{\min}:=\inf_{x\in\R}\min\big(r_u(x), r_v(x)\big),\ \
		\kappa_{\min}:=\inf_{x\in\R} \min\big(\kappa_u(x), \kappa_v(x)\big),\\ 
		\mu_{\min}:=\inf_{x\in\R}\min\big(\mu_u(x), \mu_v(x)\big), \ \ 
		\sigma_{\min}:=\inf_{x\in\mathbb{R}} \sigma(x), \ \  
		\overline{K}:=\frac{r_{\max}}{\kappa_{\min}}.
	\end{gathered}
\end{equation}

Before presenting our main results in this section, we remark that nonnegative solutions of \eqref{eq:main-sys} are all bounded as $t\to+\infty$. 
%To state this basic estimate, we introduce the following notation:
%\begin{equation}\label{rmax-kappamin}
%r_{\max}:=\sup_{x\in\R}\max\big(r_u(x), r_v(x)\big),\ \ 
%\kappa_{\min}:=\inf_{x\in\R} \min\big(\kappa_u(x), \kappa_v(x)\big),\ \ 
%\end{equation}

\begin{prop}[Basic boundedness estimate \protect{\cite[Proposition 4]{Griette-Matano-2025}}]\label{prop:uniform-bound}
Let $(u(t,x),v(t,x))$ be a solution of \eqref{eq:main-sys} with nonnegative bounded initial data $(u_0(x),v_0(x))$. Then 
$u(t,x)\geq 0$, $v(t,x)\geq 0$ for all $t\geq0, x\in\R$, and
\begin{equation}\label{u+v<max(K,u0+v0)}
u(t,x)+v(t,x)\leq \max\big(\overline{K},\, \sup_{x\in\R}(u_0(x)+v_0(x))\big) \ \ \hbox{for all}\ \ t\geq0, x\in\R,
\end{equation}  
\begin{equation}\label{uniform-bound}
\limsup_{t\to+\infty}\sup_{x\in\R}\big(u(t,x)+v(t,x)\big)\leq \overline{K}.
\end{equation}
In particular, if $u_0(x)+v_0(x)\leq\overline{K}\,(x\in\R)$, then $u(t,x)+v(t,x)\leq\overline{K}\,(t\geq 0,\,x\in\R)$.
\end{prop}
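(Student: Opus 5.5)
Nonnegativity comes from the quasi-positive structure of the system. Sum the two equations to get the upper bound for $w:=u+v$, and compare $w$ with the solution of a logistic ODE.

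\textbf{Step 1 (nonnegativity).} Write the system as $u_t=(\sigma u_x)_x+a(t,x)u+\mu_v(x)v$, $v_t=(\sigma v_x)_x+b(t,x)v+\mu_u(x)u$. Here $a=r_u-\kappa_u(u+v)-\mu_u$ and $b=r_v-\kappa_v(u+v)-\mu_v$ are bounded on $[0,T]\times\R$ for any $T$ less than the maximal existence time, since the classical solution from bounded data is bounded on compact time intervals. The off-diagonal couplings $\mu_v v$ and $\mu_u u$ have nonnegative coefficients, so this linear system with frozen coefficients is cooperative. The weak maximum principle for cooperative parabolic systems on $\R$ with bounded coefficients then gives $u,v\ge 0$. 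One can prove this via $e^{-\lambda t}(u,v)$ with $\lambda$ large and a Phragmén–Lindelöf-type argument on unbounded domains, for instance by adding $\varepsilon e^{Mt}\cosh(x)$-type barriers. Consequently the solution stays nonnegative as long as it exists.

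\textbf{Step 2 (equation for the sum).} Adding the two equations, the mutation terms cancel and
\[
w_t=(\sigma w_x)_x + r_u u + r_v v - (\kappa_u u+\kappa_v v)\,w .
\]
Using $u,v\ge0$, we have $r_uu+r_vv\le r_{\max}w$ and $\kappa_uu+\kappa_vv\ge \kappa_{\min}w$. Hence $w$ is a subsolution of the scalar equation $W_t=(\sigma W_x)_x+W(r_{\max}-\kappa_{\min}W)$, i.e.\ $W_t\le(\sigma W_x)_x+\kappa_{\min}W(\overline K-W)$.

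\textbf{Step 3 (comparison).} Let $\overline w(t)$ solve the ODE $\overline w'=\kappa_{\min}\overline w(\overline K-\overline w)$ with $\overline w(0)=\max(\overline K,\sup(u_0+v_0))$. If $r_{\max}\le 0$, so that $\overline K\le0$, replace the ODE by $\overline w'=r_{\max}\overline w-\kappa_{\min}\overline w^2$, which still decays to $\max(\overline K,0)=0$ while $w\ge0$. The scalar comparison principle on $\R$ for bounded sub/supersolutions, applied to $w$ and $\overline w$, gives $w\le\overline w(t)$. Since $\overline w$ is nonincreasing (being $\ge\overline K$), this yields \eqref{u+v<max(K,u0+v0)}. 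It also gives global existence, since the solution stays bounded. Since $\overline w(t)\to\overline K$ (or $\to 0\le\overline K^+$), we get \eqref{uniform-bound}; when $\overline K<0$ the statement should be read with $\overline K$ replaced by $0$, which is immaterial. The final claim is the case $\overline w\equiv\overline K$.

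The main obstacle is Step 1, the justification of the maximum principle for the cooperative linearized system on the unbounded domain $\R$. It needs a priori boundedness on finite time intervals, which follows from the local existence theory, together with a standard barrier argument. Steps 2–3 are routine once nonnegativity is established.
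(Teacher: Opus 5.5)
Your proposal is correct and is essentially the standard argument behind this result; the paper itself only quotes it from the companion Part I. The steps are: nonnegativity from the cooperative structure of the frozen-coefficient linear system, then summing the equations so the mutation terms cancel, and finally comparing $u+v$ with the logistic ODE $\overline{w}'=\overline{w}(r_{\max}-\kappa_{\min}\overline{w})$, whose equilibrium is exactly $\overline{K}$. Two cosmetic remarks: the ``replacement'' ODE you give for $r_{\max}\le 0$ is literally the same ODE as $\kappa_{\min}\overline{w}(\overline{K}-\overline{w})$; and your caveat that \eqref{uniform-bound} must be read with $\max(\overline{K},0)$ when $r_{\max}<0$ is a correct observation, though irrelevant under the paper's standing assumption $\lambda_1^{per}>0$, which forces $r_{\max}>0$.
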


\subsection{Principal eigenproblems  and spreading speed}\label{ss:eigenproblem}
In this subsection we recall some of the notions and basic results presented in our previous paper \cite{Griette-Matano-2025}. They will be used in our later arguments.

The linearized system associated with \eqref{eq:main-sys} is the following. 
\begin{equation}\label{eq:linearized}
	\begin{system}
		\relax &u_t=\big(\sigma(x) u_{x}\big)_x+r_u(x)u+\mu_v(x)v-\mu_u(x)u, & t>0, x\in\R, \\
		\relax &v_t=\big(\sigma(x) v_{x}\big)_x+r_v(x)v+\mu_u(x)u-\mu_v(x)v, &t>0, x\in\R.
	\end{system}
\end{equation}
Note that this is a cooperative system. 
We first define the notions of periodic principal, $\lambda$-periodic principal and Dirichlet principal eigenelements as follows. 
\begin{definition}[Periodic principal eigenpair]\label{def:periodic-principal-eigenpair}
	By a \textit{periodic principal eigenpair} associated with \eqref{eq:linearized} we mean any pair $\big(\lambda_1^{per}, (\varphi(x), \psi(x))\big)$ where $\lambda_1^{per}\in\R $, $\varphi(x)$ and $\psi(x) $ are positive $L$-periodic smooth functions that satisfy 
	\begin{equation}\label{eq:periodic-principal-eigen}
		\begin{system}
			\relax &  
			L^1[\varphi, \psi](x) :=\big(\sigma(x) \varphi_{x}\big)_x+r_u(x)\varphi+\mu_v(x)\psi-\mu_u(x)\varphi=\lambda_1^{per} \varphi, \\
			\relax & 
			L^2[\varphi, \psi](x) :=
			\big(\sigma(x) \psi_{x}\big)_x+r_v(x)\psi+\mu_u(x)\varphi-\mu_v(x)\psi=\lambda_1^{per}\psi .
		\end{system}
	\end{equation}
	We call $\lambda_1^{per}$  the \textit{periodic} principal eigenvalue and $(\varphi, \psi)$  a \textit{principal eigenvector}.
\end{definition}

{It follows from the Krein-Rutman Theorem that $\lambda_1^{per}$ is unique, and that $(\varphi, \psi)$ is unique up to multiplication by a positive scalar.}

\begin{definition}[$\lambda$-periodic principal eigenpair]\label{def:k(lambda)}
	For $\lambda>0$, by a \textit{$\lambda$-periodic principal eigenpair} associated with \eqref{eq:linearized} we mean any pair $\big(k(\lambda), (\varphi(x), \psi(x))\big)$ where $k(\lambda)\in\R $, $\varphi(x)$ and $\psi(x) $ are positive $L$-periodic smooth functions that satisfy 
	\begin{equation}\label{eq:lambda-periodic-principal-eigen}
		\begin{system}
			\relax & L^1_\lambda[\varphi, \psi](x) := e^{\lambda x} L^1[e^{-\lambda x}\varphi, e^{-\lambda x}\psi](x) =k(\lambda) \varphi, \\
			\relax & L^2_\lambda[\varphi, \psi](x) := e^{\lambda x} L^2[e^{-\lambda x}\varphi, e^{-\lambda x}\psi](x)= k(\lambda)\psi .
		\end{system}
	\end{equation}
	or, equivalently, 
	\begin{equation}\label{eq:lambda-periodic-principal-eigen2}\tag{\ref{eq:lambda-periodic-principal-eigen}$'$}
		\begin{system}
			\relax & \big(\sigma(x) \varphi_{x}\big)_x-2\lambda \sigma(x) \varphi_x+\big(\lambda^2\sigma(x)-\lambda\sigma_x(x)+r_u(x)\big)\varphi+\mu_v(x)\psi-\mu_u(x)\varphi=k(\lambda) \varphi, \\
			\relax & \big(\sigma(x) \psi_{x}\big)_x-2\lambda \sigma(x)\psi_x+\big(\lambda^2\sigma(x)-\lambda \sigma_x(x)+r_v(x)\big)\psi+\mu_u(x)\varphi-\mu_v(x)\psi = k(\lambda)\psi .
		\end{system}
	\end{equation}
	We call $k(\lambda)$ the \textit{$\lambda$-periodic principal eigenvalue} and $(\varphi, \psi)$ a \textit{$\lambda$-periodic principal eigenvector}. 
\end{definition}

{Again by the Krein-Rutman Theorem, for each $\lambda\in\R$, $k(\lambda)$ is unique and that $(\varphi, \psi)$ is unique up to multiplication by a positive scalar. Clearly we have $k(0)=\lambda_1^{per}$.}

\begin{definition}[Dirichlet principal eigenpair]\label{def:Dirichlet-principal-eigenpair}
	Let $R>0$ be given. By a \textit{Dirichlet principal eigenpair on $(-R, R)$} associated with \eqref{eq:linearized} we mean any pair $\big(\lambda_1^R, (\varphi(x), \psi(x))\big)$ where $\lambda_1^R\in\R $, $\varphi(x)$ and $\psi(x) $ are positive  smooth functions on $[-R, R]$ that satisfy
	\begin{subequations}\label{eq:Dirichlet-principal-eigen}
	\begin{equation}
		\begin{system}
			\relax &\big(\sigma(x) \varphi_{x}\big)_x+r_u(x)\varphi+\mu_v(x)\psi-\mu_u(x)\varphi=\lambda_1^{R} \varphi, \\
			\relax &\big(\sigma(x) \psi_{x}\big)_x+r_v(x)\psi+\mu_u(x)\varphi-\mu_v(x)\psi=\lambda_1^{R}\psi ,
		\end{system}
	\end{equation}
	and
		\begin{equation}
			\varphi(-R)= \psi(-R)=0 \text{ and } \varphi(R)= \psi(R)=0.
		\end{equation}
	\end{subequations}
	We call $\lambda_1^{R}$ the \textit{principal eigenvalue} and $(\varphi, \psi)$ a \textit{Dirichlet principal eigenvector}.
\end{definition}

{As before, the Krein-Rutman theorem ensures that $\lambda_1^R$ is unique and $(\varphi, \psi)$ is unique up to multiplication by a positive scalar.}

\begin{prop}[Properties of $k(\lambda)$  \protect{\cite[Proposition 1]{Griette-Matano-2025}}]\label{prop:k(lambda)}
	Let Assumption \ref{as:coop-comp} hold true.  Then:
	\begin{enumerate}[label={\rm(\roman*)}]
		\item \label{item:eigenpairk(lambda)}
		    For each $\lambda\in\R$, there exists a $\lambda$-periodic principal eigenpair $\big(k(\lambda),(\varphi, \psi)\big)$ with $\varphi(x)> 0$ and $\psi(x)>0$ for all $x\in\R $, which solves \eqref{eq:lambda-periodic-principal-eigen}, and $(\varphi, \psi)$ is unique up to the multiplication by a positive scalar.
		\item \label{item:minimaxk(lambda)}
			The following characterization of $k(\lambda)$ is valid:
			\begin{equation}\label{eq:minimax-k(lambda)}
				k(\lambda)=
				\underset{(\varphi, \psi)\in C^2_{per}(\R)^2}{\underset{{{\varphi}>{0}, \psi>0}}{\min}}\underset{x\in \R}{\sup}\, {\max}\left(\frac{L^1_\lambda[\varphi, \psi](x)}{\varphi(x)},\frac{L^2_\lambda[\varphi, \psi](x)}{\psi(x)}\right), 
			\end{equation}
			where $L^1_\lambda[\varphi, \psi](x)$, $L^2_\lambda[\varphi, \psi](x)$ are as defined in \eqref{eq:lambda-periodic-principal-eigen}. In addition,  the right-hand side has a unique minimizer up to multiplication by a positive scalar, which coincides with the principal eigenvector of the problem \eqref{eq:lambda-periodic-principal-eigen}.			
		\item \label{item:k(lambda)concave}
			The function $\lambda\mapsto k(\lambda) $ is analytic and strictly convex. Furthermore, recalling the notations in \eqref{eq:maxmincoeff},  
			the following inequalites hold:
			\begin{equation}\label{k-quadratic}
				\sigma_{\min} \lambda^2 + r_{\min}\leq k(\lambda)\leq \sigma_{\max} \lambda^2+r_{\max}
				\quad \hbox{for all}\ \  \lambda\in \R. 
			\end{equation}
	\end{enumerate}
\end{prop}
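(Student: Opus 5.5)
The plan is to treat the three items in order, working throughout with the conjugated operator $L_\lambda=(L^1_\lambda,L^2_\lambda)$ acting on $L$-periodic pairs. Each $L^i_\lambda$ is a uniformly elliptic operator with periodic coefficients, and the only coupling is through the zeroth-order terms $\mu_v\psi$ and $\mu_u\varphi$, which are strictly positive. So $L_\lambda$ is a fully coupled cooperative system.

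\textbf{Item (i).} Choose $M$ large so that $M-L_\lambda$ is invertible on $C_{per}(\R)^2$. Its inverse $(M-L_\lambda)^{-1}$ is compact, by elliptic regularity and Arzel\`a--Ascoli on the torus $\R/L\mathbb{Z}$. It is also strongly positive on the cone of nonnegative pairs. This follows from the scalar strong maximum principle applied to each component, together with $\mu_u,\mu_v>0$: if one component is positive somewhere, the coupling forces the other to be positive everywhere. The strong Krein--Rutman theorem then gives a simple principal eigenvalue with a positive eigenvector $(\varphi,\psi)$, unique up to scaling. Smoothness follows by bootstrapping.

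\textbf{Item (ii).} Let $(\varphi_0,\psi_0)$ be the principal eigenvector. Taking it as a test pair in the right-hand side of \eqref{eq:minimax-k(lambda)} shows the min is at most $k(\lambda)$. For the reverse inequality, suppose some positive periodic $(\varphi,\psi)$ satisfies $L_\lambda[\varphi,\psi]\le k'(\varphi,\psi)$ with $k'<k(\lambda)$. Set $\tau^*=\min\{\tau:\tau(\varphi,\psi)\ge(\varphi_0,\psi_0)\}$. The difference $w=\tau^*(\varphi,\psi)-(\varphi_0,\psi_0)\ge0$ touches zero in some component, and it satisfies
\[
(L_\lambda-k(\lambda))w\le (k'-k(\lambda))\,\tau^*(\varphi,\psi)<0.
\]
At a touching point this contradicts the maximum principle, since the off-diagonal terms are nonnegative. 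Uniqueness of the minimizer comes from the same touching argument: if $\sup\max(\cdot)=k(\lambda)$, then $w\ge0$ is a supersolution. By the strong maximum principle together with the coupling, either $w\equiv0$ or $w>0$. In the latter case $\tau^*$ could be decreased, a contradiction. Hence the minimizer is a multiple of the eigenvector.

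\textbf{Item (iii), bounds.} To get \eqref{k-quadratic}, I would test the minimax \eqref{eq:minimax-k(lambda)} with $\varphi=\psi=1$. This gives $L^1_\lambda[1,1]=\lambda^2\sigma-\lambda\sigma_x+r_u+\mu_v-\mu_u$. That is not quite bounded by $\sigma_{\max}\lambda^2+r_{\max}$, because of the $\lambda\sigma_x$ and $\mu_v-\mu_u$ terms. I therefore expect to use a better route: the sup/inf characterization of $k(\lambda)$ for the original operator through exponential-type functions $e^{-\lambda x}\phi$. Alternatively, I would compare with the scalar problem
\[
(\sigma\phi_x)_x-2\lambda\sigma\phi_x+(\lambda^2\sigma-\lambda\sigma_x)\phi=k_0(\lambda)\phi,
\]
whose principal eigenvalue can be computed through the adjoint. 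Integrating against the adjoint eigenfunction and using the variational (Rayleigh) structure of the symmetric part, one gets $\sigma_{\min}\lambda^2\le k_0(\lambda)\le\sigma_{\max}\lambda^2$. The mutation terms should be handled by summing the two equations weighted by adjoint eigenvectors, since $\mu$ exchanges mass and adds nothing net. Making this precise is where I expect care is needed; the cleanest route is probably a Rayleigh-quotient argument on the symmetrized problem.

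\textbf{Item (iii), analyticity and convexity.} Analyticity follows from Kato perturbation theory: $L_\lambda$ is a holomorphic family in $\lambda$ and the principal eigenvalue is simple. For convexity I would use the standard trick. Let $\lambda=t\lambda_1+(1-t)\lambda_2$. Given eigenvectors $(\varphi_i,\psi_i)$ at $\lambda_i$, form $\varphi=\varphi_1^t\varphi_2^{1-t}$ and $\psi=\psi_1^t\psi_2^{1-t}$. Writing $\varphi_i=e^{\phi_i}$, the logarithmic form of $L_\lambda$ is convex in $(\lambda,\nabla\phi)$. The coupling term $\mu_v\psi/\varphi$ is a geometric mean of the $\mu_v\psi_i/\varphi_i$, so it is bounded by the arithmetic mean by H\"older. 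Together these give
\[
L^j_\lambda[\varphi,\psi]/(\cdot)\le t\,k(\lambda_1)+(1-t)\,k(\lambda_2),
\]
and item (ii) yields convexity. Strict convexity is the main obstacle: equality in the minimax would force $(\varphi,\psi)$ to be the eigenvector. It would also force equality in the quadratic-in-gradient convexity, which requires $\lambda_1-\lambda_2+(\log\varphi_1-\log\varphi_2)_x\equiv0$. That is impossible for periodic $\varphi_i$ with $\lambda_1\ne\lambda_2$, since integrating over a period gives $\lambda_1-\lambda_2=0$.
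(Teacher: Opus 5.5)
Your arguments for (i), (ii), analyticity and (strict) convexity are sound. The touching argument for the minimax and its uniqueness clause are standard. So is the geometric-mean interpolation, with the equality case $q_1\equiv q_2$ ruled out by $\int_0^L(\log\varphi_i)_x\,dx=0$. (The paper only imports this proposition from Part~I, so there is no in-text proof to compare against.)

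The genuine gap is the two-sided bound \eqref{k-quadratic}. You leave it as a sketch, and the route you favour cannot close it. For the scalar part $\mathcal{D}_\lambda\phi:=e^{\lambda x}\big(\sigma(e^{-\lambda x}\phi)_x\big)_x$, multiplying $\mathcal{D}_\lambda\phi=d\,\phi$ by $\phi$ gives $d\int_0^L\phi^2=\lambda^2\int_0^L\sigma\phi^2-\int_0^L\sigma\phi_x^2$, so $d\le\sigma_{\max}\lambda^2$. But a real eigenvalue only lies in the numerical range, which is unbounded below, and nothing controls $\int\sigma\phi_x^2$. So a Rayleigh quotient of the symmetrized problem gives no lower bound $d\ge\sigma_{\min}\lambda^2$. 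Also, integrating an eigen-equation against its own adjoint eigenvector (scalar or system) is a tautology. ``Weighting by adjoint eigenvectors'' therefore has to be made specific before it does anything.

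What works is the following. Let $d(\lambda)$ be the periodic principal eigenvalue of $\mathcal{D}_\lambda$, and let $\phi^*>0$ be the principal eigenfunction of $\mathcal{D}_\lambda^*=\mathcal{D}_{-\lambda}$; it has the same eigenvalue, by pairing the two positive eigenfunctions. Add the two equations of \eqref{eq:lambda-periodic-principal-eigen}. Because both components carry the same $\sigma$, the mutation terms cancel pointwise. Multiply by the same scalar weight $\phi^*$ and integrate:
\[
k(\lambda)=d(\lambda)+\frac{\int_0^L\phi^*(r_u\varphi+r_v\psi)\,dx}{\int_0^L\phi^*(\varphi+\psi)\,dx}\in\big[d(\lambda)+r_{\min},\,d(\lambda)+r_{\max}\big].
\]
This is the computation behind \eqref{k-estimate} in Section~5, where $\sigma\equiv1$ and $\phi^*\equiv1$.

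For the lower bound on $d$, divide rather than multiply. With $q:=\phi_x/\phi-\lambda$ one has $\mathcal{D}_\lambda\phi/\phi=(\sigma q)_x+\sigma q^2$, which is the identity you already use for convexity. Since $\sigma q$ is periodic and $\int_0^Lq\,dx=-\lambda L$, integration gives $d\,L=\int_0^L\sigma q^2\ge\sigma_{\min}\int_0^Lq^2\ge\sigma_{\min}\lambda^2L$. Together with the energy upper bound, this yields \eqref{k-quadratic}.
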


\begin{remark}\label{rem:251025a}
	By \eqref{k-quadratic}, we see that $k(\lambda)>0$ for all $\lambda\in\R$ if $r_{\min}>0$. However, as we do not assume sign conditions on the intrinsic growth rates $r_u, r_v$ at this stage, $k(\lambda)$ can possibly take negative values for some $\lambda$.
\end{remark}

{
\begin{prop}[Comparison of the principal eigenvalues \protect{\cite[Theorem 2.1]{Griette-Matano-2025}}]\label{prop:comparison}
Let Assumption \ref{as:coop-comp} hold true. Then the mapping $R\mapsto \lambda_1^R$ is strictly increasing, and $\lambda_1^R<k(\lambda)$ for all $R>0$ and $\lambda\in\R $. Furthermore,
\begin{equation}\label{lambdaR-k}
	\lim_{R\to+\infty}\lambda_1^R=\min_{\lambda\in\R}k(\lambda).
\end{equation}
\end{prop}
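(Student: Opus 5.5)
Three claims must be established: monotonicity of $R\mapsto\lambda_1^R$, the strict inequality $\lambda_1^R<k(\lambda)$, and the limit \eqref{lambdaR-k}. Everything rests on the cooperative structure of \eqref{eq:linearized} ($\mu_u,\mu_v>0$). This structure gives a strong maximum principle for the system and a sweeping (generalized super-solution) characterization of Dirichlet principal eigenvalues. The characterization is the analogue of \eqref{eq:minimax-k(lambda)} on bounded domains: if a pair $(\bar\varphi,\bar\psi)$ is positive on $[-R,R]$ and satisfies $L^i[\bar\varphi,\bar\psi]\le \Lambda\,(\bar\varphi,\bar\psi)_i$ componentwise, then $\lambda_1^R\le\Lambda$, with equality only if $(\bar\varphi,\bar\psi)$ is a multiple of the Dirichlet eigenvector. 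I would prove this by a standard argument. Let $(\varphi_R,\psi_R)$ be the Dirichlet eigenvector and set $\tau^*=\sup\{\tau:\tau(\varphi_R,\psi_R)\le(\bar\varphi,\bar\psi)\}$. Then $w=(\bar\varphi,\bar\psi)-\tau^*(\varphi_R,\psi_R)\ge0$ satisfies $L[w]-\Lambda w\le(\lambda_1^R-\Lambda)\tau^*(\varphi_R,\psi_R)$. Assuming $\lambda_1^R>\Lambda$ and applying the strong maximum principle for cooperative systems, together with the Hopf lemma at $\pm R$, contradicts the maximality of $\tau^*$.

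\textbf{Monotonicity and strict comparison.} For $R<R'$, restrict the eigenvector on $(-R',R')$ to $[-R,R]$. It is strictly positive there and has eigenvalue $\lambda_1^{R'}$, so the sweeping lemma gives $\lambda_1^R\le\lambda_1^{R'}$. Equality would force this restriction to be proportional to the eigenvector on $(-R,R)$, which is impossible because the restriction is positive at $\pm R$ while the eigenvector vanishes there. Hence the inequality is strict.

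For $\lambda_1^R<k(\lambda)$, take the $\lambda$-periodic eigenvector $(\varphi,\psi)$ from Proposition \ref{prop:k(lambda)}. Then $(e^{-\lambda x}\varphi,e^{-\lambda x}\psi)$ is positive on $[-R,R]$ and solves $L[\cdot]=k(\lambda)(\cdot)$ exactly, by \eqref{eq:lambda-periodic-principal-eigen}. The same argument gives $\lambda_1^R\le k(\lambda)$, and strict inequality again follows from positivity at the boundary. Consequently $\lim_{R\to\infty}\lambda_1^R=:\lambda_\infty$ exists and satisfies $\lambda_\infty\le\min_\lambda k(\lambda)$. The minimum is attained by strict convexity and the quadratic growth in \eqref{k-quadratic}.

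\textbf{Lower bound, the main obstacle.} It remains to show $\lambda_\infty\ge k(\lambda_0)$, where $\lambda_0$ is the minimizer, so that $k'(\lambda_0)=0$. I would build an explicit test function, which is the standard approach for scalar periodic operators (Berestycki–Hamel–Rossi). Consider the complexified eigenproblem $e^{(\lambda_0+i\theta)x}L[e^{-(\lambda_0+i\theta)x}\cdot]$. The map $\lambda\mapsto k(\lambda)$ is analytic with a simple principal eigenvalue, so perturbation theory extends it analytically to $\lambda_0+i\theta$ for small $\theta$. This gives $k(\lambda_0+i\theta)=k(\lambda_0)-\tfrac12k''(\lambda_0)\theta^2+O(\theta^3)$, with a complex eigenvector $(\Phi_\theta,\Psi_\theta)$ close to the positive one.

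Set $\theta=\pi/R$ and take the real part of $e^{-(\lambda_0+i\theta)x}(\Phi_\theta,\Psi_\theta)$, shifted so that it vanishes at $\pm R$. This is a function of the form $e^{-\lambda_0x}\big(\cos(\theta x)\varphi+O(\theta)\big)$, which is positive on $(-R,R)$ up to corrections near the endpoints. Using the Rayleigh-type characterization from the sweeping lemma in reverse (a positive sub-solution gives $\lambda_1^R\ge\Lambda$), I obtain $\lambda_1^R\ge k(\lambda_0)-C/R^2$. Letting $R\to\infty$ gives $\lambda_\infty\ge k(\lambda_0)$.

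The delicate step is keeping the sub-solution positive near $\pm R$, where the $O(\theta)$ corrections compete with $\cos(\theta x)\to0$. I would handle this by working on a slightly smaller interval $(-R+1,R-1)$, using the monotonicity in $R$ already proved, and absorbing the imaginary-part contributions. If the complex perturbation approach proves too technical, my fallback is a compactness argument. Suppose $\lambda_\infty<k(\lambda_0)$. Normalize the Dirichlet eigenvectors via Harnack and pass to a positive entire solution of $L[\cdot]=\lambda_\infty(\cdot)$ on $\R$. Then show that any positive entire super-solution with eigenvalue $\Lambda$ forces $\Lambda\ge\min_\lambda k(\lambda)$; this last step is itself a Liouville-type result for cooperative periodic systems, which is where the real difficulty lies.
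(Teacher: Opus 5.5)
The sweeping lemma works once you fix a sign: $\mathcal{L}[w]-\Lambda w\le(\Lambda-\lambda_1^R)\tau^*(\varphi_R,\psi_R)$, with $\mathcal{L}=(L^1,L^2)$. Since $(\bar\varphi,\bar\psi)>0$ at $\pm R$, the strong maximum principle then gives strictness directly. So strict monotonicity, $\lambda_1^R<k(\lambda)$, the existence of $\lambda_\infty:=\lim_{R\to\infty}\lambda_1^R$ and the bound $\lambda_\infty\le\min_\lambda k(\lambda)$ are fine. The paper only quotes this result from Part~I without proof, so I am judging your argument on its own.

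\textbf{The gap.} What is missing is the reverse inequality $\lambda_\infty\ge\min_\lambda k(\lambda)$, which is the real content of \eqref{lambdaR-k}. Your main construction fails as described. With $k_\theta:=k(\lambda_0+i\theta)$ and $u_\theta:=e^{-(\lambda_0+i\theta)x}(\Phi_\theta,\Psi_\theta)$,
\[
\mathcal{L}[\mathrm{Re}\,u_\theta]=(\mathrm{Re}\,k_\theta)\,\mathrm{Re}\,u_\theta-(\mathrm{Im}\,k_\theta)\,\mathrm{Im}\,u_\theta,\qquad \mathrm{Im}\,k_\theta=-\tfrac16k'''(\lambda_0)\theta^3+O(\theta^5).
\]
\begin{itemize}
\item For scalar equations $k$ is even and $\lambda_0=0$, so $\mathrm{Im}\,k_\theta=0$. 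For this system $k$ need not be even (that is the point of Section~\ref{s:proof-asym-speed}), nor symmetric about $\lambda_0$, so $\mathrm{Im}\,k_\theta\neq0$ in general.
\item Near the two ends of the positivity interval of $\mathrm{Re}\,u_\theta$, you have $\mathrm{Re}\,u_\theta\to0$ while $\mathrm{Im}\,u_\theta\approx\mp e^{-\lambda_0x}(\varphi^{\lambda_0},\psi^{\lambda_0})$. So at one end the corresponding component of $\mathcal{L}[\mathrm{Re}\,u_\theta]$ is strictly negative where that component of $\mathrm{Re}\,u_\theta$ vanishes. The sub-solution inequality therefore fails on a neighbourhood of that end for every choice of $\Lambda$; this term cannot be ``absorbed''.
\item Retreating to $(-R+1,R-1)$ does not help. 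There the test function is positive on the boundary, and a function positive up to the boundary with $\mathcal{L}[w]\ge\Lambda w$ inside gives no lower bound on a Dirichlet eigenvalue. For $\partial_{xx}+c$, $w\equiv1$ gives $\Lambda=c$, whereas $\lambda_1^R=c-\pi^2/(4R^2)$.
\item The two components of $\mathrm{Re}\,u_\theta$ also vanish at different points, which your sketch does not address.
\end{itemize}
Rescuing this route needs an extra device. One option is to replace $\lambda_0+i\theta$ by $\lambda_0+\eta(\theta)+i\theta$, with $\eta(\theta)=O(\theta^2)$ chosen so that $k$ is real there. This follows from the implicit function theorem applied to $\mathrm{Im}\,k(\lambda_0+\eta+i\theta)/\theta=k'(\lambda_0+\eta)+O(\theta^2)$ when $k''(\lambda_0)>0$. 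Then $\mathrm{Re}\,u$ is an exact real solution, and truncating each component at $0$ on one nodal lobe gives a generalized sub-solution by cooperativity.

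\textbf{The fallback.} This is the right route, but it stops exactly at the step that carries the content, so as written nothing proves $\lambda_\infty\ge\min_\lambda k(\lambda)$. In one dimension that step has a short proof. You only need it for solutions, not super-solutions, since the limit of the normalized Dirichlet eigenvectors is a positive solution of $\mathcal{L}[\phi]=\lambda_\infty\phi$ on $\R$.
\begin{itemize}
\item The nonnegative solutions of the four-dimensional linear ODE system $\mathcal{L}[\phi]=\Lambda\phi$ form a closed pointed convex cone $\mathcal{C}$. Every nonzero element of $\mathcal{C}$ is positive by the strong maximum principle, which uses $\mu_u,\mu_v>0$.
\item Hence $\ell(\phi):=\phi_1(0)+\phi_2(0)$ is positive on $\mathcal{C}\setminus\{0\}$, and $\{\phi\in\mathcal{C}:\ell(\phi)=1\}$ is a compact convex base.
\item The shift $T\phi:=\phi(\cdot+L)$ is linear, invertible, and maps $\mathcal{C}$ into itself by periodicity. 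Brouwer's theorem applied to $\phi\mapsto T\phi/\ell(T\phi)$ gives $\phi\in\mathcal{C}\setminus\{0\}$ with $\phi(\cdot+L)=\rho\phi$, $\rho>0$.
\item Thus $\phi=e^{-\lambda x}(p,q)$ with $\lambda=-L^{-1}\ln\rho$ and $p,q>0$ $L$-periodic. The pair $(p,q)$ solves \eqref{eq:lambda-periodic-principal-eigen} with $k(\lambda)$ replaced by $\Lambda$, and \eqref{eq:minimax-k(lambda)} gives $k(\lambda)\le\Lambda$.
\end{itemize}
Taking $\Lambda=\lambda_\infty$ gives the missing inequality.
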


From \eqref{lambdaR-k} we see that
\[
\lim_{R\to+\infty}\lambda_1^R=\min_{\lambda\in\R}k(\lambda)
\leq k(0)=\lambda_1^{per},
\]
but the equality does not hold in general.
}

Under some symmetry conditions, $k(\lambda)$ is an even function.
\begin{proposition}[\protect{\cite[Proposition 3]{Griette-Matano-2025}}]\label{prop:symm-case}
	Suppose that Assumption \ref{as:coop-comp} holds true, and assume further that either: 
	\begin{enumerate}[label={\rm(\roman*)}]
	    \item \label{item:symmmatrix}
		$\mu_u(x)=\mu_v(x)$ for all $x\in\R $, or
	    \item \label{item:symmreflexion}
		all coefficients are even: $\sigma(x)=\sigma(-x)$, $r_u(x)=r_u(-x)$, $r_v(x)=r_v(-x)$, $\mu_u(x)=\mu_u(-x)$ and $r_v(x)=r_u(-x)$, for all $x\in\R $.
	\end{enumerate}
	Then the function $\lambda\mapsto k(\lambda)$ is even, \textit{i.e.} $k(\lambda)=k(-\lambda)$ for all $\lambda\in\R $. Consequently, we have
			\begin{equation}\label{k-even}
			    \lim_{R\to+\infty} \lambda_1^R= \min_{\lambda\in\R } k(\lambda) =k(0)
			    =\lambda_1^{per}.
			\end{equation}
\end{proposition}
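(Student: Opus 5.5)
The plan is to show that $k(-\lambda)=k(\lambda)$ in each case, by producing a positive periodic eigenvector for the operator $L_{-\lambda}=(L^1_{-\lambda},L^2_{-\lambda})$ with eigenvalue $k(\lambda)$. The uniqueness part of Proposition \ref{prop:k(lambda)}\ref{item:eigenpairk(lambda)} then forces $k(-\lambda)=k(\lambda)$. Once evenness is known, strict convexity and analyticity from Proposition \ref{prop:k(lambda)}\ref{item:k(lambda)concave} give $k'(0)=0$, so $0$ is the unique minimizer and $\min_\lambda k(\lambda)=k(0)=\lambda_1^{per}$. Combining this with \eqref{lambdaR-k} of Proposition \ref{prop:comparison} yields \eqref{k-even}.

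\emph{Case \ref{item:symmreflexion} (reflection).} I read the hypothesis as the natural one, namely that the coefficients are even; the literal statement has a typo ($r_v(x)=r_u(-x)$), and in the proof I would simply take $r_v$ even as well. Let $(\varphi,\psi)$ be the $\lambda$-periodic principal eigenvector and set $\tilde\varphi(x)=\varphi(-x)$, $\tilde\psi(x)=\psi(-x)$. A direct computation with $y=-x$ shows that $e^{-\lambda x}\varphi(x)$ transforms into $e^{\lambda y}\tilde\varphi(y)$. The operator $(\sigma\partial_x)_x$ commutes with reflection because $\sigma$ is even (note that $\sigma_x$ is odd, which is consistent with the $-\lambda\sigma_x$ term in \eqref{eq:lambda-periodic-principal-eigen2}). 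The zero-order terms are invariant. Hence $(\tilde\varphi,\tilde\psi)$ solves \eqref{eq:lambda-periodic-principal-eigen} with $-\lambda$ in place of $\lambda$ and eigenvalue $k(\lambda)$. It is positive and $L$-periodic, so $k(-\lambda)=k(\lambda)$.

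\emph{Case \ref{item:symmmatrix} ($\mu_u=\mu_v$).} Here the coupling matrix is symmetric, so the unweighted operator $L=(L^1,L^2)$ is formally self-adjoint in $L^2(\R)^2$. Consequently, the formal adjoint of $L_\lambda=e^{\lambda x}L e^{-\lambda x}$ on periodic functions is $e^{-\lambda x}L e^{\lambda x}=L_{-\lambda}$. The key step is the Krein--Rutman fact that the principal eigenvalue of a cooperative, irreducible periodic operator coincides with that of its adjoint. The adjoint is again cooperative and irreducible because $\mu>0$, so it has a positive periodic eigenvector $(\varphi^*,\psi^*)$ with some eigenvalue $k^*$. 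To identify $k^*$, pair the eigen-equation $L_\lambda(\varphi,\psi)=k(\lambda)(\varphi,\psi)$ with $(\varphi^*,\psi^*)$ over one period and integrate by parts. This gives $k(\lambda)\int(\varphi\varphi^*+\psi\psi^*)=k^*\int(\varphi\varphi^*+\psi\psi^*)$, and the integral is positive, so $k^*=k(\lambda)$. Since $(\varphi^*,\psi^*)$ is a positive periodic eigenvector of $L_{-\lambda}$, uniqueness gives $k(-\lambda)=k(\lambda)$.

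I expect the main obstacle to be verifying the adjoint identity $(L_\lambda)^*=L_{-\lambda}$ carefully, including the first-order terms. For this I would check directly that $\int_0^L e^{\lambda x}(\sigma(e^{-\lambda x}f)_x)_x\,g\,dx=\int_0^L f\,e^{-\lambda x}(\sigma(e^{\lambda x}g)_x)_x\,dx$ for periodic $f,g$. This is integration by parts twice, where the boundary terms vanish by periodicity because the exponential weights cancel in the product. The coupling terms are symmetric precisely because $\mu_u=\mu_v$.
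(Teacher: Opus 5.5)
Your argument is correct and follows the standard route behind the cited result. Reflection $x\mapsto -x$ turns a positive $\lambda$-periodic eigenvector into a positive $(-\lambda)$-periodic one when the coefficients are even. When $\mu_u=\mu_v$, the identity $L_\lambda^*=L_{-\lambda}$ and the pairing of the two positive eigenvectors (the Fredholm-alternative argument the paper mentions for scalar equations) give $k(\lambda)=k(-\lambda)$, and convexity together with \eqref{lambdaR-k} then finishes the proof.

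The only slip is in how you read the typo: $r_v$ is already assumed even, so the extra hypothesis your reflection step actually uses (and presumably what the last item should say) is $\mu_v(x)=\mu_v(-x)$.
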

We remark that $k(\lambda)$ is always an even function in the case of scalar KPP type equations {even if the coefficients are not even functions.} This is a consequence of the Fredholm alternative.

%Before presenting our main results in this section, we remark that nonnegative solutions of \eqref{eq:main-sys} are all bounded as $t\to+\infty$. 
%\begin{prop}[Basic boundedness estimate]\label{prop:uniform-bound}
%Let $(u(t,x),v(t,x))$ be a solution of \eqref{eq:main-sys} with nonnegative bounded initial data $(u_0(x),v_0(x))$. Then 
%$u(t,x)\geq 0$, $v(t,x)\geq 0$ for all $t\geq0, x\in\R$, and
%\begin{equation}\label{u+v<max(K,u0+v0)}
%u(t,x)+v(t,x)\leq \max\big(\overline{K},\, \sup_{x\in\R}(u_0(x)+v_0(x))\big) \ \ \hbox{for all}\ \ t\geq0, x\in\R,
%\end{equation}  
%\begin{equation}\label{uniform-bound}
%\limsup_{t\to+\infty}\sup_{x\in\R}\big(u(t,x)+v(t,x)\big)\leq \overline{K},
%\end{equation}
%where $\overline{K}:=\frac{r_{\max}}{\kappa_{\min}}$.
%In particular, if $u_0(x)+v_0(x)\leq\overline{K}\,(x\in\R)$, then $u(t,x)+v(t,x)\leq\overline{K}\,(t\geq 0,\,x\in\R)$.
%\end{prop}

{
Next we introduce the notion of front-like initial data.
\begin{definition}\label{def:front-like}
The pair of bounded nonnegative functions $(u_0,v_0)$ on $\R $ that appears in \eqref{main-initial} is called {\it right front-like} if there exists a real number $K$ such that
\[
\liminf_{x\to-\infty}\min(u_0(x), v_0(x))>0, \quad u_0(x)=v_0(x)= 0 \ \ \hbox{for all} \ \ x\geq K.
\]
It is called {\it left front-like} if there exists a real number $K$ such that
\[
u_0(x)=v_0(x)= 0 \ \ \hbox{for all} \ \ x\leq K, \quad \liminf_{x\to +\infty}\min(u_0(x), v_0(x))>0. 
\]
\end{definition}
}

\begin{thm}[Spreading speeds for front-like initial data \protect{\cite[Theorem 2.2]{Griette-Matano-2025}}]\label{thm:main-lindet}
	Let Assumption \ref{as:coop-comp} hold true and assume that $\lambda_1^{per}>0$. Then there exist real numbers $c^*_{R}$, $c^*_{L}$ and a positive number $\eta>0$ such that for any solution $(u,v)$ of \eqref{eq:main-sys}--\eqref{main-initial} whose initial data $(u_0,v_0)$ is right front-like, it holds that
\begin{equation}\label{right-spreading}
\begin{cases}
\displaystyle\ \liminf_{t\to\infty} \left[\inf_{x\leq ct}\min(u(t, x), v(t,x))\right] \geq \eta,
\ \ &\text{for all}\ \  c<c^*_{R},\\[9pt]
\displaystyle\ \limsup_{t\to\infty} \left[\sup_{x\geq ct}\max(u(t, x), v(t,x))\right] = 0, \ \ &\text{for all}\ \ c>c^*_{R},
\end{cases}
\end{equation}
while for any solution $(u,v)$ of \eqref{eq:main-sys}--\eqref{main-initial} whose initial data $(u_0,v_0)$ is left front-like, it holds that
\begin{equation}\label{left-spreading}
\begin{cases}
\displaystyle\ \liminf_{t\to\infty} \left[\inf_{x\geq -ct}\min(u(t, x), v(t,x))\right] \geq \eta,
\ \ &\text{for all}\ \  c<c^*_{L},\\[9pt]
\displaystyle\ \limsup_{t\to\infty} \left[\sup_{x\leq -ct}\max(u(t, x), v(t,x))\right] = 0, \ \ &\text{for all}\ \ c>c^*_{L}.
\end{cases}
\end{equation}
Furthermore, we have the following formula:
\begin{equation}\label{eq:speed}
		c^*_{R}=\inf_{\lambda>0} \frac{k(\lambda)}{\lambda}=\min_{\lambda>0} \frac{k(\lambda)}{\lambda}
		\quad\ \ 
		c^*_{L}=\inf_{\lambda<0} \frac{k(\lambda)}{-\lambda} =\min_{\lambda<0} \frac{k(\lambda)}{-\lambda}
		= \min_{\lambda>0} \dfrac{k(-\lambda)}{\lambda},
\end{equation}
where $k(\lambda)$ is the $\lambda$-principal periodic eigenvalue  defined in Definition~\ref{def:k(lambda)}.
\end{thm}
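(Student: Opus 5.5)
The plan has three parts: the upper bound (zero beyond speed $c>c^*_R$), the lower bound (uniform positivity behind speed $c<c^*_R$), and the formula \eqref{eq:speed}. I treat only the right-moving case. The left case follows by the reflection $x\mapsto -x$, which maps the operator $L_\lambda$ to $L_{-\lambda}$ with reflected coefficients, and hence $k(\lambda)$ to $k(-\lambda)$.

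\textbf{Upper bound.} Fix $c>c^*_R$ and pick $\lambda>0$ with $k(\lambda)/\lambda<c$. This is possible since $k(\lambda)/\lambda\to+\infty$ both as $\lambda\to0^+$ (because $k(0)=\lambda_1^{per}>0$) and as $\lambda\to\infty$ (by \eqref{k-quadratic}). Let $(\varphi,\psi)$ be the $\lambda$-periodic principal eigenvector from Proposition \ref{prop:k(lambda)}. Then
\[
(\bar u,\bar v)=Ae^{-\lambda(x-k(\lambda)t/\lambda)}(\varphi,\psi)
\]
solves the linearized system \eqref{eq:linearized}. Because the nonlinear terms $-\kappa(u+v)u$ are nonpositive for nonnegative solutions (Proposition \ref{prop:uniform-bound}), $(u,v)$ is a subsolution of the cooperative linear system. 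Comparison for cooperative systems then gives $(u,v)\le(\bar u,\bar v)$, once $A$ is chosen large relative to the bound $\max(\overline K,\sup(u_0+v_0))$ and the support edge $K$. Evaluating on $x\ge ct$ gives decay like $e^{-\lambda(c-k(\lambda)/\lambda)t}\to0$. The infimum in \eqref{eq:speed} is attained because $k$ is continuous and $k(\lambda)/\lambda\to\infty$ at both ends.

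\textbf{Lower bound.} This is the main obstacle, since comparison fails for the full system. The first step is to construct compactly supported subsolutions in a moving frame. For $c<c^*_R$, consider the operator with drift $c$, i.e. the eigenproblem for $e^{\lambda x}$-conjugated operators on $(-R,R)$ in the frame $x-ct$. The key claim is that the Dirichlet principal eigenvalue of $L-c\,\partial_x$ (made periodic in $t$ via the pulsating frame, or treated with a generalized eigenvalue) tends to $\min_\lambda (k(\lambda)-c\lambda)>0$ as $R\to\infty$. This is the moving-frame analogue of Proposition \ref{prop:comparison}, obtained by applying that result to the conjugated operator.

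Small multiples $\epsilon(\varphi_R,\psi_R)$ are then subsolutions of the nonlinear system as long as $u+v$ is small, since the cross terms $\mu$ are cooperative. Comparison is only needed in the region where the solution is small. To handle this I would use a contradiction argument. Suppose $\min(u,v)$ becomes small along some sequence of points $(t_n,x_n)$ with $x_n\le ct_n$. Parabolic estimates and the Harnack inequality (valid for this weakly coupled system with positive $\mu$) show that $u+v$ is then small on a large parabolic neighbourhood. On that neighbourhood the system is a small perturbation of the cooperative linear system, so the Dirichlet subsolution forces growth, which is a contradiction. This yields a uniform $\eta$ independent of the initial data.

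Finally, front-like data are handled by noting that positivity at $-\infty$, combined with the stationary Dirichlet version of the argument (using $\lambda_1^R>0$ for large $R$, since $\lambda_1^{per}>0$ and \eqref{lambdaR-k} give the needed sign once $\min k>0$, or else the moving-frame version for each $c'\in(c,c^*_R)$), propagates positivity into the whole region $x\le ct$.
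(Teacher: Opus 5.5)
Your upper bound, the attainment of the minima, and the reflection $k(\lambda)\mapsto k(-\lambda)$ are fine; the exponential barrier is the one the paper uses in Proposition~\ref{prop:upper-barrier}. The lower bound, however, breaks at the step you yourself call the key claim.

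\textbf{The moving-frame eigenvalue does not come from Proposition~\ref{prop:comparison}.} In the frame $\xi=x-ct$ the coefficients become $\sigma(\xi+ct)$, $r_u(\xi+ct)$, \dots, so they are $L/c$-periodic in $t$. The Dirichlet problem on $(-R,R)$ is therefore a time-periodic parabolic eigenproblem. Proposition~\ref{prop:comparison} concerns the stationary operator only. Conjugating by $e^{\lambda x}$ cannot remove the time dependence; for the stationary operator it does not even change the Dirichlet eigenvalue, since $e^{-\lambda x}\Phi$ is then a Dirichlet eigenfunction of $L$. So the convergence of the moving-frame principal eigenvalue to $\min_\lambda(k(\lambda)-c\lambda)$ is a separate, Nadin-type result for space-time periodic cooperative systems, which you would have to prove.

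\textbf{The claimed positivity is wrong as stated.} One has $\min_{\lambda\in\R}(k(\lambda)-c\lambda)>0$ if and only if $-c^*_L<c<c^*_R$. This theorem does not assume $\min k>0$, so $c^*_L\le 0$ is allowed. In that case the limiting eigenvalue is not positive for speeds $0<c\le -c^*_L$, even though these lie below $c^*_R$. Your last paragraph does not repair this. It appeals to $\lambda_1^R>0$ ``once $\min k>0$'', which is the hair-trigger hypothesis of Theorem~\ref{thm:hairtrigger} and not an assumption here. Taking $c'\in(c,c^*_R)$ is also not enough. The correct choice is $c'\in(\max(c,-c^*_L),c^*_R)$. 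This interval is nonempty because $c^*_R+c^*_L>0$: the line $c^*_R\lambda$ lies below the convex function $k$ on all of $\R$, and $k(0)>0$. You then use translates by $-nL$ of the moving subsolution, started far to the left where $\min(u_0,v_0)$ is bounded below.

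\textbf{The Harnack argument does not give the uniform level $\eta$.} Harnack transfers smallness at $(t_n,x_n)$ only backward in time, and the Dirichlet growth argument also only says that later values dominate earlier ones. So nothing is contradicted unless you already have a lower bound at an earlier time. The theorem also needs $\eta$ independent of $c$, which a construction with $R=R(c)$ does not give.

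Both points can be handled directly. Fix one admissible speed $c_0\in(-c^*_L,c^*_R)$, a radius $R_0$, and the time-periodic moving-frame eigenfunction $\Phi$ with eigenvalue $\Lambda>0$. Then $\epsilon e^{\Lambda t/2}\Phi$ is a subsolution until its amplitude reaches a fixed $\epsilon_0$. After that, $\epsilon_0\Phi$ is itself a subsolution. This gives $\eta=\epsilon_0\min_{|\xi|\le R_0/2}\Phi$, independent of the data and of $c$, once some positive lower bound on $x\le c't$ is known. Comparison with the nonlinear system is legitimate because only the subsolution must be small: at a touching point $u=\underline u$, the cross term is $(\mu_v-\kappa_u\underline u)(v-\underline v)\ge 0$.

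\textbf{Comparison with the paper's route.} The smallness mechanism above is what underlies Lemma~\ref{lem:comparison-below}. That lemma is the tool the paper says its companion work used for spreading. The auxiliary system \eqref{eq:auxiliary-below} is cooperative on its invariant box and has the same linearization at $0$. The lower bound then reduces to spreading for a cooperative KPP-type periodic system, with no time-periodic eigenvalue theory needed. Your hands-on route is viable only after the moving-frame eigenvalue limit and the speed restriction above are supplied.
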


\begin{remark}
	The formula \eqref{eq:speed} can be regarded as the one-dimensional version of the G\"{a}rtner-Freidlin formula for scalar equations \cite{Gertner-Freidlin-1979, Berestycki-Hamel-Nadirashvili-2005}. 
{Note that the constant $\eta$ in \eqref{right-spreading} and \eqref{left-spreading} does not depend on the choice of $(u_0,v_0)$.}
\end{remark}

\begin{definition}[Right- and left spreading speeds]\label{def:spreading-speed}
The above quantities $c^*_{R}$ and $c^*_{L}$ are called the \textit{right spreading speed} and \textit{left spreading speed} of solutions to \eqref{eq:main-sys}, respectively.
\end{definition}

\begin{prop}[\protect{\cite[Proposition 5]{Griette-Matano-2025}}]\label{prop:c*-estimate}
Let $\sigma_{\min}$, $\sigma_{\max}$, $r_{\min}$, $r_{\max}$ be the constants that appear in \eqref{eq:maxmincoeff}. Then 
\[
c^*_{R}\leq 2\sqrt{\sigma_{\max} r_{\max}}, \quad c^*_{L}\leq 2\sqrt{\sigma_{\max} r_{\max}}.
\]
Furthermore, if $r_{\min}> 0$, then 
\[
c^*_{R}\geq 2\sqrt{\sigma_{\min} r_{\min}}, \quad c^*_{L}\geq 2\sqrt{\sigma_{\min} r_{\min}}.
\]
\end{prop}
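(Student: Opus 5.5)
Both bounds come from the min formula \eqref{eq:speed} of Theorem~\ref{thm:main-lindet} combined with the quadratic bounds \eqref{k-quadratic} of Proposition~\ref{prop:k(lambda)}\ref{item:k(lambda)concave}. The left speed reduces to the right one, since by \eqref{eq:speed} $c^*_L=\min_{\lambda>0}k(-\lambda)/\lambda$, and \eqref{k-quadratic} holds for every $\lambda\in\R$, hence in particular at $-\lambda$. So I would carry out the argument once, for a generic function $\lambda\mapsto k(\pm\lambda)$ on $(0,\infty)$.

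\emph{Upper bound.} For every $\lambda>0$, \eqref{k-quadratic} gives
\[
\frac{k(\pm\lambda)}{\lambda}\leq \sigma_{\max}\lambda+\frac{r_{\max}}{\lambda}.
\]
Theorem~\ref{thm:main-lindet} assumes $\lambda_1^{per}>0$. Since $\lambda_1^{per}=k(0)\le r_{\max}$ by \eqref{k-quadratic}, we get $r_{\max}>0$, so the choice $\lambda=\sqrt{r_{\max}/\sigma_{\max}}$ is admissible. At this $\lambda$ the right-hand side equals $2\sqrt{\sigma_{\max}r_{\max}}$. Taking the infimum over $\lambda>0$ in \eqref{eq:speed} then yields both upper bounds.

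\emph{Lower bound.} Assume $r_{\min}>0$. For every $\lambda>0$, \eqref{k-quadratic} gives
\[
\frac{k(\pm\lambda)}{\lambda}\geq \sigma_{\min}\lambda+\frac{r_{\min}}{\lambda}\geq 2\sqrt{\sigma_{\min}r_{\min}},
\]
where the last step is the AM--GM inequality. Passing to the minimum over $\lambda>0$ gives $c^*_R,c^*_L\geq 2\sqrt{\sigma_{\min}r_{\min}}$.

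There is no real obstacle. The only point needing care is that \eqref{eq:speed} is available, which requires the hypothesis $\lambda_1^{per}>0$ of Theorem~\ref{thm:main-lindet}. This same hypothesis is what guarantees $r_{\max}>0$, so that the square root in the upper bound is well defined and the optimizing $\lambda$ is positive.
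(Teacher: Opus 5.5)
Your argument is correct and is essentially the intended one. The paper quotes this result from its predecessor without reproducing a proof, but the proof is the combination of \eqref{k-quadratic} with \eqref{eq:speed}, using AM--GM for the lower bound and $\lambda=\sqrt{r_{\max}/\sigma_{\max}}$ for the upper bound; this is also how the paper reasons in Section~\ref{s:proof-asym-speed} under ``cf.\ Proposition~\ref{prop:c*-estimate}''. Your observation that $\lambda_1^{per}=k(0)\le r_{\max}$ forces $r_{\max}>0$ handles the only point that needs care.
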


\begin{remark}
	As mentioned in Remark \ref{rem:251025a}, $k(\lambda)$ can possibly take negative values if $r_{\min}<0$, in which case $c^*_R$ or $c^*_L$ becomes negative. A negative speed implies that the front is retreating. If we assume $\lambda(0) (=\lambda_1^{per})>0$ as in Theorem \ref{thm:main-lindet}, then at least one of $c^*_R, c^*_L$ is positive by the convexity of $\lambda\mapsto k(\lambda)$. 
\end{remark}

If we assume that both $c^*_R$ and $c^*_L$ are positive, then the following theorem holds.

\begin{thm}[Hair-trigger effect \protect{\cite[Theorem 2.3]{Griette-Matano-2025}}]\label{thm:hairtrigger}
	Let Assumption \ref{as:coop-comp} hold true. 
Then the following three conditions are equivalent: 
\[
{\rm (a)}\ \lambda_1^R>0 \ \ \hbox{for some} \ R>0,\quad\ 
{\rm (b)}\ \min_{\lambda\in\R }k(\lambda)>0, \quad\ 
{\rm (c)}\ c^*_{R}>0,\ c^*_{L}>0.
\]
If any of these conditions holds, there exists a number $\eta>0$ depending only on the coefficients of system \eqref{eq:main-sys} such that for any nonnegative bounded initial data $(u_0, v_0)$ satisfying $(u_0(x), v_0(x))\not\equiv (0, 0)$, the solution $\big(u(t, x), v(t, x)\big)$ of \eqref{eq:main-sys}--\eqref{main-initial} has the following property: 
	\begin{equation}\label{eq:hairtrigger-below}
		\liminf_{t\to+\infty}u(t, x)\geq \eta \ \ \text{and}\ \  \liminf_{t\to+\infty} v(t, x)\geq \eta \ \ \text{for all}\ \ x\in\R .
	\end{equation}
Furthermore, if, in addition, $u_0$ and $v_0$ are compactly supported, then the right front and the left front of $(u,v)$ propagate at the speed $c^*_{R}$ and $c^*_{L}$, respectively. More precisely,
\begin{subequations}{\label{right-spreading2}}
	\begin{align}
\label{right2a}
		&\displaystyle\ \liminf_{t\to\infty} \left[\inf_{0\leq x\leq ct}\min(u(t, x), v(t,x))\right] \geq \eta,
\ \ \ \hbox{for all}\ \ 0<c<c^*_{R},\\
\label{right2b}
		&\displaystyle\ \limsup_{t\to\infty} \left[\sup_{x\geq ct}\max(u(t, x), v(t,x))\right] = 0, \ \ \ \ \hbox{for all}\ \ c>c^*_{R}.
	\end{align}
\end{subequations}
\begin{subequations}{\label{left-spreading2}}
	\begin{align}
\label{left2a}
		&\displaystyle\ \liminf_{t\to\infty} \left[\inf_{-ct\leq x\leq 0}\min(u(t, x), v(t,x))\right] \geq \eta,
\ \ \ \hbox{for all}\ \ 0<c<c^*_{L},\\
\label{left2b}
		&\displaystyle\ \limsup_{t\to\infty} \left[\sup_{x\leq -ct}\max(u(t, x), v(t,x))\right] = 0,\ \ \ \ \hbox{for all}\ \ c>c^*_{L}.
	\end{align}
\end{subequations}
The assertions \eqref{right2a} and \eqref{left2a} hold for any nonnegative nontrivial solution of \eqref{eq:main-sys}.
\end{thm}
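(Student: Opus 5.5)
The proof splits into two parts: the equivalence (a)$\Leftrightarrow$(b)$\Leftrightarrow$(c), and the hair-trigger and spreading statements.

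\emph{Equivalence of (a), (b), (c).}

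(a)$\Leftrightarrow$(b). Proposition \ref{prop:comparison} says that $R\mapsto\lambda_1^R$ is strictly increasing with limit $\min_\lambda k(\lambda)$. If $\lambda_1^R>0$ for some $R$, then monotonicity gives $\min k>\lambda_1^R>0$. Conversely, if $\min k>0$, the limit \eqref{lambdaR-k} gives $\lambda_1^R>0$ for all $R$ large.

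(b)$\Rightarrow$(c). If $\min k>0$, then $k(\lambda)/\lambda>0$ for all $\lambda>0$. The minimum in \eqref{eq:speed} is attained, so $c^*_R>0$. The same argument with $\lambda<0$ gives $c^*_L>0$.

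(c)$\Rightarrow$(b). Suppose $c^*_R,c^*_L>0$. Then $k>0$ on $(0,\infty)$ and on $(-\infty,0)$. By continuity $k(0)\ge0$. If $k(0)=0$ were the minimum, strict convexity together with $k>0$ on both sides would still allow this, so a little care is needed. Since $k(0)=0$ is the minimum, $k'(0)=0$ and $k(\lambda)=O(\lambda^2)$. Then $k(\lambda)/\lambda\to0$ as $\lambda\downarrow0$, so $c^*_R=\inf_{\lambda>0} k(\lambda)/\lambda=0$, a contradiction. Hence $\min k>0$.

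\emph{Hair-trigger effect \eqref{eq:hairtrigger-below}.}

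Assume (a). Fix $R$ with $\lambda_1^R>0$ and let $(\varphi_R,\psi_R)$ be the Dirichlet eigenvector, extended by $0$ outside $(-R,R)$. The main point is that, although the full system has no comparison principle, small functions are still subsolutions. The linearized system \eqref{eq:linearized} is cooperative. Moreover, for $u,v\ge0$ the true solution satisfies
\[
u_t\ge(\sigma u_x)_x+(r_u-\kappa_u M)u+\mu_v v-\mu_u u,
\]
and similarly for $v$, where $M$ bounds $u+v$ (such a bound exists by Proposition \ref{prop:uniform-bound}). This is a cooperative linear system, and $(u,v)$ is a supersolution of it.

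More precisely, on the region where $u+v\le\delta$ the equation is above the linear system with $r$ replaced by $r-\kappa_{\max}\delta$. Choose $\delta$ so small that $\lambda_1^R-\kappa_{\max}\delta>0$. Then $\epsilon(\varphi_R,\psi_R)$, for small $\epsilon$, is a subsolution as long as the solution stays below $\delta$ on $[-R,R]$.

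To organize this, I would use a standard contradiction argument in the spirit of \cite{Griette-Matano-2025}. Suppose $\liminf_{t\to\infty}\max_{[-R,R]}(u+v)<\delta$ along some sequence of times. Because the solution is positive by the strong maximum principle, one gets a time interval during which the cooperative comparison applies with the subsolution $\epsilon e^{(\lambda_1^R-\kappa\delta)t}(\varphi_R,\psi_R)$. This subsolution grows exponentially, which contradicts staying small. The outcome is $\liminf_{t\to\infty}\min_{[-R,R]}(u+v)\ge\delta'$.

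The real obstacle is making this uniform: $\eta$ must depend only on the coefficients, and the lower bound must hold at every $x$. I would handle this with translates of the Dirichlet problem by multiples of $L$; periodicity makes all of them equivalent. I would combine this with parabolic Harnack estimates, which convert a bound on $u+v$ into a bound on $\min(u,v)$, since the coupling terms $\mu>0$ are positive.

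\emph{Spreading at speeds $c^*_R$ and $c^*_L$.}

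Once $\min(u,v)\ge\eta$ holds on a large interval after some time $T$, the solution at time $T$ lies above a compactly supported datum, so it needs to be compared with front-like data. The lower bounds \eqref{right2a} and \eqref{left2a} should follow by rerunning the lower-bound argument of Theorem \ref{thm:main-lindet}. That argument (moving Dirichlet-type eigenfunctions built from $k(\lambda)$ with a complex shift) only uses positivity on a large interval, so it applies to any nontrivial solution after time $T$.

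The upper bounds \eqref{right2b} and \eqref{left2b} follow directly from the cooperative linear system \eqref{eq:linearized}, which dominates the nonlinear one. For the right front, the supersolution
\[
Ae^{-\lambda(x-ct)}(\varphi_\lambda,\psi_\lambda),\qquad c=k(\lambda)/\lambda,
\]
dominates compactly supported data for $A$ large. Optimizing over $\lambda$ gives \eqref{right2b}, and the symmetric construction gives \eqref{left2b}.
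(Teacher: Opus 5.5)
The equivalence (a)$\Leftrightarrow$(b)$\Leftrightarrow$(c) is correct. So are the upper bounds \eqref{right2b}, \eqref{left2b}: the solution is a subsolution of the cooperative system \eqref{eq:linearized}, exactly as in Proposition~\ref{prop:upper-barrier}. The gap is in all the lower bounds, which is where the central difficulty of the theorem lies.

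Your comparison with $\epsilon e^{(\lambda_1^R-\kappa_{\max}\delta)t}(\varphi_R,\psi_R)$ is valid only on time intervals during which $u+v\le\delta$ on $[-R,R]$. What it actually proves is that $(u,v)$ cannot stay below $\delta$ on $[-R,R]$ for all large $t$, which is a $\limsup$-type statement. The exponential-growth contradiction needs a long time interval of smallness that starts from a non-degenerate amplitude, and smallness at a sequence of times $t_n$ does not give you that. Once $u+v>\delta$ there is no comparison principle. Nothing in your argument prevents the solution from reaching $\delta$, being pushed back down by the competitive terms, and becoming arbitrarily small later. Each restart of your comparison uses an $\epsilon$ coming from the qualitative strong maximum principle, and that $\epsilon$ can degenerate along the sequence. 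Using Harnack only to pass from $u+v$ to $\min(u,v)$ does not address this.

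The spreading part has the same defect in sharper form. ``The solution at time $T$ lies above a compactly supported datum, so it needs to be compared with front-like data'' presupposes exactly the comparison principle the system lacks. Theorem~\ref{thm:main-lindet} cannot be invoked, because $(u,v)(t,\cdot)$ is never front-like. Translating the Dirichlet argument by multiples of $L$ gives lower bounds only after uncontrolled times, so you get no uniformity on $0\le x\le ct$.

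The missing ingredient is a lower barrier that compares with $(u,v)$ for all time. This is Lemma~\ref{lem:comparison-below}, built on the auxiliary system \eqref{eq:auxiliary-below}, which the authors introduced in their earlier work for the spreading results.
\begin{itemize}
\item Solutions of \eqref{eq:auxiliary-below} stay in $\{\tilde u+\tilde v\le K\}$, where that system is cooperative, and its linearization at $0$ is \eqref{eq:linearized}.
\item A solution started below $(\min(u,K/2),\min(v,K/2))$ stays below $(u,v)$ however large $(u,v)$ becomes.
\item Under (a), $\epsilon(\varphi_R,\psi_R)$ (extended by $0$) is a stationary subsolution of \eqref{eq:auxiliary-below} for small $\epsilon$. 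The auxiliary solution from it therefore increases to a positive steady state, and with $L$-translates this gives an $\eta$ depending only on the coefficients.
\item Small-amplitude moving subsolutions for $c<c^*_R$ then give \eqref{right2a}, \eqref{left2a} for any nontrivial solution after a comparison at $t=1$. Alternatively, apply the linear-determinacy theory for cooperative KPP systems to \eqref{eq:auxiliary-below}, which has the same $k(\lambda)$.
\end{itemize}

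If you want to keep your own route for \eqref{eq:hairtrigger-below}, the restart must be made quantitative. Whenever $\max_{[-R,R]}(u+v)(s)\ge\delta$, the Harnack inequality used in the proof of Lemma~\ref{lem:Harnack} gives $(u,v)(s+1)\ge c\,\delta(\varphi_R,\psi_R)$. Here $c$ depends only on the coefficients, since eventually $u+v\le\overline{K}+1$. The growing linear subsolution then forces $\max_{[-R,R]}(u+v)$ back up to $\delta$ within a fixed time $T^*$. So, after the first such time (which your growth argument provides), every time window of length $T^*$ contains such an $s$. Forward Harnack over a bounded time then gives a uniform lower bound. This repairs the pointwise hair-trigger estimate. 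The spreading bounds would still need a moving-frame version of the same argument, which is why the auxiliary system is the natural tool.
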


\begin{prop}[\protect{\cite[Proposition 6]{Griette-Matano-2025}}]\label{prop:right-left-speeds}
Let the assumption (i) or (ii) of Proposition~\ref{prop:symm-case} hold,
	along with the condition $\lambda_1^{per}>0$. Then $c^*_{R}=c^*_{L}>0$. In particular, if all the coefficients are spatially homogeneous, then $c^*_{R}=c^*_{L}>0$. 
\end{prop}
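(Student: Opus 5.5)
The plan is to reduce everything to the evenness of $k(\lambda)$ provided by Proposition~\ref{prop:symm-case}, and then read off both claims from the formula \eqref{eq:speed} of Theorem~\ref{thm:main-lindet}.

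\emph{Equality of the speeds.} Under assumption (i) or (ii), Proposition~\ref{prop:symm-case} gives $k(\lambda)=k(-\lambda)$ for all $\lambda\in\R$. Substituting this into \eqref{eq:speed} yields
\[
c^*_L=\min_{\lambda>0}\frac{k(-\lambda)}{\lambda}=\min_{\lambda>0}\frac{k(\lambda)}{\lambda}=c^*_R .
\]
The hypothesis $\lambda_1^{per}>0$ is exactly what Theorem~\ref{thm:main-lindet} requires for these formulas to hold. I would also make sure the minima are attained, in case the exact formulation matters. By \eqref{k-quadratic}, $k(\lambda)/\lambda\geq \sigma_{\min}\lambda+r_{\min}/\lambda\to+\infty$ as $\lambda\to+\infty$. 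Since $k(0)>0$ and $k$ is continuous, $k(\lambda)/\lambda\to+\infty$ as $\lambda\to0^+$. Hence the continuous function $\lambda\mapsto k(\lambda)/\lambda$ attains its minimum on $(0,\infty)$.

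\emph{Positivity.} By \eqref{k-even}, $\min_{\lambda\in\R}k(\lambda)=k(0)=\lambda_1^{per}>0$. This also follows directly from evenness and strict convexity (Proposition~\ref{prop:k(lambda)}\ref{item:k(lambda)concave}), since $k(\lambda)=\tfrac12\big(k(\lambda)+k(-\lambda)\big)\geq k(0)$. Thus $k(\lambda)>0$ for every $\lambda$, so $k(\lambda)/\lambda>0$ for every $\lambda>0$. Because the minimum is attained at some $\lambda^*>0$, we get $c^*_R=k(\lambda^*)/\lambda^*>0$. Alternatively, condition (b) of Theorem~\ref{thm:hairtrigger} holds, and that theorem gives (c) directly.

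\emph{Homogeneous case.} If all coefficients are constant, each coefficient is trivially an even function of $x$. Hence the symmetry hypothesis (ii) holds, interpreted as the evenness of every coefficient, including $r_v$ and $\mu_v$, which is clearly what is intended there. The previous two steps then apply.

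There is no serious obstacle here. The only point requiring care is attainment of the minimum, which is what gives strict positivity rather than merely nonnegativity of the infimum. That is handled by the coercivity argument above, or by simply citing the "$\min$" already written in \eqref{eq:speed}.
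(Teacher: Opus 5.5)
Your proof is correct and follows the intended route. Evenness of $k$ from Proposition~\ref{prop:symm-case}, inserted into \eqref{eq:speed}, gives $c^*_R=c^*_L$, and $\min_{\lambda}k(\lambda)=k(0)=\lambda_1^{per}>0$ (equivalently, (b)$\Rightarrow$(c) in Theorem~\ref{thm:hairtrigger}) gives positivity. The only difference is in the homogeneous case: the paper uses the explicit formula $k(\lambda)=\sigma\lambda^2+\lambda_A$, which yields $c^*_R=c^*_L=2\sqrt{\sigma\lambda_A}$, instead of invoking hypothesis (ii), and so avoids the typo $r_v(x)=r_u(-x)$ in (ii) that you correctly flagged.
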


%%%%%%%%%%%%%%
\subsection{Traveling waves}\label{ss:traveling-waves}

Now we are ready to state the main results of the present paper. 
Throughout the rest of this section, we assume, as in Theorem~\ref{thm:main-lindet}, that $\lambda_1^{per}>0$. This condition implies that $(u,v)=(0,0)$ is unstable under positive $L$-periodic perturbations. Without this condition, any solution of \eqref{eq:main-sys} with small nonnegative initial data decays to $0$ as $t\to\infty$ uniformly on $\R$.

First we recall the notion of traveling wave solutions under a spatially periodic environment. We distinguish the ``right traveling waves'' and ``left traveling waves''.

\begin{defn}[Traveling wave solutions]\label{def:TW}
	Let $c>0$ be given and $\big(u(t, x), v(t,x)\big)$ be an entire solution to \eqref{eq:main-sys}, {\it i.e.} a solution that is defined for all $t\in\R$ and $x\in\R$. We say that $\big(u(t, x), v(t, x)\big)$ is a \textit{right traveling wave} propagating at speed $c$ if it satisfies 
	\begin{equation}\label{eq:TW-propagating}
		u\left(t+\frac{L}{c}, x\right) = u(t, x-L), \; v\left(t+\frac{L}{c}, x\right) = v(t, x-L), \ \ \text{for all}\ \ (t,x)\in\R^2, 
	\end{equation}
	as well as the following asymptotics at $x=\pm\infty$:
	\begin{subequations}\label{eq:TW-limits}
	\begin{align}
	    \lim_{x\to+\infty}u(t, x)=0,\; \lim_{x\to+\infty}v(t, x)=0,\ \  \text{for all}\ \  t\in\R, \label{eq:TW-limits+inf} \\ 
		\liminf_{x\to-\infty}u(t, x)>0,\; \liminf_{x\to-\infty}v(t, x)>0,\ \  \text{for all}\ \  t\in\R. \label{eq:TW-limits-inf}
	\end{align}
	\end{subequations}
The {\it left traveling wave} is defined the same way by reversing the direction of the $x$-axis, namely
	\begin{equation}\label{eq:TW-propagating2}
		u\left(t+\frac{L}{c}, x\right) = u(t, x+L), \; v\left(t+\frac{L}{c}, x\right) = v(t, x+L), \ \ \text{for all}\ \ (t,x)\in\R^2, 
	\end{equation}
with reversed asymptotics at $x=\pm\infty$.
\end{defn}

Next we define the notion of right and left stationary wave solutions. Stationary waves, which we may also call ``stationary fronts'', are the natural extension of traveling waves to the case $c=0$. 
%%although they are stationary solutions of \eqref{eq:main-sys}, we keep this terminology for consistency with traveling waves. 

\begin{defn}[Stationary wave solutions]
	Let  $\big(u(t, x), v(t,x)\big)=\big(u(x), v(x)\big)$ be a stationary solution to \eqref{eq:main-sys}, {\it i.e.} a solution that is constant in time. We say that $\big(u(x), v(x)\big)$ is a \textit{right stationary wave} (or a \textit{right stationary front}) if it satisfies 
	 the  asymptotics \eqref{eq:TW-limits} at $x=\pm\infty$.
A {\it left stationary wave} (or a \textit{left stationary front}) is defined the same way by reversing the direction of the $x$-axis, namely, it satisfies the asymptotics
	\begin{subequations}\label{eq:left-stationary-limits}
	\begin{gather}
	    \lim_{x\to-\infty}u(x)=0,\; \lim_{x\to-\infty}v(x)=0,  \label{eq:stationary-limits+inf} \\ 
		\liminf_{x\to+\infty}u(x)>0,\; \liminf_{x\to+\infty}v(x)>0,\ \  \text{for all}\ \  t\in\R. 
	\end{gather}
	\end{subequations}
\end{defn}

Our first main result is concerned with the existence of traveling waves. Here, the symbols $c^*_{R}$ and $c^*_{L}$ denote, respectively, the right and left spreading speeds defined in Definition~\ref{def:spreading-speed}.

\begin{thm}[Existence of traveling waves]\label{thm:TW1}
	Let Assumption \ref{as:coop-comp} hold true and let $\lambda_1^{per}>0$.  Assume that  $c^*_{R}>0$. Then there exists a right traveling wave $\big(u(t, x), v(t, x)\big)$ for \eqref{eq:main-sys} with speed $c$ if and only if $c\geq c^*_{R}$. Similarly, if $c^*_{L}>0$, then there exists a left traveling wave $\big(u(t, x), v(t, x)\big)$ for \eqref{eq:main-sys} with speed $c$ if and only if $c\geq c^*_{L}$.
\end{thm}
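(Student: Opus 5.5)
We treat only right traveling waves; the left case follows by the reflection $x\mapsto -x$, which replaces $k(\lambda)$ by $k(-\lambda)$ and $c^*_R$ by $c^*_L$, as in \eqref{eq:speed}. The proof has two parts: nonexistence for $c<c^*_R$, then existence for $c\ge c^*_R$.

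\emph{Nonexistence for $c<c^*_R$.} Suppose $(u,v)$ is a right traveling wave with speed $c<c^*_R$. By \eqref{eq:TW-limits-inf} and the pulsating relation \eqref{eq:TW-propagating}, the data $(u(0,\cdot),v(0,\cdot))$ have positive liminf at $-\infty$. We bound it from below by a right front-like datum $(\underline u_0,\underline v_0)\le (u(0,\cdot),v(0,\cdot))$ supported in $(-\infty,K]$. The comparison principle fails for the full system, so we do not compare $(u,v)$ with the solution issued from $(\underline u_0,\underline v_0)$. Instead we apply the lower spreading estimate \eqref{right-spreading} directly to $(u,v)$: its initial datum is bounded and nonnegative, and the lower-bound mechanism of Theorem~\ref{thm:main-lindet} only uses positivity on a left half-line. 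I expect the proof in \cite{Griette-Matano-2025} to give this; if not, we use \eqref{right2a} of Theorem~\ref{thm:hairtrigger}, which holds for any nonnegative nontrivial solution once $c^*_R>0$, and apply it with speed $c'\in(c,c^*_R)$ to the solution shifted in time. Either way, $\min(u,v)(t,c't)\ge\eta/2$ for large $t$. On the other hand, \eqref{eq:TW-propagating} gives $u(nL/c,x)=u(0,x-nL)$. Taking $x=c't_n$ with $t_n=nL/c$, we get $x-nL=(c'/c-1)nL\to+\infty$, so \eqref{eq:TW-limits+inf} forces $u\to0$ there, a contradiction.

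\emph{Existence for $c>c^*_R$.} Write $u(t,x)=U(x-ct,x)$, with $U(s,x)$ $L$-periodic in $x$. We construct the wave as a limit of solutions to truncated problems, using a super-/sub-solution pair that survives the lack of comparison.
\begin{itemize}
\item For the supersolution: since $c>c^*_R$, there is $\lambda\in(0,\lambda^*)$ with $k(\lambda)=c\lambda$. The pair $\overline{u}=\min(\overline K,Ae^{-\lambda(x-ct)}\varphi_\lambda)$, and similarly $\overline v$ with $\psi_\lambda$, is a supersolution of the linearized cooperative system \eqref{eq:linearized}. Because the nonlinear terms $-\kappa(u+v)u$ are nonpositive, the linearized system dominates.
\item For the subsolution: take $\underline u=\max(0,e^{-\lambda(x-ct)}\varphi_\lambda-Be^{-(\lambda+\delta)(x-ct)}\varphi_{\lambda+\delta})$, which uses $k(\lambda+\delta)<c(\lambda+\delta)$ by strict convexity, and similarly for $\underline v$ with $\psi_\lambda$ and $\psi_{\lambda+\delta}$. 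The quadratic terms are absorbed for large $B$ because they are of order $e^{-2\lambda(x-ct)}$.
\end{itemize}
We then solve the Cauchy problem with data between the two, on the time-discrete map $t\mapsto t+L/c$ combined with the shift by $L$. We obtain a fixed point through a Schauder argument on the convex set $\{\underline w\le w\le\overline w\}$, applied to the truncated problem on $[-N,N]$ in the moving frame. This step needs an invariance property. The cooperative part near $0$ preserves the lower barrier, and the upper barrier is preserved because the linearized system dominates and $u+v\le\overline K$ by Proposition~\ref{prop:uniform-bound}. Both checks must be done separately for each component, rather than through a global comparison. Parabolic estimates then give compactness as $N\to\infty$. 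The resulting entire solution decays at $+\infty$ because it lies below the supersolution, and it is nontrivial because it lies above the subsolution. Its positivity at $-\infty$, as in \eqref{eq:TW-limits-inf}, follows from \eqref{right2a}: the solution is nontrivial, and the time-periodic structure transports the $\liminf\ge\eta$ behind the front.

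\emph{Critical case $c=c^*_R$.} Here $\lambda^*$ is a double root of $k(\lambda)-c\lambda$. We use the supersolution $\min\bigl(\overline K,(A+(x-ct))e^{-\lambda^*(x-ct)}\varphi^*+\partial_\lambda\text{-correction}\bigr)$, built from $\partial_\lambda$ of the eigenfunction family, which is positive for $x-ct$ large. The subsolution uses $(x-ct)e^{-\lambda^*(x-ct)}$ minus a term like $(x-ct)^{1/2}e^{-\lambda^*(x-ct)}$, as in Hamel's scalar construction. I expect this construction to be the main obstacle. It requires verifying the eigenfunction-derivative identities $\partial_\lambda$ of \eqref{eq:lambda-periodic-principal-eigen2} for the vector system, and ensuring positivity of both components on a common half-line. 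The remaining steps follow the non-critical case verbatim.
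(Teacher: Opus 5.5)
Your architecture matches the paper's: an exponential upper barrier, a difference-of-exponentials lower barrier, a Schauder fixed point for the period map on truncated problems, then a limit. Your nonexistence argument, via the lower spreading bound for data positive on a left half-line, is also what the paper does. Your direct lower comparison is fine provided the subsolution stays below the constant $K$ of \eqref{K-beta}, because the system for the differences is then cooperative; the paper routes this through \eqref{eq:auxiliary-below-Neumann} instead.

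\textbf{Gap 1: positivity behind the front.} There is a genuine gap in verifying \eqref{eq:TW-limits-inf}.
\begin{itemize}
\item You take positivity behind the front from \eqref{right2a}, claiming it holds for every nontrivial solution ``once $c^*_R>0$''. Theorem~\ref{thm:hairtrigger} gives \eqref{right2a} only under its equivalent conditions (a)--(c), i.e.\ $\min_\lambda k(\lambda)>0$, equivalently $c^*_R>0$ \emph{and} $c^*_L>0$.
\item Theorem~\ref{thm:TW1} assumes only $\lambda_1^{per}>0$ and $c^*_R>0$, so $c^*_L\le 0$ is allowed; the paper treats such one-sided situations explicitly, cf.\ Theorem~\ref{thm:TW1s}. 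There the hair-trigger result is unavailable.
\item Your invariant set $\{\underline w\le w\le\overline w\}$ is built on a single subsolution, so it only controls the solution near the front and gives no lower bound as $x\to-\infty$. The fallback in your nonexistence part has the same defect, though your primary argument there is fine.
\end{itemize}
The missing idea is to put all left $L$-shifts of the subsolution into the invariant set. The paper truncates to the half-line $x\ge -M+ct$, with a Neumann condition at the left end, and imposes $u_0(x)\ge\max_{0\le n\le\lfloor M/L\rfloor}\underline u(0,x+nL)$, and likewise for $v_0$. Since $\underline u(t,x+nL)=\underline u(t-nL/c,x)$ and each shift is negative at $x=-M+ct$, each is a lower barrier (Proposition~\ref{prop:lower-barrier}). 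After $M\to\infty$ the wave satisfies $u(0,x)\ge\sup_{n\ge0}\underline u(0,x+nL)\ge\min_{[A,A+L]}\underline u(0,\cdot)>0$ for all $x\le A+L$. This gives \eqref{eq:TW-limits-inf} with no hair-trigger hypothesis. It also explains why only the left end is truncated: on your $[-N,N]$ you would additionally need a right boundary condition compatible with both barriers.

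\textbf{Gap 2: the critical subsolution.} The critical case is left open: you name the subsolution as the main obstacle but do not construct it. The Hamel-type correction $-B\xi^{1/2}e^{-\lambda^*\xi}$, with $\xi=x-c^*_Rt$, is not adapted to the periodic vector operator as written. Applied to $\xi^{1/2}e^{-\lambda^*\xi}\big(\varphi^{\lambda^*},\psi^{\lambda^*}\big)$, the linear operator leaves $O(\xi^{-1/2}e^{-\lambda^*\xi})$ terms with sign-changing periodic coefficients. You would need correctors built from $\partial_\lambda$ and $\partial_\lambda^2$ of the eigenvector, plus a sign check of the $\xi^{-3/2}$ residual using $k''(\lambda^*)>0$.

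The paper avoids this entirely.
\begin{itemize}
\item Set $U=-\partial_\lambda\big(e^{-\lambda\xi}\varphi^\lambda\big)\big|_{\lambda=\lambda^*}$. This is an exact solution of the linearization because $k'(\lambda^*)=c^*_R$.
\item Take $\underline u^*=U+e^{-\nu\xi}\varphi^\nu-\omega e^{-\lambda^*\xi}\varphi^{\lambda^*}$ with $\nu\in(\lambda^*,2\lambda^*)$, and similarly for $\underline v^*$.
\item Since $k(\nu)>\nu c^*_R$, the positive term $e^{-\nu\xi}\varphi^\nu$ produces the strictly negative forcing $-(k(\nu)-\nu c^*_R)e^{-\nu\xi}\varphi^\nu$. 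Because $\nu<2\lambda^*$, this dominates the $O(\xi^2e^{-2\lambda^*\xi})$ quadratic terms.
\item The term $-\omega e^{-\lambda^*\xi}\varphi^{\lambda^*}$ makes both components negative on $0\le\xi\le\xi^*_\omega$.
\item Comparison is done on $x\ge\xi_0-nL+c^*_Rt$ for each shift, so the same shifted-barrier device gives positivity behind the critical front.
\end{itemize}
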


\begin{rem}
    The above theorem, together with Theorem~\ref{thm:main-lindet}, shows that the right (resp. left) spreading speeds for solutions of \eqref{eq:main-sys} with front-like initial data coincide with the minimal speed of the right (resp. left) traveling waves provided that $c^*_{R}>0$ (resp. $c^*_{L}>0$). Note also that if  $c^*_{R}>0$ and $c^*_{L}>0$, then these values coincide with the right and left spreading speeds for solutions with compactly supported initial data. 
\end{rem}

\begin{thm}[Existence of stationary and traveling waves]\label{thm:TW1s}
	Let Assumption \ref{as:coop-comp} hold true and let $\lambda_1^{per}>0$. Assume that  $c^*_R=0$. Then there exists a right stationary wave for \eqref{eq:main-sys}. Furthermore, there exists a traveling wave $\big(u(t, x), v(t, x)\big)$ with speed $c$ if and only if $c> c^*_{R}=0$. Similarly, if $c^*_L=0$, then there exists a right stationary wave for \eqref{eq:main-sys}. Furthermore, there exists a traveling wave $\big(u(t, x), v(t, x)\big)$ with speed $c$ if and only if $c> c^*_{L}=0$. 
\end{thm}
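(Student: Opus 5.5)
We treat the right direction; the left one follows by applying it to the reflected system $x\mapsto -x$, which swaps $k(\lambda)$ with $k(-\lambda)$ and so swaps $c^*_R$ with $c^*_L$. The theorem has three parts: (a) there is no right traveling wave with speed $c<0$ or $c=0$ when $c^*_R=0$; (b) there is a right traveling wave for every $c>0$; (c) there is a right stationary wave.

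\emph{Nonexistence.} Suppose a right traveling wave with speed $c>0$ existed. Its value at any fixed time is bounded and nonnegative, and by \eqref{eq:TW-limits-inf} it is bounded below by a positive constant on some half-line $(-\infty,-K]$. Take a right front-like datum lying below it: equal to a small constant on $(-\infty,-K]$ and $0$ elsewhere. The full system has no comparison principle, so we compare on the linear side instead. Since $u\ge0$, $v\ge 0$ and $u+v\le \overline{K}$ (Proposition~\ref{prop:uniform-bound}), the pair $(u,v)$ is a subsolution of a cooperative system with frozen coefficient $r_*(x)-\kappa_*(x)\overline{K}$. Applying the \emph{lower} spreading estimate of Theorem~\ref{thm:main-lindet} to the wave itself, whose initial trace is front-like from below, gives $\min(u,v)(t,ct')\ge\eta$ for every $c'<c^*_R=0$. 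This is what rules out $c<0$, but it does not rule out $c=0$. The real obstruction is that the statement only asks for $c>0$ in the definition of a traveling wave; so the content here is existence for all $c>0$, plus the statement that $c=0$ is realized only by a stationary wave. I therefore take the ``only if'' as immediate from the convention $c>0$ in Definition~\ref{def:TW}.

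\emph{Existence for $c>0=c^*_R$.} Since $c>c^*_R=\min_{\lambda>0}k(\lambda)/\lambda$, the convexity of $k$ (Proposition~\ref{prop:k(lambda)}) yields a $\lambda_c>0$ with $k(\lambda_c)=c\lambda_c$ and $k'(\lambda_c)<c$, and also a slightly larger $\lambda_c+\delta$ with $k(\lambda_c+\delta)<c(\lambda_c+\delta)$. We then run the noncritical construction from the proof of Theorem~\ref{thm:TW1}. The supersolution is $\min\{\overline{K}, e^{-\lambda_c(x-ct)}\varphi_{\lambda_c}\}$. The subsolution is $e^{-\lambda_c(x-ct)}\varphi_{\lambda_c}-Me^{-(\lambda_c+\delta)(x-ct)}\varphi_{\lambda_c+\delta}$, which works for the cooperative linearization because the quadratic terms are controlled by the extra decay. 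We solve on truncated domains in the moving periodic frame by a Schauder fixed point, using the cooperative structure only below the supersolution, exactly as in that proof, and pass to the limit. The positivity at $-\infty$ in \eqref{eq:TW-limits-inf} comes from the hair-trigger-type lower bound \eqref{right2a}-style argument with $\eta$; this needs $\lambda_1^{per}>0$ and uses $c>0$ to keep the wave nontrivial behind its front. Since none of these steps uses $c^*_R>0$ itself, only $c>c^*_R$, the argument carries over unchanged.

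\emph{Stationary wave (main obstacle).} Take traveling waves $(u_n,v_n)$ with speeds $c_n\downarrow0$, normalized by a shift so that $\max(u_n,v_n)(0,0)=\theta$ for a small fixed $\theta<\eta$; in the periodic setting this is a shift by a multiple of $L$ combined with a time shift. Parabolic estimates give convergence along a subsequence to an entire solution $(u,v)$. The time period $L/c_n\to\infty$, so the relation \eqref{eq:TW-propagating} alone does not force the limit to be stationary. Instead we use the monotonicity in time of the wave at fixed $x$, or we set up the construction directly in the variable $\xi=x-ct$ with a Lyapunov/energy estimate $\int\int c_n|\partial_\xi U_n|^2\lesssim$ bounded. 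Either route should give $\partial_t u=\partial_t v=0$ in the limit. Nondegeneracy follows from the normalization. For $\lim_{x\to+\infty}=0$ we use the uniform decay supplied by the supersolutions above, which are uniform as $c\to0$ provided $\lambda_{c_n}$ stays bounded; this holds because $k(\lambda)/\lambda\to 0$ only at $\lambda_0$ with $k(\lambda_0)=0$, and $k(\lambda_0)=0$ is exactly what $c^*_R=0$ means. Finally, $\liminf_{x\to-\infty}>0$ follows from \eqref{right2a}-type spreading lower bounds applied to the limit.
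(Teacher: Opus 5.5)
Your treatment of $c>0$ matches the paper. The ``only if'' part is immediate because Definition~\ref{def:TW} only allows $c>0$. Existence for $c>0=c^*_R$ is the supercritical construction of Theorem~\ref{thm:TW1}, which uses only $c>c^*_R$.

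One correction there: the positivity \eqref{eq:TW-limits-inf} must come from the lower barrier $\sup_n\underline{u}(0,x+nL)$, $\sup_n\underline{v}(0,x+nL)$, not from \eqref{right2a}. When $c^*_R=0$ one has $k(\lambda^*)=k'(\lambda^*)=0$, so $\min_{\R}k=0$, and by Theorem~\ref{thm:hairtrigger} the hair-trigger bound is unavailable.

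\textbf{The gap: stationarity of your limit.} Sending $c_n\downarrow 0$ gives an entire solution, but neither of your two mechanisms for making it time-independent exists here.
\begin{itemize}
\item Monotonicity in $t$ of the waves is precisely what is unknown for this system, because the profile can leave the cooperative zone. The paper obtains it only under the extra hypothesis \eqref{eq:r/kappa<mu} of Theorem~\ref{thm:TW3}.
\item An estimate like $c_n\iint|\partial_\xi U_n|^2\lesssim 1$ needs a variational structure that \eqref{eq:main-sys} lacks: the mutation coupling is not a gradient. Moreover, in a periodic medium a pulsating wave is not a function of $\xi=x-ct$ alone.
\end{itemize}

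\textbf{The asymptotics of the limit are also not secured.}
\begin{itemize}
\item At $+\infty$: normalizing $\max(u_n,v_n)(0,0)=\theta$ at a single point bounds the shift of the supersolutions $e^{-\lambda_{c_n}(x-c_nt)}\varphi^{\lambda_{c_n}}$ from one side only. The uniform decay can therefore be lost in the limit.
\item At $-\infty$: \eqref{right2a} is unavailable, as noted above. The supercritical lower barriers \eqref{eq:ul-u} also flatten out as $c_n\to0$: the admissible $\nu$ is squeezed onto $\lambda^*$, and their amplitude is $O(\nu-\lambda_{c_n})$.
\end{itemize}

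\textbf{The missing idea: avoid the limit in $c$.} When $c^*_R=0$, the critical barriers $(\overline{u}^*,\overline{v}^*)$ and $(\underline{u}^*,\underline{v}^*)$ of \eqref{eq:critical-super}--\eqref{eq:critical-sub} do not depend on $t$. Hence, by Proposition~\ref{prop:critical}, the set $E^*_M$ is invariant under the time-$\tau$ map $Q^M_\tau$ of \eqref{QM-tau} for \emph{every} $\tau>0$. The paper then argues as follows.
\begin{enumerate}
\item Schauder's theorem gives a $\tau$-periodic solution in $E^*_M$.
\item Taking $\tau=2^{-k}$ and extracting a limit yields a solution that is $2^{-j}$-periodic for every $j$, hence stationary.
\item Letting $M\to\infty$ gives a stationary solution in $E^*_\infty$.
\end{enumerate}
Its decay at $+\infty$ is inherited from $\overline{u}^*,\overline{v}^*$. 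Its positivity at $-\infty$ is inherited from $\sup_n\underline{u}^*(0,x+nL)$ and $\sup_n\underline{v}^*(0,x+nL)$. No monotonicity, energy estimate or hair-trigger bound is needed.
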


\begin{rem}
	If $c^*_R=0$, the propagation of any compactly supported initial data is blocked in the right direction. This is easily seen by using the exponential upper barriers \eqref{eq:upperu} with $\lambda=\lambda^*$ and $c=c^*_R$, which do not move in space,  A similar statement holds true in the left direction if $c^*_L=0$.
\end{rem}

The proof of Theorems \ref{thm:TW1} and \ref{thm:TW1s} involves a fixed-point argument at its core. More precisely, for each speed $c\geq c^*_{R}$, we first restrict the system \eqref{eq:main-sys} on the moving interval $-M+ct\leq x <+\infty$ and prove the existence of a traveling wave for this restricted system by applying the Schauder fixed-point theorem, then we let $M\to\infty$ to obtain a traveling wave of the original system.

In order to apply the fixed-point theorem, we construct upper and lower barriers for our system \eqref{eq:main-sys} that have the same designated decay rate as $x\to\infty$. This can be done despite the absence of the comparison principle for the entire system. The upper barrier is easy to construct, as it is simply a solution of the linearized system. 
The construction of the lower barrier is much more involved. %Inspired by the paper of Hamel \cite{Ham-08}, our method relies on
In the supercritical case $c>c^*_R$ it uses the difference between two exponential functions. This method is inspired by the paper of Hamel \cite{Ham-08} for scalar equations. However, unlike the case of scalar equations, simply taking the maximum between $(0, 0)$ and a sub-solution does not necessarily provide a lower barrier.  Therefore, we need more subtle estimates in the case of systems, particularly when one of the components becomes negative while the other is still positive.
In the critical case $c=c^*_R, c^*_L$ our construction of the lower barrier  is different from that of \cite{Ham-08}.

As mentioned earlier, our existence proof for the critical speed $c=c_R^*, c_L^*$ does not depend on the limiting argument $c\to c_R^*, c_L^*$. Instead, we prove the existence for the critical case by constructing upper and lower barriers that are different from those for the super-critical case.

Figure~\ref{fig:TW} gives a typical image of traveling waves for different parameter values. In all these examples, the intrinsic growth rates satisfy $r_u<r_v$ so that the natural front speed of $v$ is faster than that of $u$, while their carrying capacity (in the absence of the other species and mutation) 
satisfy $r_u/\kappa_u>r_v/\kappa_v$ so that $u$ will eventually dominate $v$ in the long run.

\begin{figure}[H]
    \centering
    \includegraphics[bb = 0 0 118 131]{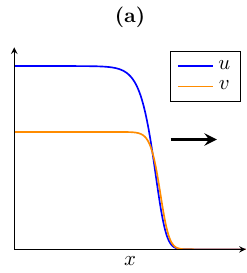} \hspace{1cm}
    \includegraphics[bb = 0 0 118 131]{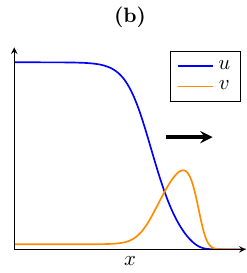}\hspace{1cm}
    \includegraphics[bb = 0 0 118 131]{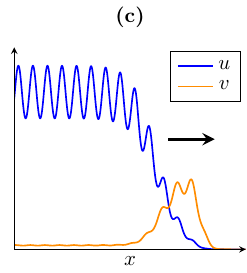}
    \caption{Profiles of traveling waves of \eqref{eq:main-sys} for different parameter values. {\bf (a)} Spatially homogeneous coefficients with large mutation rates $\mu_u$, $\mu_v$. In this case, the cooperative zone of system \eqref{eq:main-sys} is rather large, and the traveling wave lies entirely in this zone. As a result, both $u$ and $v$ have monotone profiles, just as in the case of scalar equations. {\bf (b)} Spatially homogeneous coefficients with small mutation rates $\mu_u$, $\mu_v$. In this case, a large part of the traveling wave profile lies outside the cooperative zone, and a hump appears on $v$. {\bf (c)} Spatially periodic case. 
The coefficients are the same as in (b), except $r_u(x)$ and $r_v(x)$, which have a cosine-like periodic fluctuation.}
    \label{fig:TW}
\end{figure}

The following theorem is concerned with the decay properties of \textit{any} traveling wave along the leading edge.  The proof of this result is rather involved and will be given in Section~\ref{s:decay}.

\begin{thm}[Decay rate of traveling waves]\label{thm:TW2}
	Let the assumptions of Theorem~\ref{thm:TW1} hold.
	\begin{enumerate}
		\item Let $(u, v)$ be  a right traveling wave solution for \eqref{eq:main-sys} with super-critical speed $c>c^*_R$, then there exists $\alpha>0$ such that the following statement holds. 
		    \begin{equation}\label{eq:cv-fronts-super}
				\lim_{x\to+\infty} \sup_{t\in [0, L/c]} \max\left(\left| u(t, x)e^{\lambda_c(x-ct)}-\alpha\varphi^{\lambda_c}(x)\right|,  \left|v(t, x)e^{\lambda_c(x-ct)}-\alpha\psi^{\lambda_c}(x)\right|\right) =0, 
			\end{equation}
			where $\lambda_c$ is the smallest positive root of the equation $k(\lambda)=\lambda c$ {and $(\varphi^{\lambda_c}, \psi^{\lambda_c})$ denotes the principal eigenvector of \eqref{eq:lambda-periodic-principal-eigen} with $\lambda=\lambda_c$.} 
		\item Let $(u, v)$ be  a  right traveling wave solution for \eqref{eq:main-sys} with critical speed $c=c^*_R$, then there exists $\alpha>0$ such that the following statement holds. 
		    \begin{equation}\label{eq:cv-fronts-critical}
				\lim_{x\to+\infty} \sup_{t\in [0, L/c]} \max\left(\left| u(t, x)\frac{e^{\lambda^*(x-c^*_Rt)}}{x-c^*_Rt}-\alpha \varphi^{\lambda^*}(x)\right|,  \left|v(t, x)\frac{e^{\lambda^*(x-c^*_Rt)}}{x-c^*_Rt}-\alpha \psi^{\lambda^*}(x)\right|\right) =0, 
			\end{equation}
			where $\lambda^*$ is the unique positive root of the equation $k(\lambda)=\lambda c^*_R$ {and $(\varphi^{\lambda_c}, \psi^{\lambda_c})$ denotes the principal eigenvector of \eqref{eq:lambda-periodic-principal-eigen} with $\lambda=\lambda^*$}. 
	\end{enumerate}
	Similar statements hold for left traveling waves, if we replace $x-ct$ by $x+ct$ and define $\lambda_c$ (resp. $\lambda^*$) as the largest negative solution of $k(\lambda_c)=-\lambda_c c$ (resp. $k(\lambda^*)=-\lambda^* c^*_L$).
\end{thm}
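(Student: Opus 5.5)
The plan is to reduce the problem to the leading edge, where the system is effectively the cooperative linear system \eqref{eq:linearized}. There we can transpose Hamel's scalar strategy \cite{Ham-08}: a priori exponential bounds, a Harnack-type normalization, and a Liouville-type classification of positive solutions of the linearized problem in the moving frame. Write $\xi = x-ct$ and regard $(u,v)$ as functions of $(t,x)$ satisfying the pulsating relation \eqref{eq:TW-propagating}.

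\textbf{Step 1: exponential decay.} First we show that $u,v$ decay at least exponentially as $x-ct\to+\infty$. Because the wave tends to $0$ by \eqref{eq:TW-limits+inf}, the nonlinear terms $\kappa_*(u+v)$ are small there. Hence $(u,v)$ is a supersolution of a slightly perturbed linear cooperative system, and also a subsolution of \eqref{eq:linearized} itself. Comparing with the upper barriers $e^{-\lambda(x-ct)}(\varphi^\lambda,\psi^\lambda)$ on half-lines $\xi\ge M$ gives an upper bound $\le Ce^{-\lambda(x-ct)}$ for $\lambda$ slightly below $\lambda_c$. This comparison uses the cooperative structure only in the region where $u+v$ is small, together with a periodicity/compactness argument to handle the boundary $\xi=M$.

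Conversely, the solution cannot decay faster than every exponential in an uncontrolled way. To see this, we use the Harnack inequality for cooperative systems with $\mu_u,\mu_v>0$, which couples $u$ and $v$ so that $u\asymp v$ locally. We also use the time-periodicity in the moving frame. Together these give uniform ratio bounds $u(t,x+h)/u(t,x)\in[C_h^{-1},C_h]$ along the edge.

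\textbf{Step 2: the limiting linear problem.} Set $w=(u,v)/\|(u,v)\|$ normalized at a point $\xi_n\to+\infty$, and pass to the limit using the Harnack bounds and parabolic estimates. This produces a positive entire solution of \eqref{eq:linearized} that is pulsating with speed $c$, defined on the whole line in $\xi$.

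Next comes the Liouville-type step. Every positive pulsating solution of the linear cooperative problem with speed $c$ should be a combination $\int e^{-\lambda\xi}(\varphi^\lambda,\psi^\lambda)\,d\nu(\lambda)$ over the roots of $k(\lambda)=\lambda c$. In the supercritical case these roots are $\lambda_c<\lambda_c'$; in the critical case there is the single double root $\lambda^*$, for which the additional generalized solution $\xi e^{-\lambda^*\xi}(\varphi,\psi)+e^{-\lambda^*\xi}(\tilde\varphi,\tilde\psi)$ appears. Here $(\tilde\varphi,\tilde\psi)$ is obtained by differentiating the eigenproblem \eqref{eq:lambda-periodic-principal-eigen} in $\lambda$, using the analyticity and convexity from Proposition~\ref{prop:k(lambda)}. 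I would prove this classification by a Fourier--Laplace transform in the pulsating variable, or by the sliding/ratio method of Hamel adapted to systems. In both cases the key ingredient is the simplicity of the principal eigenvalue from Krein--Rutman.

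\textbf{Step 3: the precise asymptotics.} Rather than passing to limits, I would work with the linear problem plus a source term. The discrepancy between \eqref{eq:main-sys} and \eqref{eq:linearized} is $-\kappa_*(u+v)(u,v)=O(e^{-2\lambda'\xi})$, which is exponentially smaller than the solution. Decomposing $(u,v)$ against the adjoint principal eigenfunctions, i.e.\ projecting onto $e^{-\lambda\xi}$ modes via a bilateral Laplace transform in $\xi$ on $\xi\ge M$, yields an expansion
\[
(u,v)=\alpha e^{-\lambda_c\xi}(\varphi^{\lambda_c},\psi^{\lambda_c})+o(e^{-\lambda_c\xi})
\]
in the supercritical case. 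In the critical case the same procedure gives $\alpha\,\xi\, e^{-\lambda^*\xi}(\varphi^{\lambda^*},\psi^{\lambda^*})+O(e^{-\lambda^*\xi})$. Positivity of $\alpha$ then has to be established separately.
\begin{itemize}
\item If the leading coefficient vanished in the supercritical case, the wave would decay like $e^{-\lambda_c'\xi}$. Excluding this is the main obstacle. Hamel's scalar argument uses a comparison/sliding between the wave and subsolutions built from $e^{-\lambda_c\xi}-Ke^{-(\lambda_c+\delta)\xi}$.
\item Here comparison holds only where the solution is small, so I would perform the comparison on a half-line $\xi\ge M$ inside the cooperative zone. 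The boundary values at $\xi=M$ would be controlled by the uniform positivity behind the front, which comes from \eqref{eq:TW-limits-inf} and the spreading result Theorem~\ref{thm:main-lindet}.
\item The idea is to show that a wave decaying like $e^{-\lambda_c'\xi}$ would force, via the Cauchy problem started from the wave, a propagation speed inconsistent with $c$. Equivalently, its decay would make it a subsolution-dominated profile traveling at the speed $k(\lambda_c')/\lambda_c' = c$ while being squeezed below the $\lambda_c$-barrier, which contradicts the strict convexity of $k$.
\end{itemize}
The critical case is analogous. There the obstacle is excluding pure $e^{-\lambda^*\xi}$ decay without the factor $\xi$. I would handle it by the same half-line comparison using the generalized eigenfunction above as a subsolution with a shift.

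Uniformity in $t\in[0,L/c]$ follows from the periodicity relation \eqref{eq:TW-propagating} and the parabolic estimates. The left-wave statement follows by the reflection $x\mapsto -x$, which replaces $k(\lambda)$ by $k(-\lambda)$.
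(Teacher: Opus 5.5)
Your proposal has a genuine gap at the two sharp a priori bounds on the leading edge, which is exactly where the paper does its real work.

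(a) The upper bound of Step 1 does not follow as described. For $\lambda<\lambda_c$ one has $k(\lambda)>\lambda c$, since $k(0)=\lambda_1^{per}>0$. So $e^{-\lambda(x-ct)}(\varphi^\lambda,\psi^\lambda)$ is a \emph{sub}solution of the linearized system, not an upper barrier. For admissible $\lambda\in[\lambda_c,\lambda_c']$, a comparison on $\{\xi\ge M\}$ needs the ordering at $\xi=+\infty$, which is precisely the unknown; the difficulty is at infinity, not at $\xi=M$. Everything in Step 3 rests on this bound, for instance the claim that the nonlinear term is $O(e^{-2\lambda'\xi})$ and hence negligible.

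(b) The same circularity defeats your subsolution-based proof of $\alpha>0$. To place $e^{-\lambda_c\xi}-Ke^{-(\lambda_c+\delta)\xi}$, or its $\xi e^{-\lambda^*\xi}$ analogue, below the wave on a half-line, you must already know that the wave dominates it as $\xi\to+\infty$. Your spreading-speed idea does work for $c>c^*_R$, provided Step 3 yields a remainder $O(e^{-\lambda''\xi})$ with $\lambda''>\lambda_c$. In that case, compare the wave, which is a global subsolution of the cooperative linear system, with a linear supersolution of rate $\lambda\in(\lambda_c,\min(\lambda'',\lambda_c'))$; it moves at speed $k(\lambda)/\lambda<c$. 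At $\lambda_c'$ itself the speed is exactly $c$, so your ``equivalently'' version gives nothing. At $c=c^*_R$, however, the argument is vacuous: a wave of size $O(e^{-\lambda^*\xi})$ spreads at speed at most $c^*_R=c$. So nothing rules out a vanishing coefficient of $\xi e^{-\lambda^*\xi}$.

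(c) Step 3 itself is only asserted. A Laplace-transform expansion for a pulsating front needs three things you do not address:
\begin{itemize}
\item showing that $cs$ is an eigenvalue of the $s$-twisted periodic operator for no $s\neq\lambda_c$ on $\mathrm{Re}\,s=\lambda_c$;
\item resolvent bounds to shift contours;
\item control of the half-line boundary terms.
\end{itemize}
The Liouville classification of Step 2 is neither proved nor actually used.

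The missing idea is the paper's comparison on \emph{finite} strips, in the variables $U(t,x)=u((t+x)/c,x)$, where $U(t,\cdot)$ is $L$-periodic and $t\to-\infty$ is the leading edge.

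\emph{Lower bound.} Suppose $U(t_n,\cdot)\le C_ne^{\lambda_ct_n}\varphi^{\lambda_c}$ with $C_n\to0$; the Harnack estimate (Lemma~\ref{lem:Harnack}) makes this uniform in $x$ and in both components. Then slide the \emph{linear supersolution} $\widetilde C_ne^{\lambda_ct}(\varphi^{\lambda_c},\psi^{\lambda_c})$ over $[t_n,T_n']$, whose back end exceeds the global bound on $U,V$. This is legitimate everywhere because the wave sub-solves the linear cooperative system. It forces $U(0,\cdot)\le\widetilde C_n\varphi^{\lambda_c}\to0$, a contradiction. At $c=c^*_R$ the same argument is run with the $\lambda$-derivative barrier $(\overline U^*,\overline V^*)$, shifted by $T_n\to+\infty$ via a Lambert-$W$ computation.

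\emph{Upper bound.} A too-large value at $t_n$ lets you sweep, in $\omega$, the subsolutions $e^{\lambda_ct}\varphi^{\lambda_c}-\omega e^{\nu t}\varphi^\nu$ of the auxiliary $\beta$-system below the wave for every shift. This contradicts $U\to0$ as $t\to-\infty$.

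\emph{Conclusion.} The same sweeping shows that $\inf_x\min\big(U/(e\varphi^\lambda),V/(e\psi^\lambda)\big)$ converges, where $e(t)=e^{\lambda_ct}$ or $-te^{\lambda^*t}$. A blow-up of these ratios along $t_n\to-\infty$ then gives a solution of a linear cooperative system that is $\ge(1,1)$ and touches $1$. By the strong maximum principle it is $\equiv(1,1)$. This replaces both your Liouville classification and your Laplace-transform expansion.
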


\begin{remark}
The assertion 1 of Theorem~\ref{thm:TW2} remains true even if $c^*_R=0$, and the proof is exactly the same. The assertion 2 is also true even if  $c^*_R=0$. More precisely, any right stationary wave satisfies \eqref{eq:cv-fronts-critical} with $c^*_R=0$. The proof of this latter result is easier than that of Theorem~\ref{thm:TW2} and follows from the center manifold theory for ODEs. We omit the proof here. The same is true of left traveling waves and left stationary waves if $c^*_L=0$.
\end{remark}

The upper and lower barriers that are used in the proof of Theorem~\ref{thm:TW1} satisfy the same decay estimates 
as above. In other words, in Theorem \ref{thm:TW1}, we constructed traveling waves that satisfy these decay properties. Theorem \ref{thm:TW2} asserts that all the traveling waves indeed have these decay properties, therefore they are universal properties.
%on the constructed traveling waves. However, Theorem \ref{thm:TW2} gives a stronger property as is concerns \textit{any} traveling wave, not just the ones  constructed here. 

Next theorem is concerned with the uniqueness of traveling waves. In the case of scalar equations, the uniqueness has been proved for KPP-type equations in a much more general framework \cite{Hamel-Roques-2011}.
At the moment, we do not know if uniqueness generally holds in our system for each speed $c$. However, under extra conditions on the coefficients, we can prove the uniqueness.

\begin{thm}[Uniqueness of traveling waves]\label{thm:TW3}
Let Assumption \ref{as:coop-comp} hold true and let $\lambda_1^{per}>0$. Assume that
\begin{equation}\label{eq:r/kappa<mu}
\frac{\kappa_{\mathrm{max}}}{\kappa_{\mathrm{min}}}\leq \frac{\mu_{\mathrm{min}}}{r_{\mathrm{max}}} ,
\end{equation}
where $r_{\mathrm{max}}$, $\kappa_{\mathrm{min}}$,  $\kappa_{\mathrm{max}}$, $\mu_{\mathrm{min}}$ are as in \eqref{eq:maxmincoeff}. Then, for each $c> c^*_R$ (resp. $c> c^*_L$), the right (resp. left) traveling wave with speed $c$ is unique up to a shift in time, and $t\mapsto \big(u(t, x), v(t,x)\big)$ is increasing. 
The same holds for $c=c^*_R$ (resp. $c=c^*_L$) if $c^*_R>0$ (resp. $c^*_L>0$).
\end{thm}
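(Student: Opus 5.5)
The plan is to show that, under \eqref{eq:r/kappa<mu}, every traveling wave lies in a region where \eqref{eq:main-sys} is cooperative. The comparison principle then becomes available, and we can run a sliding argument in the time variable in the spirit of Hamel--Roques, with Theorem~\ref{thm:TW2} providing the exact behavior at the leading edge.

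\emph{Step 1 (invariant cooperative region).} Let $(u,v)$ be a right traveling wave with speed $c\geq c^*_R$. The identity \eqref{eq:TW-propagating} together with the limits \eqref{eq:TW-limits} and parabolic estimates gives $\sup_{t,x}(u+v)<\infty$. Apply \eqref{uniform-bound} of Proposition~\ref{prop:uniform-bound} to the solution started at time $t_0$, and let $t_0\to-\infty$; this yields $u+v\leq \overline{K}$ everywhere. In the set $\{u,v\geq0,\ u+v\leq\overline K\}$ we have
\[
\partial_v\big[(r_u-\kappa_u(u+v))u+\mu_v v-\mu_u u\big]=\mu_v-\kappa_u u\geq \mu_{\min}-\kappa_{\max}\overline K=\mu_{\min}-\frac{\kappa_{\max}r_{\max}}{\kappa_{\min}}\geq 0
\]
by \eqref{eq:r/kappa<mu}, and the symmetric bound holds for the $v$-equation. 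Hence the system is cooperative on this set, which is invariant. The comparison principle (and the strong maximum principle, since $\mu>0$ couples the two equations) therefore holds between any two solutions taking values in it. The nonlinearity is also of KPP type, in the sense that $(u,v)\mapsto (f,g)/(\text{size})$ is decreasing along rays: for $\theta>1$, $\theta(u,v)$ is a supersolution whenever $(u,v)$ is a solution. With this, the standard sub-homogeneity argument gives uniqueness and global stability of the positive $L$-periodic steady state $(p,q)$ among positive solutions bounded by $\overline K$. Consequently every traveling wave converges to $(p,q)$ as $x-ct\to-\infty$.

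\emph{Step 2 (sliding in time).} Let $W_1=(u_1,v_1)$ and $W_2=(u_2,v_2)$ be two traveling waves with the same speed $c>c^*_R$. By Theorem~\ref{thm:TW2}, $W_i\sim\alpha_i e^{-\lambda_c(x-ct)}(\varphi^{\lambda_c},\psi^{\lambda_c})$. A time shift by $\tau$ multiplies $\alpha$ by $e^{\lambda_c c\tau}$, so we may normalize $\alpha_1=\alpha_2$. For $\tau>0$, the shifted wave $W_1^\tau(t,x)=W_1(t+\tau,x)$ has leading-edge coefficient strictly larger than $\alpha_2$, so $W_1^\tau> W_2$ for $x-ct\geq A$ with $A$ large. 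For $x-ct\leq -A$, both waves are close to $(p,q)$; here we compare $W_2$ with $W_1^\tau$ using the stability of $(p,q)$ (Step 1). For $\tau$ large, the middle zone is also ordered. Set $\tau^*=\inf\{\tau:\ W_1^{\tau'}\geq W_2 \text{ for all }\tau'\geq\tau\}$.

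We then show $\tau^*\leq 0$. If $\tau^*>0$, there are two cases. If the infimum of $W_1^{\tau^*}-W_2$ is attained at a finite point, the strong maximum principle for the cooperative linear system satisfied by the difference gives $W_1^{\tau^*}\equiv W_2$, contradicting the strict ordering at the leading edge. If it is attained only at infinity, the strict inequality at both ends lets us decrease $\tau$ slightly, contradicting minimality. Hence $W_1\geq W_2$, and by symmetry $W_1\equiv W_2$.

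The same argument applied with $W_2=W_1$ gives $W_1(t+\tau,\cdot)\geq W_1(t,\cdot)$ for every $\tau>0$. The strong maximum principle upgrades this to strict inequality, which gives monotonicity in $t$. In the critical case $c=c^*_R>0$ we repeat the argument with the asymptotics \eqref{eq:cv-fronts-critical}: a time shift $\tau>0$ changes $(x-ct)e^{-\lambda^*(x-ct)}$ by the factor $e^{\lambda^*c\tau}$ to leading order, so the same ordering at the edge holds.

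\emph{Main obstacle.} The delicate part is the tail regions in Step 2. Theorem~\ref{thm:TW2} only gives leading-order asymptotics, and the ratio $W_1^\tau/W_2$ tends to $e^{\lambda_c c\tau}>1$ only in the limit. To make the sliding rigorous, I would run a comparison on the half-line $x-ct\geq A$ using $\theta W_1^\tau$-type supersolutions, which the KPP structure from Step 1 provides. This must be done uniformly over the periodic time cell $t\in[0,L/c]$, and it requires a careful choice of $A$ independent of $\tau$ near $\tau^*$. In the back region, the corresponding difficulty is proving the uniform convergence to $(p,q)$ with a rate that survives the perturbation of the shift.
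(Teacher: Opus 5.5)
Your overall approach is the paper's: cooperativity on $\mathcal T=\{u,v\ge 0,\ u+v\le \overline K\}$ under \eqref{eq:r/kappa<mu}, convergence of every wave to the periodic steady state $(p,q)$ behind the front, and sliding in the time shift with Theorem~\ref{thm:TW2} fixing the critical shift at the leading edge.

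However, Step 2 has a genuine gap at the \emph{back} of the wave. For every $\tau$, $W_1^\tau-W_2\to 0$ as $x-ct\to-\infty$, because both waves converge to $(p,q)$. So the ``strict inequality at both ends'' you invoke to push $\tau$ below $\tau^*$ is false at $-\infty$. For $\tau$ slightly below $\tau^*$, the points where $W_1^\tau<W_2$ may run off to $x-ct\to-\infty$, and continuity in $\tau$ gives no control there. The same defect affects your initialization on $\{x-ct\le -A\}$ for large $\tau$. Global stability of $(p,q)$ for the Cauchy problem does not order two entire solutions on a half-line. A convergence rate to $(p,q)$, which is the remedy you propose, does not either.

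The missing idea, which the paper uses, is to apply the multiplicative KPP factor at the back rather than at the front. For $\gamma\in(0,1)$, $\gamma W_1$ is a strict subsolution. Since $W_2\to(p,q)$ while $\gamma W_1\le \gamma(p,q)$, there are $x_0$ and $\epsilon>0$ with $W_2(t,x)\ge \gamma W_1(t-t',x)+\epsilon$ for all $x\le x_0$ and all shifts $t'$ at once. This uses $W_i\le (p,q)$, which follows by touching $W_i$ from above with $\theta(p,q)$, $\theta>1$: at a contact point $u=\theta p$ one has $\mu_v-\kappa_u u\ge 0$, because $W_i$ takes values in $\mathcal T$.

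Now slide $\gamma W_1(\cdot-t',\cdot)$ in $t'$:
\begin{itemize}
\item contact at a finite point is excluded by the strong comparison principle combined with \eqref{eq:TW-propagating};
\item contact at the back is excluded by the $\epsilon$-gap;
\item so contact can only occur along $x_n\to+\infty$, where Theorem~\ref{thm:TW2} determines the critical shift $t_*(\gamma)$ explicitly from $\gamma,\alpha_1,\alpha_2$.
\end{itemize}
Letting $\gamma\to 1$ gives $W_1(\cdot-t_*,\cdot)\le W_2$. Exchanging the roles gives equality, and taking $W_1=W_2$ gives $t_*=0$, hence monotonicity in $t$.

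In your normalization you could equivalently slide $\theta W_1^\tau$ ($\theta>1$) above $W_2$. The $\theta W_1^\tau$ device you reserve for the front is exactly what is needed at the back, where $\theta(p,q)>(p,q)\ge W_2$ gives a margin uniform in $\tau$. A maximum principle on $\{x-ct\le -A\}$ based on the linear stability of $(p,q)$ would also work, but it is heavier.

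Conversely, the front needs no extra device. The convergence in Theorem~\ref{thm:TW2} is uniform over a time period. Hence for shifts in a compact subset of $(0,\infty)$, the ratio $W_1^\tau/W_2$ stays above $1$ on $\{x-ct\ge A\}$ with $A$ independent of the shift.
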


We end this subsection with a result on the behavior behind the front. 
The profile of traveling waves of \eqref{eq:main-sys} far behind the front is not well understood except that they are bounded from above and below by positive constants. The difficulty comes from the fact that \eqref{eq:main-sys} is not entirely a cooperative system, therefore the comparison principle holds only partially. However, if the coefficients are spatially homogeneous or rapidly oscillating, we can show that the profile of any traveling wave converges to the unique positive steady state as $x\to-\infty$.

We begin with the spatially homogeneous case. In this case, \eqref{eq:main-sys} is written in the form
\begin{equation}\label{eq:main-sys-homo}
	\begin{system}
		\relax &u_t=\sigma u_{xx}+\big(r_u-\kappa_u (u+v)\big)u+\mu_v v-\mu_u u, \\
		\relax &v_t=\sigma v_{xx}+\big(r_v-\kappa_v (u+v)\big)v+\mu_u u-\mu_v v,
	\end{system}
\end{equation}
and the linearized system \eqref{eq:linearized} is written in the following form:
\[
%%\begin{equation}\label{eq:linearized}
	\begin{system}
		\relax &u_t-\sigma u_{xx}=(r_u-\mu_u)u+\mu_v v, \\
		\relax &v_t-\sigma v_{xx}=(r_v-\mu_v)v+\mu_u u, 
	\end{system}
	\quad t>0, x\in\R.
%%\end{equation}
\]
The coefficient matrix of the right-hand side of the above system is given by
\begin{equation}\label{A}
A:=
\begin{pmatrix} r_u-\mu_u & \mu_v\\ \mu_u & r_v-\mu_v \end{pmatrix}.
\end{equation}
Let $\lambda_A$ denote the principal eigenvalue of $A$, that is, the larger of the two eigenvalues of $A$. Then it is shown in \cite[(2.29)]{Griette-Matano-2025} that 
\[
k(\lambda)=\sigma \lambda^2+\lambda_A.
\]
Consequently, $\lambda_A=\min_{\lambda\in\R}k(\lambda)=k(0)=\lambda_1^{per}$. In particular, if $\lambda_A>0$, then, by \eqref{eq:speed},
\begin{equation}\label{c*-homo}
c^*_R=c^*_L=2\sqrt{\sigma \lambda_A} >0.
\end{equation}
%%Under this assumption, the following theorem holds:

\begin{thm}[Behavior behind the front: the homogeneous case]\label{thm:asymptotic-behavior}
	Let  the coefficients of \eqref{eq:main-sys} are all constants and let Assumption \ref{as:coop-comp} hold true. Assume that $\lambda_A>0$, where $\lambda_A$ denotes the principal eigenvalue of the matrix $A$ in \eqref{A}. Then, there exists a unique positive steady state $(u^*, v^*)$, which is spatially homogeneous. Furthermore, any right traveling wave $\big(u(t, x), v(t, x)\big) =\big(U(x-ct), V(x-ct)\big)$ of \eqref{eq:main-sys} with speed $c\geq c^*_R$ converges to $(u^*, v^*)$ behind the front: 
	\begin{equation}\label{eq:convergence-homogeneous}
		\lim_{\xi\to -\infty} \big(U(\xi), V(\xi)\big) = \big(u^*, v^*\big). 
	\end{equation}
Similary results hold for left traveling waves.
\end{thm}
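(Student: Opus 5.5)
The plan is to first settle the steady state algebraically and then obtain convergence behind the front from a Lyapunov-type functional along the profile ODE.

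\emph{Existence and uniqueness of the constant steady state.} A homogeneous steady state $(u,v)$ satisfies $Ax$-type equations with $N=u+v$:
\[
(r_u-\mu_u-\kappa_u N)u+\mu_v v=0,\qquad (r_v-\mu_v-\kappa_v N)v+\mu_u u=0 .
\]
So $(u,v)$ is a positive eigenvector of the Metzler matrix $A-N\,\mathrm{diag}(\kappa_u,\kappa_v)$ with eigenvalue $0$. By Perron--Frobenius this means its principal eigenvalue $\Lambda(N)$ vanishes. Now $\Lambda$ is continuous and strictly decreasing in $N$, with $\Lambda(0)=\lambda_A>0$ and $\Lambda(N)\to-\infty$. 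Hence there is a unique $N^*>0$ with $\Lambda(N^*)=0$. Normalizing the eigenvector by $u+v=N^*$ gives a unique $(u^*,v^*)$.

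Uniqueness among \emph{all} positive bounded steady states, not only constant ones, will follow from the convergence argument below applied to stationary solutions. That step can be deferred.

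\emph{Convergence behind the front.} Write the wave as $(U,V)(\xi)$, $\xi=x-ct$. It solves
\[
\sigma U''+cU'+f(U,V)=0,\qquad \sigma V''+cV'+g(U,V)=0 .
\]
By Proposition \ref{prop:uniform-bound} and \eqref{eq:TW-limits-inf}, the profile is bounded above and below by positive constants on $(-\infty,0]$. Elliptic estimates then bound $U',V',U'',V''$. I will look for a Lyapunov function of the form
\[
E(\xi)=\int \Phi(U,V)\,d\xi\text{-type},\qquad \Phi(U,V)=a\Big(U-u^*-u^*\ln\frac{U}{u^*}\Big)+b\Big(V-v^*-v^*\ln\frac{V}{v^*}\Big),
\]
with weights $a,b>0$ to be chosen, in the spirit of the Volterra/Lotka functionals used for epidemic waves.

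Concretely, set
\[
W(\xi)=\Phi(U,V)+\text{(derivative correction terms)} ,
\]
for instance $a\frac{u^*\sigma U'}{cU}$-type terms. The aim is that
\[
\frac{d}{d\xi}W\le -c^{-1}\,D(U,V)\le 0,
\]
where $D$ is a nonnegative quantity vanishing only at $(u^*,v^*)$. To compute $D$, use the equilibrium identities $r_u-\mu_u-\kappa_u N^*=-\mu_v v^*/u^*$ and its $v$-analogue. The reaction part then splits into two pieces. The first is a competition piece, $-(U+V-N^*)\big(a\kappa_u(U-u^*)+b\kappa_v(V-v^*)\big)$. The second is a mutation piece, which is a nonpositive sum of terms like $\mu_v v^*\big(2-\frac{U v^*}{u^*V}-\frac{u^*V}{Uv^*}\big)$ once $a\mu_v v^*=b\mu_u u^*$.

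The competition piece is the problem. It is nonpositive only if $a\kappa_u=b\kappa_v$, which conflicts with the mutation choice of $a,b$. This is the main obstacle. I would first try to absorb the cross term using the strictly negative mutation piece together with the freedom in $a/b$. Failing that, I would use a different functional based on $\max(U/u^*,V/v^*)$ and $\min(U/u^*,V/v^*)$, which is sliding-type rather than an integral.

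The diffusion contributions are handled via convexity of $\Phi$: they give $-\sigma\Phi''|(U',V')|^2\le0$ terms plus a boundary-type derivative that is bounded. Once monotonicity of $W$ is established on $(-\infty,0]$, $W$ is bounded and therefore has a limit as $\xi\to-\infty$. This yields $\int_{-\infty}^0 D\,d\xi<\infty$.

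The remaining argument is an omega-limit argument. Take a sequence of shifts $\xi_n\to-\infty$. By compactness, the translates converge locally to an entire solution $(\tilde U,\tilde V)$ along which $D\equiv0$, which forces $(\tilde U,\tilde V)\equiv(u^*,v^*)$. Since the sequence was arbitrary, \eqref{eq:convergence-homogeneous} follows.

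If the Lyapunov route fails, the fallback uses the cooperative structure on the invariant region near the equilibrium. I would define $\overline m=\limsup_{\xi\to-\infty}\max(U/u^*,V/v^*)$ and $\underline m=\liminf_{\xi\to-\infty}\min(U/u^*,V/v^*)$. Evaluating the equations at limiting extremal entire solutions should show that $\overline m\le1\le\underline m$. That argument works only if the competition terms have the right sign at such extrema, and checking this sign is where the difficulty again concentrates.
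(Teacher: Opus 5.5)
\textbf{There is a genuine gap in the convergence step.} Your Perron--Frobenius argument for the constant equilibrium is fine. The ``monotone functional along the profile plus $\omega$-limit'' framework would also be standard \emph{once} you have a convex $\Phi$ with $\Phi_U f+\Phi_V g\le -D$ on the whole range of the wave. But that inequality is the entire difficulty, and your proposal does not supply it. With $x=U/u^*$, $y=V/v^*$ and $a\mu_v v^*=b\mu_u u^*=:m$, the reaction contribution is
\[
-\frac{m(x-y)^2}{xy}-(N-N^*)\big(a\kappa_u(U-u^*)+b\kappa_v(V-v^*)\big).
\]
The first term vanishes on the whole diagonal $x=y$. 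The second is an indefinite quadratic form unless $a\kappa_u=b\kappa_v$. Both terms scale with $a$, so whether the first absorbs the second is a condition on the coefficients. Already at quadratic order at $(u^*,v^*)$ it reads $4mN^*(\alpha+\beta)>(u^*\beta-v^*\alpha)^2$, with $\alpha=a\kappa_u u^*$ and $\beta=b\kappa_v v^*$. This fails, for example, for $\kappa_u=\kappa_v$, $\mu_u=\mu_v\to0$ and $\tfrac34 r_u<r_v<r_u$.

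Moving $a/b$ away from the mutation-balanced value makes the mutation term indefinite as well, and at best gives a \emph{local} Lyapunov function. Behind the front you only know that the profile stays between two positive constants, so you need a global inequality. Your fallback breaks at the same place. At a point where $x=\max(x,y)=\overline m>1$,
\[
\frac{f}{u^*}=-(x-y)\,\frac{v^*}{u^*}\big(\mu_v-\kappa_u U\big)-\kappa_u N^*x(x-1).
\]
So the desired sign needs $\kappa_u U\le\mu_v$, and $\kappa_v V\le\mu_u$ at the other extremum. That is cooperativity on the range of the wave, which is what the extra hypothesis \eqref{eq:r/kappa<mu} of Theorem~\ref{thm:TW3} is designed to guarantee and which is not assumed here.

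\textbf{How the paper avoids this.} It does not analyze the profile at all. A traveling wave restricted to $t\ge0$ is simply a nonnegative, nontrivial, bounded solution of the Cauchy problem for \eqref{eq:main-sys-homo}. Theorem~2.4 of \cite{Griette-Matano-2025} gives $\sup_{|x|\le c't}\max\big(|u-u^*|,|v-v^*|\big)\to0$ for every $0<c'<c^*_R$. Evaluating at $x=0$ gives $(U,V)(-ct)\to(u^*,v^*)$ as $t\to\infty$, which is exactly \eqref{eq:convergence-homogeneous}. Applying the same result to any positive bounded steady state shows it coincides with $(u^*,v^*)$, which settles the uniqueness you deferred. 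The missing idea is to reduce to this already-established convergence theorem, rather than trying to rebuild a global Lyapunov structure that you have not been able to produce.
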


Next we consider the case where the coefficients are rapidly oscillating. For each $\ep>0$, let the coefficients of \eqref{eq:main-sys} be given in the following form:
\begin{align*}
		r_u^\ep(x) &= r^1_u\left(\frac{x}{\ep}\right), & r_v^\ep(x) &= r^1_v\left(\frac{x}{\ep}\right), & \kappa_u^\ep(x) &= \kappa^1_u\left(\frac{x}{\ep}\right), & \kappa_v^\ep(x) &= \kappa^1_v\left(\frac{x}{\ep}\right),\\
		\mu_u^\ep(x) &= \mu^1_u\left(\frac{x}{\ep}\right), & \mu_v^\ep(x) &= \mu^1_v\left(\frac{x}{\ep}\right), & \sigma(x) &= \sigma^1\left(\frac{x}{\ep}\right),
\end{align*}
where $r^1_u(x)$, $r^1_v(x)$, $\kappa^1_u(x)$, $\kappa^1_v(x)$, $\mu^1_u(x)$, $\mu^1_v(x)$, $\sigma^1(x)$ are 1-periodic functions on $\R$ satisfying Assumption \ref{as:coop-comp}, and we define
\begin{align*}
		\overline{r_u} &= \int_0^1 r^1_u(x)dx, & \overline{r_v} &= \int_0^1 r^1_v(x)dx, & \overline{\kappa_u} &= \int_0^1 \kappa^1_u(x)dx, & \overline{\kappa_v} &= \int_0^1\kappa^1_v(x)dx,\\
		\overline{\mu_u} &= \int_0^1 \mu^1_u(x)dx, & \overline{\mu_v} &= \int_0^1\mu^1_v(x)dx), & \bar{\sigma}^H &= \left(\int_0^1 \frac{1}{\sigma^1(x)}dx\right)^{-1}.
\end{align*}
Then the system \eqref{eq:main-sys} takes the form
\begin{equation}\label{eq:main-sys-ep}
	\begin{system}
		\relax &u_t=\big(\sigma^\ep(x) u_{x}\big)_x+\big(r_u^\ep(x)-\kappa_u^\ep(x)(u+v)\big)u+\mu_v^\ep(x)v-\mu_u(x)u,\\
		\relax &v_t=\big(\sigma^\ep(x) v_{x}\big)_x+\big(r_v^\ep(x)-\kappa_v^\ep(x)(u+v)\big)v+\mu_u^\ep(x)u-\mu_v^\ep(x)v,
	\end{system}
\end{equation}
and the formal homogenization limit of \eqref{eq:main-sys-ep} as $\ep\to 0$ is given by
\begin{equation}\label{eq:homogenized}
	\begin{system}
		\relax &u_t=\bar{\sigma}^H u_{xx}+\big(\overline{r_u}-\overline{\kappa_u}(u+v)\big)u+\overline{\mu_v}v-\overline{\mu_u}u,\\
		\relax &v_t=\bar{\sigma}^H v_{xx}+\big(\overline{r_v}-\overline{\kappa_v}(u+v)\big)v+\overline{\mu_u}u-\overline{\mu_v}v.
	\end{system}
\end{equation}
As in \eqref{A}, we define the coefficient matrix $\overline{A}$ for the linearized system of \eqref{eq:homogenized} by
\begin{equation}\label{A-bar}
\overline{A}:=
\begin{pmatrix} \overline{r_u}-\overline{\mu_u} & \overline{\mu_v}\\ \overline{\mu_u} & \overline{r_v}-\overline{\mu_v} \end{pmatrix}
\end{equation}
and let $\lambda_{\overline{A}}$ denote the principal eigenvalue of $\overline{A}$. If $\lambda_{\overline{A}}>0$, then, by \eqref{c*-homo}, the spreading speeds for the homogenized system \eqref{eq:homogenized} are given by
\begin{equation}\label{c*-homo2}
\overline{c^*_R}=\overline{c^*_L}=2\sqrt{\bar{\sigma}^H \lambda_{\overline{A}}} >0.
\end{equation}

Using Theorem 2.5 of our previous paper \cite{Griette-Matano-2025}, we can show the following:

\begin{thm}[Behavior behind the front: the rapidly oscillating case]\label{thm:asymptotic-behavior2}
	Let  Assumption \ref{as:coop-comp} hold true for the system \eqref{eq:main-sys-ep}, and assume that $\lambda_{\overline{A}}>0$. Let $c^*_{\ep,R}$, $c^*_{\ep,L}$ denote, respectively, the right and the left spreading speeds for \eqref{eq:main-sys-ep}. Then there exists $\bar{\ep}>0$ such that
	\begin{equation}\label{eq:c*-ep}
	c^*_{\ep,R}>0,\ c^*_{\ep,L}>0 \ \ (\hbox{for}\ \  0<\ep<\bar{\ep}), \quad\ 
	\lim_{\ep\to 0}c^*_{\ep,R}=\lim_{\ep\to 0}c^*_{\ep,L}=\overline{c^*_R}\  (=\overline{c^*_L}) =2\sqrt{\bar{\sigma}^H \lambda_{\overline{A}}}
	\end{equation}
	and that, for each $0<\ep<\bar{\ep}$, system \eqref{eq:main-sys-ep} has a unique positive bounded steady state $(u^*_\ep(x),v^*_\ep(x))$, which is $\ep$-periodic,  and any right traveling wave $\big(u(t, x), v(t, x)\big)$ of \eqref{eq:main-sys-ep} with speed $c\geq c^*_{\ep, R}$ converges to $(u^*_\ep(x),v^*_\ep(x))$ behind the front,  in the sense that
	\begin{equation}\label{eq:convergence-rapidosc}
		{\lim_{x\to -\infty}} \big(|u(t, x)-u^*_\ep(x)|+|v(t, x)-v^*_\ep(x)|\big) = 0, \quad \,^\forall t\in\mathbb{R}.
	\end{equation}
	Similar results hold for left traveling waves.
\end{thm}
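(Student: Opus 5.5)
We argue in three steps: the speeds converge, a unique positive steady state exists, and every traveling wave converges to it behind the front.

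\textbf{Step 1: the speeds.} By the homogenization result of \cite[Theorem 2.5]{Griette-Matano-2025}, the $\lambda$-periodic principal eigenvalue $k_\ep(\lambda)$ of the linearization of \eqref{eq:main-sys-ep} converges, locally uniformly in $\lambda$, to $\bar\sigma^H\lambda^2+\lambda_{\overline A}$ as $\ep\to0$. Since $\lambda_{\overline A}>0$, we get $\min_\lambda k_\ep(\lambda)>0$ for small $\ep$. The bounds \eqref{k-quadratic} give uniform quadratic growth of $k_\ep$, so the minimizers in \eqref{eq:speed} stay in a compact subset of $(0,\infty)$ (respectively $(-\infty,0)$). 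Passing to the limit in \eqref{eq:speed} then yields \eqref{eq:c*-ep}. Positivity of both speeds follows from Theorem~\ref{thm:hairtrigger}, which also gives the uniform lower bound $\eta$ for every nontrivial solution.

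\textbf{Step 2: the steady state.} For the limit system \eqref{eq:main-sys-homo} with the averaged coefficients, the positive steady state $(\bar u^*,\bar v^*)$ is unique by Theorem~\ref{thm:asymptotic-behavior}, and it is linearly stable. I expect linear stability to follow from the ODE phase-plane analysis for the constant system, which has a unique interior equilibrium that attracts all positive data. For the $\ep$-problem, the natural route is a perturbation/homogenization argument. First, every positive bounded steady state $(u_\ep,v_\ep)$ satisfies $\eta\le u_\ep,v_\ep\le\overline K$ (Proposition~\ref{prop:uniform-bound} and Theorem~\ref{thm:hairtrigger}). Next, by homogenization compactness (two-scale convergence, or oscillating test functions with correctors), any such family converges uniformly to a bounded positive steady state of \eqref{eq:homogenized}. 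A Liouville-type statement, namely that the only bounded entire solutions of the homogenized system bounded below by $\eta$ are $(\bar u^*,\bar v^*)$, then identifies the limit. I would prove that Liouville statement with a Lyapunov function of the form $\sum a_i(w_i-w_i^*\log w_i)$. Finally, the implicit function theorem in a corrector-adapted space gives existence and uniqueness of an $\ep$-periodic steady state $(u^*_\ep,v^*_\ep)$ near $(\bar u^*,\bar v^*)$ for small $\ep$. Uniqueness among all bounded positive steady states follows from the same compactness combined with local uniqueness.

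\textbf{Step 3: convergence behind the front.} Let $(u,v)$ be a right traveling wave. Take any sequence $x_n\to-\infty$, shift by multiples of $\ep$, and use the time-periodicity relation \eqref{eq:TW-propagating}. Parabolic estimates give a limit entire solution $(\tilde u,\tilde v)$ of \eqref{eq:main-sys-ep}, defined for all $t\in\R$, with $\eta\le \tilde u,\tilde v\le\overline K$. It then suffices to prove an $\ep$-Liouville theorem: every entire solution confined to $[\eta,\overline K]^2$ equals $(u^*_\ep,v^*_\ep)$. This is the main obstacle, because no comparison principle is available. My first attempt is a contradiction argument with $\ep_n\to0$. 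The limit solutions homogenize to an entire solution of \eqref{eq:homogenized}, which the Lyapunov argument forces to equal $(\bar u^*,\bar v^*)$. Hence for small $\ep$ every such entire solution lies in a small neighbourhood of $(u^*_\ep,v^*_\ep)$. There, exponential linear stability (inherited from the homogenized linearization via the eigenvalue convergence) makes $(u^*_\ep,v^*_\ep)$ the only entire solution in that neighbourhood: a trajectory started at time $-T$ stays in the neighbourhood and contracts by a factor $e^{-\delta T}$, and we let $T\to\infty$. This gives \eqref{eq:convergence-rapidosc}. Left traveling waves are handled symmetrically.
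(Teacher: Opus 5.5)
Your top-level reduction in Step 3 is correct, and it is a genuinely different route from the paper's. Using $u(t+n\ep/c,x)=u(t,x-n\ep)$, the limits of the wave as $x\to-\infty$ are $\omega$-limit points of its trajectory. These are entire solutions with values in $[\eta,\overline{K}]^2$, so a Liouville property for such entire solutions suffices.

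The paper never needs a Liouville statement. It applies Theorem 2.5 of \cite{Griette-Matano-2025} to the wave itself, regarded as a Cauchy solution from $t=0$; that theorem says every nonnegative nontrivial Cauchy solution converges to $(u^*_\ep,v^*_\ep)$ uniformly on $-c_1t\le x\le c_2t$. This gives convergence on $[0,\ep]$ as $t\to+\infty$, and the pulsating relation transfers it to $x\to-\infty$. The claim \eqref{eq:c*-ep} and the steady state are quoted from Lemmas 18 and 21 there. In particular, Theorem 2.5 is this Cauchy-problem convergence result, not the eigenvalue homogenization you attribute to it.

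The Liouville property your route needs is exactly Lemma~\ref{lem:rapid-osc-unique}: for small $\ep$, the only entire solution bounded above and below by positive constants is $(u^*_\ep,v^*_\ep)$. Quoting it closes your Step 3 at once.

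As written, however, you try to prove that Liouville property, and Step 2, from scratch, and two load-bearing steps are not justified.

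First, the Lyapunov functional. Write the averaged coefficients without bars, and set $S=u+v$, $X=u/\bar u^*$, $Y=v/\bar v^*$. Along the averaged kinetics, the derivative of $a(u-\bar u^*\log u)+b(v-\bar v^*\log v)$ is
\[
-(S-\bar u^*-\bar v^*)\big(a\kappa_u(u-\bar u^*)+b\kappa_v(v-\bar v^*)\big)+(X-Y)\big(a\mu_v\bar v^*(X^{-1}-1)-b\mu_u\bar u^*(Y^{-1}-1)\big).
\]
The first term is nonpositive for all $(u,v)$ only if $a\kappa_u=b\kappa_v$. The mutation term is nonpositive for all $(u,v)$ only if $a\mu_v\bar v^*=b\mu_u\bar u^*$. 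Both hold only in the special case $\kappa_u\mu_u\bar u^*=\kappa_v\mu_v\bar v^*$. You would have to exhibit weights making the sum nonpositive, and zero only at the equilibrium, on all of $[\eta,\overline{K}]^2$ with $\eta$ possibly small. This is exactly where the hybrid structure bites, and you have not done it.

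Second, the uniform exponential stability of $(u^*_\ep,v^*_\ep)$ is not ``inherited via the eigenvalue convergence.'' The convergence of $k_\ep(\lambda)$ concerns the cooperative linearization at $(0,0)$. The linearization at the positive state has off-diagonal coefficients $\mu^\ep_v-\kappa^\ep_u u^*_\ep$ and $\mu^\ep_u-\kappa^\ep_v v^*_\ep$, which can be negative (for instance when mutation rates are small). So no principal-eigenvalue theory applies. You would need uniform-in-$\ep$ control of the full Bloch spectrum of a non-self-adjoint, non-cooperative periodic operator, together with an $L^\infty(\R)$ semigroup bound. The same uniform invertibility is silently required by your implicit-function step in Step 2, since the $\ep$-linearizations do not converge in operator norm.

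A minor point: linear stability of $(\bar u^*,\bar v^*)$ does hold, but it follows from the Jacobian having negative trace and positive determinant, not from global attraction.
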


%%%%%%%%%%%%%%
\subsection{Asymmetric propagation speed and a singular limit problem}\label{ss:asymmetric}

Here we give an example for which the right and left
speeds $c^*_R, c^*_L$ 
are different. 
As we mentioned before, such a situation never occurs in scalar KPP type equations, and also not in system \eqref{eq:main-sys} if the coefficients satisfy certain symmetry conditions (Proposition~\ref{prop:right-left-speeds}). 

We have shown  such an example in a preprint~\cite{Griette-Matano-2021}, which, however, heavily relies on the fact that the diffusion coefficients of the two species are different, and cannot be adapted to the present situation. The example we present here is based on a totally different idea.
We construct the example through an analysis of a certain singular limit problem.

We consider the following system which is a special case of \eqref{eq:main-sys}:
\begin{equation}\label{eq:RD-asym}
\begin{cases}
    \ \partial_t u =\partial_{xx}u + u\big(1-\kappa(u+v)\big) - \mu_1(x)u +\mu_2(x) v\\[2pt]
    \ \partial_t v =\partial_{xx}v + v\big(r-\kappa(u +v)\big) + \mu_1(x)u -\mu_2(x) v,
\end{cases}
\end{equation}
where $\kappa>0, r>0$ are positive constants, and $\mu_1(x), {\mu_2}(x)$ are nonnegative $L$-periodic functions that represent mutation rates and are given by:
\begin{equation}\label{eq:mu-12}
\mu_1(x)=
\begin{cases} \dfrac{\mu}{\ep\delta} & (0\leq x \leq \ep)\\[6pt]
0 & (\ep<x<L)
\end{cases}, \quad
\mu_2(x)=
\begin{cases} \dfrac{\mu}{\ep\delta} & (\delta-\ep\leq x \leq \delta)\\[6pt]
0 & (0\leq x<\delta-\ep,\,\delta<x<L),
\end{cases}
\end{equation}
where $\mu>0, \delta>0$ and $0<\ep<\frac{\delta}{2}$ are fixed constants. Strictly speaking, this parameter choice does not satisfy assumption \ref{as:coop-comp} because $\mu_u(x)$ and $\mu_v(x)$ take the value zero, however, we can recover assumption  \ref{as:coop-comp} by mollifying $\mu_u$ and $\mu_v$ with any positive mollifier, for instance the Gaussian kernel. 
We plot a schematic representation of $\mu_1$ and $\mu_2$ in a unit periodicity cell in Figure \ref{fig:mu-asym}.
\begin{figure}[H]
    \centering
	\includegraphics[bb = 0 0 315 117]{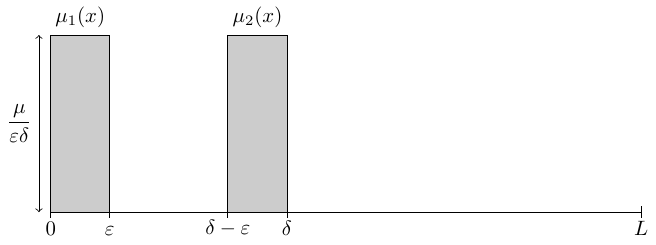}
	\caption{Schematic representation of the shape of the functions $\mu_1(x)$ and $\mu_2(x)$ over one period.}\label{fig:mu-asym}
\end{figure}

Our main result in this subsection is the following:

\begin{theorem}\label{thm:asymmetric-speeds}
	Assume $r>1$ and let $c^*_R(\delta, \ep)$ and $c^*_L(\delta, \ep)$ denote the right and the left spreading speeds of \eqref{eq:RD-asym}--\eqref{eq:mu-12}, which are given by \eqref{eq:speed}. Then, for all sufficiently small $\ep>0, \delta>0$ with $\ep=\mathcal{O}(\delta^2)$, the following inequality holds:
	\begin{equation}\label{eq:speed-asym}
		 c^*_R(\ep, \delta)<c^*_L(\ep, \delta) .
	\end{equation}
\end{theorem}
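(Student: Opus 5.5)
By \eqref{eq:speed}, $c^*_R=\min_{\lambda>0}k(\lambda)/\lambda$ and $c^*_L=\min_{\lambda>0}k(-\lambda)/\lambda$. It therefore suffices to show that, in the stated regime, $k(\lambda)<k(-\lambda)$ for every $\lambda>0$ in a neighbourhood of the minimizers. The plan is to compute the limit of $k(\lambda)$ as $\ep\to0$ and then $\delta\to0$, up to the first order in $\delta$ at which an odd-in-$\lambda$ term appears. Two ingredients drive the asymmetry. First, the $v$-equation has the larger growth rate $r>1$. Second, mutation $u\to v$ happens at $x=0$ and mutation $v\to u$ happens at $x=\delta$, just to the right of it. A right-moving exponential profile $e^{-\lambda x}$ sees these two events in a different order from a left-moving one.

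\textbf{Step 1: the limit $\ep\to0$.} Work with the eigenproblem \eqref{eq:lambda-periodic-principal-eigen} written for $\Phi=e^{-\lambda x}\varphi$, $\Psi=e^{-\lambda x}\psi$, i.e.\ $\lambda$-quasiperiodic solutions of the stationary linearized system. As $\ep\to0$, $\mu_1$ concentrates to a Dirac mass $\frac{\mu}{\delta}\delta_0$, but it acts on a layer of width $\ep$ with rate $\mu/(\ep\delta)$. Inside that layer the diffusion is negligible at leading order, because the layer is thin and $\Phi$ is $C^1$. The reaction ODE $\Phi'=-\frac{\mu}{\ep\delta}\Phi$, $\Psi'=+\frac{\mu}{\ep\delta}\Phi$, run over the layer, therefore moves the fraction $1-e^{-\mu/\delta}$ of $u$ into $v$. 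So the limit problem is not a Dirac-potential problem. It is a free problem on $(0,\delta)\cup(\delta,L)$ with transmission conditions: $\Phi,\Psi$ are continuous up to the layer transfer, and the derivatives jump. Concretely, I would integrate the equations over the layer and show that $\Phi'+\Psi'$ is continuous while $\Phi$ and $\Psi$ individually undergo the "instantaneous mutation" map $T_1=\begin{pmatrix} e^{-\mu/\delta}&0\\ 1-e^{-\mu/\delta}&1\end{pmatrix}$ acting on values. The same holds at $x=\delta$ with the map $T_2$, which transfers $v$ to $u$. The assumption $\ep=\mathcal{O}(\delta^2)$ is meant to make the errors of this reduction (diffusion inside the layer, of relative size $\ep/\delta$) negligible compared with the $\delta$-order effects computed below. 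I would justify convergence of $k$ via the minimax characterization \eqref{eq:minimax-k(lambda)}, using test functions built from the limit eigenfunctions on both sides.

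\textbf{Step 2: monodromy and expansion in $\delta$.} In the limit problem, on each free interval the system decouples into $\Phi''=(k-1)\Phi$ and $\Psi''=(k-r)\Psi$. The quasi-periodicity condition then becomes $\det\big(\mathcal{M}(k)-e^{-\lambda L}I\big)=0$ for a $4\times4$ monodromy matrix $\mathcal{M}(k)=P_{L-\delta}(k)\,J_2\,P_\delta(k)\,J_1$. Here $P_s$ is free propagation over length $s$ and $J_i$ are the jump maps. As $\delta\to0$, $J_2P_\delta J_1$ tends to the identity when $\mu/\delta\to\infty$: everything is transferred to $v$ at $0$ and back to $u$ at $\delta$. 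At zeroth order, then, $k$ is the scalar value $\lambda^2+1$, which is symmetric. The first-order correction comes from the interval $(0,\delta)$, where the mass travels as $v$ with growth rate $r$ instead of $1$. I expect the correction to be $\delta$ times a quantity that depends on the sign of $\lambda$ through the direction of the derivative flux carried across the interval. Expanding $P_\delta=I+\delta B+\mathcal{O}(\delta^2)$ and applying the implicit function theorem to the determinant, I would aim for $k(\pm\lambda)=\lambda^2+1+\delta\,(a(\lambda)\pm b(\lambda))+o(\delta)$ with $b(\lambda)<0$ for $\lambda>0$.

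\textbf{Step 3: comparing speeds, and the expected obstacle.} Since the $\delta^0$ problem has a nondegenerate minimizer $\lambda^*=1$ of $k(\lambda)/\lambda$, the two minima differ at first order by $2\delta\,b(1)$ divided by the normalization, which gives $c^*_R<c^*_L$ for small $\delta$. The main obstacle is Step 2. The symmetric part and the odd part of the correction may both vanish at order $\delta$; both transfers are nearly total, and a symmetric cancellation is plausible. If so, the expansion must be pushed to order $\delta^2$, where the odd term arises from the non-commutativity of the two jumps with the propagator. That is exactly where keeping $\ep=\mathcal{O}(\delta^2)$ errors under control becomes delicate. I would first compute the $2\times2$ reduced problem for the total mass with effective jump conditions, to identify the sign of the leading odd term, before doing the rigorous error estimates.
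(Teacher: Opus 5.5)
\textbf{There is a genuine gap. It is in Step~1, the identification of the thin-layer limit, and everything downstream inherits it.}

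\textbf{The layer limit is a derivative jump, not a value jump.} In the layer $[0,\ep]$ the eigenfunction solves the \emph{second-order} equation $(e^{-\lambda z}\varphi)''=\big(k-1+\frac{\mu}{\ep\delta}\big)e^{-\lambda z}\varphi$. There is no first-order ``reaction ODE'' in $x$. The second-derivative term is exactly what balances the large potential, so it cannot be dropped.

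The layer width $\ep$ is small compared with the screening length $\sqrt{\ep\delta/\mu}$; their ratio is $\sqrt{\mu\ep/\delta}\to0$. Hence the solution is almost affine across the layer:
\begin{itemize}
\item $\varphi(\ep)=\varphi(0)+O(\ep/\delta)$ and $\varphi'(\ep)=\varphi'(0)+\frac{\mu}{\delta}\varphi(0)+O(\ep/\delta)$;
\item $\psi$ is continuous, and $\psi'$ drops by the same amount.
\end{itemize}
So for fixed $\delta$ the limit $\ep\to0$ \emph{is} a Dirac-potential (flux-gap) problem, as in \eqref{eq:RD-limit-BC-u}--\eqref{eq:RD-limit-BC-v}. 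The value maps $T_1,T_2$ with factor $e^{-\mu/\delta}$ do not occur. Integrating the equations over the layer, as you yourself propose, produces derivative jumps, not value jumps. Consequently your zeroth-order picture is wrong: $v$ is not confined to $(0,\delta)$, and $k$ does not tend to the symmetric value $\lambda^2+1$.

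\textbf{What actually happens as $\delta\to0$.} The kick $\frac{\mu}{\delta}\varphi(0)$, acting over a length $\delta$, changes the \emph{values} by an $O(1)$ amount: $\varphi(\delta)=(1+\mu)\varphi(0)+O(\delta)$ and $\psi(\delta)=\psi(0)-\mu\varphi(0)+O(\delta)$. Integrating the free equation over $[\delta,L]$ shows that $\varphi'(L)-\varphi'(\delta)$ is bounded. This forces the second kick at $z=\delta$ to compensate, i.e.\ $\psi(\delta)=\varphi(0)+O(\delta)$.

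With the normalization $\varphi(0)=1$ this gives the limit problem \eqref{limit-problem}:
\begin{itemize}
\item $\varphi$ goes from $1+\mu$ at $0^+$ to $1$ at $L^-$, solving its own scalar equation;
\item $\psi$ goes from $1$ at $0^+$ to $1+\mu$ at $L^-$, solving its own scalar equation;
\item the two are coupled only through continuity of $\varphi'+\psi'$.
\end{itemize}
Here $\psi$ is of order one on the whole period, so $k_0(\lambda)>\lambda^2+1$ (cf.\ \eqref{k0-estimate}).

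\textbf{The asymmetry is already present at order $\delta^0$.} Let $F(k;\pm\lambda)$ be the defect $\varphi'(L^-)+\psi'(L^-)-\varphi'(0^+)-\psi'(0^+)$ of the two boundary-value solutions, and set $\alpha=\sqrt{k-1}>\beta=\sqrt{k-r}$. Then, for $\lambda>0$ and $k>r$ (and analogously for $k\le r$),
$$F(k;\lambda)-F(k;-\lambda)=2\mu(e^{\lambda L}-e^{-\lambda L})\Big(\frac{\beta}{e^{\beta L}-e^{-\beta L}}-\frac{\alpha}{e^{\alpha L}-e^{-\alpha L}}\Big)>0.$$
Since $F$ is strictly increasing in $k$, this gives $k_0(\lambda)<k_0(-\lambda)$.

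So there is no $O(\delta)$ or $O(\delta^2)$ odd correction to chase. The cancellation you fear in Step~3 is an artifact of the wrong jump conditions. The hypothesis $\ep=O(\delta^2)$ is needed only so that the $O(\ep/\delta)$ layer errors vanish, not to resolve a small asymmetry.

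\textbf{What you need instead:}
\begin{enumerate}
\item uniform $C^1$ bounds on $\varphi,\psi$ over $[\delta,L]$, to extract the limit;
\item uniqueness of $k_0(\lambda)$, from the strict monotonicity of $F$ in $k$;
\item locally uniform convergence $k(\lambda,\ep,\delta)\to k_0(\lambda)$, since pointwise convergence of convex functions is locally uniform;
\item the a priori localization of the minimizers in \eqref{eq:speed} to $[\sqrt r-\sqrt{r-1},\sqrt r+\sqrt{r-1}]$, which lets the strict inequality $k<k(-\cdot)$ on a compact set give $c^*_R<c^*_L$.
\end{enumerate}
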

\begin{remark}
Note that, in the above theorem, the condition $\ep=O(\delta^2)$ does not mean that $\ep$ and $\delta^2$ are comparable. It simply says that $\ep/\delta^2$ remains bounded as $\ep,\delta\to 0$. In particular, \eqref{eq:speed-asym} holds if we fix a sufficiently small value of $\delta>0$ and let $\ep\to 0$.
\end{remark}

To verify the claim \eqref{eq:speed-asym}, we have performed numerical simulations of the evolution of solution fronts of \eqref{eq:RD-asym}--\eqref{eq:mu-12}. 
As it turned out that $\ep$ has to be very small to see an effective difference between the two speeds in \eqref{eq:speed-asym}, we computed the solution of the singular limit of the system \eqref{eq:RD-asym}--\eqref{eq:mu-12} as $\ep\to 0$ instead of computing the original system. 
The result is given in Figure~\ref{fig:numeric-asymm-speed} (upper right), which shows a noticeable difference between the right and left speeds. 

Let us explain this singular limit in more detail. We fix $\delta>0$ in \eqref{eq:mu-12} and let $\ep\to 0$. 
The term $-\mu_1(x)u$ in the first equation of \eqref{eq:RD-asym} represents a big absorption of mass {of the size $(\ep\delta)^{-1}\mu u$ over} the interval $[0,\ep]$ while the term $\mu_2(x)v$ represents a big generation of mass of the size $(\ep\delta)^{-1}\mu v$ over the interval $[\delta-\ep,\delta]$. In the second equation of \eqref{eq:RD-asym}, the roles of absorption and generation are reversed. As $\ep$ tends to $0$, the effect of absorption and that of generation of mass are expressed by a flux gap {of the size $\delta^{-1}\mu u$ or $\delta^{-1}\mu v$} at $x=mL$ and $x=mL+\delta\;(m\in{\mathbb Z})$.  
Thus, at least formally, the limit problem (as $\ep\to 0$) takes the following form:
\begin{subequations}\label{eq:RD-limit}
\begin{equation}\label{eq:RD-limit-eqn}
\begin{cases}
	\ \partial_t u =\partial_{xx}u + u(1-\kappa(u+v))  & x\in \bigcup_{m\in\mathbb{Z}}\left(I_m\cup J_m\right),\;t>0\\[2pt]
	\ \partial_t v =\partial_{xx}v + v(r-\kappa(u+v))  & x\in \bigcup_{m\in\mathbb{Z}}\left(I_m\cup J_m\right),\;t>0,
\end{cases}
\end{equation}
where
\[
I_m=(mL,mL+\delta),\quad J_m=(mL+\delta,(m+1)L),
\]
along with the following boundary conditions for all $m\in \mathbb{Z},\;t>0$, which represent the flux gap:
\begin{equation}\label{eq:RD-limit-BC-u}
\begin{cases}
\ u(mL+0,t)=u(mL-0,t),\\ 
\ u_x(mL+0,t)= u_x(mL-0,t)+\dfrac{\mu}{\delta}\hspace{1pt}u(mL,t),\\
\ u(mL+\delta+0,t)=u(mL+\delta-0,t),\\
\ u_x(mL+\delta+0,t)=u_x(mL+\delta-0,t)-\dfrac{\mu}{\delta}\hspace{1pt}v(mL+\delta,t),
\end{cases}
\end{equation}
\begin{equation}\label{eq:RD-limit-BC-v}
\begin{cases}
\ v(mL+0,t)=v(mL-0,t),\\ 
\ v_x(mL+0,t)= v_x(mL-0,t)-\dfrac{\mu}{\delta}\hspace{1pt}u(mL,t),\\
\ v(mL+\delta+0,t)=v(mL+\delta-0,t),\\
\ v_x(mL+\delta+0,t)=v_x(mL+\delta-0,t)+\dfrac{\mu}{\delta}\hspace{1pt}v(mL+\delta,t).
\end{cases}
\end{equation}
\end{subequations}

\begin{figure}[H]
\centering
	\includegraphics[bb = 0 0 169 130]{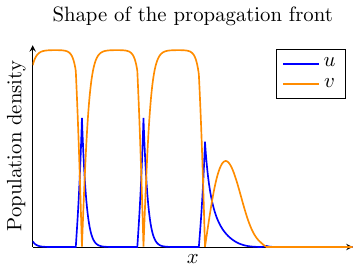}\hspace{1cm} \includegraphics[bb = 0 0 198 134]{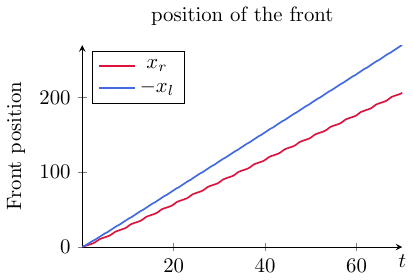} \\
	\includegraphics[bb = 0 0 389.568 137.535]{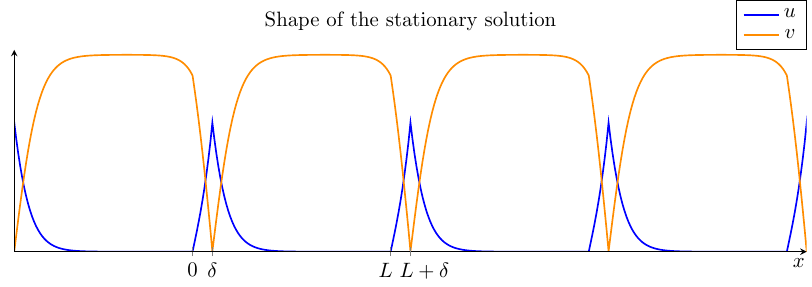}
	\caption{Numerical computation of \eqref{eq:RD-limit} starting from the initial data $u_0(x) = \mathbb{1}_{-0.1\leq x\leq 0.1}$ and $v_0(x)\equiv 0$ for $r=4.0$, $\mu=1000.0$, $\delta = 1.0$, $\kappa=1.0$. 
	\textbf{Upper left figure:} the right-bound front at time $t=100.0$. The graph shows flux gaps at $x=mL$ and $x=mL+\delta$. \textbf{Upper right figure:} the position of the {left-bound} front ($x_l$) and the {right-bound} front ($x_r$) as a function of time, with {$x_l=\inf\{x: u(x)+v(x)\geq 0.1\}$} and $x_r=\sup\{x: u(x)+v(x)\geq 0.1\}$. 
	\textbf{Bottom figure:}
the solution converges to a positive stationary solution far behind the left and the right fronts. The horizontal scale is more stretched than the upper left figure. The positions of flux gaps show clear asymmetry.}
	\label{fig:numeric-asymm-speed}
\end{figure}

The proof of Theorem \ref{thm:asymmetric-speeds} will be given in Section \ref{s:proof-asym-speed}. In view of \eqref{eq:speed}, it suffices to show that $k(\lambda)<k(-\lambda)$ for a certain range of $\lambda>0$. This is a consequence of the asymmetry in the position of flux gaps. We will first establish this inequality for the limit problem as $\ep\to 0$, then show that the same inequality holds for sufficiently small $\ep>0$.

%%%%%%%%%%%%%%%%%%%%%%
%%%%%%%%%%%%%%%%%%%%%%
\section{Existence  and uniqueness of traveling waves}
\label{s:existence}

%%%%%%%%%%%%%%%%%%%%
\subsection{Comparison with a lower barrier}

As already stated, the existence of traveling waves is proved by constructing upper and lower barriers and applying the Schauder fixed point theorem along with a limiting argument. What is most important is to find suitable lower barriers. Since our system \eqref{eq:main-sys} is not entirely cooperative, the comparison principle does not hold for the entire system. Therefore, subsolutions of \eqref{eq:main-sys} may not act as lower barriers. To overcome this difficulty, we consider an auxiliary system \eqref{eq:auxiliary-below} below and construct lower barriers of  \eqref{eq:main-sys} using this system, as we did in our previous paper \cite{Griette-Matano-2025} to study spreading properties.  Note that this system has the same linearization as \eqref{eq:main-sys} along the leading edge. We introduce some key notations. Let 
\begin{equation}\label{K-beta}
    K:=\min\left(\inf_{x\in\R }\frac{\mu_v(x)}{\kappa_u(x)}, \inf_{x\in\R }\frac{\mu_u(x)}{\kappa_v(x)}\right) , \qquad 
    \beta:={2}\,\dfrac{\max\left(\sup_{x\in\R } r_u(x) , \sup_{x\in\R }r_v(x)\right)}{K}, 
\end{equation}
and let $\big(\tilde u(t, x), \tilde v(t, x)\big) $ denote a solution to the auxiliary system: 
	\begin{equation}\label{eq:auxiliary-below}
		\begin{system}
			\relax &\tilde u_t=\big(\sigma(x)\tilde u_{x}\big)_x+\big(r_u(x)-\kappa_u(x)(\tilde u+\tilde v)-\beta \tilde u\big)\tilde u+\mu_v(x)\tilde v-\mu_u(x)\tilde u, & t>0, x\in\R, \\
			\relax &\tilde v_t=\big(\sigma(x) \tilde v_{x}\big)_x+\big(r_v(x)-\kappa_v(x)(\tilde u+\tilde v)-\beta \tilde v\big)\tilde v+\mu_u(x)\tilde u-\mu_v(x)\tilde v, &t>0, x\in\R.
		\end{system}
	\end{equation}

\begin{lem}[Comparison with a lower barrier]\label{lem:comparison-below}                                                              Let Assumption \ref{as:coop-comp} hold true. Let $\tilde u_0(x) $ and $\tilde v_0(x)$ be continuous functions such that 
    \begin{equation}\label{eq:comparison-below-initcond}
		0\leq \tilde u_0(x)\leq\min\big(u_0(x), \frac{1}{2}K\big) \ \ \text{and}\ \  0\leq \tilde v_0(x)\leq \min\big(v_0(x), \frac{1}{2}K\big), 
	\end{equation}
	and let $\big(\tilde u(t, x), \tilde v(t, x)\big) $ be the solution of \eqref{eq:auxiliary-below} starting from $\tilde u(0, x)=\tilde u_0(x)$ and $\tilde v(0, x) = \tilde v_0(x)$. Then for all $t>0$ and $x\in\R $ we have
	\begin{equation}\label{tildeu<u}
		\tilde u(t, x) \leq u(t, x) \ \ \text{and}\ \ \tilde v (t, x) \leq v(t, x).
	\end{equation}
\end{lem}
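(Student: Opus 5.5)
The plan is to compare $(u,v)$ and $(\tilde u,\tilde v)$ through the linear system satisfied by their difference. This linear system is cooperative as long as $\tilde u$ and $\tilde v$ stay below the threshold $K$ of \eqref{K-beta}. So the proof has two parts: first an a priori bound keeping $\tilde u+\tilde v\le K$, then a maximum principle for a cooperative linear system.

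\emph{Step 1: invariance of the region.}
\begin{itemize}
\item[(a)] Nonnegativity. $\tilde u,\tilde v\ge 0$ holds by the standard argument. On $\{\tilde u=0\}$ the right-hand side of the $\tilde u$-equation reduces to $\mu_v\tilde v\ge0$, and symmetrically for $\tilde v$. Moreover, the off-diagonal terms $\mu_v\tilde v$ and $\mu_u\tilde u$ are nonnegative.
\item[(b)] Upper bound on the sum. Let $S=\tilde u+\tilde v$. Adding the two equations, the mutation terms cancel, and we get
\[
S_t\le(\sigma S_x)_x+r_{\max}S-\beta(\tilde u^2+\tilde v^2)\le(\sigma S_x)_x+r_{\max}S-\tfrac{\beta}{2}S^2 .
\]
\item[(c)] Conclusion of Step 1. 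Comparing $S$ with the ODE $y'=r_{\max}y-\frac\beta2 y^2$ gives $S\le\max(\sup S(0,\cdot),\,2r_{\max}/\beta)$. Here $\sup S(0,\cdot)\le K$ by \eqref{eq:comparison-below-initcond}, and $2r_{\max}/\beta=K$ by the choice of $\beta$. Hence $0\le\tilde u,\tilde v\le\tilde u+\tilde v\le K$ for all $t\ge0$, which is exactly why the factor $2$ in $\beta$ and the bound $\frac12K$ on the data were chosen. This also yields global existence and boundedness of $(\tilde u,\tilde v)$.
\end{itemize}

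\emph{Step 2: equation for the difference.} Set $w=u-\tilde u$ and $z=v-\tilde v$. Writing $u(r_u-\kappa_u(u+v))-\tilde u(r_u-\kappa_u(\tilde u+\tilde v))=(r_u-\kappa_u(u+v))w-\kappa_u\tilde u(w+z)$, we obtain
\[
w_t=(\sigma w_x)_x+\big(r_u-\kappa_u(u+v)-\kappa_u\tilde u-\mu_u\big)w+\big(\mu_v-\kappa_u\tilde u\big)z+\beta\tilde u^2 ,
\]
and symmetrically for $z$, with off-diagonal coefficient $\mu_u-\kappa_v\tilde v$ and source $\beta\tilde v^2$.
\begin{itemize}
\item[(a)] Cooperativity. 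By Step 1, $\tilde u\le K\le\mu_v/\kappa_u$ and $\tilde v\le K\le\mu_u/\kappa_v$. So both off-diagonal coefficients are nonnegative and the system for $(w,z)$ is cooperative.
\item[(b)] Bounded coefficients and signs. The diagonal coefficients are bounded, since $u,v$ are bounded by Proposition~\ref{prop:uniform-bound} and $\tilde u,\tilde v$ by Step 1. The source terms are nonnegative, and the initial data satisfy $w(0)\ge0$, $z(0)\ge0$ by \eqref{eq:comparison-below-initcond}.
\end{itemize}

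\emph{Step 3: maximum principle.} Applying the weak maximum principle for cooperative linear parabolic systems on $\R$ with bounded coefficients to $(w,z)$ gives $w,z\ge0$, which is \eqref{tildeu<u}. I would carry this out via the usual trick. Multiply by $e^{-Ct}$ to make the diagonal coefficients negative, and add $\epsilon(1+x^2)e^{t}$-type terms to handle unboundedness of the domain. Then argue at a first touching point, using cooperativity and the nonnegativity of the source terms.

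The main obstacle is Step 1: one must control $\tilde u$ and $\tilde v$ individually by $K$ even though the mutation terms can transfer mass between the components. Bounding each component separately at its maximum fails because $\mu_v\tilde v-\mu_u\tilde u$ has no sign. The way around this is to bound the sum, where the mutation terms cancel and the $-\beta(\tilde u^2+\tilde v^2)$ penalty dominates.
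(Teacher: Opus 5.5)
Your proof is correct. The paper gives no argument here and only refers to Lemma~9 of its companion paper; your two steps are exactly the argument the constants $K$, $\beta=2r_{\max}/K$ and the data bound $\frac12K$ are designed for: the invariance of $\{\tilde u+\tilde v\le K\}$, followed by a cooperative maximum principle for $(u-\tilde u,v-\tilde v)$, whose off-diagonal coefficients $\mu_v-\kappa_u\tilde u$ and $\mu_u-\kappa_v\tilde v$ involve only the lower pair. One point is tacit in your proof: Step~1(b)--(c) and the sign of the sources $\beta\tilde u^2$, $\beta\tilde v^2$ need $\beta\ge 0$, i.e.\ $r_{\max}\ge 0$. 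For $r_{\max}<0$ the statement itself fails (equal constant data give $\partial_t(u-\tilde u)(0)=\beta\tilde u_0^2<0$), but this does not matter here, because $\lambda_1^{per}=k(0)\le r_{\max}$ by \eqref{k-quadratic} forces $r_{\max}>0$ whenever $\lambda_1^{per}>0$.
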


\begin{proof} 
	We refer to our previous paper \cite[Lemma 9]{Griette-Matano-2025}.
\end{proof}

%%%%%%%%%%%%%%%%%%%%%
\subsection{Proof of existence}
\label{ss:traveling waves-proof}

The goal of this section is to prove Theorems \ref{thm:TW1} and \ref{thm:TW1s} on the existence of traveling waves  and stationary waves. 
We consider only the right traveling waves and stationary waves, as the left ones can be treated precisely the same way. We seek traveling waves that have a precise decay rate when $x\to+\infty$, that is anticipated by the linear analysis near the leading edge. Later, in Theorem \ref{thm:TW2}, we will see that these decay properties are universal for all traveling waves. As we mentioned earlier, the existence will be proved by a fixed-point argument. More precisely, for each speed $c\geq c^*_{R}$, we first restrict the system \eqref{eq:main-sys} on the moving interval $-M+ct\leq x <+\infty$ and prove the existence of a traveling wave for this restricted system by applying the Schauder fixed-point theorem, then we let $M\to\infty$ to obtain a traveling wave of the original system.

In order to apply the fixed-point theorem, we need to construct upper and lower barriers. The upper barrier is easy to construct, as it is simply a solution of the linearized system. 
On the other hand, the construction of the lower barrier is much more involved. Our construction of lower barriers for $c>c^*_{R}$ is partly adapted from \cite{Ham-08} on scalar equations, but the lower barrier for $c=c^*_{R}$ is different from \cite{Ham-08} and is new, even for scalar KPP equations. 

As a matter of fact, it is possible to prove the existence of traveling waves for the critical case $c=c^*_{R}$ by a limiting argument, namely by taking the limit as $c\to c^*_{R}$ of the traveling waves of speed $c>c^*_{R}$. 
Such an approach is more common in the study of scalar equations, but it does not give detailed information about the spatial decay rate of the critical traveling wave as $x\to+\infty$. 
Our approach, on the other hand, gives an optimal decay rate directly for the case $c=c^*_{R}$. 

Now, for each fixed $M\geq 0$ and $c\in\R $, let us consider the following system, which is obtained by restricting the system \eqref{eq:main-sys} on the interval $-M+ct\leq x <+\infty$:
\begin{equation}\label{eq:main-sys-Neumann}
	\begin{system}
		\relax &u_t=\big(\sigma(x) u_{x}\big)_x+\big(r_u(x)-\kappa_u(x)(u+v)\big)u+\mu_v(x)v-\mu_u(x)u, & t>0,\, x>-M+ct, \\
		\relax &v_t=\big(\sigma(x) v_{x}\big)_x+\big(r_v(x)-\kappa_v(x)(u+v)\big)v+\mu_u(x)u-\mu_v(x)v, &t>0,\, x>-M+ct,\\
		& u_x(t, -M+ct) = v_x(t, -M+ct) =0\quad\  (t>0).
	\end{system}
\end{equation}
The global existence and the uniqueness of solutions of system \eqref{eq:main-sys-Neumann} for nonnegative bounded continuous initial data $\big(u_0(x),v_0(x)\big)$ is classical. 
We next consider the following auxiliary system which is obtained by restricting \eqref{eq:auxiliary-below} on the interval $-M+ct\leq x <+\infty$:
\begin{equation}\label{eq:auxiliary-below-Neumann}
	\left\{\begin{aligned}\relax
		&\tilde u_t=\big(\sigma(x)\tilde u_{x}\big)_x+\big(r_u(x)-\kappa_u(x)(\tilde u+\tilde v)-\beta \tilde u\big)\tilde u+\mu_v(x)\tilde v-\mu_u(x)\tilde u, \\
		&\tilde v_t=\big(\sigma(x) \tilde v_{x}\big)_x+\big(r_v(x)-\kappa_v(x)(\tilde u+\tilde v)-\beta \tilde v\big)\tilde v+\mu_u(x)\tilde u-\mu_v(x)\tilde v, \\ 
		& \tilde u_x(t, -M+ct) = \tilde v_x(t, -M+ct) =0 \quad (t>0). 
	\end{aligned}\right.
\end{equation}

Then equivalents of Proposition~\ref{prop:uniform-bound} and Lemma~\ref{lem:comparison-below} hold for this system, with virtually the same proof except for a minor modification at the boundary $x=-M+ct$. 
We state below an equivalent of Lemma~\ref{lem:comparison-below} without proof.

\begin{lem}[Comparison with a lower barrier]\label{lem:comparison-below-Neumann}                                                              Let Assumption \ref{as:coop-comp} hold true and let $M\geq 0$ and $c\in\R $ be given, and let $\overline{K}$ be as in \eqref{eq:maxmincoeff}. 
Let $\big(u(t,x),v(t,x)\big)$ be a solution of \eqref{eq:main-sys-Neumann} whose initial data $\big(u_0(x),v_0(x)\big)$ is bounded, nonnegative and continuous, and let $\tilde u_0(x) $ and $\tilde v_0(x)$ be continuous functions such that 
	\begin{equation}
		0\leq \tilde u_0(x):=\min\big(u_0(x), \frac{1}{2}K\big) \text{ and } 0\leq \tilde v_0(x)\leq \min\big(v_0(x), \frac{1}{2}K\big).
	\end{equation}
Then the solution $\big(\tilde u(t, x), \tilde v(t, x)\big) $ of \eqref{eq:auxiliary-below-Neumann} with initial data $\big(\tilde u_0(x),\tilde v_0(x)\big)$ satisfies
	\begin{equation}
		\tilde u(t, x) \leq u(t, x) \ \ \text{and}\ \ \tilde v (t, x) \leq v(t, x)\ \ \hbox{for all}\ \ t>0,\ x\geq -M+ct.
	\end{equation}
\end{lem}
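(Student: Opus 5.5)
The plan is to mimic the proof of Lemma~\ref{lem:comparison-below} (\cite[Lemma 9]{Griette-Matano-2025}). Two invariant-region facts come first, and then a comparison argument is run between the auxiliary system \eqref{eq:auxiliary-below-Neumann} and \eqref{eq:main-sys-Neumann}, on the moving domain $x\geq -M+ct$ with homogeneous Neumann conditions at $x=-M+ct$.

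\textbf{Step 1 (invariant bounds).} The Neumann analogue of Proposition~\ref{prop:uniform-bound} gives $u,v\geq 0$ for $t>0$. To prove it, apply the maximum principle to the cooperative linear system in which $u,v$ enter with bounded coefficients $r_*-\kappa_*(u+v)$. At the boundary point, a negative minimum is excluded by the Hopf lemma, since the Neumann condition forbids a nonzero outward derivative there.

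\textbf{Step 2 (the key bound $0\leq \tilde u,\tilde v\leq \frac12 K$).} Nonnegativity follows as in Step 1. For the upper bound, suppose $\tilde u$ first reaches the value $\frac12K$ at some point, with $\tilde u,\tilde v\leq \frac12K$ before that time. At that point the reaction term in the $\tilde u$-equation is at most
\[
\big(r_u-\beta \tfrac K2\big)\tfrac K2+\mu_v\tilde v-\mu_u\tfrac K2 \leq (r_{\max}-r_{\max})\tfrac K2+\mu_v\tfrac K2-\mu_u\tfrac K2 .
\]
This is not immediately negative, so the argument should instead compare the two components. The plan is to use $\frac12K$ as a constant supersolution componentwise, absorbing $\mu_v\tilde v-\mu_u\tilde u$ through the choice $\beta=2r_{\max}/K$ together with a strict-inequality perturbation $\frac12K+\eta e^{\omega t}$. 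If this fails, I will simply quote the argument of \cite{Griette-Matano-2025} verbatim. At the boundary $x=-M+ct$, the Neumann condition again rules out a boundary maximum via Hopf.

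\textbf{Step 3 (comparison).} Set $w_1=u-\tilde u$ and $w_2=v-\tilde v$. Subtracting the equations gives
\[
\partial_t w_1-(\sigma w_{1,x})_x = a_{11}w_1 + \big(\mu_v-\kappa_u\tilde u\big) w_2 + \big(\beta\tilde u^2+\kappa_u u (\ldots)\big),
\]
after writing $\kappa_u(u+v)u-\kappa_u(\tilde u+\tilde v)\tilde u=\kappa_u(u+v)w_1+\kappa_u\tilde u(w_1+w_2)$. The off-diagonal coefficient is $\mu_v-\kappa_u\tilde u\geq \mu_v-\kappa_u K/2>0$ by the definition of $K$ in \eqref{K-beta} and Step 2, and the source term $\beta\tilde u^2\geq 0$. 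The analogous statement holds for $w_2$. Hence $(w_1,w_2)$ is a supersolution of a linear cooperative system with bounded coefficients on $\{x>-M+ct\}$, with homogeneous Neumann data and $w_i(0,\cdot)\geq 0$. The maximum principle for cooperative systems on this moving half-line then gives $w_i\geq 0$. Concretely, one changes variables $y=x-ct$ to get a fixed half-line with drift, multiplies by $e^{-\omega t}$ and a weight growing at infinity to handle the unbounded domain, and applies Hopf at $y=-M$.

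The \textbf{main obstacle} is Step 2, the invariance of $[0,K/2]^2$ under the auxiliary flow, since this is what makes the off-diagonal terms nonnegative. The boundary modification itself is routine via Hopf's lemma.
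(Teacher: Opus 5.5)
Your overall plan is the right one: an a priori bound for the auxiliary flow, then a cooperative comparison for $(u-\tilde u,\,v-\tilde v)$ using the splitting $\kappa_u(u+v)u-\kappa_u(\tilde u+\tilde v)\tilde u=\kappa_u(u+v)w_1+\kappa_u\tilde u(w_1+w_2)$, with Hopf's lemma at the Neumann boundary. Step 3 is fine once the off-diagonal coefficients are known to be nonnegative.

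The gap is Step 2. The square $[0,\frac12K]^2$ is \emph{not} invariant under \eqref{eq:auxiliary-below-Neumann}, so no perturbation $\frac12K+\eta e^{\omega t}$ can rescue it. Take constant coefficients $r_u=r_v=\kappa_u=\kappa_v=\mu_u=1$ and $\mu_v=10$. Then $K=\min(10,1)=1$ and $\beta=2$. The homogeneous datum $\tilde u_0=\tilde v_0=\frac12$ is admissible as soon as $u_0,v_0\geq\frac12$, and it gives
\[
\tilde u_t(0)=\big(1-1-1\big)\tfrac12+10\cdot\tfrac12-\tfrac12=4>0,
\]
so $\tilde u$ leaves $[0,\frac12K]$ instantly. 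The mutation flux $\mu_v\tilde v-\mu_u\tilde u$ cannot be absorbed componentwise by the $\beta$-term.

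The missing idea is to bound the \emph{sum} $S=\tilde u+\tilde v$, since the mutation terms cancel in its equation. Assume $\tilde u,\tilde v\geq 0$ and $r_{\max}>0$; the latter is implicit throughout and is also needed for your source term $\beta\tilde u^2\geq0$. Using $\tilde u^2+\tilde v^2\geq\frac12S^2$ and $\beta=2r_{\max}/K$,
\[
S_t-(\sigma S_x)_x=r_u\tilde u+r_v\tilde v-(\kappa_u\tilde u+\kappa_v\tilde v)S-\beta(\tilde u^2+\tilde v^2)\leq r_{\max}S\Big(1-\frac{S}{K}\Big).
\]
The hypothesis $\tilde u_0,\tilde v_0\leq\frac12K$ gives $S(0,\cdot)\leq K$; this is exactly what the factors $\frac12$ and $2$ are designed for. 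Scalar comparison with the constant $K$, using Hopf at $x=-M+ct$ where $S_x=0$, then gives $S\leq K$. So the triangle $\{\tilde u,\tilde v\geq0,\ \tilde u+\tilde v\leq K\}$ is invariant.

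Hence $\tilde u,\tilde v\leq K$, and by the definition of $K$,
\[
\mu_v-\kappa_u\tilde u\geq\mu_v-\kappa_uK\geq0,\qquad \mu_u-\kappa_v\tilde v\geq0.
\]
This is weaker than the strict positivity you claim in Step 3. However, nonnegativity of the off-diagonal entries is all the weak comparison principle for cooperative systems requires. With Step 2 replaced in this way, your argument goes through.
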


%Next we construct upper and lower barriers in the form of several lemmas.
The following Proposition gives an upper barrier for the supercritical case.

\begin{proposition}[Upper barrier]
\label{prop:upper-barrier}
	Let Assumption \ref{as:coop-comp} hold true,  $\overline{K}$ be as in \eqref{eq:maxmincoeff}
 and $M\geq 0$ be given. Let $\lambda>0$ and $c\in\R $ be such that $c\geq\frac{k(\lambda)}{\lambda}$. Define
	\begin{equation}\label{eq:upperu}
	    \overline{u}(t,x):= e^{-\lambda(x-ct)}\vect{\varphi}^\lambda(x) \quad \overline{v}(t,x):= e^{-\lambda(x-ct)}\vect{\psi}^\lambda(x),
	\end{equation}
	where $(\varphi^\lambda, \psi^\lambda)$ is the $\lambda$-periodic principal eigenvector of \eqref{eq:lambda-periodic-principal-eigen} that satisfies $\Vert (\vect{\varphi}^\lambda, \psi^\lambda)\Vert_{L^\infty}=1$. 

    If $M$ is sufficiently large, then for any solution $\big(u(t, x), v(t, x)\big)$ of \eqref{eq:main-sys-Neumann} satisfying $u(0, x)\leq \overline{u}(0, x)$, $v(0, x) \leq \overline{v}(0, x)$, $u(0, x) + v(0,x) \leq  \overline{K}$, it holds that 
    \begin{equation*}
	u(t, x)\leq \overline{u}(t, x), \ \ v(t, x) \leq \overline{v}(t, x), \ \  u(t, x)+v(t, x)\leq \overline{K}\ \ \hbox{for all}\ \ t\geq 0,\ x\geq -M+ct. 
    \end{equation*}
\end{proposition}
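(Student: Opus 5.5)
The plan is to compare $(u,v)$ with $(\overline u,\overline v)$ through the \emph{linear} cooperative system \eqref{eq:linearized}, which does satisfy a comparison principle. The Neumann condition at the moving boundary $x=-M+ct$ will be handled by choosing $M$ so large that the barrier automatically dominates there. The bound $u+v\le\overline K$ is treated first and separately, since it is needed for the boundary step.

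\textbf{Step 1: the bound $u+v\le \overline K$.} This is the Neumann analogue of Proposition~\ref{prop:uniform-bound}. Nonnegativity of $u,v$ follows from the quasi-monotone structure near $0$: the mutation terms are nonnegative when the other component is nonnegative. Setting $w=u+v$, we get
\[
w_t\le (\sigma w_x)_x + r_{\max}w-\kappa_{\min}w^2 ,
\]
with $w_x=0$ on $x=-M+ct$. The constant $\overline K=r_{\max}/\kappa_{\min}$ is a supersolution of this scalar inequality and satisfies the Neumann condition trivially. The scalar comparison principle on the moving half-line then gives $w\le\overline K$ for all $t\ge0$, since $w(0,\cdot)\le \overline K$.

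\textbf{Step 2: $(\overline u,\overline v)$ is a supersolution of the linear system.} Since $u,v\ge0$, we have $-\kappa_u(u+v)u\le 0$ and $-\kappa_v(u+v)v\le0$. Hence $(u,v)$ is a subsolution of the linear cooperative system \eqref{eq:linearized} in the region $x>-M+ct$. On the other hand, by \eqref{eq:lambda-periodic-principal-eigen} a direct computation gives
\[
\overline u_t-L^1[\overline u,\overline v]=(\lambda c-k(\lambda))\,\overline u\ \ge 0,
\]
and similarly for $\overline v$. This is where the hypothesis $c\ge k(\lambda)/\lambda$ is used.

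\textbf{Step 3: the boundary.} The key point is that $(\overline u,\overline v)$ does not satisfy a favourable Neumann inequality: $\overline u_x=e^{-\lambda(x-ct)}(\varphi^\lambda_x-\lambda\varphi^\lambda)$ has no sign in general. So I will not use the Neumann condition for the barrier at all. On the boundary,
\[
\overline u(t,-M+ct)=e^{\lambda M}\varphi^\lambda(-M+ct)\ge e^{\lambda M}\min\varphi^\lambda ,
\]
and the analogous bound holds for $\overline v$. Both minima are positive, by periodicity and positivity of the eigenvector. I choose $M$ so large that $e^{\lambda M}\min(\min\varphi^\lambda,\min\psi^\lambda)>\overline K$. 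Step 1 then gives $u\le\overline K<\overline u$ and $v<\overline v$ on the boundary for all $t\ge0$. The comparison therefore becomes a Dirichlet-type comparison in which the boundary inequality holds strictly.

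\textbf{Step 4: comparison on the half-line.} Set $W_1=u-\overline u$ and $W_2=v-\overline v$. These satisfy a linear cooperative system of differential inequalities, $W_{i,t}\le (\sigma W_{i,x})_x+(\text{bounded})W_i+(\text{nonnegative})W_j$. They are $\le0$ at $t=0$ and $<0$ on the lateral boundary. Both are bounded above on the domain, because $u,v\le\overline K$ and $\overline u,\overline v\ge0$, and $\overline u,\overline v$ decay as $x\to+\infty$.

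The main obstacle is that the domain is unbounded to the right. I would handle this in the standard way. First multiply by $e^{-Ct}$ to make the zeroth-order coefficient negative. Then add a small penalty $\varepsilon\,\cosh(x-ct)$, or $\varepsilon e^{\gamma |x|}$, which dominates at infinity. Suppose a positive maximum of $\max(W_1,W_2)$ were attained at some first time. At that point the cooperative structure (nonnegative off-diagonal terms) gives the usual contradiction, since the boundary is excluded by Step 3. Letting $\varepsilon\to0$ yields $W_1,W_2\le0$, that is, $u\le\overline u$ and $v\le\overline v$ for all $t\ge0$ and $x\ge -M+ct$.
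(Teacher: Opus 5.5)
Your proposal is correct and follows essentially the same route as the paper: the bound $u+v\le\overline{K}$ comes from the Neumann analogue of Proposition~\ref{prop:uniform-bound}, $(u,v)$ is a subsolution of a linear cooperative system of which $(\overline{u},\overline{v})$ is a (super)solution, and the boundary is handled by taking $M$ large so that $\overline{u},\overline{v}\ge\overline{K}$ on $x=-M+ct$. The differences are cosmetic. You compare in \eqref{eq:linearized}, where $(\overline{u},\overline{v})$ is a supersolution, while the paper compares in the shifted system \eqref{eq:240125-1}, where it is an exact solution. You also write out the unbounded-domain maximum-principle argument that the paper simply cites.
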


\begin{proof}
The fact that $u(t, x)+v(t, x)\leq \overline{K}$ is a consequence of (an equivalent of) Proposition~\ref{prop:uniform-bound}, so we prove the former two inequalities. 
    Direct computation show that
    \begin{equation}\label{eq:240125-1}
	\begin{aligned}\relax
	    &\overline{u}_t = \big(\sigma(x)\overline{u}_x\big)_x + \big(r_u(x)-\mu_u(x)+\lambda c-k(\lambda)\big)\overline{u} + \mu_v(x) \overline{v}, \\ 
	    &\overline{v}_t = \big(\sigma(x)\overline{v}_x\big)_x + \big(r_v(x)-\mu_v(x)+\lambda c-k(\lambda)\big)\overline{v} + \mu_u(x) \overline{u}.
	\end{aligned}
    \end{equation}
    Therefore $(\overline{u},\overline{v})$ is a solution of the linear cooperative system \eqref{eq:240125-1}. 
    Since $c\geq\frac{k(\lambda)}{\lambda}$, we have 
    \begin{equation*}
	\begin{aligned}\relax
	    {u}_t& = \big(\sigma(x){u}_x\big)_x + \big(r_u(x)-\mu_u(x)-\kappa_u(x)(u+v)\big){u}+ \mu_v(x) {v} \\ 
	    &\leq \big(\sigma(x){u}_x\big)_x + \big(r_u(x)-\mu_u(x)+\lambda c-k(\lambda)\big){u}+ \mu_v(x) {v}, \\
	    {v}_t &= \big(\sigma(x){v}_x\big)_x + \big(r_v(x)-\mu_v(x)-\kappa_v(x)(u+v)\big){v} + \mu_u(x) {u}\\
	    &\leq  \big(\sigma(x){v}_x\big)_x + \big(r_v(x)-\mu_v(x)+\lambda c-k(\lambda)\big){v} + \mu_u(x) {u}. 
	\end{aligned}
    \end{equation*}
    Hence $(u, v)$ is a subsolution to the system \eqref{eq:240125-1}. 
    Moreover, for $M$ sufficiently large, we have 
    \begin{equation*}
	\overline{u}(t, -M+ct) = e^{\lambda M} \varphi^\lambda(-M+ct) \geq \overline{K}\geq u(t, -M+ct)
    \end{equation*}
    and similarly $\overline{v}(t, -M+ct) \geq v(t, -M+ct). $ 
    Thus by the comparison principle for cooperative parabolic systems, the following inequalities hold, which prove the lemma:
    \begin{equation*}
	u(t, x)\leq \overline{u}(t, x),\ \ v(t, x)\leq \overline{v}(t, x) \quad \text{for all} \ \ t\geq 0,\  x\geq -M+ct.\qedhere
    \end{equation*}
\end{proof}

Before we construct a lower barrier, we prepare the following Lemma.
\begin{lemma}[A differential inequality for $c>c^*_R$]\label{lem:sub-solution}
    Let Assumption \ref{as:coop-comp} hold true and let $c>c^*_{R}$ be given. Let $\lambda>0$ be {the smallest positive solution of} ${\frac{k(\lambda)}{\lambda}=c}$. 
	Define 
	\begin{equation}\label{eq:ul-u}
	    \underline{u}(t, x):=e^{-\lambda(x-ct)}\varphi^\lambda(x)-\omega e^{-\nu(x-ct)}\varphi^\nu(x), \ \ \underline{v}(t, x):=e^{-\lambda(x-ct)}\psi^\lambda(x)-\omega e^{-\nu(x-ct)}\psi^\nu(x), 
	\end{equation}
	where the constant $\nu>0$ satisfies ${k(\nu)-\nu c<0}$ and $\lambda<\nu<2\lambda$. Then there exists $\omega^*>0$ such that, for all $\omega\geq \omega^*$, we have 
	\begin{subequations}\label{eq:sub-sol-beta}
	\begin{align}
	    \underline{u}_t&\leq \big(\sigma(x)\underline{u}_x\big)_x + \big(r_u(x)-\kappa_u(x)(\underline{u}+\underline{v})-\beta \underline{u}\big)\underline{u} + \mu_v(x) \underline{v}-\mu_u(x) \underline{u}, \\
	    \underline{v}_t&\leq \big(\sigma(x)\underline{v}_x\big)_x + \big(r_v(x)-\kappa_v(x)(\underline{u}+\underline{v})-\beta \underline{v}\big)\underline{v} + \mu_u(x) \underline{u}-\mu_v(x)\underline{v}, 
	\end{align}
	\end{subequations}
	whenever 
	\begin{equation}\label{eq:xi_omega}
	    x-ct> \xi_\omega:=\frac{1}{\nu-\lambda}\left(\ln(\omega) + \ln\left[\underset{{y\in\R}}{\inf}\min\left(\frac{\varphi^\nu(y)}{\varphi^\lambda(y)},\frac{\psi^\nu(y)}{\psi^\lambda(y)}\right)\right]\right),
	\end{equation}
	and $\underline{u}(t, x)<0$ and $\underline{v}(t, x)<0$ whenever $x-ct \leq \xi_\omega$.
\end{lemma}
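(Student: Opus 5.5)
We will verify the inequalities by a direct computation. As in \eqref{eq:240125-1}, each exponential-eigenfunction term satisfies a linear cooperative system. Writing $\xi=x-ct$, the pair $(e^{-\lambda\xi}\varphi^\lambda, e^{-\lambda\xi}\psi^\lambda)$ solves
\[
w_t=(\sigma w_x)_x+(r_u-\mu_u)w+\mu_v z+(\lambda c-k(\lambda))w .
\]
Since $\lambda c=k(\lambda)$, the last term vanishes, so this pair is an exact solution of the linearized system \eqref{eq:linearized}. The pair $(e^{-\nu\xi}\varphi^\nu, e^{-\nu\xi}\psi^\nu)$ satisfies the same system with the extra term $(\nu c-k(\nu))(\cdot)$, and $\nu c-k(\nu)>0$.

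By linearity, substituting $(\underline u,\underline v)$ into the linear part gives
\[
\underline u_t-(\sigma\underline u_x)_x-(r_u-\mu_u)\underline u-\mu_v\underline v=-\omega(\nu c-k(\nu))e^{-\nu\xi}\varphi^\nu,
\]
and similarly for $\underline v$ with $\psi^\nu$. Hence \eqref{eq:sub-sol-beta} reduces to the two inequalities
\[
\omega(\nu c-k(\nu))e^{-\nu\xi}\varphi^\nu\ \ge\ \big(\kappa_u(\underline u+\underline v)+\beta\underline u\big)\underline u
\]
and its analogue for $\underline v$.

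Next we control the sign of the components. Set $m:=\inf_y\min(\varphi^\nu/\varphi^\lambda,\psi^\nu/\psi^\lambda)>0$, which is positive by periodicity and positivity. For $\xi\le\xi_\omega$ we have $\omega m e^{-(\nu-\lambda)\xi}\ge 1$, so $\omega e^{-\nu\xi}\varphi^\nu\ge e^{-\lambda\xi}\varphi^\lambda$, and likewise for $\psi$. This gives $\underline u\le 0$ and $\underline v\le0$ there. Strict negativity requires slightly more care at the point where the infimum is attained; alternatively one simply notes that on the closed region at most equality occurs. For $\xi>\xi_\omega$ the components may be of either sign.

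On $\xi>\xi_\omega$ we bound the right-hand side crudely. We have $|\underline u|,|\underline v|\le C e^{-\lambda\xi}$ with $C$ independent of $\omega$, since the positive part is at most $e^{-\lambda\xi}\|\varphi^\lambda\|_\infty$ and the negative part satisfies $\omega e^{-\nu\xi}\varphi^\nu\le C'e^{-\lambda\xi}$ once $\xi>\xi_\omega-O(1)$. Indeed, on that region $\omega e^{-(\nu-\lambda)\xi}\lesssim 1/m$. The quadratic right-hand side is therefore at most $C''e^{-2\lambda\xi}$, with $C''$ depending on $\kappa_{\max},\beta$ and the eigenfunctions but not on $\omega$. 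It then suffices that
\[
\omega(\nu c-k(\nu))\min\varphi^\nu\, e^{-\nu\xi}\ \ge\ C'' e^{-2\lambda\xi},
\]
that is, $\omega\ge C''' e^{-(2\lambda-\nu)\xi}$. Since $2\lambda-\nu>0$, the right-hand side is decreasing in $\xi$, so we only need the inequality at $\xi=\xi_\omega$. There $e^{-(2\lambda-\nu)\xi_\omega}=(\omega m)^{-(2\lambda-\nu)/(\nu-\lambda)}$ decreases in $\omega$, so the condition holds for all $\omega\ge\omega^*$ with $\omega^*$ large. The same argument handles the $v$ component.

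The main obstacle is keeping the constant in $|\underline u|\le Ce^{-\lambda\xi}$ uniform in $\omega$ on $\xi>\xi_\omega$, and exploiting $\nu<2\lambda$ in the final comparison. The sign computation at $\xi\le\xi_\omega$ is routine.
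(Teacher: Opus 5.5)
Your argument is correct and is essentially the paper's proof: the linear part of the operator applied to $(\underline{u},\underline{v})$ leaves exactly the absorbing term $-\omega(\nu c-k(\nu))e^{-\nu(x-ct)}\varphi^\nu$ (resp.\ $\psi^\nu$), and the quadratic terms are $O(e^{-2\lambda(x-ct)})$ with an $\omega$-independent constant on $x-ct>\xi_\omega$, so $\nu<2\lambda$ lets a large $\omega$ absorb them. Your tracking of that constant through $\omega e^{-(\nu-\lambda)\xi}<1/m$ is cleaner than the paper's bound by $\sup\varphi^\lambda$ alone. One correction to your hedge on the sign: on the line $x-ct=\xi_\omega$, at a point $x$ where the infimum $m$ is attained, the corresponding component equals $0$ exactly, so no extra care will give strict negativity there. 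On the closed region only $\underline{u},\underline{v}\le 0$ holds, with strict inequality for $x-ct<\xi_\omega$, and that is all the later arguments use.
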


\begin{proof}
    That $\underline{u}(t, x)<0$ and $\underline{v}(t, x)<0$ when $x-ct \leq \xi_\omega$ follows from direct computations with the explicit formula \eqref{eq:ul-u}. 
    Let us prove the differential inequality. We have:
    \begin{equation*}
	\begin{aligned}\relax 
	    &\underline{u}_t=\big(\sigma(x)\underline{u}_x\big)_x + \big(r_u(x)-\mu_u(x)\big)\underline{u} + \mu_v(x) \underline{v} - (-k(\nu)+\nu c)\omega e^{-\nu(x-ct)}\varphi^\nu(x),\\
	    &\underline{v}_t=\big(\sigma(x)\underline{v}_x\big)_x + \big(r_v(x)-\mu_v(x)\big)\underline{v} + \mu_u(x) \underline{u} - (-k(\nu)+\nu c)\omega e^{-\nu(x-ct)}\psi^\nu(x),
	\end{aligned}
    \end{equation*}
    where we recall that $-k(\nu)+\nu c>0$.  Observe that
    \begin{align*}
	\kappa_u(x)(\underline{u}+\underline{v})\underline{u}+\beta \underline{u}^2
	& \leq \big(\sup \kappa_u+\beta\big) \underline{u}^2 + \frac{\kappa_u(x)}{2} \big(\underline{u}^2 + \underline{v}^2\big) \leq \left(\frac{3}{2}\sup \kappa_u + \beta\right)\underline{u}^2 + \frac{1}{2}\sup \kappa_u \underline{v}^2 \\ 
	&\leq \big(2\sup \kappa_u+\beta\big)\max\big(\sup \varphi^\lambda, \sup\psi^\lambda\big)^2 e^{-2\lambda (x-ct)} \\
	&\leq \left[\big(2\sup \kappa_u+\beta\big)\frac{\max\big(\sup \varphi^\lambda, \sup\psi^\lambda\big)^2}{\inf \varphi^\nu} e^{-(2\lambda-\nu)(x-ct)}\right] \varphi^\nu(x)e^{-\nu(x-ct)} \\
	&\leq \frac{C_1}{\omega^{\frac{2\lambda-\nu}{\nu-\lambda}}}  \varphi^\nu(x)e^{-\nu(x-ct)}, 
    \end{align*}
    since $x-ct\geq \xi_\omega$, where the constant $C_1>0$ depends on $\sup\kappa_u, \beta, \lambda, \nu, \varphi^\lambda$ and $\varphi^\nu$. Taking $\omega\geq \left(\frac{C_1}{-k(\nu)+\nu c}\right)^{\frac{2\lambda-\nu}{\nu-\lambda}+1}= \left(\frac{C_1}{-k(\nu)+\nu c}\right)^{\frac{\nu-\lambda}{\lambda}}$ we obtain 
    \begin{equation*}
	\frac{C_1}{\omega^{\frac{2\lambda-\nu}{\nu-\lambda}}}\leq \omega (-k(\nu)+\nu c)
    \end{equation*}and therefore 
    \begin{equation*}
	\kappa_u(x)(\underline{u}+\underline{v})\underline{u}+\beta \underline{u}^2\leq \omega (-k(\nu)+\nu c)\varphi^\nu(x)e^{-\nu(x-ct)}.
    \end{equation*}
    Thus we finally get to 
    \begin{equation*}
	\underline{u}_t\leq \big(\sigma(x)\underline{u}_x\big)_x + \big(r_u(x)-\mu_u(x)-\kappa_u(x)(\underline{u}+\underline{v})-\beta \underline{u}\big)\underline{u} + \mu_v(x) \underline{v} .
    \end{equation*}
    Similarly, 
    there is a constant $C_2>0$ such that for any $\omega\geq \left(\frac{C_2}{-k(\nu)+\nu c}\right)^{\frac{\nu-\lambda}{\lambda}} $ we have 
    \begin{equation*}
	\kappa_v(x)(\underline{u}+\underline{v})\underline{v}+\beta \underline{v}^2\leq \omega (-k(\nu)+\nu c)\psi^\nu(x)e^{-\nu(x-ct)},
    \end{equation*}
    hence 
    \begin{equation*}
	\underline{v}_t\leq \big(\sigma(x)\underline{v}_x\big)_x + \big(r_v(x)-\mu_v(x)-\kappa_v(x)(\underline{u}+\underline{v})-\beta \underline{v}\big)\underline{v} + \mu_u(x) \underline{u} .
    \end{equation*}
    Therefore, \eqref{eq:sub-sol-beta} holds true for $x-ct\geq \xi_\omega$, which finishes the proof of Lemma \ref{lem:sub-solution}. 
\end{proof}

\begin{proposition}[Lower barrier for $c>c^*_{R}$]\label{prop:lower-barrier}
    Let Assumption \ref{as:coop-comp} hold true and let $M\geq 0$, $c>c^*_{R}$ be given. Let $\lambda>0$ be {the smallest positive solution of} ${\frac{k(\lambda)}{\lambda}=c}$ 
    and let $\big(\underline{u}(t,x), \underline{v}(t, x)\big)$ be the defined by \eqref{eq:ul-u},
	where the constant $\nu>0$ satisfies ${k(\nu)-\nu c<0}$ and $\lambda<\nu<2\lambda$. There exists $\omega^*>0$ such that, for all $\omega\geq \omega^*$, we have 
	$\underline{u}(t, -M+ct)<0$ and $\underline{v}(t, -M+ct)<0$ for all $t\geq 0$, and 
	that, for any 
	$n\in\big\{0, \ldots, \left\lfloor\frac{M}{L}\right\rfloor\big\}$ 
	and any nonnegative solution $\big(u(t,x), v(t, x)\big)$ of \eqref{eq:main-sys-Neumann} satisfying
	\begin{equation*}
	    u_0(x)\geq \underline{u}(0, x+nL), \ \  v_0(x)\geq \underline{v}(0,x+nL)\quad 
	    \hbox{for all}\ \ x\geq -M,
	\end{equation*} 
	we have 
	\begin{equation}\label{eq:ulu-super-u}
	    u(t,x)\geq \underline{u}(t, x+nL),\ \ v(t, x)\geq \underline{v}(t, x+nL) \quad \text{for all} \ \ t\geq 0,\  x\geq -M+ct.
	\end{equation}
\end{proposition}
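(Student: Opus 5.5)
The plan is to compare $(u,v)$ not with $(\underline u,\underline v)$ directly, since the full system has no comparison principle, but with the solution of the auxiliary system \eqref{eq:auxiliary-below-Neumann}, using Lemma~\ref{lem:comparison-below-Neumann}.

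\textbf{Step 1: negativity at the boundary.} First choose $\omega^*$ at least as large as the constant from Lemma~\ref{lem:sub-solution}. Enlarge it further so that $\xi_\omega\geq -M+(\lfloor M/L\rfloor+1)L$, which is possible since $\xi_\omega\to+\infty$ as $\omega\to\infty$ (note $\nu>\lambda$). Then on the moving boundary $x=-M+ct$ we have $x+nL-ct\leq -M+nL\leq \xi_\omega$ for every $n\leq\lfloor M/L\rfloor$. Lemma~\ref{lem:sub-solution} therefore gives $\underline u(t,-M+ct+nL)<0$ and $\underline v(t,-M+ct+nL)<0$, and in particular (taking $n=0$) the first claim.

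\textbf{Step 2: size condition.} Increase $\omega$ further so that $\underline u^+,\underline v^+\leq K/2$ everywhere. This works because the positive part of $\underline u$ lives on $x-ct>\xi_\omega$, where $e^{-\lambda(x-ct)}\leq e^{-\lambda\xi_\omega}\to0$. The shift by $nL$ is harmless because the coefficients and eigenfunctions are $L$-periodic. Indeed, $(\underline u,\underline v)(t,x+nL)$ is again of the form \eqref{eq:ul-u}, multiplied by $e^{-\lambda nL}$ and with $\omega$ replaced by $\omega e^{-(\nu-\lambda)nL}$. More simply, it satisfies \eqref{eq:sub-sol-beta} at every point where $x+nL-ct>\xi_\omega$.

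\textbf{Step 3: comparison for the auxiliary system.} Set $\tilde u_0=\min(u_0,K/2)$ and $\tilde v_0=\min(v_0,K/2)$, and let $(\tilde u,\tilde v)$ solve \eqref{eq:auxiliary-below-Neumann}. By Lemma~\ref{lem:comparison-below-Neumann}, $u\geq\tilde u$ and $v\geq\tilde v$. It then suffices to show $\tilde u\geq\underline u(\cdot,\cdot+nL)$ and $\tilde v\geq\underline v(\cdot,\cdot+nL)$. The initial inequality holds by Step 2 and the hypothesis. The auxiliary system is cooperative in the region $\tilde u,\tilde v\in[0,K/2]$: the cross derivative is $\mu_v-\kappa_u\tilde u\geq \mu_v-\kappa_u K/2>0$, and similarly for the other equation. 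One also checks that $[0,K/2]^2$ is forward invariant, using the choice of $\beta$ in \eqref{K-beta}.

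\textbf{Step 4: comparison with the sub-solution (main obstacle).} The difficulty is that $(\underline u,\underline v)$ is only a sub-solution where it is positive, while $\max(0,\cdot)$ need not be a sub-solution of a system. The idea is to exploit the structure of Lemma~\ref{lem:sub-solution}: both components are negative exactly on $x-ct\leq\xi_\omega$ and satisfy \eqref{eq:sub-sol-beta} on the complement. Note that the sign change happens simultaneously for the two components only if the infimum in \eqref{eq:xi_omega} is attained consistently, so a region where one component is positive and the other is not may exist. In that region I would use $\tilde v\geq0$ and the positivity of the coupling coefficient, so that the cross term favours the comparison.

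Concretely, I would argue by a first-touching-time argument on the set where $\underline u>0$ or $\underline v>0$. Suppose $w:=\tilde u-\underline u$ first vanishes at a point $(t_0,x_0)$. At that point $\underline u=\tilde u\geq0$, so $x_0$ lies in the region $x-ct>\xi_\omega$ where \eqref{eq:sub-sol-beta} holds, and $x_0$ is strictly interior by Step 1. Subtracting the inequalities, the reaction difference is controlled by a Lipschitz term in $w$ plus the cross term $\mu_v(\tilde v-\underline v)+\dots\geq0$. The positivity of this cross term uses the monotonicity of $\tilde v\mapsto(\mu_v-\kappa_u\tilde u)\tilde v$ on $[0,K/2]$ together with $\tilde v\geq\underline v$ up to time $t_0$. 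The standard strong maximum principle for weakly coupled cooperative systems, applied to $(\tilde u-\underline u)^-$ and $(\tilde v-\underline v)^-$ with a Gronwall-type weight, then gives a contradiction.

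The care needed is exactly where $\underline v<0\leq\underline u$: there the term $-\kappa_u\underline v\,\underline u$ and the term $\mu_v\underline v$ must be bounded using $\tilde v\geq0>\underline v$. Combining Steps 3 and 4 yields \eqref{eq:ulu-super-u}.
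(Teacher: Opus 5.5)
Your proposal is correct in substance and follows the paper's proof. The paper likewise reduces to the auxiliary system via Lemma~\ref{lem:comparison-below-Neumann}. It takes $\omega$ large so that $\xi_\omega\ge 0$, which makes every shift $n\le\lfloor M/L\rfloor$ negative on $x=-M+ct$; your threshold is more than needed, and $\xi_\omega\ge0$ has the advantage of being independent of $M$, which matters for the limit $M\to\infty$ in Theorem~\ref{thm:TW1}. It also takes $\omega$ large so that $\sup\underline u,\sup\underline v\le K/2$, and then uses Lemma~\ref{lem:sub-solution} on $\{x+nL-ct>\xi_\omega\}$. Your Step~4 just spells out what the paper compresses into calling $(\underline u,\underline v)$ a ``lower barrier'' for \eqref{eq:auxiliary-below-Neumann}. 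It is less delicate than you suggest: Lemma~\ref{lem:sub-solution} gives \eqref{eq:sub-sol-beta} on the whole set $x+nL-ct>\xi_\omega$ regardless of the signs of the components, and both components are negative on the complement. So you can compare directly on that moving half-line, with $\underline u\le0\le\tilde u$ and $\underline v\le0\le\tilde v$ on its left edge.

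One claim in Step~3 is false, although what you need survives. The box $[0,K/2]^2$ is not forward invariant in general. Take constant coefficients $r_u=r_v=\kappa_u=\kappa_v=1$, $\mu_u=0.1$, $\mu_v=10$, so that $\lambda_A=1>0$, $K=0.1$ and $\beta=20$. At $\tilde u=\tilde v=K/2$ the $\tilde u$-reaction term equals $0.49>0$.

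The correct invariant set is the triangle $\{\tilde u,\tilde v\ge0,\ \tilde u+\tilde v\le K\}$. Adding the two equations, the mutation terms cancel, and $s=\tilde u+\tilde v$ satisfies $s_t-(\sigma s_x)_x\le r_{\max}s-\frac{\beta}{2}s^2$. The right-hand side is nonpositive for $s\ge 2r_{\max}/\beta=K$, which is exactly why $\beta$ carries the factor $2$. On this triangle $\mu_v-\kappa_u\tilde u\ge\mu_v-\kappa_uK\ge0$ and $\mu_u-\kappa_v\tilde v\ge0$. That is all the weak comparison needs (nonnegative rather than strictly positive coupling).

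Each reaction term is affine in the other unknown. Hence this sign condition on $\tilde u$ (resp.\ $\tilde v$) alone controls the cross term, even where $\underline v$ (resp.\ $\underline u$) is negative. So the restriction ``on $[0,K/2]$'' in your Step~4 monotonicity argument should also be dropped.
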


The following proof is adapted from \cite{Ham-08} on scalar KPP type equations.

\begin{proof}
    We first prove the lemma for the special case $n=0$, then discuss the general case later.

    Let us first remark that $\underline{u}(t+\frac{L}{c}, x)=\underline u(t, x-L)$ and $\underline{v}(t+\frac{L}{c}, x)=\underline{v}(t, x-L)$. It follows from direct computations that $\sup_{t, x} \underline{u}$ and $\sup_{t, x}\underline{v}$ are both finite and become arbitrarily small when $\omega\to+\infty$. 
    By Lemma  \ref{lem:comparison-below-Neumann}, it suffices to check that the vector function $(\underline{u}, \underline{v}) $ is a lower barrier to the system \eqref{eq:auxiliary-below-Neumann} if $\omega>0$ is sufficiently large.     We assume that 
    \begin{equation*}
	\omega>\exp\left(-\ln\left[\underset{{y\in\R}}{\inf}\min\left(\frac{\varphi^\nu(y)}{\varphi^\lambda(y)},\frac{\psi^\nu(y)}{\psi^\lambda(y)}\right)\right]\right) = \left(\underset{{y\in\R}}{\inf}\min\left(\frac{\varphi^\nu(y)}{\varphi^\lambda(y)},\frac{\psi^\nu(y)}{\psi^\lambda(y)}\right)\right)^{-1}
    \end{equation*}
    so that $\xi_{\omega}\geq 0$ and in particular $\underline{u}(t, x)<0$ and $\underline{v}(t, x)<0$ at the boundary $x=ct-M$.    The differential inequality for $x-ct>\xi_\omega$ follows from Lemma \ref{lem:sub-solution} and the lemma is proved for $n=0$. 
    
    Precisely the same argument holds for $(\underline{u}(t, x+nL), \underline{v}(t, x+nL)\big)$ since these functions satisfy the same differential inequalities as the case $n=0$ and since $\underline{u}(t, x+nL)<0$ and $\underline{v}(t, x+nL)<0$ at the boundary $x=-M+ct$ for all $n=1, \ldots, \left\lfloor\frac{M}{L}\right\rfloor$. This latter claim holds because $\underline{u}(t, x)<0$ and $\underline{v}(t, x)<0$ whenever $x-ct<0$ by our choice of $\omega$. This completes the proof of Lemma \ref{prop:lower-barrier}.
\end{proof}

To construct upper and lower barriers for the critical speed $c=c^*_R$, we need a different approach, since the decay rate of the traveling wave as $x\to+\infty$ is not purely exponential. 
Let $\lambda^*>0$ be the unique positive  solution of ${\frac{k(\lambda)}{\lambda}=c^*_{R}}$, and define
\begin{equation}\label{eq:critical-aux}
	U(t, x):=-\frac{\partial}{\partial \lambda}\big(e^{-\lambda(x-c^*_{R}t)}\varphi^\lambda(x)\big)\big|_{\lambda=\lambda^*}, \quad 
	    V(t, x):=-\frac{\partial}{\partial \lambda}\big(e^{-\lambda(x-c^*_{R}t)}\varphi^\lambda(x)\big)\big|_{\lambda=\lambda^*}, 
	\end{equation}
	and, for positive constants $\alpha>0$,  $\omega>0$ and $\lambda^*<\nu<2\lambda^*$, 
	\begin{equation}\label{eq:critical-super}
	\begin{split}
	    &\overline{u}^*(t, x) := U(t, x)+\alpha e^{-\lambda^*(x-c^*_{R}t)}\varphi^{\lambda^*}(x),\\
	    &\overline{v}^*(t, x) := V(t, x)+\alpha e^{-\lambda^* (x-c^*_{R}t)}\psi^{\lambda^*}(x),
	\end{split}
	\end{equation}
	\begin{equation}\label{eq:critical-sub}
	\begin{split}
	    &\underline{u}^*(t, x) := U(t, x)+e^{-\nu(x-c^*_{R}t)}\varphi^{\nu}(x)-\omega e^{-\lambda^*(x-c^*_{R}t)}\varphi^{\lambda^*}(x),\\ 
	    &\underline{v}^*(t, x) := V(t, x)+e^{-\nu(x-c^*_{R}t)}\psi^{\nu}(x)-\omega e^{-\lambda^*(x-c^*_{R}t)}\psi^{\lambda^*}(x). 
	\end{split}
\end{equation}
We will show that these functions play the role of an upper and a lower barrier for the critical case.
\begin{lemma}[A differential inequality for $c=c^*_R$]\label{lem:critical-diff-ineq}
	Let Assumption \ref{as:coop-comp} hold true and  $\lambda^*>0$ be {the unique positive solution of} ${\frac{k(\lambda)}{\lambda}=c^*_R}$. Let $\nu\in (\lambda^*, 2\lambda^*)$ be given  and $\alpha>0$ be fixed.
	\begin{enumerate}
		\item The vector function $\big(\overline{u}^*, \overline{v}^*\big)$ defined by \eqref{eq:critical-super} satisfies   
			\begin{subequations}\label{eq:critical-solution-cooperative}
				\begin{align}
					\overline{u}^*_t &= \big(\sigma(x)\overline{u}^*_x\big)_x + r_u(x)\overline{u}^*+\mu_v(x)\overline{v}^* - \mu_u(x) \overline{u}^*, \\
					\overline{v}^*_t &= \big(\sigma(x)\overline{v}^*_x\big)_x + r_v(x)\overline{v}^*+\mu_u(x)\overline{u}^* - \mu_v(x)\overline{v}^*.
				\end{align}
			\end{subequations}

		\item Then there exists $\omega^*>0$ such that, for all $\omega\geq \omega^*$, the vector function $\big(\underline{u}^*, \underline{v}^*\big)$ defined by \eqref{eq:critical-sub} satisfies the differential inequality
			\begin{subequations}\label{eq:critical-sub-ineq}
				\begin{align}
					\underline{u}^*_t&\leq \big(\sigma(x)\underline{u}^*_x\big)_x + \big(r_u(x)-\kappa_u(x)(\underline{u}^*+\underline{v}^*)-\beta \underline{u}^*\big)\underline{u}^* + \mu_v(x) \underline{v}^*-\mu_u(x) \underline{u}^*, \\
					\underline{v}^*_t&\leq \big(\sigma(x)\underline{v}^*_x\big)_x + \big(r_v(x)-\kappa_v(x)(\underline{u}^*+\underline{v}^*)-\beta \underline{v}^*\big)\underline{v}^* + \mu_u(x) \underline{u}^*-\mu_v(x)\underline{v}^*, 
				\end{align}
			\end{subequations}
	whenever 
	\begin{equation}\label{eq:xi_omega^*}
		x-c^*_Rt> \xi_\omega^*:=\omega+\inf_{y\in\mathbb{R}}\left[\frac{\partial\varphi^{\lambda}|_{\lambda^*}(y)}{\varphi^{\lambda^*}(y)}-\frac{\varphi^{\nu}(y)}{\varphi^{\lambda^*}(y)}\right]
	\end{equation}
	and $\underline{u}^*(t, x)<0$ and $\underline{v}^*(t, x)<0$ whenever $ 0\leq x-c^*_Rt \leq  \xi_\omega^*$.
	\end{enumerate}
\end{lemma}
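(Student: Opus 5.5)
The plan is to exploit the fact that, for every $\lambda$, the function $E_\lambda(t,x):=e^{-\lambda(x-ct)}(\varphi^\lambda(x),\psi^\lambda(x))$ satisfies the linear system with a defect proportional to $\lambda c-k(\lambda)$. Concretely, writing $\mathcal{L}$ for the linear operator of \eqref{eq:linearized} (so that the system reads $\partial_t w=\mathcal{L}w$), the identity \eqref{eq:240125-1} gives
\[
\partial_t E_\lambda-\mathcal{L}E_\lambda=\big(\lambda c-k(\lambda)\big)E_\lambda .
\]
Here we use $c=c^*_R$ and choose $\lambda\mapsto(\varphi^\lambda,\psi^\lambda)$ analytic; this is possible by simplicity of the principal eigenvalue, with a normalization fixed, for instance, by $\int_0^L(\varphi^\lambda+\psi^\lambda)=1$. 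Differentiating this identity in $\lambda$ at $\lambda=\lambda^*$ gives
\[
\partial_t(-\partial_\lambda E)-\mathcal{L}(-\partial_\lambda E)=-\big(c^*_R-k'(\lambda^*)\big)E_{\lambda^*}+\big(\lambda^*c^*_R-k(\lambda^*)\big)(-\partial_\lambda E).
\]
Both coefficients vanish, because $\lambda^*$ minimizes $k(\lambda)/\lambda$: this gives $k(\lambda^*)=\lambda^*c^*_R$ and, differentiating, $k'(\lambda^*)=c^*_R$. Hence $(U,V)$ solves the linear system exactly; we read the second component of \eqref{eq:critical-aux} with $\psi$ in place of the repeated $\varphi$. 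Since $E_{\lambda^*}$ also solves the linear system exactly, item 1 follows by linearity.

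For item 2, write $\underline u^*=U+E^{(1)}_\nu-\omega E^{(1)}_{\lambda^*}$, and similarly for $\underline v^*$. The linear part of the residual is
\[
\partial_t\underline u^*-\mathcal{L}_1[\underline u^*,\underline v^*]=\big(\nu c^*_R-k(\nu)\big)e^{-\nu(x-c^*_Rt)}\varphi^\nu,
\]
which is strictly negative, since $\nu>\lambda^*$ and strict convexity give $k(\nu)>\nu c^*_R$. It therefore remains to absorb the nonlinear terms $\kappa_u(\underline u^*+\underline v^*)\underline u^*+\beta(\underline u^*)^2$ into $(k(\nu)-\nu c^*_R)e^{-\nu\xi}\varphi^\nu$, where $\xi=x-c^*_Rt$. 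Explicitly,
\[
U=e^{-\lambda^*\xi}\big(\xi\varphi^{\lambda^*}-\partial_\lambda\varphi^\lambda|_{\lambda^*}\big),
\]
so for $\xi\geq1$ we get $|\underline u^*|,|\underline v^*|\leq C(1+\xi+\omega)e^{-\lambda^*\xi}$. The nonlinear terms are then bounded by $C(\xi+\omega)^2e^{-2\lambda^*\xi}$, and we need this to be at most $c_0e^{-\nu\xi}$, i.e.
\[
C(\xi+\omega)^2e^{-(2\lambda^*-\nu)\xi}\leq c_0 .
\]
On the region $\xi>\xi^*_\omega\approx\omega-C'$ we have $\xi+\omega\leq 2\xi+C$, so the left side is at most $C(\xi+1)^2e^{-(2\lambda^*-\nu)\xi}$, which tends to $0$ as $\xi\to\infty$. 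It is therefore small once $\omega$, and hence $\xi^*_\omega$, is large. This fixes $\omega^*$.

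For the sign claim, note that on $0\leq\xi\leq\xi^*_\omega$,
\[
e^{\lambda^*\xi}\underline u^*/\varphi^{\lambda^*}=\xi-\frac{\partial_\lambda\varphi}{\varphi^{\lambda^*}}+e^{-(\nu-\lambda^*)\xi}\frac{\varphi^\nu}{\varphi^{\lambda^*}}-\omega\leq\xi-\omega-\inf_y\Big[\frac{\partial_\lambda\varphi}{\varphi^{\lambda^*}}-\frac{\varphi^\nu}{\varphi^{\lambda^*}}\Big].
\]
Here we used $e^{-(\nu-\lambda^*)\xi}\leq1$ for $\xi\geq0$. The right side is $\leq0$ exactly when $\xi\leq\xi^*_\omega$, up to the precise sign convention in \eqref{eq:xi_omega^*}; I would adjust the definition, or the inequality, to be strict. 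The $v$-component is handled analogously, after taking the infimum over both components.

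The main obstacle is the bookkeeping in this last step. The threshold $\xi^*_\omega$ must simultaneously make $\underline u^*,\underline v^*$ negative below it and guarantee the absorption estimate above it. The polynomial factor $\xi$ makes the absorption depend on $\omega$ only through $\xi^*_\omega\sim\omega$, so the key is verifying that $\xi+\omega\lesssim\xi$ on the relevant region.
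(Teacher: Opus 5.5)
Your proposal is correct and follows essentially the same route as the paper. For item 1 you differentiate the $\lambda$-family of exponential solutions at $\lambda^*$ and use $k(\lambda^*)=\lambda^*c^*_R$, $k'(\lambda^*)=c^*_R$. For item 2 you absorb the quadratic terms, of size $(\xi+\omega)^2e^{-2\lambda^*\xi}$, into the negative defect $-(k(\nu)-\nu c^*_R)e^{-\nu\xi}\varphi^\nu$, using $\nu<2\lambda^*$ and $\omega\leq\xi+C$ on $\{\xi>\xi^*_\omega\}$; folding $\omega$ into $\xi$ this way is slightly cleaner than the paper's term-by-term treatment.

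Your handling of the sign claim, which the paper only asserts, is also right, and two points can be settled. For the $u$-component the convention in \eqref{eq:xi_omega^*} is exactly correct, and strictness comes for free since $e^{-(\nu-\lambda^*)\xi}<1$ for $\xi>0$ and $\xi^*_\omega>0$, so no adjustment is needed. For $\underline v^*$ you do need the threshold $\omega$ plus the minimum of the $\varphi$- and $\psi$-infima, and this does not disturb the absorption step.
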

\begin{proof}
	To obtain equality \eqref{eq:critical-solution-cooperative}, we first
    differentiate \eqref{eq:240125-1} with respect to $\lambda$ and evaluate at $\lambda=\lambda^*$,  which yields 
    \begin{equation}\label{eq:240125-3}
	\begin{aligned}\relax
	    {U}_t& = \big(\sigma(x){U}_x\big)_x + \big(r_u(x)-\mu_u(x)+c^*_{R}-k'(\lambda^*)\big){U} + \mu_v(x) {V} \\ 
	    &= \big(\sigma(x){U}_x\big)_x + \big(r_u(x)-\mu_u(x)\big){U} + \mu_v(x) {V}, \\
	    {V}_t  &= \big(\sigma(x){V}_x\big)_x + \big(r_v(x)-\mu_v(x)+ c^*_{R}-k'(\lambda^*)\big){V} + \mu_u(x) {U} \\
	    &=\big(\sigma(x){V}_x\big)_x + \big(r_v(x)-\mu_v(x)\big){V} + \mu_u(x) {U}.
	\end{aligned}
    \end{equation}
    Indeed $\lambda^*$ is the minimizer of $\lambda\mapsto\frac{k(\lambda)}{\lambda}$, and therefore  $c^*_{R}=k'(\lambda^*)$. 
	Thus, $(U,V)$ is a solution of the cooperative system \eqref{eq:critical-solution-cooperative}, and clearly $e^{\lambda^*(x-c^*_Rt)}\big(\varphi^{\lambda^*}(x), \psi^{\lambda^*}(x)\big)$ is also a solution of \eqref{eq:critical-solution-cooperative},  so $(\overline{u}^*,\overline{v}^*)$ solves \eqref{eq:critical-solution-cooperative} as well. 
	\medskip

	We turn to \eqref{eq:critical-sub-ineq}.
    Direct computations show
    \begin{equation*}
	\begin{aligned}\relax 
	    &\underline{u}^*_t=\big(\sigma(x)\underline{u}^*_x\big)_x + \big(r_u(x)-\mu_u(x)\big)\underline{u}^* + \mu_v(x) \underline{v}^* - (k(\nu)-\nu c^*_{R}) e^{-\nu(x-c^*_{R}t)}\varphi^\nu(x),\\
	    &\underline{v}^*_t=\big(\sigma(x)\underline{v}^*_x\big)_x + \big(r_v(x)-\mu_v(x)\big)\underline{v}^* + \mu_u(x) \underline{u}^* - (k(\nu)-\nu c^*_{R}) e^{-\nu(x-c^*_{R}t)}\psi^\nu(x),
	\end{aligned}
    \end{equation*}
    where we recall that $k(\nu)-\nu c^*_{R}>0$ since $\nu>\lambda^*$.

Now we show that, for $x-c^*_{R}t$ sufficiently large, the term $e^{-\nu(x-c^*_{R}t)}$ dominates the quadratic nonlinearities $(\underline{u}^*)^2$, $(\underline{v}^*)^2$ and   $\underline{u}^*\underline{v}^*$ which behave like $(x-c^*_{R}t)^2e^{-2\lambda^*(x-c^*_{R}t)}$. 
    Once this is shown, we see that $\big(\underline{u}^*,\underline{v}^*\big)$ is a 
    subsolution to \eqref{eq:auxiliary-below-Neumann} 
    for carefully chosen parameters.

    We remark that the leading term in $U(t, x)$ near $x=+\infty$ is $(x-c^*_{R}t)e^{-\lambda^*(x-c^*_{R}t)}$. 
    Direct computations show
    \begin{equation*}
	\begin{aligned}\relax 
	    &\underline{u}^*_t=\big(\sigma(x)\underline{u}^*_x\big)_x + \big(r_u(x)-\mu_u(x)\big)\underline{u}^* + \mu_v(x) \underline{v}^* - (k(\nu)-\nu c^*_{R}) e^{-\nu(x-c^*_{R}t)}\varphi^\nu(x),\\
	    &\underline{v}^*_t=\big(\sigma(x)\underline{v}^*_x\big)_x + \big(r_v(x)-\mu_v(x)\big)\underline{v}^* + \mu_u(x) \underline{u}^* - (k(\nu)-\nu c^*_{R}) e^{-\nu(x-c^*_{R}t)}\psi^\nu(x),
	\end{aligned}
    \end{equation*}
    where we recall that $k(\nu)-\nu c^*_{R}>0$.

    Computing the differential in $\lambda$,  $U(t, x)$ can be written as
	\begin{equation}
		U(t, x) = \left(1-\frac{1}{x-c^*_Rt}\frac{\partial_\lambda \varphi^{\lambda}|_{\lambda^*} (x)}{\varphi^{\lambda^*}(x)}\right) (x-c^*_R t)e^{-\lambda^*(x-c^*_R t)}\varphi^{\lambda^*}(x), 
	\end{equation}
	and similarly 
	\begin{equation}
		V(t, x) = \left(1-\frac{1}{x-c^*_Rt}\frac{\partial_\lambda \psi^{\lambda}|_{\lambda^*} (x)}{\psi^{\lambda^*}(x)}\right) (x-c^*_R t)e^{-\lambda^*(x-c^*_R t)}\psi^{\lambda^*}(x), 
	\end{equation}
	whenever $x-c^*_Rt>0$. Thus for $x-c^*_Rt\geq \xi_0:=\max\left(2\underset{x\in\mathbb{R}}{\sup}\frac{\partial_\lambda \varphi^{\lambda}|_{\lambda^*} (x)}{\varphi^{\lambda^*}(x)},-\underset{x\in\mathbb{R}}{\inf} \frac{\partial_\lambda \varphi^{\lambda}|_{\lambda^*} (x)}{\varphi^{\lambda^*}(x)}\right)$
	we have
    \begin{equation*}
	\frac{1}{2}(x-c^*_{R}t)e^{-\lambda^*(x-c^*_{R}t)}\varphi^{\lambda^*}(x)\leq U(t, x)\leq 2(x-c^*_{R}t)e^{-\lambda^*(x-c^*_{R}t)}\varphi^{\lambda^*}(x),
    \end{equation*}
    and similarly for $x-c^*_Rt\geq \xi_1:=\max\left(2\underset{x\in\mathbb{R}}{\sup}\frac{\partial_\lambda \psi^{\lambda}|_{\lambda^*} (x)}{\psi^{\lambda^*}(x)},-\underset{x\in\mathbb{R}}{\inf} \frac{\partial_\lambda \psi^{\lambda}|_{\lambda^*} (x)}{\psi^{\lambda^*}(x)}\right)$ we obtain
    \begin{equation*}
	\frac{1}{2}(x-c^*_{R}t)e^{-\lambda^*(x-c^*_{R}t)}\psi^{\lambda^*}(x)\leq V(t, x)\leq 2(x-c^*_{R}t)e^{-\lambda^*(x-c^*_{R}t)}\psi^{\lambda^*}(x).
    \end{equation*}
    Note that we have 
    \begin{equation}\label{u*v*-boundary}
	    \underline{u}^*(t, x)<0, \quad \underline{v}^*(t,x)<0 \quad \ \text{ whenever } \ \ 0\leq x-c^*_Rt\leq \xi_\omega^*,
    \end{equation}
    where $\xi_\omega^*$ is defined by \eqref{eq:xi_omega^*}, and that $\xi_\omega^*\geq \max(\xi_0, \xi_1)$ whenever 
    \begin{equation*}
	    \omega\geq \omega_0:=\max(\xi_0, \xi_1) + \sup_{x\in\mathbb{R}}\max\left(\frac{\varphi^{\nu}(x)-\partial_\lambda\varphi^{\lambda}|{\lambda^*}(x)}{\varphi^{\lambda^*}(x)},
	    \frac{\psi^{\nu}(x)-\partial_\lambda\psi^{\lambda}|{\lambda^*}(x)}{\psi^{\lambda^*}(x)}\right). 
    \end{equation*}
    By developing the square of \eqref{eq:critical-sub}, we obtain for $\xi\geq\max(\xi_0, \xi_1)$:
    \begin{align*}
	\underline{u}^*(t, x)^2 &\leq 3U(t, x)^2+3 e^{-2\nu(x-c^*_{R}t)}\varphi^\nu(x)^2 + 3\omega^2e^{-2\lambda^*(x-c^*_{R}t)}\varphi^{\lambda^*}(x)^2  \\ 
	    &\leq \big(12\varphi^{\lambda^*}(x)^2(x-c^*_R t)^2 + 3\omega^2\varphi^{\lambda^*}(x)^2  + 3\varphi^\nu(x)^2\big) e^{-2\lambda^*(x-c^*_{R}t)}, 
    \end{align*}
       and similarly 
    \begin{equation*}
	\underline{v}^*(t, x)^2 \leq 
	    \big(12\psi^{\lambda^*}(x)^2(x-c^*_R t)^2 + 3\omega^2\psi^{\lambda^*}(x)^2  + 3\psi^\nu(x)^2\big) e^{-2\lambda^*(x-c^*_{R}t)} .
    \end{equation*}
    Thus 
    \begin{align}
	\nonumber
	    \kappa_u(x)\big(\underline{u}^*+\underline{v}^*)\underline{u}^*&+\beta (\underline{u}^*)^2 
	\leq  \left(\frac{3}{2}\sup \kappa_u + \beta \right)(\underline{u}^*)^2 +\frac{1}{2}\sup \kappa_u(\underline{v}^*)^2 \\
	\nonumber
	    &\leq \big(C_0 (x-c^*_{R}t)^2+C_1 \omega^2+C_2\big) e^{-2\lambda^*(x-c^*_{R}t)}\\
	    \nonumber
	    &\leq \big(\tilde{C}_0 (x-c^*_{R}t)^2e^{-2(\lambda^*-\nu)(x-c^*_{R}t)}+\tilde{C}_1 \omega^2e^{-2(\lambda^*-\nu)(x-c^*_{R}t)}+\tilde{C}_2e^{-2(\lambda^*-\nu)(x-c^*_{R}t)}\big) \\
	    &\quad \times \big(k(\nu)-\nu c^*_{R}\big)e^{-\nu(x-c^*_{R}t)}\varphi^\nu(x),\label{eq:250923a}
    \end{align}
where the constants $\tilde{C}_0$, $\tilde{C}_1$ and $\tilde{C}_2$ are independent of $\omega$. In the right-hand side of the last inequality \eqref{eq:250923a}, the three terms can be controlled when $\omega$ is large:\\
\noindent$\bullet\quad  \tilde{C}_0 (x-c^*_{R}t)^2e^{-2(\lambda^*-\nu)(x-c^*_{R}t)}$: clearly $\xi^2 e^{-2(\lambda^*-\nu)\xi}\to 0 $ when $\xi\to \infty$, so there exists $\xi_3$ with the property that $\xi^2 e^{-2(\lambda^*-\nu)\xi}\leq \frac{1}{3\tilde{C}_0}$ for any $\xi\geq \xi_3$. Set $\omega_1$ such that $\xi_\omega^*\geq \xi_3$ for any $\omega\geq \omega_1$; then when $\omega\geq \omega_1$ we have
\begin{equation}\label{eq:250923b}
	\tilde{C}_0 (x-c^*_{R}t)^2e^{-2(\lambda^*-\nu)(x-c^*_{R}t)} \leq \frac{1}{3}, \qquad \,^\forall x-c^*_Rt \geq \xi_\omega^*. 
\end{equation}
\noindent$\bullet\quad  \tilde{C}_1 \omega^2e^{-2(\lambda^*-\nu)(x-c^*_{R}t)}$: Because of $\xi_\omega^*\geq \omega - C_1'$, we have $\tilde{C}_1 \omega^2e^{-2(\lambda^*-\nu)(x-c^*_{R}t)}\leq \tilde{C}_1 e^{-2(\lambda^*-\nu)(\omega-C_1')}$. Therefore for any $\omega\geq \omega_2:= C_1'+\frac{1}{2\lambda-\nu}\ln(3\tilde{C}_1)$ we have
\begin{equation}\label{eq:250923c}
	\tilde{C}_1 \omega^2e^{-2(\lambda^*-\nu)(x-c^*_{R}t)}\leq \frac{1}{3}, \qquad \,^\forall x-c^*_Rt \geq \xi_\omega^*. 
\end{equation}
\noindent$\bullet\quad \tilde{C}_2e^{-2(\lambda^*-\nu)(x-c^*_{R}t)}$: since, again,  $\xi_\omega^*\geq \omega - C_1'$, we have $\tilde{C}_2e^{-2(\lambda^*-\nu)(x-c^*_{R}t)}\leq \tilde{C}_2 e^{-2(\lambda^*-\nu)(\omega-C_1')}$. Therefore for any $\omega\geq \omega_3:= C_1'+\frac{1}{2\lambda-\nu}\ln(3\tilde{C}_2)$ we have
\begin{equation}\label{eq:250923d}
	\tilde{C}_2 e^{-2(\lambda^*-\nu)(x-c^*_{R}t)}\leq \frac{1}{3}, \qquad \,^\forall x-c^*_Rt \geq \xi_\omega^*. 
\end{equation}

Gathering \eqref{eq:250923a}, \eqref{eq:250923b}, \eqref{eq:250923c} and \eqref{eq:250923d},  we find that for any $\omega\geq \max(\omega_0, \omega_1, \omega_2, \omega_3)$ we have
    \begin{equation*}
	\kappa_u(x)\big(\underline{u}^*+\underline{v}^*)\underline{u}^*+\beta (\underline{u}^*)^2 \leq \big(k(\nu)-\nu c^*_{R}\big)e^{-\nu(x-c^*_{R}t)}\varphi^\nu(x) \ \ \text{for all}\  x-c^*_{R} t\geq \xi_\omega^*.
    \end{equation*}
    By a similar argument, we find that, if $\omega\geq \omega_4$, we have 
    \begin{equation*}
	\kappa_v(x)\big(\underline{u}^*+\underline{v}^*)\underline{v}^*+\beta (\underline{v}^*)^2 \leq \big(k(\nu)-\nu c^*_{R}\big)e^{-\nu(x-c^*_{R}t)}\psi^\nu(x) \ \ \text{for all}\  x-c^*_{R} t\geq \xi_\omega^*.
    \end{equation*}
We have checked that \eqref{eq:critical-sub-ineq} holds for $\omega\geq\omega^*:=\max(\omega_0, \omega_1, \omega_2, \omega_3, \omega_4)$. This finishes the proof of Lemma \ref{lem:critical-diff-ineq}.
\end{proof}

\begin{proposition}[Lower and upper barrier for $c=c^*_{R}$]\label{prop:critical}
    Let Assumption \ref{as:coop-comp} hold, let $\overline{K}$ be as in \eqref{eq:maxmincoeff} and let $M\geq 0$ be given. 
    Then there exist $\alpha^*>0$, $\omega>0$ and $\xi_0>0$ such that
    \[
    \underline{u}^*(t, \xi_0+ct)<0\ \ \hbox{and}\ \ \underline{v}^*(t, \xi_0+ct)<0,\quad \hbox{for all} \ \ t\geq 0,
    \] 
    and that, for any $\alpha>\alpha^*$, any $n\in\big\{0, \ldots, \left\lfloor\frac{M}{L}\right\rfloor\big\}$ and any solution $\big(u(t,x), v(t, x)\big)$ of \eqref{eq:main-sys-Neumann} satisfying
	\begin{subequations}\label{eq:critical-initial}
	    \begin{equation}\label{eq:critical-initial1}
		u_0(x)\geq 0,\ \  v_0(x)\geq 0\ \ \text{and} \ \ u_0(x)+v_0(x)\leq \overline{K}, \ \ \ {}^\forall x \geq -M, 
	    \end{equation}
	    \begin{equation}\label{eq:critical-initial2}
		u_0(x)\leq \overline{u}^*(0, x), \ \  v_0(x)\leq \overline{v}^*(0, x), \ \ \ {}^\forall x\geq \xi_0,
	    \end{equation}
	    \begin{equation}\label{eq:critical-initial3}
	    u_0(x)\geq\underline{u}^*(0, x+nL),\quad v_0(x)\geq\underline{v}^*(0, x+nL),   \ \ \ {}^\forall x\geq \xi_0 -nL,
	    \end{equation}
	\end{subequations} 
	it holds that 
	\begin{subequations}\label{eq:critical-sub-super}
	    \begin{equation}\label{eq:critical-sub-super-xgeq-M}
		u(t, x)\geq 0,\ \ v(t, x)\geq 0\ \ \text{and} \ \ u(t, x)+v(t, x)\leq \overline{K}, \ \   {}^\forall t\geq 0 \text{ and } {}^\forall x\geq -M+c^*_R t, 
	    \end{equation}
	    \begin{equation}\label{eq:critical-sub-super-super}
	    u(t, x)\leq \overline{u}^*(t, x), \ \  v(t, x)\leq \overline{v}^*(t, x), 
		  ~  {}^\forall t\geq 0 \text{ and } {}^\forall x\geq \xi_0+c^*_R t, 
	    \end{equation}
	    \begin{equation}\label{eq:critical-sub-super-sub}
		u(t,x)\geq \underline{u}^*(t, x+nL),\ \  v(t,x)\geq\underline{v}^*(t, x+nL), 
		  ~  {}^\forall t\geq 0 \text{ and } {}^\forall x\geq \xi_0-nL+c^*_R t. 
	    \end{equation}
	\end{subequations}
\end{proposition}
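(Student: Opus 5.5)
The plan is to treat the three groups of inequalities in \eqref{eq:critical-sub-super} separately. First, fix $\nu\in(\lambda^*,2\lambda^*)$ and take $\omega\ge\omega^*$ as in Lemma~\ref{lem:critical-diff-ineq}. Then define $\xi_0:=\xi^*_\omega$ from \eqref{eq:xi_omega^*}, enlarged if necessary so that $\xi_0>0$ (we may simply increase $\omega$). With this choice, \eqref{u*v*-boundary} gives $\underline{u}^*(t,\xi_0+c^*_Rt)<0$ and $\underline{v}^*(t,\xi_0+c^*_Rt)<0$. Because $(\underline{u}^*,\underline{v}^*)(t+L/c^*_R,x)=(\underline{u}^*,\underline{v}^*)(t,x-L)$, the same negativity holds for the shifted functions $\underline{u}^*(t,x+nL)$, $\underline{v}^*(t,x+nL)$ at $x=\xi_0-nL+c^*_Rt$.

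The bound \eqref{eq:critical-sub-super-xgeq-M} comes directly from the Neumann analogue of Proposition~\ref{prop:uniform-bound}.

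For the upper bound \eqref{eq:critical-sub-super-super} I would copy the proof of Proposition~\ref{prop:upper-barrier} on the moving half-line $x\ge\xi_0+c^*_Rt$. Since the nonlinear terms are nonpositive, $(u,v)$ is a subsolution of the cooperative linear system \eqref{eq:critical-solution-cooperative}, and by Lemma~\ref{lem:critical-diff-ineq}(1) $(\overline{u}^*,\overline{v}^*)$ solves that system. It remains to dominate $(u,v)$ on the lateral boundary $x=\xi_0+c^*_Rt$. There $\overline{u}^*$ and $\overline{v}^*$ are $(L/c^*_R)$-time periodic functions, equal to $e^{-\lambda^*\xi_0}\big[(\xi_0+\alpha)\varphi^{\lambda^*}(x)-\partial_\lambda\varphi^\lambda|_{\lambda^*}(x)\big]$ and the analogous expression with $\psi$. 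Because $\varphi^{\lambda^*}$ and $\psi^{\lambda^*}$ are bounded below by positive constants, choosing $\alpha^*$ large makes these boundary values at least $\overline{K}\ge u,v$. Positivity of $\overline{u}^*,\overline{v}^*$ on the whole region $x-c^*_Rt\ge\xi_0$ also follows once $\alpha^*$ is large. The comparison principle for cooperative systems on moving domains then gives \eqref{eq:critical-sub-super-super}.

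For the lower bound \eqref{eq:critical-sub-super-sub} I would follow Proposition~\ref{prop:lower-barrier} and use Lemma~\ref{lem:comparison-below-Neumann}. The subtlety is that $(\underline{u}^*,\underline{v}^*)$ can be small but nonnegative; in fact it is positive far to the right. So we compare $(u,v)$ with the solution $(\tilde u,\tilde v)$ of \eqref{eq:auxiliary-below-Neumann} started from $\min(u_0,K/2)$, $\min(v_0,K/2)$, which satisfies $\tilde u\le u$, $\tilde v\le v$. We then show $\tilde u\ge\underline{u}^*_+$, $\tilde v\ge\underline{v}^*_+$ with $\underline{u}^*_+:=\max(\underline{u}^*,0)$. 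For this we need $\sup\underline{u}^*,\sup\underline{v}^*\le K/2$ on $x-c^*_Rt\ge\xi_0$. The bound holds for $\omega$ large, because $\underline{u}^*\le 2(\xi-\omega+C)e^{-\lambda^*\xi}\varphi^{\lambda^*}$ with $\xi=x-c^*_Rt$, which is uniformly small.

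Lemma~\ref{lem:critical-diff-ineq}(2) supplies the differential inequality wherever $\underline{u}^*$ and $\underline{v}^*$ are positive, since that region lies in $\{\xi>\xi^*_\omega\}$. Whenever one component is negative, its positive part is zero there. The main obstacle is the mixed region where one component is positive and the other negative, because the comparison is not automatic there. I would handle it as in \cite[Lemma 9]{Griette-Matano-2025}, by a maximum-principle argument at the first touching point. Suppose $\tilde u$ touches $\underline{u}^*>0$ at some point. The coupling term $\mu_v\tilde v\ge\mu_v\underline{v}^*_+\ge\mu_v\underline{v}^*$ has the right sign, since the auxiliary system is cooperative in the range $[0,K/2]^2$ by the choice of $K$ and $\beta$ in \eqref{K-beta}. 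So the inequality from Lemma~\ref{lem:critical-diff-ineq} holds even with $\underline{v}^*$ replaced by $\underline{v}^*_+$. Finally, on the lateral boundary $x=\xi_0-nL+c^*_Rt$ both shifted barriers are negative, so the boundary condition is automatic. Combining this with $\tilde u\le u$ and $\tilde v\le v$ gives \eqref{eq:critical-sub-super-sub}.
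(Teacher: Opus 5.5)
Your proposal is correct and follows the paper's proof: the same three-part split, the same choice $\xi_0=\xi^*_\omega$ with $\omega\ge\omega^*$, $\alpha^*$ chosen so that $\overline{u}^*,\overline{v}^*\ge\overline{K}$ on the line $x=\xi_0+c^*_Rt$, and the lower bound obtained from Lemma~\ref{lem:comparison-below-Neumann} together with Lemma~\ref{lem:critical-diff-ineq}, with the $n$-shifts handled by negativity on the lateral boundary. The only deviation is your positive-part/first-touching argument for the ``mixed region''. The paper does not need it: Lemma~\ref{lem:critical-diff-ineq}(2) already gives the inequality for the signed pair $(\underline{u}^*,\underline{v}^*)$ on all of $\{x-c^*_Rt>\xi^*_\omega\}$, and the auxiliary nonlinearity has off-diagonal derivatives $\mu_v-\kappa_u u\ge 0$ and $\mu_u-\kappa_v v\ge 0$ for all $u,v\le K$, negative values included, so direct comparison suffices. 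Your explicit check that $\sup\underline{u}^*,\sup\underline{v}^*\le K/2$ on that region is a detail the paper leaves implicit.
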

	
\begin{proof}
   The claim \eqref{eq:critical-sub-super-xgeq-M} is a direct consequence of (a variant of) Proposition~\ref{prop:uniform-bound} and \eqref{eq:critical-initial1}. We next  
	consider \eqref{eq:critical-sub-super-super}. It follows from  Lemma \ref{lem:critical-diff-ineq} that $(\overline{u}^*,\overline{v}^*)$
     is a solution of the cooperative system \eqref{eq:critical-solution-cooperative}. On the other hand, $(u,v)$ is a subsolution of \eqref{eq:240125-1}. 
    Furthermore, for $\alpha$ sufficiently large we have  
    \begin{equation*}
	\overline{u}^*(t, \xi_0+c^*_{R}t) =\big((\xi_0+\alpha)\varphi^{\lambda^*}(\xi_0+c^*_{R}t)-\partial_\lambda \varphi^{\lambda^*}(\xi_0+c^*_{R}t)\big)e^{-\lambda^* \xi_0} \geq \overline{K} \geq u(t, \xi_0+c^*_Rt),  
    \end{equation*}
    and similarly $\overline{v}^*(t, \xi_0+c^*_{R}t)\geq \overline{K}\geq v(t, \xi_0+c^*_Rt)$.
    This, together with the inequality \eqref{eq:critical-initial2} and the comparison principle for the linear cooperative system \eqref{eq:240125-1}, proves \eqref{eq:critical-sub-super-super}.
   \medskip

    Next we prove \eqref{eq:critical-sub-super-sub} for $n=0$.
    By Lemma  \ref{lem:comparison-below-Neumann}, 
    in order to show that 
    $(\underline{u}, \underline{v}) $ is a 
    lower barrier for $(u, v)$, 
    it suffices to verify that $(\underline{u}, \underline{v}) $ is a subsolution of \eqref{eq:auxiliary-below-Neumann} in the range $x\geq \xi_0+c^*_R t$ and that $(u,v)\geq (\underline{u}, \underline{v})$ at $x=\xi_0+c^*_R t$ 
    for sufficiently large $\omega>0$ and $\xi_0$. This is exactly the conclusion of Lemma \ref{lem:critical-diff-ineq}, with $\omega>\omega^*$ and  $\xi_0=\xi_\omega^*$ defined by \eqref{eq:xi_omega^*}.
    The Lemma is proved for $n=0$.

    Precisely the same argument holds for $(\underline{u}^*(t, x+nL), \underline{v}^*(t, x+nL)\big)$ 
    with $\xi_0$ replaced by $\xi_0-nL$,
    since it satisfies the same differential inequalities as the case $n=0$ and since $\underline{u}^*(t, x+nL)<0$ and $\underline{v}^*(t, x+nL)<0$ at the boundary $x=\xi_0-nL+c^*_Rt$ for all $n=1, \ldots, \left\lfloor\frac{M}{L}\right\rfloor$. This latter claim holds because, by our choice of $\omega=\omega(\xi_0)$, we have
    $\underline{u}(t, x)<0$ and $\underline{v}(t, x)<0$ whenever $x-c^*_Rt=\xi_0$; moreover $\xi_0>0$, therefore $\xi_0-nL\geq -nL \geq -M$. This completes the proof of Lemma \ref{prop:critical}. 
    \end{proof}

\begin{proof}[Proof of Theorem \ref{thm:TW1}]
	Let $c^*_R> 0$ be the right spreading speed defined in \eqref{eq:speed}.
    The fact that there exists no traveling wave for $c<c^*_{R}$ is a direct consequence of the spreading property (Theorem~\ref{thm:main-lindet}). The existence of a traveling wave for $c\geq c^*_{R}$ will be shown by using the Schauder fixed-point Theorem and a limiting argument.
    
	We begin with the case $c> c^*_{R}$. Fix $c> c^*_{R}$, and let $\lambda$ be the smallest positive solution to ${\lambda c=k(\lambda)}$. We let $\overline{u}(t,x) $ and $\overline{v}(t,x) $ be the functions defined in Lemma \ref{prop:upper-barrier} by \eqref{eq:upperu} and $\underline{u}(t,x)$ and  $\underline{v}(t,x)$ be the function defined in Lemma \ref{prop:lower-barrier} by \eqref{eq:ul-u}.

	For $M>0$, let $N_M:=\lfloor\frac{M}{L}\rfloor$, 
	i.e., the largest integer not exceeding $\frac{M}{L}$, 
	and let $E_M$ be the (convex) set of all vector functions  $(u_0, v_0)\in BUC\big([-M, +\infty)\big)^2$ satisfying the following constraints:
	\begin{equation*}
	    \left.
	    \begin{aligned}
		u_0(x) &\geq 0, & \max_{n\in\{0, 1, \ldots, N_M\}}\underline{u}(0, x+nL)\leq u_0(x)&\leq \overline{u}(0,x), \\
		v_0(x) &\geq 0, & \max_{n\in\{0, 1, \ldots, N_M\}}\underline{v}(0, x+nL)\leq v_0(x)&\leq \overline{v}(0,x), \\
		& & u_0(x)+v_0(x)& \leq \overline{K}, 
	    \end{aligned}\ \ \ \right\}
	    \ \ \ {}^\forall x\geq -M,
	\end{equation*} 
	where $\overline{K}$ is as in \eqref{eq:maxmincoeff}.
	Note that $E_M$ is not empty. Indeed, since $\overline{u}(0,x)>\underline{u}(0, x)$ and since $\overline{u}(0,x+nL)$ is monotone decreasing with respect to $n\in\mathbb{N}$, we clearly have $\overline{u}(x,0)>\underline{u}(x+nL)$ for all $n\geq 0$, and the same holds between $\overline{v}$ and $\underline{v}$. 
	Next we define an 
	operator $Q^M:E_M\to BUC\big([-M, +\infty)\big)^2$ by 
	\begin{equation}\label{QM}
	Q^M(u_0, v_0)(x)=\left(u\left(\frac{L}{c}, x+L\right),  v\left(\frac{L}{c}, x+L\right)\right),
	\end{equation}
	where $\big(u(t, x),v(t, x)\big)$ is the solution of the system \eqref{eq:main-sys-Neumann} starting from the initial data $(u_0, v_0)$.

	Then, it follows from Lemmas \ref{prop:upper-barrier}  and  \ref{prop:lower-barrier}  that  $Q^M(E_M)\subset E_M$. Moreover 
	the  operator 
	$Q^M$ is compact.
	Thus, the Schauder fixed-point Theorem implies the existence of a fixed-point $\big(u^M, v^M\big)\in E_M$ such that $Q^M(u^M, v^M)=(u^M, v^M)$. 
	Again by 
	the parabolic regularity, there exists a sequence $M_n\to+\infty$ such that $u^{M_n} $ converges locally uniformly to a solution $\vect{u}^\infty$ to $ Q^\infty(u^\infty, v^\infty)=(u^\infty, v^\infty)$, which belongs to the set $E_\infty\subset BUC(\mathbb{R})^2$ defined by the constraints
	\begin{equation*}
	    \left.
	    \begin{aligned}
		u_0(x) &\geq 0, & \sup_{n\in\mathbb{N}}\underline{u}(0, x+nL)\leq u_0(x)&\leq \overline{u}(0,x), \\
		v_0(x) &\geq 0, & \sup_{n\in\mathbb{N}}\underline{v}(0, x+nL)\leq v_0(x)&\leq \overline{v}(0,x), \\
		& & u_0(x)+v_0(x)& \leq \overline{K}, 
	    \end{aligned}\ \ \ \right\}
	    \ \ \ {}^\forall x\in\mathbb{R}.
	\end{equation*}
	Note first that the equality $Q^\infty(u^\infty, v^\infty)=(u^\infty, v^\infty)$ is equivalent to \eqref{eq:TW-propagating}. Next, 
	since $\overline{u}(t,x)\to 0$ 
	and $\overline{v}(t,x)\to 0$ 
	as $x\to+\infty$, we have $u^\infty(t, x)\to 0 $,  
	$v^\infty(t, x)\to 0 $ as $x\to+\infty$,
	which proves \eqref{eq:TW-limits+inf}. The condition 
	\eqref{eq:TW-limits-inf} is satisfied since $\sup_{n\in\mathbb{N}}\underline{u}(t, x+nL)$ and $\sup_{n\in\mathbb{N}}\underline{v}(t, x+nL)$ are uniformly bounded below 
	by positive constants as 
	$x\to-\infty$. 
	Thus 
	$(u^\infty, v^\infty)$ is  the expected traveling wave. 
	\smallskip
	
	Next we consider the case $c=c^*_{R}>0$. In this case, we can basically repeat the same procedure, replacing $E_M$ by the set $E_M^*\subset BUC\big([-M, +\infty)\big)^2$ defined by the constraints
	\[
	\begin{array}{ll}
		u_0(x) \geq 0, \ \  v_0(x)\geq 0, \ \  u_0(x)+v_0(x)\leq \overline{K},  \quad & {}^\forall x \geq -M; \\[6pt]
	     u_0(x)\leq \overline{u}^*(0,x), \quad\  v_0(x) \leq \overline{u}^*(0,x), \quad & {}^\forall x\geq \xi_0; \\[6pt]
	      \underline{u}^*_M(x)\leq u_0(x),\quad\ \  \underline{v}^*_M(x)\leq v_0(x),\ \ & {}^\forall x\geq \xi_0-N_ML,
	     	     \end{array}
	     \]
	     where 
	     \[
	     \underline{u}^*_M(x):=\max\limits_{n\in\{0, 1, \ldots, N_M\}}\underline{u}^*(0, x+nL), \quad 
	     \underline{v}^*_M(x):=\max\limits_{n\in\{0, 1, \ldots, N_M\}}\underline{v}^*(0, x+nL).
	     \]
	     Note that $E^*_M\ne\varnothing$ since $\overline{u}^*(0,x)>\underline{u}^*(0,x)$ and  
	     $\overline{u}^*(0,x+nL)$ is monotone decreasing with respect to $n\in\mathbb{N}$.
	     The operator $Q^M:E_M^*\to E_M^*$ is defined by \eqref{QM}. Thus $Q^M$ possesses a fixed point in $E^*_M$ and its limit $(u^\infty,v^\infty)$ as 
	$M\to \infty$ belongs to the set $E_\infty^*$ defined by the following constraints:
	\[
	\begin{array}{ll}
		u_0(x) \geq 0, \ \  v_0(x)\geq 0, \ \  u_0(x)+v_0(x)\leq \overline{K},  \quad & {}^\forall x \in\R; \\[6pt]
	     u_0(x)\leq \overline{u}^*(0,x), \quad\  v_0(x) \leq \overline{u}^*(0,x), \quad & {}^\forall x\geq \xi_0; \\[6pt]
	      \displaystyle 
	      \sup_{n\in\mathbb{N}}\underline{u}^*(0, x+nL)\leq u_0(x),\ \  
		\sup_{n\in\mathbb{N}}\underline{v}^*(0, x+nL)\leq v_0(x), \ \  & {}^\forall x\in\R.
	     	     \end{array}
	     \]
As above, one easily sees that the vector function $(u^\infty,v^\infty)$ satisfies \eqref{eq:TW-propagating} and \eqref{eq:TW-limits}, therefore it is the desired traveling wave. 
This completes the proof of Theorem \ref{thm:TW1}.
\end{proof}

\begin{proof}[Proof of Theorem \ref{thm:TW1s}]
The proof of the existence of right traveling waves for any super-critical speed $c>c^*_R=0$ is precisely the same as the proof of Theorem~\ref{thm:TW1} for the case  $c>c^*_R$, so we omit it. We now focus on the existence of a right stationary wave.

Let us explain how to adapt the argument in the proof of Theorem~\ref{thm:TW1} for $c=c^*_R>0$ to the case where $c^*_R=0$. For $\tau>0$,  we consider the solution map $Q^M_\tau$ defined by 
\begin{equation}\label{QM-tau}
	Q^M_\tau(u_0, v_0)(x)=\left(u\left(\tau, x\right),  v\left(\tau, x\right)\right),
\end{equation}
where $\big(u(t, x),v(t, x)\big)$ is the solution of the system \eqref{eq:main-sys-Neumann} starting from the initial data $(u_0, v_0)$. Let $E^*_M$ be the set defined above. 
Since the subsolution $\big(\underline{u}^*, \underline{v}^*\big)$ and the supersolution $\big(\overline{u}^*, \overline{v}^*\big)$ are stationary, we have $Q^M_\tau(E^*_M)\subset E^*_M$ for any $\tau>0$. As a consequence, there exists a fixed point $\big(u^M_{0, \tau}(x), v^M_{0, \tau}(x)\big)$, satisfying $Q^M_\tau\big(u^M_{0,\tau}, v^M_{0, \tau}\big)(x)=\big(u^M_{0,\tau}(x), v^M_{0, \tau}(x)\big)$. The corresponding solution of \eqref{eq:main-sys}, which we denote by $\big(u^M_\tau(t, x), v^M_\tau(t,x)\big)$, is therefore $\tau$-periodic in time. We consider the family of such solutions obtained by setting $\tau=2^{-k}=:\tau_k\,(k\in\mathbb{N})$. Since $\big(u^M_{0, \tau_k}(x), v^M_{0, \tau_k}(x)\big) =\big(u^M_{\tau_k}(1, x), v^M_{\tau_k}(1, x)\big)  $, by the standard estimates on parabolic operators, the family $\{\big(u^M_{0, \tau_k}, v^M_{0, \tau_k}\big)\::\: k\in\mathbb{N}\}$ is relatively compact with respect to the uniform topology; therefore, up to extracting a subsequence, it converges uniformly to a limit $\big(u^M_{0, 0}(x), v^M_{0, 0}(x)\big)$ as $k\to+\infty$. Clearly $\big(u^M_{0, 0}(x), v^M_{0, 0}(x)\big)$ is time-periodic of period $2^{-k}$ for any $k\in\mathbb{N}$, therefore it is stationary. 
By taking the limit $M\to-\infty$ with the usual diagonal extraction procedure, we obtain a stationary wave for \eqref{eq:main-sys} that belongs to $E^*_\infty$. This completes the proof of Theorem \ref{thm:TW1s}
\end{proof}

%%%%%%%%%%%%%%%%%%%
\subsection{Proof of uniqueness}

Here we prove Theorem \ref{thm:TW3} on the uniqueness of traveling waves under the condition \eqref{eq:r/kappa<mu}.

\begin{proof}[Proof of Theorem \ref{thm:TW3}]
We just focus on right traveling waves, and deal with the case $c\geq c^*_R>0$. The case $c>c^*_R=0$ follows from precisely the same arguments.
	By Proposition \ref{prop:uniform-bound}, the set 
	\begin{equation}
		\mathcal{T}:=\left\{\big(u(x), v(x)\big)\in BUC(\mathbb{R})^2\: :\: u(x)\geq 0, v(x)\geq 0 \text{ and } u(x)+v(x)\leq \frac{r_{\textrm{max}}}{\kappa_{\textrm{min}}} \right\}
	\end{equation}
	is positively invariant under the semiflow generated by \eqref{eq:main-sys}.
	Furthermore, any nonnegative solution will eventually be attracted by $\mathcal{T}$, uniformly in $x\in\mathbb{R}$ as $t\to+\infty$. Consequently, every traveling wave solution is contained in $\mathcal{T}$.

	We  remark that, under the assumptions of the theorem, the system \eqref{eq:main-sys} is cooperative in $\mathcal{T}$. Indeed, by \eqref{eq:r/kappa<mu}, we have $u+v\leq \frac{\mu_{\min}}{\kappa_{\max}}$ in $\mathcal{T}$, hence
	\begin{equation*}
		\frac{\partial f_u}{\partial v}(x, u, v) = \mu_{v}(x)-\kappa_{u}(x) u \geq 0, \qquad  \frac{\partial f_v}{\partial u}(x, u, v) = \mu_{u}(x)-\kappa_{v}(x) v \geq 0,
	\end{equation*}
	where $f_{u}(x, u, v)$ and $f_{v}(x,u,v)$ denote, respectively, the reaction terms in the first and the second line of \eqref{eq:main-sys}. In particular, the classical comparison principle for cooperative parabolic systems holds true: given a subsolution $\big(\underline{u}(t, x), \underline{v}(t, x)\big)$ and a supersolution $\big(\overline{u}(t, x), \overline{v}(t, x)\big)$ satisfying 
	 $0\leq \underline{u}(T, x)\leq \overline{u}(T, x)$,  $0\leq \underline{v}(T, x)\leq \overline{v}(T, x)$ for some $T\in\mathbb{R}$ and for all $x\in\mathbb{R}$, and $\big(\overline{u}(t, \cdot), \overline{v}(t,\cdot)\big)\in\mathcal{T}$ for all $t\geq T$, 
	we have
	\begin{equation}
		\underline{u}(t, x)\leq \overline{u}(t, x),\text{ and }\underline{v}(t, x)\leq \overline{v}(t, x)\qquad  \,^\forall t>T \text{ and }\,^\forall x\in\mathbb{R}.
	\end{equation}
%%	Here, $\big(\underline{u}(t, x), \underline{v}(t, \cdot)\big)$ is a nonnegative subsolution of \eqref{eq:main-sys}, therefore satisfies the upper estimate from Proposition \ref{prop:uniform-bound}. Hence $\big(\underline{u}(t, \cdot), \underline{v}(t, \cdot)\big)\in \mathcal{T}$ for all $t\geq T$. 
	Furthermore if $\underline{u}(T, x)\not \equiv \overline{u}(T, x)$ or $\underline{v}(T, x)\not \equiv \overline{v}(T, x)$, then the strict inequalities $\underline{u}(t, x)<\overline{u}(t, x)$ and $\underline{v}(t, x)<\overline{v}(t, x)$ holds for any $t>T$ and $x\in\mathbb{R}$.
	By this remark and Theorem \ref{thm:TW2}, the proof then follows from a classical sliding argument, which we recall here.

	Let $\big(u^1(t, x), v^1(t,x)\big)$ and $\big(u^2(t, x), v^2(t,x)\big) $ be two traveling waves with speed $c\geq c^*_R$. 
	Since \eqref{eq:main-sys} is cooperative in $\mathcal{T}$ and the nonlinearity satisfies KPP-type conditions, 
	it is classical that there exists a unique positive stationary solution $\big(u^*(x), v^*(x)\big)$ of \eqref{eq:main-sys}. 
	Furthermore, {although our definition of traveling waves, Definition~\ref{def:TW}, only requires that the traveling wave be bounded below by a positive constant far behind the front, it is not difficult to show that any traveling wave in this broader sense converge to the positive steady state as $x\to -\infty$, if the system is entirely cooperative. This follows by the shifting argument and the comparison principle. Thus we have}
	\begin{equation}
		\lim_{x\to-\infty}\big(|u^i(t, x)-u^*(x)|+|v^i(t, x)-v^*(x)|\big) =0, \quad {}^\forall t\in\R,\ i=1,2.
	\end{equation}
	See for example, \cite[Theorem 2.1]{Wei-02} or \cite[Proposition 3.32]{Lia-Zha-10}.
	We remark that $\big(u^{1, \gamma}(t, x), v^{1, \gamma}(t, x)\big):=\gamma\big(u^1(t, x), v^{1}(t, x)\big)$ is a subsolution of \eqref{eq:main-sys} for all $\gamma\in(0, 1)$.  
	Fix $t\in\mathbb{R}$ arbitrarily and $x_0\ll -1$. 
	For $x\leq x_0$, we have $\big(u^2(t, x), v^2(t, x)\big)\geq \frac{1+\gamma}{2}\big(u^*(x), v^*(x)\big)$, and therefore  there exists $\epsilon>0$ such that 
	\begin{equation}\label{eq:260616a}
		\big(u^2(t, x), v^2(t, x)\big)\geq \big(u^{1, \gamma}(t', x)+\epsilon, v^{1, \gamma}(t', x)+\epsilon\big), \qquad ^{\forall} t'\in\mathbb{R} \text{ and }^\forall x\leq x_0.
	\end{equation}
	By Theorem \ref{thm:TW2} we know moreover that there exist constants $\alpha_1, \alpha_2>0$ such that 
	\begin{multline}
		\big(u^1(t, x), v^1(t, x)\big) \sim \alpha_1 e_{\lambda}(x-ct)\big(\varphi^\lambda(x), \psi^{\lambda}(x)\big), \text{ and } \\
		\big(u^2(t, x), v^2(t, x)\big) \sim \alpha_2 e_\lambda(x-ct)\big(\varphi^\lambda(x), \psi^{\lambda}(x)\big),
		 \text{ as } x\to+\infty,
	\end{multline}
	where $e_\lambda(\xi)=e^{-\lambda \xi}$ if $c>c^*_R$ and $e_\lambda(\xi)=\xi e^{-\lambda^*\xi}$ if $c=c_R$.
	As a consequence, there exists $t_0$ such that 
	\begin{equation*}
		\big(u^2(t, x), v^2(t, x)\big)\geq \big(u^{1, \gamma}(t-t_0, x), v^{1, \gamma}(t-t_0, x)\big), \qquad ^\forall x\in\mathbb{R}.
	\end{equation*}
	Define 
	\begin{equation*}
		t_*:=\inf\{t'\in\mathbb{R}\: :\: \big(u^2(t, x), v^2(t, x)\big)\geq \big(u^{1, \gamma}(t-t', x), v^{1, \gamma}(t-t', x)\big), \, ^\forall x\in\mathbb{R}\}.
	\end{equation*}
	Then $t_*\leq t_0<+\infty$. By the definition of $t_*$ we have $u^2(t, x)\geq u^{1, \gamma}(t-t_*, x)$ and $v^2(t, x)\geq v^{1, \gamma}(t-t_*, x)$ for all $x\in\mathbb{R}$. Moreover, $u^2$ and $u^{1, \gamma}$ are not identical because they have different behaviors as $x\to-\infty$; therefore by the strong comparison principle we have the strict inequality 
	\begin{equation*}
		u^2(t, x)=u^2(t+L/c, x+L) > u^{1, \gamma}(t-t_*+L/c, x+L)=u^{1, \gamma}(t-t_*, x), \qquad ^\forall x\in\mathbb{R},
	\end{equation*}
	where we have used the pulsating relation \eqref{eq:TW-propagating}; 
	and similarly we conclude that $  v^2(t, x)>v^{1, \gamma}(t-t_*, x) $ for all $x\in\mathbb{R}$. In view of this,  \eqref{eq:260616a} and the definition of $t_*$, there exists a sequence $x_n\to +\infty$ such that either 
	$\lim_{n\to+\infty}\frac{u^{1, \gamma}(t-t_*, x_n)}{u^2(t,x_n)}=1$, or 
	$\lim_{n\to+\infty}\frac{v^{1, \gamma}(t-t_*, x_n)}{v^2(t,x_n)}=1$. 
	In both cases, we obtain 
	\begin{equation*}
		1=\lim_{n\to+\infty} \frac{\gamma\alpha_1 e_\lambda\big(x_n-c(t-t_*)\big)}{\alpha_2 e_\lambda(x_n-ct)} = \gamma \frac{\alpha_1}{\alpha_2} e^{\lambda c t_*},
	\end{equation*}
	so that 
	\begin{equation*}
		t_* = \frac{1}{\lambda c} \ln\left(\frac{\alpha_2}{\gamma\alpha_1}\right).
	\end{equation*}
	By taking the limit $\gamma\to 1$ and up to redefining $t_*$, we have shown the property that 
	\begin{equation}\label{eq:260616b}
		\big(u^1(t-t', x),v^1(t-t', x)\big) \leq \big(u^2(t, x), v^2(t, x)\big), \qquad^\forall t'\geq t_*:= \frac{1}{\lambda c}\ln\left(\frac{\alpha_2}{\alpha_1}\right).
	\end{equation}
	Exchanging the roles of $\big(u^1(t, x), v^1(t, x)\big) $ and $ \big(u^2(t, x), v^2(t, x)\big)$ and arguing similarly, we obtain
	\begin{equation}\label{eq:260616c}
		\big(u^1(t, x),v^1(t, x)\big) \geq \big(u^2(t-t', x), v^2(t-t', x)\big), \qquad^\forall t'\geq t'_*:= \frac{1}{\lambda c}\ln\left(\frac{\alpha_1}{\alpha_2}\right)=-t_*.
	\end{equation}
	This proves that $\big(u^1(t-t_*, x), v^1(t-t_*, x) \big)\equiv \big(u^2(t, x), v^2(t, x)\big)$, for all $x\in\mathbb{R}$, and consequently this equality holds for any $t\in\mathbb{R}$. This proves uniqueness up to a time shift. 

	Now, we apply \eqref{eq:260616b} to the pair $\big(u^1(t, x), v^1(t, x)\big)= \big(u^2(t, x), v^2(t, x)\big)$. In this case $\alpha_1=\alpha_2$, therefore we have $t_*=0$. Hence \eqref{eq:260616b} shows that $\big(u^1(t, x), v^1(t, x)\big)$ is time-increasing. 
	This finishes the proof or Theorem \ref{thm:TW3}.
\end{proof}

\bigskip
%%%%%%%%%%%%%%%%%%%%%%%%%%%%%%%%%%%%%%%%
%%%%%%%%%%%%%%%%%%%%%%%%%%%%%%%%%%%%%%%%

\section{Exponential decay of traveling waves}\label{s:decay}

In this section we focus on the exponential decay of traveling waves along the leading edge, as stated in Theorem \ref{thm:TW2}. 
Throughout this section we 
fix $c>0$. 
Our argument is largely inspired by the scalar case studied by Hamel \cite{Ham-08}. 
As in \cite{Ham-08}, we make
 a change of variable to obtain compactness in the ``space variable'', by introducing the following functions:
\begin{equation}
    U(t,x) = u\left(\frac{t+x}{c}, x\right); \ V(t, x)=v\left(\frac{t+x}{c}, x\right).
\end{equation}
It is easily seen from the pulsating relation \eqref{eq:TW-propagating} that the new functions $U$ and $V$ are $L$-periodic with respect to the $x$ variable and satisfy
\begin{equation}\label{eq:TW-newvar}
    \begin{split}
	%-\sigma(x)\big(c^{-2}U_{tt}-2c^{-1}U_{tx}+U_{xx}\big)-\sigma_x(x)\big(-c^{-1}U_t+U_x\big)+c^{-1}U_t &= f_u(x, U, V),  \\
	%-\sigma(x)\big(c^{-2}V_{tt}-2c^{-1}V_{tx}+V_{xx}\big)-\sigma_x(x)\big(-c^{-1}V_t+V_x\big)+c^{-1}V_t &= f_v(x, U, V).  
	cU_t-\sigma(x)\big(U_{tt}-2U_{tx}+U_{xx}\big)+\sigma_x(x)\big(U_t-U_x\big)&= f_u(x, U, V),  \\
	cV_t-\sigma(x)\big(V_{tt}-2V_{tx}+V_{xx}\big)+\sigma_x(x)\big(V_t-V_x\big) &= f_v(x, U, V).  
    \end{split}
\end{equation}
Moreover, given $U(t, x)$ and $V(t, x)$, we can recover the original functions by the change of variables $u(t, x) = U\left(ct-x, x\right)$ and $v(t, x) = V\left(ct-x, x\right)$. 
With these functions $U, V$, the statements \eqref{eq:cv-fronts-super}, \eqref{eq:cv-fronts-critical} of Theorem \ref{thm:TW2} are expressed in the following form:
\begin{equation}\label{eq:decay-U,V}
\lim_{t\to-\infty}\frac{U(t, x)}{e(t)\varphi^\lambda(x)}=
\lim_{t\to-\infty}\frac{V(t, x)}{e(t)\psi^\lambda(x)}=\alpha \quad\hbox{uniformly in}\ \ x\in\R,
\end{equation}
where $e(t)= e^{\lambda_c t}$ and $\lambda=\lambda_c$ if $c>c^*_R$, while $e(t)= -t e^{\lambda^*t}$ and $\lambda=\lambda^*$ if $c=c^*_R$.

Let us give a brief summary of the structure of the proof.
Although  the system \eqref{eq:TW-newvar} is neither parabolic nor uniformly elliptic, the strong maximum principle still holds for \eqref{eq:TW-newvar}, because the original equation is uniformly parabolic. Therefore, we can establish the sweeping principle (Proposition \ref{prop:sweeping-method}), which applies to a family of  subsolutions  $\big(\underline{U}_\mu, \underline{V}_\mu\big)$ and supersolutions $\big(\overline{U}_\mu, \overline{V}_\mu\big)$, to show that $\big(\underline{U}_0, \underline{V}_0\big)\leq \big(\overline{U}_0, \overline{V}_0\big)$ implies $\big(\underline{U}_1, \underline{V}_1\big)\leq \big(\overline{U}_1, \overline{V}_1\big)$.
 Then, we use this result to derive an analogue of the sliding method (Proposition \ref{prop:comparison-rescaled}). 
 These two Propositions, 
 along with the Harnack inequality derived in Lemma \ref{lem:Harnack}, constitute our main tools to establish the rate of decay along the leading edges of traveling waves.

Then, we start analyzing the exponential behavior of the leading edges of traveling waves. Proposition~\ref{prop:lower-bound-tail} is devoted to a rough exponential lower bound, while Proposition~\ref{prop:upper-bound-tail} establishes a rough exponential upper bound. The proofs are quite similar and rely on the same idea. For the lower bound, suppose by contradiction that there exists a sequence of points $(t_n, x_n)$ with $t_n\to -\infty$ such that $\big(U(t_n), V(t_n)\big)$ converge to 0 faster than expected, i.e.
\begin{equation*}
    \min\big(U(t_n, x_n), V(t_n, x_n)\big) \leq C_n e^{\lambda t_n},
\end{equation*}
where  $\lambda$ is the smallest positive solution of the equation $\lambda c=k(\lambda)$ (here we explain the argument for super-critical speeds, 
which is simpler). By using a supersolution of the type $ \big(\overline{U}(t, x), \overline{V}(t, x)\big):= e^{\lambda t}\big(\varphi^\lambda(x), \psi^\lambda(x)\big)$ and the Harnack inequality, we can use the sliding method on the interval $[t_n, 0] $ to show that 
\begin{equation*}
    \min\big(U(0, x_n), V(0, x_n)\big) \leq \tilde{C}_n ,
\end{equation*}
where $\tilde{C}_n$ converges to $0$ as $n\to+\infty$. Taking the limit, this is obviously a contradiction, 
since $\big(U(0, x), V(0, x)\big)$ are uniformly positive because of the periodicity.
We proceed similarly for the upper bound. The technical details are slightly more involved for the critical case $c=c^*_R$. To summarize, we use a supersolution to derive a rough lower bound, and a subsolution to derive a rough upper bound, which may sound slightly unusual. As we shall see below, for a more refined lower bound, we use a family of subsolutions.

In Lemma \ref{lem:exponential-decay-minimum}, we prove the stronger property that 
\[
    \inf_{x\in\mathbb{R}}\min\left(\frac{U(t, x)}{e^{\lambda t }\varphi^\lambda(x)}, \frac{V(t, x)}{e^{\lambda t}\psi^\lambda(x)}\right)
\]
converges to a positive limit, 
say $\alpha$, as $t\to -\infty$.
This is done by a family of subsolutions and the sweeping method (Proposition \ref{prop:sweeping-method}). To apply the sweeping method, we have to show that the starting member of the family of subsolutions lies below the traveling wave, which is guaranteed by the rough lower bound in Proposition \ref{prop:lower-bound-tail}. Again, the method is the same for the critical speed, but the computations are more involved. 
Note that, by shifting the time, we may assume without loss of generality that $\alpha=1$ in the above limit.

Finally we prove \eqref{eq:decay-U,V} with $\alpha=1$ by showing that
\[
    \sup_{x\in\mathbb{R}}\max\left(\frac{U(t, x)}{e^{\lambda t }\varphi^\lambda(x)}, \frac{V(t, x)}{e^{\lambda t}\psi^\lambda(x)}\right)\to 1\quad\hbox{as}\ \ t\to-\infty.
\]
This is done by a contradiction argument  and the strong maximum principle.

The above strategy is, for the most part,  similar to \cite{Ham-08}. However, there are  some important differences.  
Among other things, since the traveling wave does not necessarily lie in the cooperative region, we do not know if the traveling wave is increasing in $t$ entirely, which is in marked contrast with the scalar case. 
Another difficulty, which is also seen in the construction of the traveling wave in section \ref{s:existence}, is that the two components of the subsolutions $\big(\underline{U}(t, x), \underline{V}(t, x)\big)$ do not necessarily become negative at the same position, therefore, we have to pay attention to the behavior of $\big(\underline{U}(t, x), \underline{V}(t, x)\big)$ even where they are negative.  

We now proceed to the proof. 
We start with  a comparison principle. 
\begin{proposition}[The sweeping method]\label{prop:sweeping-method}
	Let 
	$\sigma $ be a $C^1$, positive and $L$-periodic function, 
	and $\big(\overline{U}_\mu(t, x), \overline{V}_\mu(t, x)\big)$ and $\big(\underline{U}_\mu(t, x), \underline{V}_\mu(t, x)\big)$ be continuous with respect to $\mu\in [0,1]$,  $L$-periodic in $x$ and satisfy respectively
    \begin{equation}\label{eq:newvar-super}
	\begin{split}
	&c\overline{U}_t-\sigma(x)\big(\overline{U}_{tt}-2\overline{U}_{tx}+\overline{U}_{xx}\big)+\sigma_x(x)\big(\overline{U}_t-\overline{U}_x\big) \geq g_u(x, \overline{U}, \overline{V}),  \\
	&c\overline{V}_t-\sigma(x)\big(\overline{V}_{tt}-2\overline{V}_{tx}+\overline{V}_{xx}\big)+\sigma_x(x)\big(\overline{V}_t-\overline{V}_x\big) \geq g_v(x, \overline{U}, \overline{V}),  
	\end{split}
    \end{equation}
    and
    \begin{equation}\label{eq:newvar-sub}
	\begin{split}
	&c\underline{U}_t-\sigma(x)\big(\underline{U}_{tt}-2\underline{U}_{tx}+\underline{U}_{xx}\big)+\sigma_x(x)\big(\underline{U}_t-\underline{U}_x\big) \leq g_u(x, \underline{U}, \underline{V}),  \\
	&c\underline{V}_t-\sigma(x)\big(\underline{V}_{tt}-2\underline{V}_{tx}+\underline{V}_{xx}\big)+\sigma_x(x)\big(\underline{V}_t-\underline{V}_x\big) \leq g_v(x, \underline{U}, \underline{V}),  
	\end{split}
    \end{equation}
	in the classical sense, for all $\mu\in[0,1]$, $t\in[T_1, T_2]$ and $x\in\mathbb{R}$, where $g=(g_u, g_v)$ is a cooperative nonlinearity that is also locally Lipschitz  in $(U, V)$  and $L$-periodic in $x$. Suppose that the following three  statements hold true.  
	    \begin{subequations}\label{eq:250928a}
	    \begin{equation}\label{eq:250928aa}
		    \underline{U}_\mu(T_1, x)<\overline{U}_\mu(T_1,x),\quad  \underline{V}_\mu(T_1, x)<\overline{V}_\mu(T_1,x), \,^\forall x\in\mathbb{R}\text{ and }\,^\forall\mu\in [0,1), 
	    \end{equation}
		\begin{equation}\label{eq:250928ab}
		    \underline{U}_\mu(T_2,x)<\overline{U}_\mu(T_2, x),\quad  \underline{V}_\mu(T_2,x)<\overline{V}_\mu(T_2, x), \,^\forall x\in \mathbb{R}\text{ and }\,^\forall\mu\in [0,1),
	    \end{equation}
	    \begin{equation}\label{eq:250928ac}
		    \underline{U}_0(t, x)\leq\overline{U}_0(t,x),\quad  \underline{V}_0(t, x)\leq\overline{V}_0(t,x), \,^\forall(t, x)\in [T_1, T_2]\times\mathbb{R}. 
	    \end{equation}
	    \end{subequations}
    Then, 
    \begin{equation}\label{eq:250928b}
	\underline{U}_1(t, x)\leq \overline{U}_1(t, x)  \text{ and } \underline{V}_1(t, x)\leq \overline{V}_1(t, x), \text{ for all }(t, x)\in[T_1, T_2]\times \mathbb{R}.
    \end{equation}

%	If moreover $g=(g_u, g_v)$ is strongly coupled in the sense that $v\mapsto g_u(x, u, v)$ and $u\mapsto g_v(x,u,v)$ are strictly increasing 
%	and satisfies $g_u(x, 0, 0)\geq 0$,  $g_v(x, 0, 0)\geq 0$,
	If $\overline{U}_\mu>0$ and $\overline{V}_\mu> 0 $ for all $\mu\in[0, 1)$,  
	then the inequalities \eqref{eq:newvar-sub} need to be satisfied   only on the set of $(\mu, t, x)$ where either $\underline{U}_{\mu}(t, x)> 0$ or $\underline{V}_\mu(t, x)> 0$.
\end{proposition}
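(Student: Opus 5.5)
We use a continuation argument in $\mu$. Let
\[
\mathcal{M}:=\big\{\mu\in[0,1]:\ \underline{U}_\mu\leq\overline{U}_\mu,\ \underline{V}_\mu\leq\overline{V}_\mu \text{ on } [T_1,T_2]\times\R\big\}.
\]
By \eqref{eq:250928ac} we have $0\in\mathcal{M}$. Continuity in $\mu$ makes $\mathcal{M}$ closed: the inequalities pass to pointwise limits. If we show $\mathcal{M}$ is open in $[0,1)$, connectedness of $[0,1)$ gives $[0,1)\subset\mathcal{M}$, and closedness then gives $1\in\mathcal{M}$, which is \eqref{eq:250928b}.

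\textbf{Step 1 (strict ordering on $[0,1)$).} Let $\mu\in\mathcal{M}\cap[0,1)$ and set $W=\overline{U}_\mu-\underline{U}_\mu\geq0$, $Z=\overline{V}_\mu-\underline{V}_\mu\geq0$. Since $g$ is cooperative and locally Lipschitz, subtracting \eqref{eq:newvar-sub} from \eqref{eq:newvar-super} gives a linear system
\[
cW_t-\sigma(W_{tt}-2W_{tx}+W_{xx})+\sigma_x(W_t-W_x)-a(t,x)W\geq b(t,x)Z,
\]
with bounded coefficients and $b\geq0$, and the symmetric inequality for $Z$. The operator is degenerate elliptic in $(t,x)$. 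In the original variables $(s,y)=((t+x)/c,\,x)$, however, it is exactly the uniformly parabolic operator $\partial_s-(\sigma\partial_y)_y$. The $L$-periodicity in $x$ lets us work on a compact set. We then apply the strong maximum principle for parabolic inequalities in those variables, on the domain corresponding to $T_1<t<T_2$. This yields: if $W$ vanishes at an interior point, then $W\equiv0$ on a region reaching the boundary $t=T_1$ or $t=T_2$. That contradicts \eqref{eq:250928aa}--\eqref{eq:250928ab}, which give strict inequality there; the same holds for $Z$. Hence $W,Z>0$ on $[T_1,T_2]\times\R$, and by periodicity and compactness $W,Z\geq\eta>0$.

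\textbf{Step 2 (openness).} Uniform continuity in $\mu$ on the compact set $[T_1,T_2]\times[0,L]$ extends to all $x$ by periodicity. So for $\mu'$ close to $\mu$ the differences stay positive, and $\mu'\in\mathcal{M}$.

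\textbf{Main obstacle.} Step 1 is where the difficulty lies. First, we must check carefully that the maximum principle transfers through the change of variables. The domain $\{T_1<ct-x<T_2\}$ is a slanted strip in the $(s,y)$-plane, not a time slab. So we should apply the strong maximum principle on the connected components of backward parabolic neighborhoods, verifying that a zero of $W$ propagates along the backward cone to the strip boundary. Second, for the final claim, suppose \eqref{eq:newvar-sub} holds only where $\underline{U}_\mu>0$ or $\underline{V}_\mu>0$. At a touching point where $W=0$, positivity of $\overline{U}_\mu$ forces $\underline{U}_\mu=\overline{U}_\mu>0$, so the inequality is available at that point and in a neighborhood of it. The local strong maximum principle argument then still applies. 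The set where it holds is open and contains the zero set of $W$, so the propagation argument goes through unchanged.
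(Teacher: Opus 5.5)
Your proposal is correct and follows essentially the same route as the paper. The paper phrases the continuity argument through $\mu^*:=\sup\{\mu:\ \underline U_\mu\le\overline U_\mu,\ \underline V_\mu\le\overline V_\mu\}$ and extracts a touching point $(t^*,x^*)$ by compactness, whereas you use an open--closed argument with a uniform margin $\eta$. The key step is identical in both: apply the strong maximum principle to the difference in the original parabolic variables on the slanted strip, and use the strict inequalities at $t=T_1,T_2$ to rule out interior touching. Your treatment of the restricted case is also what the paper's final paragraph implicitly relies on: the subsolution inequality is only needed near the zero set of $W$, and that set lies in $\{\underline U_\mu>0\}$ because $\overline U_\mu>0$.
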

\begin{proof}
	Let us define
    \begin{equation}\label{eq:250929a}
	\mu^*:=\sup\{\mu\geq 0\: :\: \underline{U}_\mu(t, x)\leq \overline{U}_\mu(t, x) \text{ and } \underline{V}_\mu(t, x)\leq \overline{V}_\mu(t, x) , \,^\forall (t,x)\in [T_1, T_2]\}. 
    \end{equation}
	Our goal is to show that $\mu^*=1$, which would establish \eqref{eq:250928b}. Assume by contradiction that $\mu^*<1$. Then there exists a sequence of points $(\mu_n, t_n, x_n)\in (\mu^*, 1]\times [T_1, T_2]\times \mathbb{R}$ with $\mu_n\to \mu^*$ as $n\to+\infty$, such that  
    \begin{equation}
	    \text{ either } \underline{U}_{\mu_n}(t_n, x_n)>\overline{U}_{\mu_n}(t_n, x_n) \text{ or } \underline{V}_{\mu_n}(t_n, x_n)>\overline{V}_{\mu_n}(t_n, x_n) , \qquad \,^\forall n\in\mathbb{N}.
    \end{equation}
	We will assume the former, namely that $\underline{U}_{\mu_n}(t_n, x_n)>\overline{U}_{\mu_n}(t_n, x_n)$; the argument is similar if $\underline{V}_{\mu_n}(t_n, x_n)>\overline{V}_{\mu_n}(t_n, x_n)$. By using the compactness of  $(t, x)\in [T_1, T_2]\times[0,L]$, we may assume that $t_n\to t^*\in[T_1, T_2]$ and $x_n\to x^*\in[0,L]$. Using \eqref{eq:250929a} we deduce that 
    \begin{equation}\label{eq:250929b}
	    \underline{U}_{\mu^*}(t^*, x^*)=\overline{U}_{\mu^*}(t^*, x^*) \text{ and } \underline{V}_{\mu^*}(t^*, x^*)\leq\overline{V}_{\mu^*}(t^*, x^*).
    \end{equation}
    Next, we deduce from our assumption \eqref{eq:250928aa} that $t^*>T_1$ and we deduce from \eqref{eq:250928ab} that $t^*<T_2$. 
    \medskip

	We define the functions $\overline{u}({t}, x):=\overline{U}_{\mu^*}(ct-x, x)$, $\overline{v}({t}, x):=\overline{V}_{\mu^*}(ct-x, x)$, $\underline{u}({t}, x):=\underline{U}_{\mu^*}(ct-x, x)$, $\underline{v}({t}, x):=\underline{V}_{\mu^*}(ct-x, x)$ in the strip $\frac{T_1+{x}}{c}\leq {t}\leq \frac{T_2+{x}}{c}$ (and $x\in\mathbb{R}$). Then $(\overline{u}, \overline{v})$ and $(\underline{u}, \underline{v})$ satisfy 
    \begin{equation}
	\begin{aligned}
	    \overline{u}_t - \big(\sigma(x)\overline{u}_x\big)_x & \geq g_u(x, \overline{u}, \overline{v}), \\
	    \overline{v}_t - \big(\sigma(x)\overline{v}_x\big)_x &\geq g_v(x, \overline{u}, \overline{v}), 
	\end{aligned}
    \end{equation}
and 
    \begin{equation}
	\begin{aligned}
	    \underline{u}_t - \big(\sigma(x)\underline{u}_x\big)_x & \leq g_u(x, \underline{u}, \underline{v}), \\
	    \underline{v}_t - \big(\sigma(x)\underline{v}_x\big)_x &\leq g_v(x, \underline{u}, \underline{v}). 
	\end{aligned}
    \end{equation}
	It follows from \eqref{eq:250929b} that $\overline{u}(\tilde{t}^*+\tau^*/c, x^*) = \underline{u}(\tilde{t}^*, x^*)$ and $\overline{v}(\tilde{t}^*+\tau^*/c, x^*) \leq  \underline{v}(\tilde{t}^*,x^*)$, where $\tilde{t}^*:=\frac{t^*+x^*}{c}$; and the point $(\tilde{t}^*, x^*)$ is  interior to the strip $\frac{T_1+{x}}{c}< {t}< \frac{T_2-\tau^*+{x}}{c}$. It follows from the strong maximum principle for cooperative parabolic systems (see e.g. \cite[Chap. 3 Theorem 13 p.190]{Pro-Wei-1984}) that
	\begin{equation}\label{eq:251002a}
	\overline{u}(t/c, x)\equiv\underline{u}(t, x), \text{ for all }(t, x) \text{ such that } \frac{T_1+{x}}{c}< {t}< \frac{T_2+{x}}{c}.
    \end{equation}
	Coming back to $\overline{U}_{\mu^*}$ and $\underline{U}_{\mu^*}$ and using the periodicity in $x$, 
	one gets $\overline{U}_{\mu^*}(t, \cdot)\equiv \underline{U}_{\mu^*}(t, \cdot)$ for $t\in [T_1, t^*]$, which contradicts \eqref{eq:250928aa}. We have shown that $\mu^*=1$, which proves \eqref{eq:250928b}. 

	Moreover, if we assume that $\overline{U}_\mu>0$ and $\overline{V}_\mu>0$ for all $\mu\in[0,1)$, then it is clear that the same argument as above applies, if we simply assume that $(\underline{U}_\mu, \underline{V}_\mu)$ satisfies the inequalities \eqref{eq:newvar-sub} in the region where either  $\underline{U}_{\mu}(t, x)> 0$ or $\underline{V}_\mu(t, x)> 0$. This proves the last part of Proposition \ref{prop:sweeping-method}. The proof is complete. 
\end{proof}
A direct consequence of Proposition \ref{prop:sweeping-method} is the following result, known as the ``sliding method''.
\begin{proposition}[The sliding method]\label{prop:comparison-rescaled}
    Let Assumption \ref{as:coop-comp} hold true, and $\big(\overline{U}(t, x), \overline{V}(t, x)\big)$ and $\big(\underline{U}(t, x), \underline{V}(t, x)\big)$ be $L$-periodic in $x$ and satisfy  
	respectively \eqref{eq:newvar-super} and \eqref{eq:newvar-sub}  
	in the classical sense, for all $t\in[T_1, T_2]$ and $x\in\mathbb{R}$, where $g=(g_u, g_v)$ is cooperative nonlinearity that is also locally Lipschitz  in $(U, V)$  and $L$-periodic in $x$. Suppose that: 
    \begin{enumerate}\label{prop:comparison-monotone}
	\item Either 
	    \begin{subequations}\label{eq:250708a}
	    \begin{equation}\label{eq:250708aa}
		\underline{U}(T_1, x)<\overline{U}(t,x),\quad  \underline{V}(T_1, x)<\overline{V}(t,x), \,^\forall (t, x)\in (T_1, T_2]\times\mathbb{R}, 
	    \end{equation}
		and 
		\begin{equation}\label{eq:250708ab}
		    \overline{U}(T_2, x)>\underline{U}(t,x),\quad  \overline{V}(T_2, x)>\underline{V}(t,x), \,^\forall (t, x)\in [T_1,T_2)\times\mathbb{R};
	    \end{equation}
	    \end{subequations}
	\item Or, 
	    \begin{subequations}\label{eq:250708b}
	    \begin{equation}
		\underline{U}(T_2, x)<\overline{U}(t,x),\quad  \underline{V}(T_2, x)<\overline{V}(t,x), \,^\forall(t, x)\in [T_1, T_2)\times\mathbb{R}, 
	    \end{equation}
		 and 
		    \begin{equation}\label{eq:251002b}
			    \overline{U}(T_1, x)>\underline{U}(t,x), \quad \overline{V}(T_1, x)>\underline{V}(t,x), \,^\forall (t, x)\in (T_1, T_2]\times\mathbb{R};
	    \end{equation}
	    \end{subequations}
    \end{enumerate}
    Then 
    \begin{equation}\label{eq:250708c}
	\overline{U}(t, x) \geq \underline{U}(t, x) \text{ and }\overline{V}(t, x)\geq \underline{V}(t, x), \text{ for all }(t, x)\in[T_1, T_2]\times \mathbb{R}.
    \end{equation}
\end{proposition}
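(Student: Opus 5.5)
The plan is to deduce Proposition~\ref{prop:comparison-rescaled} from the sweeping method, Proposition~\ref{prop:sweeping-method}, by building a one-parameter family out of time translations of the subsolution. I treat case 1; case 2 is symmetric, with the roles of $T_1$ and $T_2$ reversed and the translation taken in the other direction. Equation \eqref{eq:newvar-sub} is autonomous in $t$, since its coefficients depend only on $x$. Hence $\underline{U}(t-s,x)$ is again a subsolution wherever it is defined, for any shift $s$.

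The difficulty is that the shifted function must be defined on all of $[T_1,T_2]$. So I first extend $\underline{U}$ to $t<T_1$ by freezing it in time: set $\underline{U}^{ext}(t,x)=\underline{U}(T_1,x)$ for $t\le T_1$. This extension is not a subsolution, so the family has to be arranged differently. Instead, I will define the family by truncation in time:
\[
\underline{U}_\mu(t,x):=\underline{U}\big(\max(T_1,\,t-(1-\mu)(T_2-T_1)),x\big),
\]
and similarly for $\underline{V}_\mu$, with $\overline{U}_\mu:=\overline{U}$ and $\overline{V}_\mu:=\overline{V}$ for every $\mu$.

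The three hypotheses of Proposition~\ref{prop:sweeping-method} are then checked as follows.
\begin{itemize}
\item At $\mu=0$, $\underline{U}_0(t,x)=\underline{U}(T_1,x)$ for all $t\in[T_1,T_2]$. By \eqref{eq:250708aa} this lies below $\overline{U}(t,x)$ for $t>T_1$, and at $t=T_1$ it equals $\underline{U}(T_1,x)\le \overline{U}(T_1,x)$ by continuity. This gives \eqref{eq:250928ac}.
\item At $t=T_1$, $\underline{U}_\mu(T_1,x)=\underline{U}(T_1,x)$. It must lie strictly below $\overline{U}(T_1,x)$; this is the delicate point discussed below.
\item At $t=T_2$, $\underline{U}_\mu(T_2,x)=\underline{U}(s_\mu,x)$ with $s_\mu=T_2-(1-\mu)(T_2-T_1)<T_2$ for $\mu<1$. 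Hence it is strictly below $\overline{U}(T_2,x)$ by \eqref{eq:250708ab}, which gives \eqref{eq:250928ab}.
\end{itemize}
At $\mu=1$ we recover $\underline{U}_1=\underline{U}$, so the conclusion \eqref{eq:250928b} is exactly \eqref{eq:250708c}.

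The main obstacle is that the truncated family is only a subsolution in the region $t>T_1+(1-\mu)(T_2-T_1)$. On the frozen part it is merely a function constant in $t$, and it is not $C^2$ across the junction. I would handle this by revisiting the contact-point argument in the proof of Proposition~\ref{prop:sweeping-method}, which only needs the subsolution inequality near a first touching point $(t^*,x^*)$. On the frozen part, $\underline{U}_{\mu}(t,x)=\underline{U}(T_1,x)<\overline{U}(t,x)$ strictly for $t>T_1$. A touching point therefore cannot occur there, except possibly at $t=T_1$. It also cannot occur at the junction time, because there the subsolution value is again $\underline{U}(T_1,x)$, which is strictly below $\overline{U}$. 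So touching can only happen in the genuinely shifted region, where the strong maximum principle applies as before.

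For the strictness at $t=T_1$ (the second check above), I would first run the argument with $\overline{U}$ replaced by $\overline{U}+\epsilon$. For the cooperative Lipschitz nonlinearity $g$, I expect $\overline{U}+\epsilon e^{Kt}$ to be a strict supersolution on the compact time interval, with $L$-periodicity giving uniformity in $x$. I would then let $\epsilon\to0$. I expect this regularization and the bookkeeping at the junction to be the only nontrivial part; the rest is a direct invocation of the sweeping method.
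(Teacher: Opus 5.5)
Your overall plan is workable, but the step you identify as the only nontrivial part, the $\epsilon$-regularization, fails. It is also unnecessary.

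\textbf{Why the regularization fails.} In \eqref{eq:newvar-super} the variable $t$ is $c\tau-x$, where $\tau$ is the original time. So the operator is not parabolic in $t$: it contains $-\sigma U_{tt}$. For $W=e^{Kt}$ one finds
\[
cW_t-\sigma\big(W_{tt}-2W_{tx}+W_{xx}\big)+\sigma_x\big(W_t-W_x\big)=K\big(c+\sigma_x(x)-\sigma(x)K\big)e^{Kt}.
\]
This is negative for large $K$. For every $K$ it is at most $\frac{(c+\sigma_x)^2}{4\sigma}e^{Kt}$, a fixed quantity unrelated to the Lipschitz constant of $g$. A plain constant $\epsilon$ fails too: the operator kills it, while $g(x,\overline U+\epsilon,\overline V+\epsilon)-g(x,\overline U,\overline V)$ can be positive. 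A perturbation growing in the original time, $e^{K(t+x)/c}$, is not $L$-periodic in $x$, and that destroys the compactness used to locate the contact point. So ``$\overline U+\epsilon e^{Kt}$ is a strict supersolution'' is false in general, and the $\epsilon\to0$ step has no basis.

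\textbf{What is actually missing.} You need to see where the contradiction must be drawn. Put $h_\mu:=(1-\mu)(T_2-T_1)$ and run the $\mu^*$ argument directly on your family.
\begin{itemize}
\item If $\mu^*<1$ and $\mu_n\downarrow\mu^*$, a violation point $(t_n,x_n)$ cannot lie in the frozen part $[T_1,T_1+h_{\mu_n}]$. There $\underline U_{\mu_n}=\underline U(T_1,\cdot)\le\overline U$ by \eqref{eq:250708aa} and continuity.
\item Hence $t_n>T_1+h_{\mu_n}\to T_1+h_{\mu^*}>T_1$. A possible touching at $t=T_1$ is never a limit of violations, so it is irrelevant.
\item The limiting contact point is not at the junction $T_1+h_{\mu^*}$: strictness there is \eqref{eq:250708aa} with $t=T_1+h_{\mu^*}$. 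It is not at $T_2$ either, by \eqref{eq:250708ab}. So it is interior to the shifted region, where $\underline U(\cdot-h_{\mu^*},\cdot)$ is a classical subsolution.
\item The strong maximum principle in the strip $T_1+h_{\mu^*}<c\tau-x<T_2$, together with $L$-periodicity, gives identity in the touching component for all $t\in(T_1+h_{\mu^*},T_2)$.
\item Letting $t\downarrow T_1+h_{\mu^*}$ gives $\underline U(T_1,\cdot)=\overline U(T_1+h_{\mu^*},\cdot)$ at some point, contradicting \eqref{eq:250708aa}.
\end{itemize}
Identity cannot propagate into the frozen part, since it is not a subsolution there. So the contradiction must be taken at the junction, not at $T_1$ as in the proof of Proposition~\ref{prop:sweeping-method}. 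With this, no strictness at $T_1$ is ever needed.

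\textbf{Relation to the paper's proof.} Once you see this, the frozen part does no work, and dropping it gives the paper's argument after the substitution $t\mapsto t-h_\mu$. The paper keeps $\underline U$ fixed and slides $\overline U_\mu(t,x):=\overline U(t+h_\mu,x)$ over the window $[T_1,T_2-h_\mu]$ where it is defined.
\begin{itemize}
\item The left-end comparison $\underline U(T_1,\cdot)<\overline U(T_1+h_\mu,\cdot)$ is strict exactly by \eqref{eq:250708aa}.
\item The right-end comparison $\underline U(T_2-h_\mu,\cdot)<\overline U(T_2,\cdot)$ is strict exactly by \eqref{eq:250708ab}.
\item At $\mu=0$ the window is the single instant $T_1$, where \eqref{eq:250708aa} with $t=T_2$ applies.
\end{itemize}
The hypotheses are tailored to this family, so there is no non-strict endpoint, no non-smooth junction and no regularization. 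The only change to Proposition~\ref{prop:sweeping-method} is allowing a $\mu$-dependent window. Case~2 is the same with $\underline U(t+h_\mu,x)$ slid and $\overline U$ fixed.
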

\begin{proof}
	To prove assertion $1$,  we apply the sweeping method (Proposition  \ref{prop:sweeping-method}) to the family of sub- and supersolutions defined by $\big(\underline{U}_{\mu}(t, x), \underline{V}_\mu(t, x)\big)=\big(\underline{U}(t, x), \underline{V}(t, x)\big)$ and
	$\big(\overline{U}_{\mu}(t, x), \overline{V}_\mu(t, x)\big)=\big(\overline{U}(t+(1-\mu)(T_2-T_1), x), \overline{V}(t+(1-\mu)(T_2-T_1), x)\big)$.

%	The proof of point $1$ is similar to that of Proposition \ref{prop:sweeping-method}, by using the family $\big(\underline{U}_{\mu}(t, x), \underline{V}_\mu(t, x)\big)=\big(\underline{U}(t, x), \underline{V}(t, x)\big)$ and
	%$\big(\overline{U}_{\mu}(t, x), \overline{V}_\mu(t, x)\big)=\big(\overline{U}(t+(1-\mu)(T_2-T_1), x), \overline{V}(t+(1-\mu)(T_2-T_1), x)\big)$.
	%The only difference is that $\big(\overline{U}_{\mu}(t, x), \overline{V}_\mu(t, x)\big)$ is not defined in the whole interval $t\in[T_1,T_2]$, but only for $t\in[T_1, T_2-(1-\mu)(T_2-T_1)]$. This is not a problem, because our assumption \eqref{eq:250708a} enforces that $\underline{U}_\mu(t, x)<\overline{U}_{\mu}$ and $\underline{V}_\mu(t, x)<\overline{V}_{\mu}(t, x)$ for any $(t, x)\in \{T_1, T_2-(1-\mu)(T_2-T_1)\}\times\mathbb{R}$. We omit the details. 

The proof of assertion $2$ is similar to that of assertion $1$, by using the family $\big(\underline{U}_{\mu}(t, x), \underline{V}_\mu(t, x)\big)=\big(\underline{U}(t+(1-\mu)(T_2-T_1), x), \underline{V}(t+(1-\mu)(T_2-T_1), x)\big)$ and
	$\big(\overline{U}_{\mu}(t, x), \overline{V}_\mu(t, x)\big)=\big(\overline{U}(t, x), \overline{V}(t, x)\big)$.
\end{proof}

\begin{lemma}[Harnack-type estimate]\label{lem:Harnack}
    Let Assumption \ref{as:coop-comp} hold true and let $c\geq c^*_R$ be fixed.  There exists a constant $H>0$ such that for any classical solution $\big(U(t, x), V(t, x)\big)$ of \eqref{eq:TW-newvar} with $(t,x)\in\mathbb{R}\times\mathbb{R}$, one has
    \begin{equation}\label{eq:est-Harnack}
	\min\big(U(t, x), V(t, x)\big) \geq H^{-1}\max\big(U(t, y), V(t, y)\big)
    \end{equation}
    for any $t\in\mathbb{R}$ and all $(x, y)\in\mathbb{R}\times\mathbb{R}$.
\end{lemma}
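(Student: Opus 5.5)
The plan is to go back to the original variables, where the problem becomes a standard parabolic Harnack inequality for a cooperative linear system with bounded coefficients. The pulsating relation will then convert the "equal $t$" comparison in \eqref{eq:est-Harnack} into a comparison at a later time in the original variables. I will take $(U,V)$ nonnegative and bounded, with $U+V\leq\overline{K}$. This is the situation for traveling waves: they are entire solutions, so the argument of Proposition~\ref{prop:uniform-bound} applied from arbitrarily negative initial times gives $u+v\leq \overline{K}$. I would add this implicit assumption explicitly, since the constant $H$ must not depend on the solution.

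\textbf{Step 1 (reduction).} Set $u(s,x)=U(cs-x,x)$ and $v(s,x)=V(cs-x,x)$. Then $(u,v)$ solves \eqref{eq:main-sys} and satisfies the pulsating relation $u(s+L/c,x+L)=u(s,x)$, which is equivalent to the $L$-periodicity of $U$ in $x$. Hence $U(t,x)=u\big((t+x)/c,x\big)$. By periodicity it suffices to take $x,y\in[0,L]$.

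Fix $k=2$ and set $x'=x+2L$ and $s_2=(t+x)/c+2L/c$. Then $U(t,x)=u(s_2,x')$ and $V(t,x)=v(s_2,x')$. On the other side, $\max(U,V)(t,y)=\max(u,v)(s_1,y)$ with $s_1=(t+y)/c$. These points satisfy
\[
s_2-s_1\in[L/c,\,3L/c],\qquad |x'-y|\leq 3L .
\]
So it suffices to prove a forward-in-time Harnack inequality:
\[
\min(u,v)(s_2,x')\geq H^{-1}\max(u,v)(s_1,y)
\]
for all such pairs of points, with $H$ depending only on the coefficients and on $c$.

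\textbf{Step 2 (linearization with bounded coefficients).} Write the system as
\[
u_s=(\sigma u_x)_x+a(s,x)u+\mu_v v,\qquad v_s=(\sigma v_x)_x+b(s,x)v+\mu_u u,
\]
where $a=r_u-\mu_u-\kappa_u(u+v)$ and $b=r_v-\mu_v-\kappa_v(u+v)$. Both are bounded by a constant depending only on the coefficients, because $0\leq u+v\leq\overline{K}$. Since $u,v\geq0$, each component is a nonnegative supersolution of a scalar uniformly parabolic equation with bounded zero-order term: $u_s-(\sigma u_x)_x-au\geq0$, and similarly for $v$.

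\textbf{Step 3 (scalar Harnack).} The classical parabolic Harnack inequality (Krylov--Safonov, or Moser in divergence form) applied to the solution $u$ of $u_s-(\sigma u_x)_x-au=\mu_v v\geq0$ gives
\[
u(s_2,x')\geq C^{-1}u(s_1,y).
\]
To get this, I would compare $u$ with the solution $w$ of the homogeneous equation having the same data at time $s_1-\delta$ and apply Harnack to $w$. Alternatively, I would apply the interior Harnack inequality, which is valid for nonnegative solutions with bounded right-hand side of fixed sign. The time lag lies in the compact range $[L/c,3L/c]$ and the spatial distance is at most $3L$, so after covering by finitely many cylinders $C$ is uniform. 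The same holds for $v$.

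\textbf{Step 4 (cross terms).} This step uses the positive coupling $\mu_{\min}>0$. Let $G$ be the Green function of $\partial_s-(\sigma\partial_x)_x-a$, which admits a Gaussian lower bound uniform in the coefficients. Duhamel's formula gives
\[
u(s_2,x')\geq \mu_{\min}\int_{s_2-\tau}^{s_2}\int G(s_2,x';r,z)\,v(r,z)\,dz\,dr .
\]
I would insert the scalar Harnack bound for $v$ from Step 3, namely $v(r,z)\geq C^{-1}v(s_1,y)$ for $(r,z)$ in a box of fixed size located between $s_1$ and $s_2$. This is possible by choosing an intermediate time, which is why the time gap of at least $L/c$ is needed. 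The result is $u(s_2,x')\geq C'^{-1}v(s_1,y)$, and symmetrically $v(s_2,x')\geq C'^{-1}u(s_1,y)$. Combining with Step 3 yields the claim with $H=\max(C,C')$.

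I expect the main obstacle to be the bookkeeping of uniformity. The Harnack constant for the cross terms must be made independent of the particular solution, which rests entirely on the a priori bound $u+v\leq\overline{K}$. One also has to arrange the intermediate box so that it fits inside the time window for every admissible pair $(x,y)$. Using two periods of the pulsating shift, rather than one, is chosen precisely to leave room for this.
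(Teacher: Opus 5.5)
Your Step 1 is the paper's reduction: the paper uses periodicity to move $y$ back by $2L$ rather than moving $x$ forward, and then compares the windows $s\in[(t-2L)/c,(t-L)/c]$ and $s\in[t/c,(t+L)/c]$ on $x\in[-2L,L]$. Making the bound $0\le u+v\le\overline{K}$ explicit is a good addition, since the constant really does depend on having bounded coefficients.

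\textbf{The gap is in Step 3.} A single component is only a nonnegative \emph{supersolution} of its scalar equation, $u_s-(\sigma u_x)_x-au=\mu_v v\ge 0$. For supersolutions only the weak Harnack inequality is available: the later infimum is bounded below by an $L^p$ average, not by the pointwise value $u(s_1,y)$. The bound $u(s_2,x')\ge C^{-1}u(s_1,y)$ is in fact false for scalar supersolutions with arbitrary nonnegative forcing. A forcing of fixed mass concentrated just before $(s_1,y)$ makes $u(s_1,y)$ as large as you like, while $u(s_2,x')$ stays of the order of that mass.

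Neither of your justifications repairs this. Comparing with the homogeneous solution $w$ gives $u(s_2,x')\ge w(s_2,x')\ge C^{-1}w(s_1,y)$. But $w(s_1,y)\le u(s_1,y)$, so the inequality points the wrong way. The Harnack inequality with a right-hand side only gives $u(s_1,y)\le C\big(u(s_2,x')+\|\mu_v v\|\big)$, and that additive term is controlled by $v$, not by $u$. Controlling $\mu_v v$ near $(s_1,y)$ by later values is exactly the cross estimate you are trying to prove, so the argument is circular. Step 4 inherits the gap, because it invokes Step 3 for $v$.

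The paper does not split the system. After the same change of variables, it applies the Harnack inequality of F\"oldes and Pol\'a\v{c}ik for fully coupled cooperative linear parabolic systems; here the coupling holds since $\mu_u,\mu_v\ge\mu_{\min}>0$. That result gives directly that $\inf\min(u,v)$ on the later window is at least a constant times $\sup\max(u,v)$ on the earlier one. The constant depends only on the coefficient bounds and on $\theta=L/c$.

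If you want to stay with scalar tools, work with $w=u+v$ instead:
\begin{itemize}
\item The mutation terms cancel, so $w_s=(\sigma w_x)_x+d\,w$. Here $d$ is a convex combination of $r_u-\kappa_u w$ and $r_v-\kappa_v w$, hence bounded.
\item So $w$ is a genuine solution, and the classical Harnack inequality applies to it.
\item Keep your Duhamel argument of Step 4, and use it for both the forcing $\mu_v v$ and the free propagation of $u$ from intermediate times $r\in[s_1+L/(4c),s_1+L/(2c)]$.
\item For a fixed box $B$ this gives $u(s_2,x')\ge C_1^{-1}\iint_B(u+v)\ge C_2^{-1}\inf_B w\ge C_3^{-1}w(s_1,y)\ge C_3^{-1}\max(u,v)(s_1,y)$, and symmetrically for $v$.
\end{itemize}
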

\begin{proof}
	We come back to the original variables by setting $u(t, x)=U(ct-x, x)$ and $v(t, x) = V(ct-x, x)$; then $(u,v)$ is an entire solution of the parabolic system \eqref{eq:main-sys}. In order to obtain \eqref{eq:est-Harnack}, it is sufficient to establish that
    \begin{equation} \label{eq:250708f}
	\underset{x\in [-2L, L]}{\inf_{s\in [t/c, (t+L)/c]}} \min \big(u(s, x), v(s, x)\big) \geq H^{-1} \underset{x\in [-2L, L]}{\sup_{s\in [(t-2L)/c, (t-L)/c]}} \max\big(u(s, x), v(s, x)\big). 
    \end{equation}
	The Harnack inequality established by Földes and Polá\v{c}ik \cite[Theorem 3.9]{Fol-Pol-09} provides a constant $\kappa$ depending only on $\theta$ such that for any $\tau\in\mathbb{R}$,
    \begin{equation} 
	\underset{x\in [-2L, L]}{\inf_{s\in [\tau+7/2\theta, \tau+4\theta]}} \min \big(u(s, x), v(s, x)\big) \geq \kappa \underset{x\in [-2L, L]}{\sup_{s\in [\tau+\theta, \tau+2\theta]}} \max\big(u(s, x), v(s, x)\big). 
    \end{equation}
    Selecting $\theta=L/c$ and  $\tau=(t-3L)/c$ we obtain  \eqref{eq:250708f} with  $H=\frac{1}{\kappa} $  and therefore \eqref{eq:est-Harnack}.
\end{proof}
We can now obtain a lower  bound of the leading edge of the traveling wave.

\begin{proposition}[Lower rough bound of the leading edge]\label{prop:lower-bound-tail}
Let Assumption \ref{as:coop-comp} hold true and let $c\geq c^*_R$ be fixed. Let $\big(U(t, x), V(t, x)\big) $ be a classical, nonnegative and nontrivial solution of \eqref{eq:TW-newvar}. Then, 
    \begin{enumerate}
	\item if $c>c^*_R$, there is a constant $C>0$ and $T\in\mathbb{R}$ such that 
	    \begin{equation}\label{eq:250725b}
		U(t, x) \geq Ce^{\lambda_c t} \varphi^{\lambda_c}(x) \text{ and } V(t, x)\geq C e^{\lambda_c t}\psi^{\lambda_c}(x) ,  \qquad \,^\forall t\leq T, \,^\forall x\in \mathbb{R}, 
	    \end{equation}
	    where $\lambda_c>0$ is the smallest positive solution of $\lambda_c c = k(\lambda_c)$;
	\item 
	    if $c=c^*_R$, there is a constant $C>0$  and $T\in\mathbb{R}$ such that
	    \begin{equation}\label{eq:250725c}
		U(t, x) \geq C|t|e^{\lambda^* t} \varphi^{\lambda^*}(x) \text{ and } V(t, x)\geq C|t| e^{\lambda^* t}\psi^{\lambda^*}(x) ,  \qquad \,^\forall t\leq T, \,^\forall x\in \mathbb{R}, 
	    \end{equation}
	    where $\lambda^*>0$ is the smallest positive solution of $\lambda^* c = k(\lambda^*)$.
    \end{enumerate}
\end{proposition}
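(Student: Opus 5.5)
The plan is to argue by contradiction, using as a supersolution an exponential solution of the \emph{linearized} cooperative system. The comparison tool will be the sliding method (Proposition~\ref{prop:comparison-rescaled}), combined with the Harnack estimate of Lemma~\ref{lem:Harnack}.

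\textbf{Setup.} Let $g=(g_u,g_v)$ denote the linear cooperative nonlinearity
\[
g_u(x,U,V)=(r_u-\mu_u)U+\mu_vV,\qquad g_v(x,U,V)=(r_v-\mu_v)V+\mu_uU .
\]
Since $U,V\geq 0$, we have $f_u\leq g_u$ and $f_v\leq g_v$. Hence $(U,V)$ satisfies \eqref{eq:newvar-sub} with this $g$.

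In the original variables, $e^{-\lambda(x-ct)}(\varphi^\lambda,\psi^\lambda)$ solves the linear system \eqref{eq:240125-1} when $\lambda c=k(\lambda)$. Under the change of variables, $x-ct$ becomes $-t$. Therefore $A e^{\lambda_c t}(\varphi^{\lambda_c},\psi^{\lambda_c})$ satisfies \eqref{eq:newvar-super} with equality, for every $A>0$.

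We also record a positivity fact. By $L$-periodicity in $x$ and the strong maximum principle, $\min(U(0,\cdot),V(0,\cdot))\geq m_0>0$. Moreover $U,V\leq \overline K$ by Proposition~\ref{prop:uniform-bound}.

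\textbf{Supercritical case $c>c^*_R$.} Suppose the claim fails. Then there are $t_n\to-\infty$ and $x_n$ with
\[
\min\big(U(t_n,x_n)/\varphi^{\lambda_c}(x_n),\,V(t_n,x_n)/\psi^{\lambda_c}(x_n)\big)\leq \varepsilon_n e^{\lambda_c t_n},\qquad \varepsilon_n\to 0 .
\]
By Lemma~\ref{lem:Harnack}, $\max(U,V)(t_n,\cdot)\leq C H\varepsilon_n e^{\lambda_c t_n}$ on all of $\mathbb R$. Set $A_n:=2CH\varepsilon_n/\min(\varphi^{\lambda_c},\psi^{\lambda_c})$, so that the supersolution $A_ne^{\lambda_c t}(\varphi,\psi)$ lies strictly above $(U,V)(t_n,\cdot)$. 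Because this supersolution is increasing in $t$, it also lies above $(U,V)(t_n,\cdot)$ at every later time.

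Next choose $T_2^n$ such that $A_ne^{\lambda_cT_2^n}\min\varphi^{\lambda_c}>\overline K$. Then $T_2^n\to+\infty$, and condition \eqref{eq:250708a} holds on $[t_n,T_2^n]$. Proposition~\ref{prop:comparison-rescaled} gives $U(0,x)\leq A_n\max\varphi^{\lambda_c}\to0$, which contradicts $U(0,\cdot)\geq m_0$.

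\textbf{Critical case $c=c^*_R$ (main obstacle).} A pure exponential supersolution only yields $U(0)\lesssim \varepsilon_n|t_n|$, which need not tend to $0$. I would therefore use the family
\[
S_\beta(t,x)=e^{\lambda^*t}\big[(\beta-t)\varphi^{\lambda^*}(x)+\partial_\lambda\varphi^\lambda|_{\lambda^*}(x)\big]
\]
together with the analogous $\psi$-component. By the computation in Lemma~\ref{lem:critical-diff-ineq}, this family solves the linear system, using $k'(\lambda^*)=c^*_R$. It is positive and comparable to $(\beta-t)e^{\lambda^*t}$ for $t\leq\beta-C_0$.

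Given the contradiction sequence with $\varepsilon_n|t_n|e^{\lambda^*t_n}$, take $\beta_n=\varepsilon_n^{-1/2}$. Choose $A_n$ with $A_n(\beta_n+|t_n|)\approx 2CH\varepsilon_n|t_n|$, so that $A_nS_{\beta_n}$ dominates $(U,V)$ at time $t_n$. Two points must then be checked.
\begin{enumerate}[label=(\alph*)]
\item \emph{Boundary condition at $T_2$.} Take $T_2=\beta_n/2$. The value of $A_nS_{\beta_n}$ there is at least of order $A_n\beta_n e^{\lambda^*\beta_n/2}\gtrsim \sqrt{\varepsilon_n}\,e^{\lambda^*/(2\sqrt{\varepsilon_n})}\to\infty$. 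This exceeds $\overline K$, so \eqref{eq:250708a} holds on $[t_n,\beta_n/2]$. The monotonicity in $t$ needed for the first part of \eqref{eq:250708a} holds for $t\leq\beta-C_0-1/\lambda^*$, which covers this range.
\item \emph{Smallness at time $0$.} The sliding method gives $U(0,\cdot)\lesssim A_n\beta_n\leq 2CH\sqrt{\varepsilon_n}\to0$.
\end{enumerate}
Together these give the same contradiction as before. The delicate step is exactly this balancing of $\beta_n$. It must be large enough that the supersolution stays positive and exceeds $\overline K$ before it loses monotonicity, yet small enough relative to $|t_n|$ that its value at $t=0$ vanishes. The choice $\beta_n=\varepsilon_n^{-1/2}$ achieves both. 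If needed, the small-$\varepsilon_n$ regime can be treated separately with the plain exponential supersolution.
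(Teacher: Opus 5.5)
Your proof is correct. The supercritical case is exactly the paper's argument: Harnack at $t_n$, the pure exponential solution of the linearized cooperative system, then assertion 1 of Proposition~\ref{prop:comparison-rescaled} on $[t_n,T_2^n]$ with $T_2^n\to+\infty$.

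In the critical case you use the same ingredients, but you parametrize the supersolution differently. The paper fixes one profile $\overline U^*=S_\alpha$ with $\alpha$ fixed and only translates it in time. Since $S_\alpha(t-T,x)=e^{-\lambda^* T}S_{\alpha+T}(t,x)$, its comparison functions are exactly your $A\,S_\beta$ restricted to the curve $A=e^{-\lambda^*(\beta-\alpha)}$. Because amplitude and phase are tied together, the paper has to find the optimal shift by inverting $z\mapsto ze^z$. This is where the Lambert $W_{-1}$ asymptotics come from, together with the preliminary reduction $C_nt_n\to-\infty$ needed to get $T_n\to\infty$.

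By decoupling $A$ from $\beta$, you can fit $A_n$ directly at $t=t_n$ and choose $\beta_n=\varepsilon_n^{-1/2}$ freely. This turns the argument into elementary estimates and gives the explicit bound $U(0,\cdot)\lesssim\sqrt{\varepsilon_n}$. The paper's version has the (mainly aesthetic) advantage of being a pure time-translation of a single barrier, which is the sliding philosophy used throughout Section~\ref{s:decay}.

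Two small corrections, neither affecting validity:
\begin{itemize}
\item The solution of the linearized system is $e^{\lambda^*t}\big[(\beta-t)\varphi^{\lambda^*}-\partial_\lambda\varphi^\lambda|_{\lambda^*}\big]$, with a minus sign. This is \eqref{eq:critical-super} under $x-c^*_Rt\mapsto -t$. With your plus sign the extra term $2e^{\lambda^*t}\partial_\lambda\varphi^\lambda|_{\lambda^*}$ is not a solution. None of your estimates (positivity, comparability with $(\beta-t)e^{\lambda^*t}$, monotonicity for $t\le\beta-C_0-1/\lambda^*$) depend on this sign.
\item Your lower bound $\sqrt{\varepsilon_n}\,e^{\lambda^*/(2\sqrt{\varepsilon_n})}$ at $T_2=\beta_n/2$ implicitly assumes $|t_n|\ge\beta_n$. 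If $|t_n|<\beta_n$, you only get $A_n\beta_n\gtrsim\varepsilon_n|t_n|$, so the value at $T_2$ is $\gtrsim\varepsilon_n e^{\lambda^*/(2\sqrt{\varepsilon_n})}$. This still diverges, so the separate ``small-$\varepsilon_n$'' treatment is unnecessary.
\end{itemize}
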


\begin{proof}
    Let us prove \eqref{eq:250725b} first. Assume by contradiction that \eqref{eq:250725b} does not hold, then there exist sequences $t_n\to -\infty$, $x_n\in\mathbb{R}$ and $C_n\to 0$ such that 
	    \begin{equation*}
		U(t_n, x_n) \leq C_ne^{\lambda_c t_n} \varphi^{\lambda_c}(x_n) \text{ or } V(t_n, x_n)\leq C_n e^{\lambda_c t_n}\psi^{\lambda_c}(x_n).
	    \end{equation*}
	    By the Harnack inequality \eqref{eq:est-Harnack} in Lemma \ref{lem:Harnack} we deduce
	    \begin{equation}
		U(t_n, x)< \widetilde{C}_n e^{\lambda_c t_n}\varphi^{\lambda_c }(x) \text{ and } V(t_n, x)< \widetilde{C}_n e^{\lambda_c t_n}\psi^{\lambda_c }(x) , \qquad \,^\forall x\in\mathbb{R}, 
	    \end{equation}
	    where $\widetilde{C}_n:=2 H \frac{\max(\varphi^{\lambda_c}, \psi^{\lambda_c})}{\min(\varphi^{\lambda_c}, \psi^{\lambda_c})}C_n$. It follows from elementary computations that the vector function 
	    \begin{equation*}
		\big(\overline{U}(t, x), \overline{V}(t, x)\big):= \widetilde{C}_n e^{\lambda_c t}\big(\varphi^{\lambda_c}(x), \psi^{\lambda_c}(x)\big)
	    \end{equation*}
	    satisfies \eqref{eq:newvar-super} with $g_u(x, u, v)= u\big(r_u(x)-\mu_u(x)\big) + \mu_v(x) v$ and $g_v(x, u, v)= v\big(r_v(x)-\mu_v(x)\big) + \mu_u(x) u$ (there is even equality), and $\big(U(t, x), V(t, x)\big)$ satisfies \eqref{eq:newvar-sub} with the same $g$, which is cooperative. Moreover, $\big(U(t, x), V(t, x)\big)$ is globally bounded by a constant $K>0$ and there exists $T'\geq 0$ such that $\inf_{x\in\mathbb{R}}\min\big(\overline{U}(T', x), \overline{V}(T', x)\big)>K$. Thus we may apply assertion $1$ in Proposition \ref{prop:comparison-rescaled} to find 
	    \begin{equation*}
		U(t, x)\leq \widetilde{C}_n e^{\lambda_c t}\varphi^{\lambda_c }(x) \text{ and } V(t, x)\leq \widetilde{C}_n e^{\lambda_c t}\psi^{\lambda_c }(x) , \qquad \,^\forall t\in [t_n, T'], \,^\forall x\in\mathbb{R}.
	    \end{equation*}
	    Since $t_n\to -\infty$ and $T'\geq 0$, we obtain in particular 
	    \begin{equation}
		U(0, x)\leq \widetilde{C}_n \varphi^{\lambda_c }(x) \text{ and } V(t, x)\leq \widetilde{C}_n \psi^{\lambda_c }(x) , \qquad \,^\forall t\in [t_n, T'], \,^\forall x\in\mathbb{R}.
	    \end{equation}
	    Letting $n\to \infty$ and recalling $\widetilde{C}_n\to 0$, we obtain
	    \begin{equation*}
		U(0, x) = 0 \text{ and } V(0, x)=0, \qquad \,^\forall x\in\mathbb{R}.
	    \end{equation*}
	    This is obviously a contradiction, which proves \eqref{eq:250725b}.
	    \medskip

	    Next we prove \eqref{eq:250725c}. We follow the same steps. Assume by contradiction that \eqref{eq:250725c} does not hold, then there exist sequences $t_n\to -\infty$, $x_n\in\mathbb{R}$ and $C_n\to 0$ such that 
	    \begin{equation*}
		U(t_n, x_n) \leq C_n|t_n|e^{\lambda^* t_n} \varphi^{\lambda^*}(x_n) \text{ or } V(t_n, x_n)\leq C_n |t_n| e^{\lambda^* t_n}\psi^{\lambda^*}(x_n).
	    \end{equation*}
	    Up to increasing $C_n$, we will assume that $C_nt_n\to -\infty$ while keeping $C_n\to 0$ (for instance, we define $\tilde{C}_n:=\max(C_n, 1/\sqrt{-t_n})$ and drop the tilde).
	    Next, recall the upper barrier  $\big(\overline{u}^*(t, x), \overline{v}^*(t, x)\big)$ defined in \eqref{eq:critical-super}. 
	    Setting $\big(\overline{U}^*(t, x), \overline{V}^*(t, x)\big):= \big(\overline{u}^*(\frac{t+x}{c}, x), \overline{v}^*(\frac{t+x}{c}, x)\big)$ this upper barrier in the new variables 
	    satisfies the following:
	    \begin{equation}
	    \begin{split}
		    & \overline{U}^*(t, x) = (-t+\alpha) e^{\lambda^* t}\varphi^{\lambda^*}(x) - e^{\lambda^* t} \partial_\lambda \varphi^{\lambda^*}(x),\\
		    & \overline{V}^*(t, x) = (-t+\alpha) e^{\lambda^* t}\psi^{\lambda^*}(x) - e^{\lambda^* t} \partial_\lambda \psi^{\lambda^*}(x).
		 \end{split}
	    \end{equation}
		     We assume that $\alpha>0$ is sufficiently large, so that (1) $\inf \min\big(\overline{U}^*(-1/\lambda^*, x), \overline{V}^*(-1/\lambda^*,x)\big)> K$ (the value $t=-1/\lambda_c$ is chosen as the maximum of the function $t\mapsto |t|e^{\lambda^*t}$ for $t\in (-\infty, 0]$); and (2) $\alpha\geq \sup_x \max\left(\frac{\partial_\lambda \varphi^{\lambda^*}(x)}{\varphi^{\lambda^*}(x)},\frac{\partial_\lambda \psi^{\lambda^*}(x)}{\psi^{\lambda^*}(x)}\right)$, which implies that $t\mapsto\big(\overline{U}^*(t, x), \overline{V}^*(t, x)\big)$ is decreasing on $(-\infty, -1/\lambda^*]$ and  strictly positive on this interval.

	    Recalling \eqref{eq:240125-3}, the vector function $\big(\overline{U}^*(t, x), \overline{V}^*(t, x)\big)$ satisfies \eqref{eq:newvar-super} with $g_u(x, u, v)= u\big(r_u(x)-\mu_u(x)\big) + \mu_v(x) v$ and $g_v(x, u, v)= v\big(r_v(x)-\mu_v(x)\big) + \mu_u(x) u$ (there is even equality), as defined in the proof of \eqref{eq:250725b}; and  the same holds for the shifted functions  $\big(\overline{U}^*(t-T, x), \overline{V}^*(t-T, x)\big)$ for arbitrary $T\in\mathbb{R}$.

	We investigate the maximal value of $T>0$ such that $\big(\overline{U}^*(t-T, x), \overline{V}^*(t-T, x)\big)\geq \big({U}(t, x), {V}(t, x)\big)$ for all $t\in [t_n, T-1/\lambda^*]$. Clearly, from assertion $1$ in Proposition \ref{prop:comparison-rescaled}, it suffices to check that 
	    \begin{equation}\label{ineq:250919a}
\big(\overline{U}^*(t_n-T, x), \overline{V}^*(t_n-T, x)\big)\geq \big({U}(t_n, x), {V}(t_n, x)\big).
	    \end{equation}
	   For the $U$-component, a sufficient condition for the latter inequality is: 
	    \begin{align*}
		    & & \left[- (t_n-T)+\alpha\big)\varphi^{\lambda^*}(x)-\partial_\lambda \varphi^{\lambda^*}(x)\right] e^{\lambda^*(t_n-T)} &\geq C_n (-t_n)e^{\lambda^* t_n}\varphi^{\lambda^*}(x) \\
		    \Leftarrow & & \left[ -(t_n-T)e^{\lambda^*(t_n-T)} + C_nt_n e^{\lambda^* t_n}\right]\varphi^{\lambda^*}(x) & \geq 0 \\
		    \Leftrightarrow & &\lambda^*(t_n-T) e^{\lambda^*(t_n-T)} & \leq C_n \lambda^* t_n e^{\lambda^*t_n} \\
		    \Leftrightarrow & &\lambda^*(t_n-T) & \geq W_{-1}\big(C_n \lambda^* t_n e^{\lambda^*t_n}\big) \\
		    \Leftrightarrow & & T&\leq t_n - \frac{1}{\lambda^*}W_{-1}\big(C_n \lambda^* t_n e^{\lambda^*t_n}\big),
	    \end{align*}
	    where $W_{-1}(z)$ is the inverse of the function $z\mapsto ze^z$ for $z\leq -1$, also called the \textit{secondary branch of the Lambert-$W$ function}. Among other properties, $W_{-1}(z)$ is defined for $z\in[-e^{-1}, 0)$, is strictly decreasing,  $W_{-1}(-e^{-1})= -1$, and $W_{-1}(z)\to -\infty$ as $z\to 0$. 

	    We obtain a similar condition for the $V$-component. Hence \eqref{ineq:250919a} is automatically satisfied whenever 
	    \begin{equation}
		    T\leq T_n:= t_n - \frac{1}{\lambda^*}W_{-1}\big(C_n \lambda^* t_n e^{\lambda^*t_n}\big).
	    \end{equation}
	    It follows from the definition of $W_{-1}$ that $W_{-1}(z)=\ln(-z) - \ln\big(-W_{-1}(z)\big)$ for any $z\in (e^{-1}, 0)$. Therefore, by applying recursively this relation, we obtain
	    \begin{align*}
		    T_n&= t_n-\frac{1}{\lambda^*}W_{-1}\big(C_n \lambda^* t_n e^{\lambda^* t_n}\big) = t_n-\frac{1}{\lambda^*}\left[\ln\big(-C_n\lambda^* t_n e^{\lambda t_n}\big) - \ln\big(-W_{-1}\big(C_n \lambda^* t_n e^{\lambda^* t_n}\big)\big)\right] \\ 
		    &= t_n - \frac{1}{\lambda^*}\left[\lambda^* t_n + \ln\big(-C_n\lambda^* t_n\big) - \ln\big(-W_{-1}\big(C_n \lambda^* t_n e^{\lambda^* t_n}\big)\big)\right]\\
		    &=- \frac{1}{\lambda^*}\ln\big(-C_n\lambda^* t_n\big) +\frac{1}{\lambda^*} \ln\Big(-\ln\big(-C_n \lambda^* t_n e^{\lambda^* t_n}\big)  +\ln\big(-W_{-1}\big(C_n \lambda^* t_n e^{\lambda^* t_n}\big)\big)\Big) \\
		    &=\frac{1}{\lambda^*}\ln\left[\frac{-1}{C_n \lambda^* t_n}\Big(-{\lambda^* t_n}- \ln\big(-C_n \lambda^* t_n \big)  +\ln\big(-W_{-1}\big(C_n \lambda^* t_n e^{\lambda^* t_n}\big)\big)\Big)\right] \\
		    &=\frac{1}{\lambda^*}\ln\left[\frac{1}{C_n} + \frac{1}{C_n\lambda^*t_n} \ln(-C_n\lambda^* t_n) - \frac{1}{C_n\lambda^*t_n}\ln\big(-W_{-1}\big(C_n \lambda^* t_n e^{\lambda^* t_n}\big)\big)\right]\\
		    &\geq\frac{1}{\lambda^*}\ln\left[\frac{1}{C_n} + \frac{1}{C_n\lambda^*t_n} \ln(-C_n\lambda^* t_n) \right] = \frac{1}{\lambda^*} \ln\left[\frac{1}{C_n}+o_n(1)\right]\xrightarrow[n\to+\infty]{}+\infty.
	    \end{align*}
	    The $o_n(1)$ term comes from our assumption that $C_nt_n\to -\infty$, which implies that $\frac{\ln(-C_n\lambda^* t_n)}{C_n\lambda^*t_n} =o_n(1)$ as $n\to+\infty$. 

	    Summarizing, we have proved that 
	    \begin{equation*}
		    \big(\overline{U}^*(t-T_n, x), \overline{V}^*(t-T_n, x)\big)\geq \big({U}(t, x), {V}(t, x)\big), \qquad \,^\forall t\in [t_n, T_n-1/\lambda^*], x\in\mathbb{R},
	    \end{equation*}
	    thus taking the limit $n\to+\infty$, we obtain
	    \begin{equation*}
		    (0, 0)\geq \big({U}(t, x), {V}(t, x)\big), \qquad \,^\forall t\in \mathbb{R}, x\in\mathbb{R},
	    \end{equation*}
	    which is obviously a contradiction.  This proves assertion $2$ of Proposition \ref{prop:lower-bound-tail}. Since both assertions have been proved, the proof is finished. 
\end{proof}

\begin{proposition}[Upper rough bound of the leading edge]\label{prop:upper-bound-tail}
Let Assumption \ref{as:coop-comp} hold true and let $c\geq c^*_R$ be fixed. Let $\big(U(t, x), V(t, x)\big) $ be a classical, nonnegative and nontrivial solution of \eqref{eq:TW-newvar}, that vanishes when $t\to+\infty$. Then, 
    \begin{enumerate}
	\item if $c>c^*_R$, there is a constant $C>0$ and $T\in\mathbb{R}$ such that 
	    \begin{equation}\label{eq:250920b}
		U(t, x) \leq Ce^{\lambda_c t} \varphi^{\lambda_c}(x) \text{ and } V(t, x)\leq C e^{\lambda_c t}\psi^{\lambda_c}(x) ,  \qquad \,^\forall t\leq T, \,^\forall x\in \mathbb{R}, 
	    \end{equation}
	    where $\lambda_c>0$ is the smallest positive solution of $\lambda_c c = k(\lambda_c)$;
	\item 
	    if $c=c^*_R$, there is a constant $C>0$  and $T\in\mathbb{R}$ such that
	    \begin{equation}\label{eq:250920c}
		U(t, x) \leq C|t|e^{\lambda^* t} \varphi^{\lambda^*}(x) \text{ and } V(t, x)\leq C|t| e^{\lambda^* t}\psi^{\lambda^*}(x) ,  \qquad \,^\forall t\leq T, \,^\forall x\in \mathbb{R}, 
	    \end{equation}
	    where $\lambda^*>0$ is the smallest positive solution of $\lambda^* c = k(\lambda^*)$.
    \end{enumerate}
\end{proposition}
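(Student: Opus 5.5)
We argue by contradiction, mirroring the proof of Proposition~\ref{prop:lower-bound-tail} but with the roles of super- and subsolutions exchanged, as announced in the summary at the beginning of this section. Suppose first $c>c^*_R$ and that \eqref{eq:250920b} fails. Then there are $t_n\to-\infty$, $x_n$ and $C_n\to+\infty$ with $U(t_n,x_n)\geq C_ne^{\lambda_c t_n}\varphi^{\lambda_c}(x_n)$ or the analogous inequality for $V$. By the Harnack inequality (Lemma~\ref{lem:Harnack}) we upgrade this to
\[
U(t_n,x)\geq \widetilde C_n e^{\lambda_c t_n}\varphi^{\lambda_c}(x),\quad V(t_n,x)\geq \widetilde C_n e^{\lambda_c t_n}\psi^{\lambda_c}(x)\quad\text{for all } x\in\R,
\]
with $\widetilde C_n\to\infty$. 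We then compare $(U,V)$ from below with a family of subsolutions of the auxiliary system \eqref{eq:auxiliary-below} written in the variables of \eqref{eq:TW-newvar}. Note that $(U,V)$ itself is a supersolution of \eqref{eq:auxiliary-below}, since it solves \eqref{eq:TW-newvar} and the extra $-\beta\tilde u^2$, $-\beta\tilde v^2$ terms only decrease the right-hand side, and the auxiliary nonlinearity is cooperative on $[0,K/2]^2$.

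The natural subsolution is the Hamel-type function from Lemma~\ref{lem:sub-solution}, rewritten in the new variables and scaled down:
\[
\underline U(t,x)=\delta\big(e^{\lambda_c t}\varphi^{\lambda_c}(x)-\omega e^{\nu t}\varphi^{\nu}(x)\big),
\]
and similarly for $\underline V$. Here $\delta>0$ is small and fixed, and $\omega$ is large, so that $\underline U,\underline V\leq K/2$ and the last part of Proposition~\ref{prop:sweeping-method} applies: the inequalities are only needed where one component is positive, which is where Lemma~\ref{lem:sub-solution} gives them. We use the time-shifted family $(\underline U,\underline V)(t-s)$ with $s$ in a range $[s_n,s_n^{\max}]$, chosen so that:
\begin{itemize}
\item at $t=t_n$ the shifted subsolution lies below $\widetilde C_n e^{\lambda_c t_n}(\varphi^{\lambda_c},\psi^{\lambda_c})$. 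Using $\underline U\leq \delta e^{\lambda_c(t-s)}\varphi^{\lambda_c}$, this allows shifts with $e^{-\lambda_c s}\leq \widetilde C_n/\delta$, that is, $s\geq -\lambda_c^{-1}\ln(\widetilde C_n/\delta)\to-\infty$;
\item at the upper end $T_2$ the shifted subsolution is negative in both components, which holds as soon as $T_2-s$ exceeds the level where $\underline U,\underline V<0$.
\end{itemize}
Proposition~\ref{prop:sweeping-method}, applied with $\mu$ sweeping the shift parameter, then gives
\[
(U,V)(t,x)\geq(\underline U,\underline V)(t-s_n,x)\quad\text{on }[t_n,T_2],\qquad s_n:=-\lambda_c^{-1}\ln(\widetilde C_n/\delta)+O(1).
\]
The strict inequalities at both ends are exactly \eqref{eq:250928aa}--\eqref{eq:250928ab}, and the starting member is handled by taking $s$ very large, where the subsolution is negative everywhere in $[t_n,T_2]$.

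Now fix $t$ such that $t-s_n$ equals a point where $\underline U$ attains a fixed positive value $\eta_0$. Since $s_n\to-\infty$, this gives $U(t'_n,\cdot)\geq\eta_0'>0$ at times $t'_n=s_n+O(1)\to-\infty$. This contradicts the hypothesis that $(U,V)$ vanishes as $t\to+\infty$: since $U(t,x)=u((t+x)/c,x)$, that hypothesis means $u,v\to0$ as $x\to+\infty$. Pulling back to the original variables, the contradiction is obtained after rewriting the time direction consistently with $e(t)\to0$ as $t\to-\infty$; one should double check the orientation of the change of variables. The genuinely contradictory form is that $(U,V)$ is bounded below by a positive constant at times $t\to-\infty$, whereas \eqref{eq:decay-U,V}-type decay, or simply $\lim_{t\to-\infty}(U,V)=0$, which is the leading-edge condition \eqref{eq:TW-limits+inf}, forbids this.

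For $c=c^*_R$ we repeat the argument with the critical subsolution $\underline u^*,\underline v^*$ of \eqref{eq:critical-sub} from Lemma~\ref{lem:critical-diff-ineq}, scaled by $\delta$. Its leading behaviour is $\delta|t|e^{\lambda^* t}\varphi^{\lambda^*}$, and the admissible shift is computed through the Lambert function $W_{-1}$ exactly as in Proposition~\ref{prop:lower-bound-tail}; this again yields $s_n\to-\infty$ when $C_n\to\infty$.

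The main obstacle is the sweeping step. The subsolution components become negative at different places, so we must ensure, via the last clause of Proposition~\ref{prop:sweeping-method} and the fact that $U,V>0$, that touching can only occur where some component is positive, where Lemmas~\ref{lem:sub-solution} and \ref{lem:critical-diff-ineq} apply. We must also keep $\underline U,\underline V\leq K/2$, so that Lemma~\ref{lem:comparison-below}-type cooperativity of the auxiliary system is valid. Choosing $\delta$ small uniformly in the shift handles the latter.
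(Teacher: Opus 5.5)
Your overall architecture matches the paper's. You argue by contradiction, upgrade the bound at $t_n$ with the Harnack inequality, compare from below with the Hamel-type subsolution of the auxiliary system in the variables of \eqref{eq:TW-newvar}, invoke the last clause of Proposition~\ref{prop:sweeping-method}, and contradict $(U,V)\to 0$ as $t\to-\infty$. You are right that $t\to-\infty$, not $t\to+\infty$, is the hypothesis actually used.

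\textbf{The gap: the sweeping family.} The shift family you describe does not satisfy the hypotheses of Proposition~\ref{prop:sweeping-method}.
\begin{itemize}
\item Since $\underline U(\tau,x)<0$ exactly for $\tau\geq-\xi_\omega$, the member $\underline U(t-s,x)$ is negative on $[t_n,T_2]$ only when $s\le t_n+\xi_\omega$. For $s$ very large it is a small \emph{positive} tail $\approx\delta e^{\lambda_c(t-s)}\varphi^{\lambda_c}$, not a negative function.
\item Your endpoint requirement (negativity at $T_2$ for every member) forces $T_2\geq s^{\max}-\xi_\omega$. The positive bump of the starting member, of height $\delta\sup\underline U$, then lies inside the window near $t\approx s^{\max}-\xi_\omega$. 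The hypotheses give you no lower bound on $(U,V)$ there.
\item Starting from a genuinely negative member ($s\le t_n+\xi_\omega$) does not help. Since $s_n-t_n\to+\infty$, the family must pass through the member whose bump sits at $t=t_n$. There \eqref{eq:250928aa} would require $\delta\sup\underline U<\min U(t_n,\cdot)$, which fails because $U(t_n,\cdot)\to0$.
\end{itemize}

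\textbf{How the paper avoids this.} It sweeps in $\omega$ at a fixed shift $T\geq-\lambda_c^{-1}\ln\widetilde C_n$. Three facts make this work: $\underline U_\omega$ is decreasing in $\omega$; $\sup\underline U_\omega\to 0$ as $\omega\to\infty$; and the negativity region $\{\tau\geq-\xi_\omega\}$ grows with $\omega$. On the fixed window $[t_n,T-\xi_{\omega^*}]$ you then get:
\begin{itemize}
\item \eqref{eq:250928aa} from $\underline U_\omega\le e^{\lambda_c\tau}\varphi^{\lambda_c}$;
\item \eqref{eq:250928ab} from negativity at $\tau=-\xi_{\omega^*}$;
\item \eqref{eq:250928ac} for very large $\omega$, since $\inf_{[t_n,T_2]\times\R}\min(U,V)>0$.
\end{itemize}

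\textbf{A repair of your shift family for $c>c^*_R$ only.} Fix $T_2$, say $T_2=0$. Take $\delta<\inf_x\min(U,V)(0,x)/\sup\underline U$, so that \eqref{eq:250928ab} holds by smallness rather than negativity. Start from a large $s$, where the member is uniformly tiny on the compact window.

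This repair does not carry over to $c=c^*_R$, so "exactly as in Proposition~\ref{prop:lower-bound-tail}" is not enough there. With $T_2$ fixed and $s_n\to-\infty$, the final member's window reaches $\tau=T_2-s_n\to+\infty$. There $\underline u^*$ is positive again, because the term $e^{\nu\tau}\varphi^\nu$ dominates, and it is no longer a subsolution. The $\omega$-sweep keeps every window inside $\tau\le-\xi^*_{\omega^*}$ and handles both cases uniformly.
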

\begin{proof}
	Let us prove \eqref{eq:250920b} first. Assume by contradiction that \eqref{eq:250920b} does not hold, then there exist sequences $t_n\to -\infty$, $x_n\in\mathbb{R}$ and $C_n\to +\infty$ such that 
	    \begin{equation*}
		U(t_n, x_n) \geq C_ne^{\lambda_c t_n} \varphi^{\lambda_c}(x_n) \text{ or } V(t_n, x_n)\geq C_n e^{\lambda_c t_n}\psi^{\lambda_c}(x_n).
	    \end{equation*}
	    By the Harnack inequality \eqref{eq:est-Harnack} in Lemma \ref{lem:Harnack} we deduce
	    \begin{equation}
		U(t_n, x)> \widetilde{C}_n e^{\lambda_c t_n}\varphi^{\lambda_c }(x) \text{ and } V(t_n, x)> \widetilde{C}_n e^{\lambda_c t_n}\psi^{\lambda_c }(x) , \qquad \,^\forall x\in\mathbb{R}, 
	    \end{equation}
	    where $\widetilde{C}_n:=\frac{1}{2H} \frac{\min(\varphi^{\lambda_c}, \psi^{\lambda_c})}{\max(\varphi^{\lambda_c}, \psi^{\lambda_c})}C_n$. 
	    Let $\big(\underline{u}_{\omega}(t, x), \underline{v}_{\omega}(t,x)\big)$ be defined by \eqref{eq:ul-u}  with $\omega\geq \omega^*$ so that the differential inequality \eqref{eq:sub-sol-beta} is satisfied by applying Lemma \ref{lem:sub-solution} ($\beta$ is chosen sufficiently large so that the system \eqref{eq:auxiliary-below} is cooperative). Define
	    \begin{equation*}
		    \big(\underline{U}_{\omega}(t, x), \underline{V}_{\omega}(t, x)\big):= \left(\underline{u}_{\omega}\left(\frac{t+x}{c}, x\right), \underline{v}_{\omega}\left(\frac{t+x}{c}, x\right)\right).
	    \end{equation*}
	    In other words, 
	    \begin{equation*}
		    \underline{U}_{\omega}(t, x) = e^{\lambda_c t} \varphi^{\lambda_c}(x) - \omega e^{\nu t} \varphi^{\nu}(x) 
		\text{ and } 
		    \underline{V}_{\omega}(t, x) = e^{\lambda_c t} \psi^{\lambda_c}(x) - \omega e^{\nu t} \psi^{\nu}(x) ,
	    \end{equation*}
	    for some $\nu\in(\lambda_c, 2\lambda_c)$. It follows from Lemma \ref{lem:sub-solution} that $\underline{U}_{\omega}(t, x)<0$ and $\underline{V}_{\omega}(t, x)<0$ whenever $t\geq -\xi_\omega$, with $\xi_\omega$ defined in \eqref{eq:xi_omega}. Changing the variable in \eqref{eq:sub-sol-beta}, we obtain that $(\underline{U}_{\omega}, \underline{V}_{\omega})$ satisfies the differential inequality \eqref{eq:newvar-sub} for any $t\leq -\xi_\omega$, with
	    \begin{subequations}\label{eq:250926a}
	    \begin{align}
		g_u(x, u,v) &= u(r_u(x)-\kappa_u(x)(u+v)-\beta u)+\mu_v(x)v - \mu_u(x) u, \\ 
		g_v(x, u,v) &= v(r_v(x)-\kappa_v(x)(u+v)-\beta v)+\mu_u(x)u - \mu_v(x) v,
	    \end{align}
	    \end{subequations}
	    which is cooperative and strongly coupled (in the sense of Proposition \ref{prop:sweeping-method}) on $[0, \frac{\sup\max(r_u, r_v)}{\beta}]^2$ when $\beta\geq \frac{\sup\max(r_u, r_v, \kappa_u, \kappa_v)}{\inf\min (\mu_u, \mu_v)}$. 

	    We investigate the minimal value of $T$ such that $\big(\underline{U}_{\omega}(t-T, x), \underline{V}_{\omega}(t-T, x)\big)\leq \big(U(t, x), V(t, x)\big)$ for any $t\in [t_n, -\xi_\omega+T]$. Suppose 
	    \begin{equation*}
		T\geq T_n:=-\frac{1}{\lambda_c}\ln\big(\tilde{C}_n\big),
	    \end{equation*}
	    then we have $\tilde{C}_n e^{\lambda t_n}\varphi^{\lambda_c}(x)\geq e^{\lambda_c(t_n-T)}\varphi^{\lambda_c}(x)$ and similarly $\tilde{C}_n e^{\lambda t_n}\psi^{\lambda_c}(x)\geq e^{\lambda_c(t_n-T)}\psi^{\lambda_c}(x).$ 
	    Note that $T_n\to -\infty$ as $n\to +\infty$. In order to apply Proposition \ref{prop:sweeping-method}, we use $\omega$ as the continuous family parameter. Clearly $\big(\underline{U}_\omega(t-T, x), \underline{V}_{\omega}(t-T, x)\big)\leq (0, 0)$ for all $(t, x)\in [t_n, -\xi_\omega+T]\times \mathbb{R}$, uniformly as $\omega\to +\infty$, which shows that the assumption \eqref{eq:250928ac} holds; for any $\omega\geq \omega^*$ the assumptions \eqref{eq:250928aa} and \eqref{eq:250928ab} hold with $T_1=t_n$ and $T_2=-\xi_{\omega^*}+T$, and we have already checked that the inequality system \eqref{eq:newvar-sub} holds for any $(\omega, t, x)$ at which either $\underline{U}_\omega(t, x)>0$ or $\underline{V}_{\omega}(t, x)>0$.

	    Applying Proposition \ref{prop:sweeping-method}, we have proved that 
	    \begin{equation*} 
		\big(\underline{U}(t-T, x), \underline{V}(t-T, x)\big)\leq \big(U(t, x), V(t, x)\big), \qquad \text{ for any } T\geq T_n \text{ and } t\in [t_n, -\xi_\omega+T].
	    \end{equation*}
	    Taking the limit $n\to+\infty$, we obtain 
	    \begin{equation*}
		\big(\underline{U}(t-T, x), \underline{V}(t-T, x)\big)\leq \big(U(t, x), V(t, x)\big), \qquad \text{ for any } T\in \mathbb{R} \text{ and }t\in (-\infty, -\xi_\omega+T], 
	    \end{equation*}
which implies
    \begin{equation*}
	U(t, x) \geq \sup_{s\in\mathbb{R}} \underline{U}(s, x) >0 \text{ and } V(t, x) \geq \sup_{s\in\mathbb{R}} \underline{V}(s, x) >0. 
    \end{equation*}
    Clearly this contradicts the limit $U(t, x)\to 0 $ and $V(t, x)\to 0$ as $t\to-\infty$. This proves assertion 1 of Proposition \ref{prop:upper-bound-tail}.
    \medskip

    Next we turn to \eqref{eq:250920c}. Assume by contradiction that \eqref{eq:250920c} does not hold, then there exist sequences $t_n\to -\infty$, $x_n\in\mathbb{R}$ and $C_n\to +\infty$ such that 
	    \begin{equation*}
		U(t_n, x_n) \geq C_n|t_n|e^{\lambda_c t_n} \varphi^{\lambda_c}(x_n) \text{ or } V(t_n, x_n)\geq C_n |t_n| e^{\lambda_c t_n}\psi^{\lambda_c}(x_n).
	    \end{equation*}
	    By the Harnack inequality \eqref{eq:est-Harnack} in Lemma \ref{lem:Harnack} we deduce
	    \begin{equation}
		    U(t_n, x)> \widetilde{C}_n |t_n|e^{\lambda_c t_n}\varphi^{\lambda_c }(x) \text{ and } V(t_n, x)> \widetilde{C}_n |t_n|e^{\lambda_c t_n}\psi^{\lambda_c }(x) , \qquad \,^\forall x\in\mathbb{R}, 
	    \end{equation}
	    where $\widetilde{C}_n:=\frac{1}{2H} \frac{\min(\varphi^{\lambda_c}, \psi^{\lambda_c})}{\max(\varphi^{\lambda_c}, \psi^{\lambda_c})}C_n$. 
	    Let $\big(\underline{u}(t, x), \underline{v}(t,x)\big)$ be defined by \eqref{eq:critical-sub}  with $\omega\geq \omega^*$ so that the differential inequality \eqref{eq:critical-sub-ineq} is satisfied by applying Lemma \ref{lem:critical-diff-ineq} ($\beta$ is chosen sufficiently large so that the system \eqref{eq:auxiliary-below} is cooperative). As before we will use a family of subsolutions indexed by $\omega$, but let us omit the $\omega$ index for simplicity. Define
	    \begin{equation*}
		\big(\underline{U}(t, x), \underline{V}(t, x)\big):= \left(\underline{u}\left(\frac{t+x}{c}, x\right), \underline{v}\left(\frac{t+x}{c}, x\right)\right).
	    \end{equation*}
	    In other words, 
	    \begin{align*}
		    \underline{U}(t, x)& =\left(-t+\frac{\partial_\lambda \varphi^{\lambda}|_{\lambda^*}(x)}{\varphi^{\lambda^*}(x)} -\omega\right)e^{\lambda^* t} \varphi^{\lambda^*}(x) +  e^{\nu t} \varphi^{\nu}(x) 
		\text{ and } \\
		    \underline{V}(t, x) &=\left(-t+\frac{\partial_\lambda \psi^{\lambda}|_{\lambda^*}(x)}{\psi^{\lambda^*}(x)} -\omega\right)e^{\lambda_c t} \psi^{\lambda_c}(x) +  e^{\nu t} \psi^{\nu}(x) ,
	    \end{align*}
	    for $\nu\in(\lambda^*, 2\lambda^*)$. It follows from Lemma \ref{lem:critical-diff-ineq} that $\underline{U}(t, x)<0$ and $\underline{V}(t, x)<0$ whenever $t\geq -\xi_\omega$, with $\xi_\omega^*$ defined in \eqref{eq:xi_omega^*}. Changing the variable in \eqref{eq:critical-sub-ineq}, we obtain that $(\underline{U}, \underline{V})$ satisfies the differential inequality \eqref{eq:newvar-sub} for any $t\leq -\xi_\omega^*$, with $(g_u, g_v)$ defined by \eqref{eq:250926a}, 
	    which is cooperative on $[0, \frac{\sup\max(r_u, r_v)}{\beta}]^2$ when $\beta\geq \frac{\sup\max(r_u, r_v, \kappa_u, \kappa_v)}{\inf\min (\mu_u, \mu_v)}$. 

	We investigate the maximal value of $T>0$ such that $\big(\underline{U}(t-T, x), \underline{V}(t-T, x)\big)\leq \big(U(t, x), V(t, x)\big)$ for any $t\in [t_n, -\xi_\omega+T]$. Proceeding as in the case $c>c^*_R$ by applying Proposition \ref{prop:sweeping-method} with the family indexed by $\omega$,  it suffices to check that 
	    \begin{equation}\label{ineq:251008g}
\big(\underline{U}(t_n-T, x), \underline{V}(t_n-T, x)\big)\leq \big({U}(t_n, x), {V}(t_n, x)\big).
	    \end{equation}
	   For the $U$-component, a sufficient condition for the latter inequality is (we assume $t_n-T\leq 0$ in the computations because $\underline{U}(t, x)<0$ and $\underline{V}(t, x)<0$ when $t\geq 0$): 
	    \begin{align*}
		    & & \left(-(t_n-T)+\frac{\partial_\lambda \varphi^{\lambda}|_{\lambda^*}(x)}{\varphi^{\lambda^*}(x)} -\omega\right)e^{\lambda^* (t_n-T)} \varphi^{\lambda^*}(x) +  e^{\nu (t_n-T)} \varphi^{\nu}(x) &\leq C_n (-t_n)e^{\lambda^* t_n}\varphi^{\lambda^*}(x) \\
		    \Leftarrow & & \left[ -(t_n-T) + K_1 \right]e^{\lambda^* (t_n-T)} \varphi^{\lambda^*}(x) & \leq -C_nt_n e^{\lambda^* t_n} \varphi^{\lambda^*}(x)\\
		    \Leftrightarrow & &\lambda^*(t_n-T - K_1) e^{\lambda^*(t_n-T-K_1)} & \geq C_n \lambda^* t_n e^{\lambda^*(t_n-K_1)} \\
		    \Leftrightarrow & &\lambda^*(t_n-T-K_1) & \leq W_{-1}\big(\tilde{C}_n \lambda^* t_n e^{\lambda^*t_n}\big) \\
		    \Leftrightarrow & & t_n -K_1- \frac{1}{\lambda^*}W_{-1}\big(C_n \lambda^* t_n e^{\lambda^*t_n}\big) &\leq T,
	    \end{align*}
	    where $K_1$ is a sufficiently large constant and $\tilde{C}_n:= C_n e^{-\lambda^*K_1}$; we recall that $W_{-1}(z)$ is the inverse of the function $z\mapsto ze^z$ for $z\leq -1$, also called the \textit{secondary branch of the Lambert-$W$ function}. Among other properties, $W_{-1}(z)$ is defined for $z\in[-e^{-1}, 0)$, is strictly decreasing,  $W_{-1}(-e^{-1})= -1$, and $W_{-1}(z)\to -\infty$ as $z\to 0$. 

	    We obtain a similar condition for the $V$-component. Hence \eqref{ineq:251008g} is automatically satisfied whenever 
	    \begin{equation}
		    T\geq T_n:= t_n -K_1 - \frac{1}{\lambda^*}W_{-1}\big(\tilde{C}_n \lambda^* t_n e^{\lambda^*t_n}\big).
	    \end{equation}
	    It follows from the definition of $W_{-1}$ that $W_{-1}(z)=\ln(-z) - \ln\big(-W_{-1}(z)\big)$ for any $z\in (e^{-1}, 0)$. Therefore, 
	    \begin{align}
		    \nonumber T_n&= t_n-K_1-\frac{1}{\lambda^*}W_{-1}\big(\tilde{C}_n \lambda^* t_n e^{\lambda^* t_n}\big) \\
		    \nonumber &= t_n-K_1-\frac{1}{\lambda^*}\left[\ln\big(-\tilde{C}_n\lambda^* t_n e^{\lambda t_n}\big) - \ln\big(-W_{-1}\big(\tilde{C}_n \lambda^* t_n e^{\lambda^* t_n}\big)\big)\right] \\ 
		    \nonumber &= t_n-K_1 - \frac{1}{\lambda^*}\left[\lambda^* t_n + \ln\big(-\tilde{C}_n\lambda^* t_n\big) - \ln\big(-W_{-1}\big(\tilde{C}_n \lambda^* t_n e^{\lambda^* t_n}\big)\big)\right]\\
		    \nonumber &= -K_1- \frac{1}{\lambda^*}\ln\big(-\tilde{C}_n\lambda^* t_n\big) +\frac{1}{\lambda^*} \ln\Big(-\ln\big(-\tilde{C}_n \lambda^* t_n e^{\lambda^* t_n}\big)  +\ln\big(-W_{-1}\big(\tilde{C}_n \lambda^* t_n e^{\lambda^* t_n}\big)\big)\Big) \\
		    \nonumber &=-K_1+\frac{1}{\lambda^*}\ln\left[\frac{-1}{\tilde{C}_n \lambda^* t_n}\Big(-{\lambda^* t_n}- \ln\big(-\tilde{C}_n \lambda^* t_n \big)  +\ln\big(-W_{-1}\big(\tilde{C}_n \lambda^* t_n e^{\lambda^* t_n}\big)\big)\Big)\right] \\
		     \label{eq:251003a}&=-K_1+\frac{1}{\lambda^*}\ln\left[\frac{1}{\tilde{C}_n} - \frac{1}{\tilde{C}_n\lambda^*t_n} \ln\left( \frac{-W_{-1}\big(\tilde{C}_n \lambda^* t_n e^{\lambda^* t_n}\big)}{-\tilde{C}_n\lambda^* t_n}\right)\right].
%		    &=K_1+\frac{1}{\lambda^*}\ln\left[\frac{1}{\tilde{C}_n} + \frac{1}{\tilde{C}_n\lambda^*t_n} \ln(-\tilde{C}_n\lambda^* t_n) - \frac{1}{\tilde{C}_n\lambda^*t_n}\ln\big(-W_{-1}\big(\tilde{C}_n \lambda^* t_n e^{\lambda^* t_n}\big)\big)\right]\\
%		    &\geq\frac{1}{\lambda^*}\ln\left[\frac{1}{\tilde{C}_n} + \frac{1}{\tilde{C}_n\lambda^*t_n} \ln(-\tilde{C}_n\lambda^* t_n) \right] = \frac{1}{\lambda^*} \ln\left[\frac{1}{\tilde{C}_n}+o_n(1)\right]\xrightarrow[n\to+\infty]{}+\infty.
	    \end{align}
	    Let $0<\varepsilon< e^{-1}$ be given, it follows from elementary computations that $xe^x \geq -e^{(1-\varepsilon) x}$ whenever $x\leq x_\varepsilon:=\frac{1}{\varepsilon}W_{-1}(-\varepsilon)$; thus $W_{-1}(x)\geq \frac{1}{1-\varepsilon}\ln(-x)$ for $x\in (x_\varepsilon e^{x_\varepsilon}, 0)$. Hence,  
	    \begin{align*}
		    -W_{-1}\big(\tilde{C}_n \lambda^* t_n e^{\lambda^* t_n}\big) &\leq -\frac{1}{1-\varepsilon} \ln\big( -\tilde{C}_n \lambda^* t_n e^{\lambda^* t_n}\big) = -\frac{\lambda^* t_n}{1-\varepsilon}+\frac{-\ln\big(-\tilde{C}_n\lambda^*t_n\big)}{1-\varepsilon}.
	    \end{align*}
	    Coming back to \eqref{eq:251003a}, we conclude 
	    \begin{align*}
		    T_n&\leq -K_1+\frac{1}{\lambda^*}\ln\left[\frac{1}{\tilde{C}_n}+\frac{-1}{\tilde{C}_n\lambda^*t_n} \ln\left(\frac{1}{(1-\varepsilon)\tilde{C}_n} + \frac{-\ln\big(-\tilde{C}_n\lambda^*t_n\big)}{-(1-\varepsilon)\tilde{C}_n\lambda^*t_n}\right) \right] \\ 
		    &=-K_1+\frac{1}{\lambda^*}\ln\left[\frac{1}{\tilde{C}_n}+\frac{-1}{\lambda^*t_n} \frac{\ln\big((1-\varepsilon)\tilde{C}_n+o_n(1)\big)}{\tilde{C}_n}\right] = K_1+\frac{1}{\lambda^*}\ln\left[\frac{1}{\tilde{C}_n}+o_n(1)\right].
	    \end{align*}
	    This shows that $T_n\to -\infty$ as $n\to+\infty$. Hence, applying Proposition \ref{prop:sweeping-method} as in the previous step, we obtain 
	    \begin{equation*} 
		\big(\underline{U}(t-T, x), \underline{V}(t-T, x)\big)\leq \big(U(t, x), V(t, x)\big), \qquad \text{ for any } T\geq T_n \text{ and } t\in [t_n, -\xi_\omega+T].
	    \end{equation*}
	    Taking the limit $n\to+\infty$, we obtain 
	    \begin{equation*}
		\big(\underline{U}(t-T, x), \underline{V}(t-T, x)\big)\leq \big(U(t, x), V(t, x)\big), \qquad \text{ for any } T\in \mathbb{R} \text{ and }t\in (-\infty, -\xi_\omega+T], 
	    \end{equation*}
which implies
    \begin{equation*}
	U(t, x) \geq \sup_{s\in\mathbb{R}} \underline{U}(s, x) >0 \text{ and } V(t, x) \geq \sup_{s\in\mathbb{R}} \underline{V}(s, x) >0. 
    \end{equation*}
    Clearly this contradicts the limit $U(t, x)\to 0 $ and $V(t, x)\to 0$ as $t\to-\infty$. This proves assertion 2 and finishes the proof of Proposition \ref{prop:upper-bound-tail}.
\end{proof}
In the next Proposition, we adapt the proof of Proposition \ref{prop:upper-bound-tail} to show that the minimum of $\big(U(t, x), V(t, x)\big)$ behaves like a true exponential when $t\to -\infty$.
\begin{lemma}[Exponential decay of the minimum]\label{lem:exponential-decay-minimum}
Let Assumption \ref{as:coop-comp} hold true and let $c\geq c^*_R$ be fixed. Let $\big(U(t, x), V(t, x)\big) $ be a classical, nonnegative and nontrivial solution of \eqref{eq:TW-newvar}, that vanishes when $t\to+\infty$. Then, 
    \begin{enumerate}
	\item if $c>c^*_R$, then  the following limit exists and is positive:
	    \begin{equation}\label{eq:251005a}
		    \lim_{t\to-\infty} \inf_{x\in\mathbb{R}}\min\left(\dfrac{U(t, x)}{\varphi^{\lambda_c}(x)e^{\lambda_c t}}, 
		    \dfrac{V(t, x)}{\psi^{\lambda_c}(x)e^{\lambda_c t}}\right) >0,
	    \end{equation}
	    where $\lambda_c>0$ is the smallest positive solution of $\lambda_c c = k(\lambda_c)$;
	\item 
	    if $c=c^*_R$, then  the following limit exists and is positive:
		    \begin{equation}\label{eq:251005b}
		    \lim_{t\to-\infty} \inf_{x\in\mathbb{R}}\min\left(\dfrac{U(t, x)}{-t\varphi^{\lambda^*}(x)e^{\lambda^* t}}, 
		    \dfrac{V(t, x)}{-t\psi^{\lambda^*}(x)e^{\lambda^* t}}\right) >0,
	    \end{equation}
	    where $\lambda^*>0$ is the smallest positive solution of $\lambda^* c = k(\lambda^*)$.
    \end{enumerate}
\end{lemma}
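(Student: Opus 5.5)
Set $m(t):=\inf_{x}\min\big(U(t,x)/(e(t)\varphi^{\lambda}(x)),\,V(t,x)/(e(t)\psi^{\lambda}(x))\big)$, where $e(t)=e^{\lambda_c t}$, $\lambda=\lambda_c$ when $c>c^*_R$, and $e(t)=-te^{\lambda^* t}$, $\lambda=\lambda^*$ when $c=c^*_R$. By Propositions \ref{prop:lower-bound-tail} and \ref{prop:upper-bound-tail}, $0<C_1\le m(t)\le C_2$ for $t\le T$. Put $\underline{m}:=\liminf_{t\to-\infty}m(t)>0$ and $\overline{m}:=\limsup_{t\to-\infty}m(t)$. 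It suffices to show $\overline{m}\le\underline{m}$. The idea is to compare $(U,V)$ with a family of subsolutions in the same spirit as the proof of Proposition \ref{prop:upper-bound-tail}, but now calibrated to the exact amplitude $m$ rather than to a crude constant.

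\textbf{Supercritical case.} Fix $\varepsilon>0$ and choose $s\le T$ very negative with $m(s)\ge\overline{m}-\varepsilon$. Then $U(s,\cdot)\ge(\overline{m}-\varepsilon)e^{\lambda_c s}\varphi^{\lambda_c}$, and the same holds for $V$ with $\psi^{\lambda_c}$. Consider the subsolutions of Lemma \ref{lem:sub-solution}, written in the variables $(t,x)$:
\[
\underline{U}_\omega(t,x)=Ae^{\lambda_c t}\varphi^{\lambda_c}(x)-\omega e^{\nu t}\varphi^{\nu}(x),
\]
with $A=\overline{m}-\varepsilon$ and the analogous $\underline{V}_\omega$ (rescaling by $A$ is harmless after a time shift, since $A^{\nu/\lambda_c}$ can be absorbed into $\omega$). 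These satisfy the differential inequality of the cooperative auxiliary system \eqref{eq:250926a} wherever one of the components is positive, and both components are negative for $t\ge-\xi_\omega$. At $t=s$ we have $\underline{U}_\omega(s,\cdot)\le Ae^{\lambda_c s}\varphi^{\lambda_c}\le U(s,\cdot)$ for every $\omega$, and similarly for $V$. For $\omega$ huge the subsolution is negative on all of $[s,\infty)$, which gives \eqref{eq:250928ac}. We then decrease $\omega$ continuously down to a value $\omega_s$ at which the negativity set begins near $s$ plus a fixed length. Proposition \ref{prop:sweeping-method}, in its last assertion (using $U,V>0$), yields $(U,V)\ge(\underline{U}_\omega,\underline{V}_\omega)$ on $[s,\infty)$ for all admissible $\omega$. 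The strict inequalities at the endpoints \eqref{eq:250928aa}--\eqref{eq:250928ab} are obtained by replacing $A$ by $A-\varepsilon'$.

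Now the key point: because $\nu>\lambda_c$, letting the starting time $s\to-\infty$ allows $\omega$ to be taken fixed while the correction satisfies $\omega e^{(\nu-\lambda_c)t}\to0$ as $t\to-\infty$. This gives $m(t)\ge A-o(1)$ for all $t\le s'$ with $s'$ independent of the late choices. Consequently $\underline{m}\ge\overline{m}-2\varepsilon$. The point to verify carefully is that the sweeping interval can be chosen as $[s,t_0]$ with $t_0$ fixed, namely $-\xi_\omega$ for a fixed $\omega\ge\omega^*$, independently of $s$. This works because the subsolution at time $s$ only requires $U(s)\ge Ae^{\lambda_c s}\varphi$, which holds for all large negative $s$ that nearly realize $\overline{m}$, and because the lower bound at later times $t\in[s,t_0]$ is then inherited from the comparison.

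\textbf{Critical case.} Here we use $\underline{U}$ from \eqref{eq:critical-sub}, written as
\[
\Big(-t+\tfrac{\partial_\lambda\varphi^\lambda|_{\lambda^*}}{\varphi^{\lambda^*}}-\omega\Big)e^{\lambda^*t}\varphi^{\lambda^*}+e^{\nu t}\varphi^{\nu},
\]
multiplied by $A$ and shifted in time. The difficulty is that a time shift $\tau$ turns the factor $-t$ into $-(t-\tau)$, and for $|t|\to\infty$ the term $-t$ dominates both $\tau$ and $\omega$. So the ratio to $-te^{\lambda^*t}$ still tends to $Ae^{-\lambda^*\tau}$. We choose $\tau$ so that the profile lies below $(U,V)$ at time $s$, using $m(s)\ge\overline{m}-\varepsilon$ and the bounded lower-order terms, and choose $\omega$ large. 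The same sweeping argument then gives $\liminf m\ge(\overline{m}-\varepsilon)(1-o(1))$ as the shifts are optimized.

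\textbf{Main obstacle.} I expect the main obstacle to be the uniformity claim in the supercritical case: arranging that the sweeping family (parameter $\omega$, or the shift) yields a lower bound at times $t\ll s$, even though the comparison only runs forward from $s$. The way around it is to reverse roles. For each target time $t$, pick $s<t$ with $m(s)$ close to $\overline{m}$ (possible since $\overline{m}$ is a limsup as $s\to-\infty$), and apply the comparison on $[s,t_0]$, which contains $t$. Because the error $\omega e^{(\nu-\lambda_c)(t-s\text{-shift})}$ must be small at $t$, the rescaling must be done so that the subsolution's amplitude is matched at $s$ while the correction is normalized at the later time. The exact normalization needs care, and at this step I would first try taking $\omega$ as a function of $s$ such that $\xi_\omega$ tracks $t$.
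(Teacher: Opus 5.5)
Your argument is essentially the paper's. The paper likewise takes times $s_n\to-\infty$ at which the ratio exceeds its $\liminf$ by a fixed amount; it phrases this as a contradiction after normalizing the $\liminf$ to $1$ by a time shift. It then sweeps the subsolutions of Lemma~\ref{lem:sub-solution} (resp.\ Lemma~\ref{lem:critical-diff-ineq}) in $\omega$ on $[s_n,t_0]$, with the final $\omega$ and $t_0$ independent of $n$, and lets $s_n\to-\infty$ to obtain the bound on a whole half-line. In the critical case the paper locates the admissible time shift through the Lambert $W$ function, and your asymptotic choice of shift or amplitude does the same job more simply.

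The ``main obstacle'' in your last paragraph is not one. With $\omega$ fixed, the relative error at a target time $t$ is of order $\omega e^{(\nu-\lambda_c)t}$, independently of $s$. So discard the tentative idea of letting $\omega$ depend on $s$, like the $\omega_s$ in your sweeping step. That choice would confine the comparison to a window of bounded length after $s$, where the relative error does not vanish, and would spoil the estimate.
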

\begin{proof}
	We first consider assertion 1. 
	From Proposition \ref{prop:upper-bound-tail} we may assume without loss of generality (up to a translation in time) that 
	\begin{equation}\label{eq:251007b}
		\liminf_{t\to-\infty}\inf_{x\in\mathbb{R}}\min\left(\dfrac{U(t, x)}{\varphi^{\lambda_c}(x)e^{\lambda_c t}}, 
		    \dfrac{V(t, x)}{\psi^{\lambda_c}(x)e^{\lambda_c t}}\right) = 1.
	\end{equation}
	Suppose by contradiction that there exist $\varepsilon>0$ and a sequence $s_n\to -\infty$ such that 	
	\begin{equation*}
		%\inf_{x\in\mathbb{R}}\min\left(\dfrac{U(t_n, x)}{\varphi^{\lambda_c}(x)e^{\lambda_c t_n}},
		%\dfrac{V(t_n, x)}{\psi^{\lambda_c}(x)e^{\lambda_c t_n}}\right) \xrightarrow[n\to+\infty]{}  1 \text{ and }
		\inf_{x\in\mathbb{R}}\min\left(\dfrac{U(s_n, x)}{\varphi^{\lambda_c}(x)e^{\lambda_c s_n}}, 
		    \dfrac{V(s_n, x)}{\psi^{\lambda_c}(x)e^{\lambda_c s_n}}\right) \geq 1 + \varepsilon.
	\end{equation*}

	    %Let $\big(\underline{u}_{\omega}(t, x), \underline{v}_{\omega}(t,x)\big)$ be defined by \eqref{eq:ul-u}  with $\omega\geq \omega^*$ so that the differential inequality \eqref{eq:sub-sol-beta} is satisfied by applying Lemma \ref{lem:sub-solution} ($\beta$ is chosen sufficiently large so that the system \eqref{eq:auxiliary-below} is cooperative). 
	    We define
	    \begin{equation*}
		    \underline{U}_{\omega}(t, x) = e^{\lambda_c t} \varphi^{\lambda_c}(x) - \omega e^{\nu t} \varphi^{\nu}(x) 
		\text{ and } 
		    \underline{V}_{\omega}(t, x) = e^{\lambda_c t} \psi^{\lambda_c}(x) - \omega e^{\nu t} \psi^{\nu}(x) ,
	    \end{equation*}
	    for some $\nu\in(\lambda_c, 2\lambda_c)$. 
	    As in the proof of Proposition \ref{prop:upper-bound-tail}, 
	    it follows from Lemma \ref{lem:sub-solution} that $\underline{U}_{\omega}(t, x)<0$ and $\underline{V}_{\omega}(t, x)<0$ whenever $t\geq -\xi_\omega$, with $\xi_\omega$ defined in \eqref{eq:xi_omega},
	    and we obtain that $(\underline{U}_{\omega}, \underline{V}_{\omega})$ satisfies the differential inequality \eqref{eq:newvar-sub} for any $t\leq -\xi_\omega$, with $(g_u, g_v)$ defined by \eqref{eq:250926a}.

	    We investigate the minimal value of $T$ such that $\big(\underline{U}_{\omega}(t-T, x), \underline{V}_{\omega}(t-T, x)\big)\leq \big(U(t, x), V(t, x)\big)$ for any $t\in [s_n, -\xi_\omega+T]$. Proceeding as in the proof of Proposition \ref{prop:upper-bound-tail} by applying Proposition \ref{prop:sweeping-method} (sweeping method) with the family indexed by $\omega$,  it suffices to check that 
	    \begin{equation}\label{ineq:251007a}
\big(\underline{U}(s_n-T, x), \underline{V}(s_n-T, x)\big)\leq \big({U}(s_n, x), {V}(s_n, x)\big).
	    \end{equation}
	    Clearly this is the case for any  $ T\geq T_\varepsilon:=-\frac{1}{\lambda_c}\ln\big(1+\varepsilon\big)$, which shows that 
	    \begin{equation*}
		    \big(\underline{U}(t-T_\varepsilon, x), \underline{V}(t-T_\varepsilon, x)\big)=\big((1+\varepsilon)e^{\lambda_c t}-\omega (1+\varepsilon)^{\frac{\nu}{\lambda}}e^{\nu t}\big)\big(\varphi^{\lambda_c}(x), \psi^{\lambda_c}(x)\big) \leq \big(U(t, x), V(t, x)\big), 
	    \end{equation*}
	    and in particular 
	    \begin{equation*}
		    \liminf_{t\to-\infty}\inf_{x\in\mathbb{R}}\min\left(\frac{U(t, x)}{e^{\lambda_c t}\varphi^{\lambda_c}(x)}, \frac{V(t, x)}{e^{\lambda_c t}\psi^{\lambda_c}(x)}\right) \geq 1+\varepsilon, 
	    \end{equation*}
	    which contradicts \eqref{eq:251007b}. The contradiction proves assertion 1.
	    \medskip

%%%%%%%%%%%%%%%%%%%%%%%%%%%%%%%%%%%%%%%%%%%%%%%%%%%%%%%%%%%%%%%%%%%%%%
Next we consider assertion 2. Without loss of generality we assume that 
	\begin{equation}\label{eq:251007c}
		\liminf_{t\to-\infty}\inf_{x\in\mathbb{R}}\min\left(\dfrac{U(t, x)}{-te^{\lambda^* t}\varphi^{\lambda^*}(x)}, 
		    \dfrac{V(t, x)}{-te^{\lambda^* t}\psi^{\lambda^*}(x)}\right) = 1,
	\end{equation}
	and we suppose by contradiction that 
	there is $\varepsilon>0$ and a sequence $s_n$ such that 	
	\begin{equation*}
		%\inf_{x\in\mathbb{R}}\min\left(\dfrac{U(t_n, x)}{\varphi^{\lambda^*}(x)e^{\lambda^* t_n}},
		%\dfrac{V(t_n, x)}{\psi^{\lambda^*}(x)e^{\lambda^* t_n}}\right) \xrightarrow[n\to+\infty]{}  1 \text{ and }
		\inf_{x\in\mathbb{R}}\min\left(\dfrac{U(s_n, x)}{-s_ne^{\lambda^* s_n}\varphi^{\lambda^*}(x)}, 
		    \dfrac{V(s_n, x)}{-s_ne^{\lambda^* s_n}\psi^{\lambda^*}(x)}\right) \geq 1 + \varepsilon.
	\end{equation*}
	We define
	    \begin{align*}
		    \underline{U}(t, x) &=\left(-t+\frac{\partial_\lambda \varphi^{\lambda}|_{\lambda^*}(x)}{\varphi^{\lambda^*}(x)} -\omega\right)e^{\lambda^* t} \varphi^{\lambda^*}(x) +  e^{\nu t} \varphi^{\nu}(x) 
		\text{ and } \\
		    \underline{V}(t, x) &=\left(-t+\frac{\partial_\lambda \psi^{\lambda}|_{\lambda^*}(x)}{\psi^{\lambda^*}(x)} -\omega\right)e^{\lambda^* t} \psi^{\lambda^*}(x) +  e^{\nu t} \psi^{\nu}(x) ,
	    \end{align*}
	    for $\nu\in(\lambda^*, 2\lambda^*)$. 
	    As in the proof of Proposition \ref{prop:upper-bound-tail}, 
	    it follows from Lemma \ref{lem:critical-diff-ineq} that $\underline{U}(t, x)<0$ and $\underline{V}(t, x)<0$ whenever $t\geq -\xi_\omega$, with $\xi_\omega^*$ defined in \eqref{eq:xi_omega^*}, 
	    and we obtain that $(\underline{U}, \underline{V})$ satisfies the differential inequality \eqref{eq:newvar-sub} for any $t\leq -\xi_\omega^*$, with $(g_u, g_v)$ defined by \eqref{eq:250926a}, 
	    which is cooperative on $[0, \frac{\sup\max(r_u, r_v)}{\beta}]^2$ when $\beta\geq \frac{\sup\max(r_u, r_v, \kappa_u, \kappa_v)}{\inf\min (\mu_u, \mu_v)}$. 

	We investigate the maximal value of $T>0$ such that $\big(\underline{U}(t-T, x), \underline{V}(t-T, x)\big)\leq \big(U(t, x), V(t, x)\big)$ for any $t\in [s_n, -\xi_\omega+T]$. Proceeding as in the proof of Proposition \ref{prop:upper-bound-tail} by applying Proposition \ref{prop:sweeping-method} with the family indexed by $\omega$,  it suffices to check that 
	    \begin{equation}\label{ineq:251007c}
\big(\underline{U}(s_n-T, x), \underline{V}(s_n-T, x)\big)\leq \big({U}(s_n, x), {V}(s_n, x)\big).
	    \end{equation}
	   For the $U$-component, a sufficient condition for the latter inequality is (we assume $s_n-T\leq 0$ in the computations because $\underline{U}(t, x)<0$ and $\underline{V}(t, x)<0$ when $t\geq 0$): 
	    \begin{align*}
		    & & \left(-(s_n-T)+\frac{\partial_\lambda \varphi^{\lambda}|_{\lambda^*}(x)}{\varphi^{\lambda^*}(x)} -\omega\right)e^{\lambda^* (s_n-T)} \varphi^{\lambda^*}(x) +  e^{\nu (s_n-T)} \varphi^{\nu}(x) &\leq (1+\varepsilon) (-s_n)e^{\lambda^* s_n}\varphi^{\lambda^*}(x) \\
		    \Leftarrow & & \left[ -(s_n-T)e^{\lambda^*(s_n-T)} + K_1 \right]e^{\lambda^* (s_n-T)} \varphi^{\lambda^*}(x) & \leq -(1+\varepsilon)s_n e^{\lambda^* s_n} \varphi^{\lambda^*}(x)\\
		    \Leftrightarrow & &\lambda^*(s_n-T - K_1) e^{\lambda^*(s_n-T-K_1)} & \geq (1+\varepsilon) \lambda^* s_n e^{\lambda^*(s_n-K_1)} \\
		    \Leftrightarrow & &W_{-1}\big((1+\varepsilon)e^{-\lambda^*K_1} \lambda^* s_n e^{\lambda^*s_n}\big) & \geq \lambda^*(s_n-T-K_1) \\
		    \Leftrightarrow & & s_n-K_1 - \frac{1}{\lambda^*}W_{-1}\big((1+\varepsilon)e^{-\lambda^*K_1}\lambda^* s_n e^{\lambda^*s_n}\big) &\leq T,
	    \end{align*}
	    where $K_1$ is a sufficiently large constant. 

	    We obtain a similar condition for the $V$-component. Hence \eqref{ineq:251007c} is automatically satisfied whenever 
	    \begin{equation}
		    T\geq T_n:= s_n-K_1 - \frac{1}{\lambda^*}W_{-1}\big((1+\varepsilon)e^{-\lambda^*K_1} \lambda^* s_n e^{\lambda^*s_n}\big).
	    \end{equation}
	    It follows from the definition of $W_{-1}$ that $W_{-1}(z)=\ln(-z) - \ln\big(-W_{-1}(z)\big)$ for any $z\in (e^{-1}, 0)$. Let $\tilde{C}_\varepsilon:=(1+\varepsilon)e^{-K_1}$, then 
	    \begin{align}
		    \nonumber T_n&=- K_1+s_n-\frac{1}{\lambda^*}W_{-1}\big(\tilde{C}_\varepsilon \lambda^* s_n e^{\lambda^* s_n}\big) \\
		    \nonumber &= -K_1+s_n-\frac{1}{\lambda^*}\left[\ln\big(-\tilde{C}_\varepsilon\lambda^* s_n e^{\lambda s_n}\big) - \ln\big(-W_{-1}\big(\tilde{C}_\varepsilon \lambda^* s_n e^{\lambda^* s_n}\big)\big)\right] \\ 
		    \nonumber &= -K_1+s_n - \frac{1}{\lambda^*}\left[\lambda^* s_n + \ln\big(-\tilde{C}_\varepsilon\lambda^* s_n\big) - \ln\big(-W_{-1}\big(\tilde{C}_\varepsilon \lambda^* s_n e^{\lambda^* s_n}\big)\big)\right]\\
		    \nonumber &=-K_1- \frac{1}{\lambda^*}\ln\big(-\tilde{C}_\varepsilon\lambda^* s_n\big) +\frac{1}{\lambda^*} \ln\Big(-\ln\big(-\tilde{C}_\varepsilon \lambda^* s_n e^{\lambda^* s_n}\big)  +\ln\big(-W_{-1}\big(\tilde{C}_\varepsilon \lambda^* s_n e^{\lambda^* s_n}\big)\big)\Big) \\
		    \nonumber &=-K_1+\frac{1}{\lambda^*}\ln\left[\frac{-1}{\tilde{C}_\varepsilon \lambda^* s_n}\Big(-{\lambda^* s_n}- \ln\big(-\tilde{C}_\varepsilon \lambda^* s_n \big)  +\ln\big(-W_{-1}\big(\tilde{C}_\varepsilon \lambda^* s_n e^{\lambda^* s_n}\big)\big)\Big)\right] \\
		     \label{eq:251007d}&=-K_1+\frac{1}{\lambda^*}\ln\left[\frac{1}{\tilde{C}_\varepsilon} - \frac{1}{\tilde{C}_\varepsilon\lambda^*s_n} \ln\left( \frac{-W_{-1}\big(\tilde{C}_\varepsilon \lambda^* s_n e^{\lambda^* s_n}\big)}{-\tilde{C}_\varepsilon\lambda^* s_n}\right)\right].
%		    &=K_1+\frac{1}{\lambda^*}\ln\left[\frac{1}{\tilde{C}_\varepsilon} + \frac{1}{\tilde{C}_\varepsilon\lambda^*s_n} \ln(-\tilde{C}_\varepsilon\lambda^* s_n) - \frac{1}{\tilde{C}_\varepsilon\lambda^*s_n}\ln\big(-W_{-1}\big(\tilde{C}_\varepsilon \lambda^* s_n e^{\lambda^* s_n}\big)\big)\right]\\
%		    &\geq\frac{1}{\lambda^*}\ln\left[\frac{1}{\tilde{C}_\varepsilon} + \frac{1}{\tilde{C}_\varepsilon\lambda^*s_n} \ln(-\tilde{C}_\varepsilon\lambda^* s_n) \right] = \frac{1}{\lambda^*} \ln\left[\frac{1}{\tilde{C}_\varepsilon}+o_n(1)\right]\xrightarrow[n\to+\infty]{}+\infty.
	    \end{align}
	    Let $0<\mu< e^{-1}$ be given, it follows from elementary computations that $xe^x \geq -e^{(1-\mu) x}$ whenever $x\leq x_\mu:=\frac{1}{\mu}W_{-1}(-\mu)$; thus $W_{-1}(x)\geq \frac{1}{1-\mu}\ln(-x)$ for $x\in (x_\mu e^{x_\mu}, 0)$. Hence,  
	    \begin{align*}
		    -W_{-1}\big(\tilde{C}_\varepsilon \lambda^* s_n e^{\lambda^* s_n}\big) &\leq -\frac{1}{1-\mu} \ln\big( -\tilde{C}_\varepsilon \lambda^* s_n e^{\lambda^* s_n}\big) = -\frac{\lambda^* s_n}{1-\mu}+\frac{-\ln\big(-\tilde{C}_\varepsilon\lambda^*s_n\big)}{1-\mu}.
	    \end{align*}
	    Coming back to \eqref{eq:251007d}, we conclude 
	    \begin{align*}
		    T_n&\leq -K_1+\frac{1}{\lambda^*}\ln\left[\frac{1}{\tilde{C}_\varepsilon}+\frac{-1}{\tilde{C}_\varepsilon\lambda^*s_n} \ln\left(\frac{1}{(1-\mu)\tilde{C}_\varepsilon} + \frac{-\ln\big(-\tilde{C}_\varepsilon\lambda^*s_n\big)}{-(1-\mu)\tilde{C}_\varepsilon\lambda^*s_n}\right) \right] \\ 
		    &=-K_1+\frac{1}{\lambda^*}\ln\left[\frac{1}{\tilde{C}_\varepsilon}+\frac{-1}{\lambda^*s_n} \frac{\ln\big((1-\mu)\tilde{C}_\varepsilon+o_n(1)\big)}{\tilde{C}_\varepsilon}\right] = \frac{1}{\lambda^*}\ln\left[\frac{1}{1+\varepsilon}\right]+o_n(1).
	    \end{align*}
	    This shows that $T_n\to T_*\leq -\frac{1}{\lambda^*}\ln(1+\varepsilon)$ as $n\to+\infty$. Hence, applying Proposition \ref{prop:sweeping-method} and taking the limit $n\to+\infty$ as in the previous step, we obtain 
	    \begin{multline*} 
		    (1+\varepsilon)\left[-(t-T_*)+\frac{\partial_\lambda \varphi^{\lambda}|_{\lambda^*}(x)}{\varphi^{\lambda^*}(x)} -\omega\right]e^{\lambda^* t}\varphi^{\lambda^*}(x)+(1+\varepsilon)^{\frac{\lambda}{\nu}}e^{\nu t} \varphi^\nu(x) \leq  U(t, x)  \\ 
		    \text{ and }(1+\varepsilon)\left[-(t-T_*)+\frac{\partial_\lambda \psi^{\lambda}|_{\lambda^*}(x)}{\psi^{\lambda^*}(x)} -\omega\right]e^{\lambda^* t}\psi^{\lambda^*}(x)+(1+\varepsilon)^{\frac{\lambda}{\nu}}e^{\nu t} \psi^\nu(x) \leq  V(t, x)  ,
	    \end{multline*}
	    for all $t\in\mathbb{R}$ and $x\in\mathbb{R}$, 
	    and in particular 
	    \begin{equation*}
		    \liminf_{t\to-\infty}\inf_{x\in\mathbb{R}}\min\left(\frac{U(t, x)}{-te^{\lambda^* t}\varphi^{\lambda^*}(x)}, \frac{V(t, x)}{-te^{\lambda^* t}\psi^{\lambda^*}(x)}\right) \geq 1+\varepsilon, 
	    \end{equation*}
	    which contradicts \eqref{eq:251007c}. The contradiction proves assertion 2. The proof of Lemma \ref{lem:exponential-decay-minimum} is finished.
\end{proof}

\begin{proof}[Proof of Theorem \ref{thm:TW2}] 
	Let $(u, v)$ be a traveling wave solution for \eqref{eq:main-sys} and $c\geq c^*_R$ be given. We define 
	    \begin{equation*}
		    \big({U}(t, x), {V}(t, x)\big):= \left({u}\left(\frac{t+x}{c}, x\right), {v}\left(\frac{t+x}{c}, x\right)\right)
	    \end{equation*}
	    so that $\big(U(t, x), V(t, x)\big)$ is a classical solution of \eqref{eq:TW-newvar}. 
	We set $e(t)= e^{\lambda_c t}$ and $\lambda=\lambda_c$ if $c>c^*_R$ and $e(t)= -t e^{\lambda^*t}$ and $\lambda=\lambda^*$ if $c=c^*_R$.
	    From Proposition \ref{prop:lower-bound-tail} and proposition \ref{prop:upper-bound-tail} we know that 
	    \begin{equation}\label{eq:251008a}
		    0<\liminf_{t\to-\infty}\inf_{x\in\mathbb{R}}\min\left(\frac{U(t, x)}{e(t)\varphi^\lambda(x)}, \frac{V(t, x)}{e(t)\psi^\lambda(x)}\right) \leq  
		    \limsup_{t\to-\infty}\sup_{x\in\mathbb{R}}\max\left(\frac{U(t, x)}{e(t)\varphi^\lambda(x)}, \frac{V(t, x)}{e(t)\psi^\lambda(x)}\right)<+\infty, 
	    \end{equation}
	    and from Lemma \ref{lem:exponential-decay-minimum}  we know moreover that  
	    \begin{equation} \label{eq:251008d}
		    \lim_{t\to-\infty}\inf_{x\in\mathbb{R}}\min\left(\frac{U(t, x)}{e(t)\varphi^\lambda(x)}, \frac{V(t, x)}{e(t)\psi^\lambda(x)}\right) =1,   
	    \end{equation}
	up to a translation in time. Suppose by contradiction that there exists a $\varepsilon>0$ and a sequence $t_n, x_n$ with $t_n\to -\infty$ and $x_n\in [0, L] $ such that 
	\begin{equation}\label{eq:251007e}
		\max\left(\frac{U(t_n, x_n)}{e(t_n)\varphi^\lambda(x_n)}, \frac{V(t_n, x_n)}{e(t_n)\psi^\lambda(x_n)}\right) \geq 1+\varepsilon.
	\end{equation}
	We define $\big(\tilde{U}_n(t, x), \tilde{V}_n(t, x)\big):= \left(\frac{U(t+t_n, x)}{e(t+t_n)\varphi^\lambda(x)}, \frac{V(t+t_n, x)}{e(t+t_n)\psi^\lambda(x)}\right)$  and $\big(\tilde{u}_n(t, x), \tilde{v}_n(t, x)\big) = \big(\tilde{U}_n(ct-x, x), \tilde{V}_n(ct-x, x)\big)$. Then the functions $\big(\tilde{u}_n(t, x), \tilde{v}_n(t, x)\big)$ can be rewritten as  
	\begin{align*}
	    \tilde{u}_n(t, x)&= \tilde{U}_n(ct-x, x)= \dfrac{U(ct-x+t_n, x)}{e(ct-x+t_n) \varphi^{\lambda}(x)} = 
	    \dfrac{u\left(\frac{ct-x+t_n+x}{c}, x\right)}{e(ct-x+t_n) \varphi^{\lambda}(x)} = 
	    \dfrac{u\left(t+\frac{t_n}{c}, x\right)}{e(ct-x+t_n) \varphi^{\lambda}(x)}, \\ 
	    \tilde{v}_n(t, x)&= \tilde{V}_n(ct-x, x)= \dfrac{V(ct-x+t_n, x)}{e(ct-x+t_n) \psi^{\lambda}(x)} = 
	    \dfrac{v\left(\frac{ct-x+t_n+x}{c}, x\right)}{e(ct-x+t_n) \psi^{\lambda}(x)} = 
	    \dfrac{v\left(t+\frac{t_n}{c}, x\right)}{e(ct-x+t_n) \psi^{\lambda}(x)}, 
	\end{align*}
	and therefore satisfy (we use short notations and omit the function's arguments in the next computation: $e=e(ct-x+t_n)$, $\varphi=\varphi^\lambda(x)$, and the prime to denote a function's first derivative)
	\begin{subequations}\label{eq:251008b}
	\begin{align}
	    \nonumber \partial_t \tilde{u}_n &=\frac{\partial_tu}{e\varphi} - c\tilde{u}_n \frac{e'}{e}=\frac{\partial_x(\sigma \partial_x u)+f_u(x, u,v)}{e\varphi} - c\tilde{u}_n \frac{e'}{e} \\ 
	    \nonumber &= \partial_x (\sigma \partial_x \tilde{u}_n) + 2\sigma \frac{\partial_x (e\varphi)}{e\varphi} \partial_x\tilde{u}_n + \tilde{u}_n \frac{\partial_x (\sigma\partial_x(e\varphi))}{e\varphi}+ \frac{f_u(x, u, v)}{e\varphi} - c\tilde{u}_n\frac{e'}{e} \\
	    \nonumber &= \partial_x (\sigma \partial_x \tilde{u}_n) + 2\sigma \left(-\frac{e'}{e}+\frac{\varphi'}{\varphi}\right) \partial_x\tilde{u}_n + \tilde{u}_n\left[ \frac{e\partial_x (\sigma\partial_x\varphi) - 2e'\sigma \partial_x \varphi +(-e'\partial_x \sigma + \sigma e'' - ce' )\varphi }{e\varphi} \right]\\ 
	     &\quad +  \frac{f_u(x, u, v)}{e\varphi}, \\
	     \nonumber \partial_t \tilde{v}_n &=\partial_x (\sigma \partial_x \tilde{v}_n) + 2\sigma \left(-\frac{e'}{e}+\frac{\psi'}{\psi}\right) \partial_x\tilde{v}_n + \tilde{v}_n\left[ \frac{e\partial_x (\sigma\partial_x\psi) - 2e'\sigma \partial_x \psi +(-e'\partial_x \sigma + \sigma e'' - ce' )\psi }{e\psi} \right]\\ 
	    &\quad +  \frac{f_v(x, u, v)}{e\psi}.
	\end{align}
	\end{subequations}
	From \eqref{eq:251008a} and since $t_n\to -\infty$, the functions $(\tilde{u}_n, \tilde{v}_n)$ are locally bounded, and $\big(u(t+t_n/c, x), v(t+t_n/c, x)\big)\to (0, 0)$ as $n\to+\infty$, locally uniformly in $(t, x)$. Note that 
	\begin{equation*}
	    \frac{e'}{e} =\frac{e'(ct-x+t_n)}{e(ct-x+t_n)}= \lambda +\mathcal{O}\left(\frac{1}{ct-x+t_n}\right) , \qquad \frac{e''}{e}=\lambda^2 +\mathcal{O}\left(\frac{1}{ct-x+t_n}\right),
	\end{equation*}
	and 
	\begin{align*}
	    \frac{f_u(x, u, v)}{e\varphi} &=\dfrac{u\big(r_u-\kappa_u ( u+v)\big) + \mu_v v -\mu_u u }{e\varphi}= \tilde{u}_n \big(r_u-\mu_u-\kappa_u(u+v)\big) +\mu_v \frac{\psi}{\varphi}\tilde{v}_n, \\ 
	    \frac{f_v(x, u, v)}{e\psi} &=\dfrac{v\big(r_v-\kappa_v ( u+v)\big) + \mu_u u -\mu_v v }{e\varphi}= \tilde{v}_n \big(r_v-\mu_v-\kappa_v(u+v)\big) +\mu_u \frac{\varphi}{\psi}\tilde{v}_n, 
	\end{align*}
	so \eqref{eq:251008b} is a linear parabolic equation with locally bounded coefficients. 
	    From standard parabolic estimates, the functions $(\tilde{u}_n, \tilde{v}_n)$ converge in $C^{1, 2}_{t, x; loc}(\mathbb{R}\times\mathbb{R})$, to a vector function $(\tilde{u}_\infty, \tilde{v}_\infty)$ satisfying
	    \begin{subequations}\label{eq:251008c}
	    \begin{align}
		\nonumber \partial_t \tilde{u}_\infty &= \partial_x (\sigma \partial_x \tilde{u}_\infty) + 2\sigma \left(-\lambda +\frac{\varphi'}{\varphi}\right) \partial_x\tilde{u}_\infty + \tilde{u}_\infty\left[ \frac{\partial_x (\sigma\partial_x\varphi) - 2\lambda\sigma \partial_x \varphi +(-\lambda \partial_x \sigma + \lambda^2\sigma  - \lambda c  )\varphi }{\varphi} \right]\\ 
		\nonumber&\quad +\tilde{u}_\infty \big(r_u-\mu_u\big) +\mu_v \frac{\psi}{\varphi}\tilde{v}_\infty\\
		&= \partial_x (\sigma \partial_x \tilde{u}_\infty) + 2\sigma \left(-\lambda +\frac{\varphi'}{\varphi}\right) \partial_x\tilde{u}_\infty + \tilde{u}_\infty\underset{=0}{\underbrace{\big[k(\lambda)-\lambda c\big]}} + \mu_v \frac{\psi}{\varphi}(\tilde{v}_\infty-\tilde{u}_\infty), \label{eq:251008ca} \\ 
		\partial_t \tilde{v}_\infty  
		&= \partial_x (\sigma \partial_x \tilde{v}_\infty) + 2\sigma \left(-\lambda +\frac{\psi'}{\psi}\right) \partial_x\tilde{v}_\infty + \mu_v \frac{\varphi}{\psi}(\tilde{u}_\infty-\tilde{v}_\infty), \label{eq:251008cb} 
	    \end{align}
	    \end{subequations}
	    and moreover (up to further extraction) $x_n\to x_{\infty}\in[0,L]$.
	    Because of \eqref{eq:251008d} we have $\big(\tilde{u}_\infty, \tilde{v}_\infty\big)\geq (1, 1)$ and for each $t\in\mathbb{R}$ there exists $x_0(t)\in\mathbb{R}$ such that either  
	    \begin{subequations}\label{eq:251008e}
		\begin{equation}\label{eq:251008ea}
		\tilde{u}_\infty(ct-x_0(t)-nL, x_0(t)+nL) = 1 ,  \,^\forall n\in\mathbb{Z}, 
	    \end{equation}
	    or 
	    \begin{equation}\label{eq:251008eb}
		\tilde{v}_\infty(ct-x_0(t)-nL, x_0(t)+nL) = 1 , \,^\forall n\in\mathbb{Z}. 
	    \end{equation}
	    \end{subequations} 
	    Suppose that \eqref{eq:251008ea} holds for $t=0$. From the strong minimum principle for cooperative parabolic systems  (see \cite[Chap. 3 Theorem 13 p.190]{Pro-Wei-1984}) we deduce that $\tilde{u}_\infty\equiv 1$, and it follows from \eqref{eq:251008ca} that $\tilde{v}_\infty\equiv 1$ as well. If \eqref{eq:251008eb} holds, a similar argument leads to the same conclusion that $(\tilde{u}_\infty, \tilde{v}_\infty)\equiv (1,1)$.  But from \eqref{eq:251007e} we have $\max\big(\tilde{u}_\infty(0, x_{\infty}),\tilde{v}_\infty(0, x_\infty)\big)\geq 1+\varepsilon$, hence we have reached a contradiction. 

	    We have proved: 
	    \begin{equation*} \label{eq:251008f}
		    \lim_{t\to-\infty}\inf_{x\in\mathbb{R}}\min\left(\frac{U(t, x)}{e(t)\varphi^\lambda(x)}, \frac{V(t, x)}{e(t)\psi^\lambda(x)}\right) =\lim_{t\to-\infty}\sup_{x\in\mathbb{R}}\max\left(\frac{U(t, x)}{e(t)\varphi^\lambda(x)}, \frac{V(t, x)}{e(t)\psi^\lambda(x)}\right)=1.   
	    \end{equation*}
	     This establishes the claim \eqref{eq:decay-U,V} with $\alpha=1$, and the proof of Theorem \ref{thm:TW2} is complete.
\end{proof}

%%%%%%%%%%%%%%%%%%%%
%%%%%%%%%%%%%%%%%%%%

\section{Behavior behind the front}
\label{sec:behind}
In this section, we study the behavior of traveling waves far behind the front, 
under the assumption that the coefficients are spatially homogeneous, for that the spatial period $L$ is very small (the rapidly oscillating case). 
A large part of the arguments here rely on the results we presented in our previous paper \cite{Griette-Matano-2025}.

\subsection{The homogeneous case}
In this subsection we assume that the coefficients of \eqref{eq:main-sys} are spatially homogeneous. 
We begin with the existence of a unique homogeneous equilibrium for \eqref{eq:main-sys}.

\begin{lem}[Existence and uniqueness of stationary state \protect{\cite{Griette-Matano-2025}}]\label{lem:stat-ode-uniqueness}
    Let $ r_u, r_v\in\R$, $\kappa_u>0$, $\kappa_v>0$, and $\mu_u, \mu_v>0$. 
	Assume that $\lambda_A>0$, where $\lambda_A$ denotes the principal eigenvalue of the matrix $A$ in \eqref{A}.
    Then, there exists a unique positive homogeneous steady state $(u^*, v^*)$ for \eqref{eq:main-sys}. 
\end{lem}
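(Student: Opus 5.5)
The plan is to reduce the problem to a one-parameter family of $2\times 2$ linear eigenvalue problems, parametrized by the total mass $s=u+v$, and then to apply Perron--Frobenius theory. A homogeneous steady state $(u,v)$ of \eqref{eq:main-sys-homo} solves
\[
(r_u-\mu_u-\kappa_u s)u+\mu_v v=0,\qquad \mu_u u+(r_v-\mu_v-\kappa_v s)v=0,\qquad s=u+v.
\]
For each $s\ge 0$ I set
\[
A_s:=A-s\,\mathrm{diag}(\kappa_u,\kappa_v),
\]
with $A$ as in \eqref{A}. Then $(u,v)$ is a positive steady state exactly when $(u,v)^T$ is a positive vector in $\ker A_s$ with $u+v=s$.

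The off-diagonal entries of $A_s$ are $\mu_u,\mu_v>0$. So $A_s+m I$ is a positive matrix for $m$ large, and Perron--Frobenius applies. It gives a simple real principal eigenvalue $\lambda(s)$ with a positive eigenvector, unique up to scaling. It also shows that no other eigenvalue has a nonnegative nonzero eigenvector. Hence $A_s$ has a positive kernel vector if and only if $\lambda(s)=0$, and in that case $\ker A_s$ is spanned by that positive vector. Note also that if $u,v\ge0$ with $(u,v)\neq(0,0)$, the equations force both components to be positive, because $\mu_u,\mu_v>0$. So "positive" can be taken in either sense.

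The key step is to study the scalar function $s\mapsto \lambda(s)$ on $[0,\infty)$.

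\begin{itemize}
\item \emph{Starting value.} We have $\lambda(0)=\lambda_A>0$ by assumption.
\item \emph{Continuity.} $\lambda(s)$ is continuous in $s$; it is the larger root of a quadratic whose coefficients depend continuously on $s$.
\item \emph{Strict decrease.} Let $s<s'$ and let $w>0$ be a left Perron eigenvector of $A_{s'}$. Pair $w$ with the right Perron eigenvector $e>0$ of $A_s$:
\[
\lambda(s)\,w\cdot e = w^T A_s e = w^T A_{s'}e+(s'-s)\,w^T\mathrm{diag}(\kappa_u,\kappa_v)e=\lambda(s')\,w\cdot e+(s'-s)\,w^T\mathrm{diag}(\kappa_u,\kappa_v)e .
\]
The last term is strictly positive, so $\lambda(s)>\lambda(s')$.
\item \emph{Behaviour at infinity.} $\lambda(s)\to-\infty$ as $s\to\infty$. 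Indeed, with left vector $(1,1)$ the column sums of $A_s$ are $r_u-\kappa_u s$ and $r_v-\kappa_v s$. The Perron eigenvalue of a Metzler matrix is bounded by its maximal column sum, so $\lambda(s)\le r_{\max}-\kappa_{\min}s$.
\end{itemize}

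By the intermediate value theorem there is then exactly one $s^*>0$ with $\lambda(s^*)=0$.

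For existence, take the positive Perron vector $(e_1,e_2)$ of $A_{s^*}$ and normalize it so that $e_1+e_2=s^*$. Setting $(u^*,v^*):=(e_1,e_2)$ gives a positive homogeneous steady state.

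For uniqueness, let $(u,v)$ be any positive homogeneous steady state and put $s=u+v$. Then $(u,v)$ is a positive kernel vector of $A_s$, so $\lambda(s)=0$, which forces $s=s^*$. Simplicity of the Perron eigenvalue then gives $(u,v)=\theta(e_1,e_2)$ for some $\theta>0$. The normalization $u+v=s^*$ forces $\theta=1$.

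I expect the only step needing care to be the strict monotonicity and the decay of $\lambda(s)$, which the left-eigenvector pairing above handles. Everything else is routine Perron--Frobenius theory for irreducible Metzler $2\times2$ matrices. If one prefers to avoid the general theory, the same conclusions can be checked directly from the explicit formula for the larger eigenvalue of a $2\times 2$ matrix.
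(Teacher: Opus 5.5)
Your proof is correct, but it takes a different route from the one the paper relies on. The paper gives no argument here. It refers to Lemma~12 of \cite{Griette-Matano-2025} and describes the proof as algebraic computations on the steady-state system, that is, a direct solution of the two equations. One natural version of such a computation reduces to a polynomial equation for the ratio $v/u$ and then checks the sign of the resulting amplitude.

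You instead parametrize by the total mass $s=u+v$. You recast the problem as $\lambda(s)=0$ for the Perron root of $A_s=A-s\,\mathrm{diag}(\kappa_u,\kappa_v)$. You get existence and uniqueness of $s^*$ from three ingredients:
\begin{itemize}
\item strict monotonicity, via your left/right Perron eigenvector pairing;
\item the column-sum bound $\lambda(s)\le r_{\max}-\kappa_{\min}s$;
\item $\lambda(0)=\lambda_A>0$.
\end{itemize}
The direction of $(u^*,v^*)$ then follows from simplicity of the Perron root.

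Each step checks out. $A_s+mI$ is strictly positive for large $m$. The column sums of $A_s$ are exactly $r_u-\kappa_u s$ and $r_v-\kappa_v s$. The pairing identity gives strict decrease because $w$, $e$ and the $\kappa$'s are positive.

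What each approach buys:
\begin{itemize}
\item The algebraic route is fully elementary and can produce explicit formulas for $(u^*,v^*)$.
\item Your route shows structurally why $\lambda_A$ is the threshold: it is $\lambda(0)$. It also gives the converse for free, since $\lambda(s)<\lambda_A\le 0$ for all $s>0$ rules out positive homogeneous equilibria when $\lambda_A\le 0$.
\item Your route spares you the hand verification that positivity of the amplitude is equivalent to $\lambda_A>0$.
\item Your route carries over essentially verbatim to $n$ strains with an irreducible Metzler mutation matrix and competition of the form $\kappa_i\sum_j u_j$, where explicit elimination becomes impractical.
\end{itemize}
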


\begin{proof}
	The proof follows from algebraic computations on the system satisfied by the homogeneous steady state. We refer to \cite[Lemma 12]{Griette-Matano-2025} for the details.  
\end{proof}

\begin{proof}[Proof of Theorem \ref{thm:asymptotic-behavior}]
It is shown in Theorem 2.4 of our previous paper \cite{Griette-Matano-2025} that any nonnegative solution $(u(t,x),v(t,x))$ to the Cauchy problem for \eqref{eq:main-sys-homo} with nontrivial initial data converges to $(u^*,v^*)$ in the sense that, for each $0<c<c^*_R\,(=c^*_L)$, it holds that
\[
\lim_{t\to+\infty} \sup_{|x|\leq ct} \max\big(|u(t,x)-u^*|, |v(t,x)-v^*|\big)=0.
\]
Hence
\begin{equation}\label{eq:convergence-at-0}
\lim_{t\to+\infty} \max\big(|u(t,0)-u^*|, |v(t,0)-v^*|\big)=0.
\end{equation}
In particular, if $(u(t,x),v(t,x))$ is a right traveling wave of the form $\big(U(x-ct), V(x-ct)\big)$, then it is easily seen that \eqref{eq:convergence-at-0} is equivalent to \eqref{eq:convergence-homogeneous}. Similarly, if  $(u(t,x),v(t,x))$ is a left traveling wave of the form $\big(U(x+ct), V(x+ct)\big)$, \eqref{eq:convergence-at-0} is equivalent to
\[
\lim_{\xi\to +\infty}\big(U(\xi), V(\xi)\big)=(u^*,v^*).
\]
This completes the proof of Theorem \ref{thm:asymptotic-behavior}.
 \end{proof}
 
%%%%%%%%%%%%%%%%%%%%

\subsection{The rapidly oscillating case}

Next, we deal with the case where the spatial period $L=\ep$ is sufficiently small. We begin with the following lemma, which is an analog of Lemma~\ref{lem:stat-ode-uniqueness} in the previous subsection.

\begin{lem}[Existence and uniqueness of rapidly oscillating entire solution \protect{\cite{Griette-Matano-2025}}]\label{lem:rapid-osc-unique}
	Let the assumptions of Theorem \ref{thm:asymptotic-behavior2} hold. Then there exists $\bar{\ep}$ such that for any $0<\ep\leq\bar{\ep}$, the system \eqref{eq:main-sys-ep} possesses a unique entire solution $(u^*_\ep, v^*_\ep)$ that is bounded from above and from below by positive constants, and this entire solution is an $\ep$-periodic stationary solution.  
\end{lem}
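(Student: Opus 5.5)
The plan is to obtain Lemma~\ref{lem:rapid-osc-unique} from the homogenization results of \cite{Griette-Matano-2025} (Theorem 2.5 there) together with the homogeneous uniqueness statement Lemma~\ref{lem:stat-ode-uniqueness}. The argument splits into existence, a compactness/homogenization step, and a local uniqueness step near the homogenized equilibrium.

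\textbf{Existence.} Since $\lambda_{\overline A}>0$ and the periodic principal eigenvalue $\lambda_1^{per}(\ep)$ of the linearization of \eqref{eq:main-sys-ep} converges to $\lambda_{\overline A}$ as $\ep\to0$, we have $\lambda_1^{per}(\ep)>0$ for small $\ep$. We then run the semiflow from $\ep$-periodic initial data $\eta(\varphi_\ep,\psi_\ep)$ in the invariant region $u+v\le\overline K$. Using the auxiliary system \eqref{eq:auxiliary-below} and Lemma~\ref{lem:comparison-below}, this data is a lower barrier. The solution stays $\ep$-periodic and is bounded below by a positive constant. A standard argument then yields an $\ep$-periodic positive steady state: either the $\omega$-limit of the increasing solution of the cooperative auxiliary system serves as a subsolution and we apply a fixed-point argument, or we apply Schauder on the set of periodic functions between the barriers, as in Section~\ref{s:existence}.

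\textbf{Compactness and homogenization.} Let $(u_\ep,v_\ep)$ be any entire solution bounded above and below by positive constants. Proposition~\ref{prop:uniform-bound} gives $u_\ep+v_\ep\le\overline K$. The hair-trigger lower bound of Theorem~\ref{thm:hairtrigger}, with $\eta$ uniform in $\ep$ as provided by the homogenization results, gives $\min(u_\ep,v_\ep)\ge\eta$. We then argue by contradiction with a sequence $\ep_n\to0$ and translations. By homogenization compactness (uniform Hölder bounds for divergence-form equations, De Giorgi–Nash), the solutions converge to a bounded entire solution of \eqref{eq:homogenized} lying in $[\eta,\overline K]$. Theorem 2.4 of \cite{Griette-Matano-2025} shows that every such entire solution of the homogeneous system is identically $(u^*,v^*)$. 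Hence every positive bounded entire solution of \eqref{eq:main-sys-ep} is uniformly close to $(\bar u^*,\bar v^*)$ for small $\ep$, in $L^\infty$ after correctors.

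\textbf{Local uniqueness (main obstacle).} We must show there is only one entire solution in a small neighborhood of $(\bar u^*,\bar v^*)$. The intended tool is the linear stability of $(\bar u^*,\bar v^*)$ for \eqref{eq:homogenized}: the Jacobian there has negative principal eigenvalue, as follows from the structure computed in Lemma~\ref{lem:stat-ode-uniqueness}. This stability persists under homogenization, giving an exponential dichotomy/contraction for the linearization of \eqref{eq:main-sys-ep} around the periodic steady state, uniformly in small $\ep$. Take the difference $w$ of two entire solutions in the neighborhood. It satisfies a linear system whose coefficients are close to the stable linearization, so $\|w(t)\|\le Ce^{-\gamma(t-s)}\|w(s)\|$ for all $s<t$. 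Letting $s\to-\infty$ gives $w\equiv0$. In particular, any entire solution coincides with the periodic steady state, which is therefore $\ep$-periodic, stationary, and unique.

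The delicate point is obtaining this uniform-in-$\ep$ contraction despite rapidly oscillating coefficients. I would first try a uniform exponential bound for the linearized semigroup, obtained via a homogenization-convergence of the linearized operators' resolvents. Failing that, I would use a Lyapunov-type $L^2$ estimate with the periodic principal eigenfunctions as weights.
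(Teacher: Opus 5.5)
Your outline (periodic barriers for existence, an $\ep$-uniform homogenization/compactness argument showing every entire solution in the class is $L^\infty$-close to $\bar p=(\bar u^*,\bar v^*)$, then local uniqueness near $\bar p$) is a sound architecture. The paper gives no argument of its own here; it defers to Lemma~21 of \cite{Griette-Matano-2025}, so the outline has to stand on its own.

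\textbf{The gap: local uniqueness.} The decisive third step is not proved, and the reason you give for it is wrong. Write $F(y,p)$ for the reaction term with the $1$-periodic coefficients. The difference $w=w_1-w_2$ of two solutions in the class solves $w_t=(\sigma^\ep w_x)_x+A^\ep(t,x)w$ with $A^\ep=D_pF(x/\ep,\bar p)+O(\delta)$. The leading part oscillates with $O(1)$ amplitude and tends to $\overline J=D_p\overline F(\bar p)$ only weakly-$*$. So it is not ``close to the stable linearization'' in any sense a perturbation argument can use.

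What you actually need is an exponential decay estimate in $L^\infty(\R)$, uniform in $\ep$, for a non-cooperative system with rapidly oscillating zero-order coefficients. That is exactly what you leave open. $L^2$ resolvent or Bloch convergence would not suffice directly either, since $w$ is merely bounded, not in $L^2(\R)$.

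Your fallback does not fit the structure. The off-diagonal entries $\overline{\mu_v}-\overline{\kappa_u}\bar u^*$ and $\overline{\mu_u}-\overline{\kappa_v}\bar v^*$ of $\overline J$ have no sign, so there is no principal eigenvalue or positive eigenvector to use as a weight. The stability of $\overline J$ comes instead from $\mathrm{tr}\,\overline J<0<\det\overline J$, which follows from the equilibrium relations.

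\textbf{A smaller gap in the Liouville step.} Theorem~2.4 of \cite{Griette-Matano-2025}, as quoted here, is a convergence result for the Cauchy problem. To conclude that every entire solution of \eqref{eq:homogenized} with values in $[\eta,\overline K]$ is constant, you need that convergence to be uniform over initial data bounded below by $\eta$. You should justify this separately.

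\textbf{One way to close the main gap.} This works directly in $L^\infty$ and needs no spectral theory.
\begin{itemize}
\item Take $Q=Q^T>0$ with $Q\overline J+\overline J^TQ=-2I$, and put $B(y)=D_pF(y,\bar p)$. Its cell average is $\overline J$, because $F$ is affine in the coefficients.
\item Let $\Pi$ be the $1$-periodic symmetric solution of $(\sigma^1\Pi')'=QB+B^TQ-(Q\overline J+\overline J^TQ)$; the right-hand side has zero mean, so $\Pi$ exists.
\item Set $P(x)=Q+\ep^2\Pi(x/\ep)$ and $m=w^TPw$. Then
\[
m_t=(\sigma^\ep m_x)_x-2\sigma^\ep w_x^TPw_x-4\sigma^\ep w^TP_xw_x-w^T(\sigma^\ep P_x)_xw+w^T\big(PA^\ep+(A^\ep)^TP\big)w.
\]
\item Since $(\sigma^\ep P_x)_x=(\sigma^1\Pi')'(x/\ep)$, the oscillating $O(1)$ zero-order terms cancel exactly. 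They leave $w^T(Q\overline J+\overline J^TQ)w=-2|w|^2$, up to $O(\ep^2+\delta)|w|^2$.
\item The cross term is $O(\ep)|w||w_x|$. It is absorbed by $-2\sigma^\ep w_x^TPw_x$ up to an $O(\ep^2)|w|^2$ remainder.
\item Hence $m_t\le(\sigma^\ep m_x)_x-\gamma m$ once $\ep$ and $\delta$ are small. A bounded nonnegative entire function satisfying this vanishes by the maximum principle, so $w\equiv0$.
\end{itemize}
This argument uses essentially that $u$ and $v$ share the same scalar diffusion $\sigma^\ep$.

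\textbf{A simplification.} Uniqueness in the class, combined with the class's invariance under time shifts and shifts by $\ep$, gives stationarity and $\ep$-periodicity for free. Your existence step therefore only has to produce one entire solution bounded away from $0$.
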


\begin{proof}
	The proof of this result has been given in our former paper \cite[Lemma 21]{Griette-Matano-2025}. 
\end{proof}

Now we are ready to prove assertion 2 of Theorem \ref{thm:asymptotic-behavior2}

\begin{proof}[Proof of Theorem \ref{thm:asymptotic-behavior2}]
The claim \eqref{eq:c*-ep} is proved in Lemma 18 of our previous paper \cite{Griette-Matano-2025}. 
By Theorem 2.5 of \cite{Griette-Matano-2025}, any nonnegative solution $\big(u(t,x),v(t,x)\big)$ to the Cauchy problem for \eqref{eq:main-sys-ep} with nontrivial initial data satisfies, for any $c_1, c_2$ with $0<c_1<c^*_{\ep,L}$, $0<c_2<c^*_{\ep,R}$,
\[
\lim_{t\to+\infty} \sup_{-c_1t\leq x\leq c_2 t} \big(|u(t,x)-u^*_\ep(x)|+|v(t,x)-v^*_\ep(x)|\big)=0.
\]
Hence
\begin{equation}\label{eq:convergence-[0,ep]}
\lim_{t\to+\infty} \sup_{0\leq x\leq \ep} \big(|u(t,x)-u^*_\ep(x)+|v(t,x)-v^*_\ep(x)|\big)=0.
\end{equation}
In particular, if  $(u(t,x),v(t,x))$ is a right traveling wave, then by the condition \eqref{eq:TW-propagating},
\[
u\left(t+\frac{\ep}{c}, x\right) = u(t, x-\ep), \; v\left(t+\frac{\ep}{c}, x\right) = v(t, x-\ep), \ \ \text{for all}\ \ (t,x)\in\R^2,
\]
therefore, it is not difficult to see that \eqref{eq:convergence-[0,ep]} implies \eqref{eq:convergence-rapidosc}. 
Left traveling waves can be treated similarly by using \eqref{eq:TW-propagating2}. This completes the proof of Theorem \ref{thm:asymptotic-behavior2}.
\end{proof}

%%%%%%%%%%%%%%%%%%%%%%%%%%%%%%%%%%%%%%%%%%%%%%%%%%%%%%%%%%%%%%%%%%%%%%%%%%%%%%%
%%%%%%%%%%%%%%%%%%%%%%%%%%%%%%%%%%%%%%%%%%%%%%%%%%%%%%%%%%%%%%%%%%%%%%%%%%%%%%%
%%%%%%%%%%%%%%%%%%%%%%%%%%%%%%%%%%%%%%%%%%%%%%%%%%%%%%%%%%%%%%%%%%%%%%%%%%%%%%%
%%%%%%%%%%%%%%%%%%%%%%%%%%%%%%%%%%%%%%%%%%%%%%%%%%%%%%%%%%%%%%%%%%%%%%%%%%%%%%%

%%%%%%%%%%%%%%%%%%%
%%%%%%%%%%%%%%%%%%%%

%%%%%%%%%%%%%%%%%%%%%%%%%%%%
\section{Asymmetric speeds: Proof of Theorem \ref{thm:asymmetric-speeds}}
\label{s:proof-asym-speed}

We first note that the $\lambda$-periodic eigenvalue problem \eqref{eq:lambda-periodic-principal-eigen2} for the system \eqref{eq:RD-asym} is given in the form:
\begin{equation}\label{EP}\tag{EP$_\lambda$}
\begin{cases}
	\varphi''-2\lambda \varphi' +(\lambda^2+1)\varphi -\mu_1(z)\varphi+\mu_2(z)\psi=k(\lambda)\varphi&(z\in\R ),\\[1pt]
	\psi''-2\lambda \psi' +(\lambda^2+r)\psi+\mu_1(x)\varphi-\mu_2(z)\psi =k(\lambda)\psi&(z\in\R ),\\[2pt]
\varphi, \, \psi:\ \hbox{$L$-periodic}.
\end{cases}
\end{equation}
Let $\big(k(\lambda),(\varphi,\psi)\big)$ denote the principal eigenpair of \eqref{EP} satisfying $\varphi>0, \psi>0$. Then, recalling \eqref{eq:speed}, we have the following formula for the right and left spreading speeds:
\begin{equation}\label{c*}
	c^*_R:=c^*_R(\ep, \delta)=\min_{\lambda>0} \frac{k(\lambda)}{\lambda},\quad
	c^*_L:=c^*_L(\ep, \delta)=\min_{\lambda>0} \frac{k(-\lambda)}{\lambda},
\end{equation}
where we indicate the dependence of the speeds on the parameters $\ep, \delta$. 
By summing up the two equations in \eqref{EP} and integrating over the interval $[0,L]$, we obtain
\[
(\lambda^2+1)\int_0^L\varphi \, \dd z+(\lambda^2+r)\int_0^L \psi \,\dd x=k(\lambda)\int_0^L(\varphi+\psi) \dd x.
\]
This, together with the positivity of $\varphi, \psi$, implies (cf. \eqref{k-quadratic})
\begin{equation}\label{k-estimate}
	\lambda^2+1<k(\lambda)<\lambda^2+r\quad ({}^\forall \lambda\in\R ),
\end{equation}
or, equivalently,
\begin{equation}\label{k-estimate2}
\lambda+\frac{1}{\lambda}<\frac{k(\pm\lambda)}{\lambda}<\lambda+\frac{r}{\lambda}\quad ({}^\forall \lambda>0).
\end{equation}
Hence, by \eqref{c*},
\[
\hspace{40pt}2\leq c^*_R<2\sqrt{r},\quad  2\leq c^*_L \leq 2\sqrt{r}
\qquad \hbox{(cf. Proposition~\ref{prop:c*-estimate})}.
\]
Furthermore, from \eqref{k-estimate2} we see that the minima in \eqref{c*} are both attained on the interval
\[
\sqrt{r}-\sqrt{r-1}\leq \lambda \leq \sqrt{r}+\sqrt{r-1}.
\]

%%%%%%%%%%%%%%%%%%
\subsection{Estimates on the eigenvectors}\label{ss:example}
Here we establish some properties of the eigenvectors when $\ep, \delta$ are small and $\ep/\delta$ is bounded. Here, for simplicity, we will use the notation $k:=k(\lambda)$ whenever the context is clear.  \medskip

\noindent
{\bf Analysis on the interval $[0,\ep]$}

\vskip 5pt
On this interval, the solution $(\varphi,\psi)$ of \eqref{EP} satisfies the following:
\begin{equation}\label{I1}
\begin{cases}
\varphi''-2\lambda \varphi' +(\lambda^2+1)\varphi -\dfrac{\mu}{\ep\delta}\varphi=k\varphi&(0<z<\ep),\\[8pt]
\psi''-2\lambda \psi' +(\lambda^2+r)\psi+\dfrac{\mu}{\ep\delta}\varphi=k\psi&(0<z<\ep).
\end{cases}
\end{equation}
The first equation of \eqref{I1} can be solved for $\varphi$ by using the roots of the characteristic equation $X^2-2\lambda X +(\lambda^2+1-\mu/(\ep\delta)-k)=0$. More precisely,
\[
	\varphi(z)=C_1^{+}e^{X_1^{+} z}+C_1^{-}e^{X_1^{-} z}, \text{ where }
X_1^{\pm}=\lambda\pm \sqrt{k+\frac{\mu}{\ep\delta}-1},
\]
and the constants $C_1^+, C_1^-$ satisfy 
$
C_1^+ + C_1^- = \varphi(0)$ and $ C_1^+X_1^+ + C_1^-X_1^- = \varphi'(0).
$
Consequently,
\begin{equation*}
\varphi(z)=C_1^{+}\left(1+X_1^{+} z+O\left((X_1^{+} z)^2\right)\right)+C_1^{-}\left(1+X_1^{-} z+O\left((X_1^{-} z)^2\right)\right)
= \varphi(0)+z\varphi'(0)+O\left(\frac{\ep}{\delta}\right)
\end{equation*}
for all $0\leq z\leq \ep$.
In particular,
\begin{equation}\label{phi(ep)}
\varphi(\ep)=\varphi(0)+O\left(\frac{\ep}{\delta}\right).
\end{equation}
Integrating the first equation in \eqref{I1} over $[0,\ep]$ and using the above estimate, we obtain
\begin{equation}\label{phi'(ep)}
\varphi'(\ep)=\varphi'(0)+2\lambda\left(\varphi(\ep)-\varphi(0)\right)+\frac{\mu}{\delta}\varphi(0)+O(\ep) 
=\varphi'(0)+\frac{\mu}{\delta}\varphi(0)+O\left(\frac{\ep}{\delta}\right).
\end{equation}
Next we note that the function $\Psi(z):=\varphi(z)+\psi(z)$ satisfies the following:
\[
\Psi''-2\lambda \Psi'+(\lambda^2+r-k)\Psi=(r-1)\varphi.
\]
Since the coefficients of the above equation are independent of $\ep,\delta$, we have
\[
\Psi(\ep)=\Psi(0)+O(\ep),\quad \Psi'(\ep)=\Psi'(0)+O(\ep).
\]
Combining this with \eqref{phi(ep)} and \eqref{phi'(ep)}, we obtain
\begin{equation}\label{psi(ep)}
\psi(\ep)=\psi(0)+O\left(\frac{\ep}{\delta}\right),\quad \psi'(\ep)=\psi'(0)-\frac{\mu}{\delta}\varphi(0)+O\left(\frac{\ep}{\delta}\right).
\end{equation}

\vskip 10pt
\noindent
{\bf Analysis on the interval $[\ep,\delta-\ep]$}

\vskip 5pt
On this interval, the solution $(\varphi,\psi)$ of \eqref{EP} simply satisfies the following equations:
\begin{equation}\label{I2}
\begin{cases}
\varphi''-2\lambda \varphi' +(\lambda^2+1)\varphi =k\varphi,\\[2pt]
\psi''-2\lambda \psi' +(\lambda^2+r)\psi=k\psi,
\end{cases}
\end{equation}
along with the initial conditions
\[
\varphi(\ep)=\varphi(0)+O\left(\frac{\ep}{\delta}\right), \ \ \varphi'(\ep)=\varphi'(0)+\frac{\mu}{\delta}\varphi(0)+O\left(\frac{\ep}{\delta}\right),
\]
\[
\psi(\ep)=\psi(0)+O\left(\frac{\ep}{\delta}\right), \ \ \psi'(\ep)=\psi'(0)-\frac{\mu}{\delta}\varphi(0)+O\left(\frac{\ep}{\delta}\right).
\]
Therefore
\[
\begin{split}
\varphi(\delta-\ep)&=\varphi(\ep)+(\delta-2\ep)\varphi'(\ep)+\frac12(\delta-2\ep)^2\varphi''(\ep)+\cdots\\
&=\varphi(0)+(\delta-2\ep)\left(\varphi'(0)+\frac{\mu}{\delta}\varphi(0)\right)+O\left(\frac{\ep}{\delta}\right)+O(\delta^2)\\[2pt]
&=(1+\mu)\varphi(0)+O(\delta)+O\left(\frac{\ep}{\delta}\right),
\end{split}
\]
\[
\begin{split}
\varphi'(\delta-\ep)&=\varphi'(\ep)+(\delta-2\ep)\varphi''(\ep)+\cdots\\
&=\varphi'(\ep)+(\delta-2\ep)\left(2\lambda\varphi'(\ep)-(\lambda^2+1-k)\varphi(\ep)\right)+\cdots\\[2pt]
&=\varphi'(0)+\frac{\mu}{\delta}\varphi(0)+2\lambda\mu\varphi(0)+O(\delta)+O\left(\frac{\ep}{\delta}\right).
\end{split}
\]
Similarly, we have
\[
\begin{split}
\psi(\delta-\ep)&=\psi(0)-\mu\varphi(0)+O(\delta)+O\left(\frac{\ep}{\delta}\right),\\ 
\psi'(\delta-\ep)&=\psi'(0)-\frac{\mu}{\delta}\varphi(0)-2\lambda\mu\varphi(0)+O(\delta)+O\left(\frac{\ep}{\delta}\right).
\end{split}
\]
Combining these, and arguing as in the derivation of \eqref{phi(ep)}, \eqref{phi'(ep)} and \eqref{psi(ep)}, we obtain
\begin{equation}\label{phi(delta)}
\begin{cases}
\, \varphi(\delta)=(1+\mu)\varphi(0)+O(\delta)+O\left(\dfrac{\ep}{\delta}\right),\\[4pt] 
\, \varphi'(\delta)=\varphi'(0)+\dfrac{\mu}{\delta}\left(\varphi(0)-\psi(\delta)\right)+2\lambda\mu\varphi(0)+O(\delta)+O\left(\dfrac{\ep}{\delta}\right),\\[6pt]
\, \psi(\delta)=\psi(0)-\mu\varphi(0)+O(\delta)+O\left(\dfrac{\ep}{\delta}\right),\\[6pt]
\, \psi'(\delta)=\psi'(0)-\dfrac{\mu}{\delta}\left(\varphi(0)-\psi(\delta)\right)-2\lambda\mu\varphi(0)+O(\delta)+O\left(\dfrac{\ep}{\delta}\right).
\end{cases}
\end{equation}

In what follows we assume $\ep=O(\delta^2)$, so the term $O(\delta)+O\left(\frac{\ep}{\delta}\right)$ is simply expressed as $O(\delta)$. Let us normalize $(\varphi,\psi)$ so that
\begin{equation}\label{phi(0)=1}
\varphi(0)=1\ \left(\,=\varphi(L)\,\right).
\end{equation}
The first line of \eqref{phi(delta)} then implies
\begin{equation}\label{phi(delta)=1+mu}
\varphi(\delta)=1+\mu +O(\delta).
\end{equation}
Note that $\varphi$ satisfies the first line of \eqref{I2} on $\delta<z<L$, which we can rewrite as 
\begin{equation}\label{Phi''}
\left(e^{-\lambda z}\varphi\right)''=(k-1)\left(e^{-\lambda z}\varphi\right)\quad (\delta<z<L).
\end{equation}
As $k$ satisfies the inequality \eqref{k-estimate}, we have $k-1>0$. 
Therefore $e^{-\lambda z}\varphi$ is convex on $[\delta,L]$, hence its maximum is attained either at $z=\delta$ or at $z=L$. This and \eqref{phi(0)=1}, \eqref{phi(delta)=1+mu} imply
\begin{equation}\label{phi-bound}
0<\varphi(z)\leq (1+\mu)e^{\lambda L}  + O(\delta) \quad(z\in[\delta,L]).
\end{equation}
Hence $\varphi|_{[\delta,L]}$ is uniformly bounded as $\ep, \delta\to 0$. 
Integrating the first line of \eqref{I2}, we get
\[
\varphi'(L)-\varphi'(\delta)=2\lambda(\varphi(L)-\varphi(\delta))+(k-\lambda^2-1)\int_{\delta}^L\varphi(z)\dd z.
\]
The right-hand side of the above identity is uniformly bounded as $\ep, \delta\to 0$. This and the second line of \eqref{phi(delta)}, 
together with $\varphi'(L)=\varphi'(0)$, implies $\varphi(0)-\psi(\delta)=O(\delta)$, thus
\begin{equation}\label{psi(delta)}
\psi(\delta)=1+O(\delta).
\end{equation}
Combining this and the third line of \eqref{phi(delta)}, and recalling $\ep=O(\delta^2)$, we get 
\begin{equation}\label{psi(0)}
\psi(0)=1+\mu+O(\delta).
\end{equation}
Note also that, by summing up the second and the fourth lines of \eqref{phi(delta)}, we obtain
\begin{equation}\label{phi'+psi'}
\varphi'(\delta)+\psi'(\delta)=\varphi'(0)+\psi'(0)+O(\delta).
\end{equation}

%%%%%%%%%%%%%%%%%%
\subsection{Analysis of the limit problem}\label{ss:analysis-limit}

\vskip 5pt
\noindent
{\bf Uniform derivative estimates}

\vskip 5pt
Let us study the limit problem of \eqref{EP} with \eqref{eq:mu-12} as $\ep, \delta\to 0$. As before, we impose
\[
\ep=O(\delta^2).
\]
For each $\ep, \delta>0$ and $\lambda\in\R $, let  $k(z;\lambda,\ep,\delta)$ and $(\varphi(z;\lambda,\ep,\delta),\psi(z;\lambda,\ep,\delta))$ denote, respectively, the principal eigenvalue and the principal eigenvector of \eqref{EP} with \eqref{eq:mu-12} under the normalization condition $\varphi(0;\lambda,\ep,\delta)=1$ as before (see \eqref{phi(0)=1}). 
Let us first show that $\varphi, \varphi', \psi, \psi'$ are uniformly bounded on $[\delta,L]$ as $\ep, \delta\to 0$. The uniform boundedness of $\varphi$ is already shown in \eqref{phi-bound}. In order to prove the uniform boundedness of $\varphi'$, we prepare the following lemma.

\begin{lemma}\label{lem:Phi-estimates}
Let $\Phi$ be a $C^2$ function on an interval $[a,b]$ satisfying the equation
	\begin{equation}\label{eq:260526a}
\Phi''(z)=F(z)\quad (a<z<b).
	\end{equation}
Then the following estimates hold:
\begin{equation}\label{Phi-estimates}
\begin{split}
& |\Phi(z)|\leq \max\left(|\Phi(a)|,|\Phi(b)|\right) + \frac{(b-z)(z-a)}{b-a}\int_a^b|\Phi(\xi)|\dd\xi,\\
& |\Phi'(z)|\leq \frac{\left|\Phi(b)-\Phi(a)\right|}{b-a}+\int_a^b|\Phi(\xi)|\dd\xi.
\end{split}
\end{equation}
\end{lemma}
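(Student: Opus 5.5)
The plan is to write $\Phi$ explicitly through the Green function of $d^2/dz^2$ on $[a,b]$ with Dirichlet boundary conditions, and then bound the kernel and its $z$-derivative pointwise. The integrals on the right-hand side of \eqref{Phi-estimates} must be read as $\int_a^b|F(\xi)|\,\dd\xi$, i.e.\ with $F=\Phi''$ in place of $\Phi$. With $\Phi$ itself the estimate is false: taking $\Phi(z)=\eta\sin(Nz)$ on $[0,\pi]$ gives $\Phi(a)=\Phi(b)=0$ and $\int|\Phi|\leq \pi\eta$, while $\Phi'(0)=\eta N$ can be made arbitrarily large. In the intended application $\Phi=e^{-\lambda z}\varphi$ and $F=(k-1)\Phi$ by \eqref{Phi''}, so the two readings differ only by the bounded factor $k-1$; this is harmless for the uniform bounds on $[\delta,L]$ that follow.

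First, decompose $\Phi=\ell+w$. Here
\[
\ell(z):=\frac{(b-z)\Phi(a)+(z-a)\Phi(b)}{b-a}
\]
is the affine interpolant, and $w$ solves $w''=F$ with $w(a)=w(b)=0$. The Dirichlet Green function gives
\[
w(z)=\int_a^b G(z,\xi)F(\xi)\,\dd\xi,
\]
where
\[
G(z,\xi)=-\frac{(b-z)(\xi-a)}{b-a}\ \ (\xi\leq z),\qquad G(z,\xi)=-\frac{(z-a)(b-\xi)}{b-a}\ \ (\xi\geq z).
\]
One checks this by direct differentiation: $\partial_z G$ jumps by $1$ across $\xi=z$, and $G$ vanishes at $z=a$ and at $z=b$. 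Uniqueness for the Dirichlet problem of $w''=F$ (the difference of two solutions is affine and vanishes at both ends) then identifies $w$.

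For the first estimate, note that $\ell$ is a convex combination of $\Phi(a)$ and $\Phi(b)$, so $|\ell(z)|\leq\max(|\Phi(a)|,|\Phi(b)|)$. For the kernel, $(\xi-a)\leq(z-a)$ when $\xi\leq z$ and $(b-\xi)\leq(b-z)$ when $\xi\geq z$. Hence $|G(z,\xi)|\leq (b-z)(z-a)/(b-a)$, and the bound on $w$ follows at once.

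For the second estimate, differentiate under the integral:
\[
\Phi'(z)=\frac{\Phi(b)-\Phi(a)}{b-a}+\int_a^b\partial_zG(z,\xi)F(\xi)\,\dd\xi .
\]
The derivative $\partial_zG$ equals $(\xi-a)/(b-a)$ for $\xi<z$ and $-(b-\xi)/(b-a)$ for $\xi>z$. Both have absolute value at most $1$, which gives the claimed bound.

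There is no real obstacle; the only points needing care are getting the signs of $G$ right and justifying differentiation under the integral. The latter is fine because $G$ is Lipschitz in $z$ and $F$ is continuous, or one can simply differentiate the split expression $\int_a^z+\int_z^b$ directly.
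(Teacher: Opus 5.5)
Your proof is correct and follows the paper's argument: the affine interpolant plus the Dirichlet Green's-function representation of $\Phi$, followed by the pointwise bounds $|G(z,\xi)|\le (b-z)(z-a)/(b-a)$ and $|\partial_z G|\le 1$, which the paper leaves to the reader. You are also right that the integrands in \eqref{Phi-estimates} must be $|F(\xi)|$ rather than $|\Phi(\xi)|$, since the paper's own Green's-function formula integrates against $F$; in the applications $F=(k-1)\Phi$ or $F=(k-r)\Phi$ with $k$ bounded, so nothing downstream is affected.
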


\begin{proof}
	The solution of the  equation \eqref{eq:260526a} is given in the following form
\[
\Phi(z)=\frac{(b-z)\Phi(a)+(z-a)\Phi(b)}{b-a}+\int_a^b G(z,\xi)F(\xi)\dd\xi.
\]
where $G(z,\xi)$ denotes the Green's function on the interval $[a,b]$, namely
\[
G(z,\xi)=
\begin{cases}
\,-\dfrac{(\xi-a)(b-z)}{b-a} &(\hbox{if}\ z\geq \xi)\\[6pt]
\,-\dfrac{(z-a)(b-\xi)}{b-a} &(\hbox{if}\ z<\xi)
\end{cases}
\]
The estimates \eqref{Phi-estimates} follow easily from the above expression, so we omit the details.
\end{proof}

Now let us prove the uniform boundedness of $\varphi'$. Since $e^{-\lambda z}\varphi(z)$ satisfies the equation \eqref{Phi''}, we see from the above lemma that $e^{-\lambda z}\varphi(z)$ and $\left(e^{-\lambda z}\varphi(z)\right)'$ are uniformly bounded. Consequently $\varphi'$ is uniformly bounded on $[0,\delta]$ as $\ep, \delta\to 0$.

Next we prove the uniform boundedness of $\psi$, $\psi'$.
Note that $\psi$ satisfies the second line of \eqref{I2} on $\delta<z<L$, which we can rewrite as 
\begin{equation}\label{Psi''}
\left(e^{-\lambda z}\psi\right)''=(k-r)\left(e^{-\lambda z}\psi\right)\quad (\delta<z<L).
\end{equation}
If $k\geq r$, then $\left(e^{-\lambda z}\psi\right)$ is convex. This, together with \eqref{psi(delta)} and \eqref{psi(0)}, implies that $\psi$ is uniformly bounded. Arguing as we did for $\varphi$, we obtain the uniform boundedness of $\psi'$. 

Next, if $k<r$, by summing up the two equations in \eqref{EP} and integrating over $[0,L]$, we obtain
\[
\begin{split}
& (k-\lambda^2-1)\int_0^L \varphi(z)\dd z=(\lambda^2+r-k)\int_0^L \psi(z)\dd z.\\
& \ \ \hbox{Hence}\ \ \ 
(r-\lambda^2-1)\int_0^L \varphi(z)\dd z>\lambda^2 \int_0^L \psi(z)\dd z.
\end{split}
\]
Since the left-hand side of the above inequality is uniformly bounded as $\ep, \delta\to 0$, so is the right-hand side. In particular, $\int_\delta^L \psi(z)\dd z$ is uniformly bounded as $\ep,\delta\to 0$. Combining this with \eqref{psi(delta)}, \eqref{psi(0)}, and applying Lemma \ref{lem:Phi-estimates}, we see that $\psi, \psi'$ are uniformly bounded on $[\delta,L]$ as $\ep.\delta\to 0$.

\vskip 10pt
\noindent
{\bf Formulation of the limit problem}

\vskip 5pt
Now we let $\ep, \delta\to 0$ while keeping $\ep=O(\delta^2)$. The interval $[0,\delta]$ shrinks to the point $z=0$. 
Since $\varphi, \varphi', \psi, \psi'$ are uniformly bounded as $\ep,\delta\to 0$, we see, by the bootstrap argument for \eqref{I2} that $\varphi'', \varphi''', \psi'', \psi'''$ are also uniformly bounded on $[\delta,L]$ as $\ep,\delta\to 0$. Note also that, by \eqref{k-estimate}, the eigenvalue $k(\lambda;\ep,\delta)$ varies within the range
\[
\lambda^2+1<k(\lambda;\ep,\delta)<\lambda^2+r.
\]
Consequently, for any sequence $\ep_n, \delta_n\to 0$ with $\ep_n=O(\delta_n^2)$, we can extract a subsequence $(\ep_{n_j},\delta_{n_j})$ such that $(\varphi(z,\lambda,\ep_{n_j},\delta_{n_j}), \psi(z,\lambda,\ep_{n_j},\delta_{n_j}))$ converges to some function $(\varphi_0(z;\lambda),\psi_0(z;\lambda))$ on $(0,L]$ as $j\to\infty$ in the $C^2$ sense and $k(\lambda,,\ep_{n_j},\delta_{n_j})$ to some constant $k_0$ as $j\to \infty$. 
Needless to say, since $(\varphi(z,\lambda,\ep_{n_j},\delta_{n_j}), \psi(z,\lambda,\ep_{n_j},\delta_{n_j}))$ are $L$-periodic, so is the limit $(\varphi_0,\psi_0)$, but this function has discontinuity at $z=0$. 
In view of \eqref{phi(0)=1}, \eqref{phi(delta)=1+mu}, \eqref{psi(0)}, \eqref{psi(delta)} and \eqref{phi'+psi'}, we see that $(\varphi_0,\psi_0,k_0)$ is a solution of the following limit problem:
\begin{subequations}\label{limit-problem}
\begin{eqnarray}\label{limit-phi}
&\begin{cases}
\,\varphi''-2\lambda \varphi' +(\lambda^2+1)\varphi =k\varphi \ \ \ (0<z<L),\\[2pt]
\,\varphi(+0)=1+\mu,\ \ \varphi(L-0)=1,\\[2pt]
\, \varphi(z)>0\ \ \ (0<z<L),
\end{cases}\\[4pt]
\label{limit-psi}
&\begin{cases}
\,\psi''-2\lambda \psi' +(\lambda^2+r)\psi=k\psi\ \ \ (0<z<L),\\[2pt]
\psi(+0)=1,\ \ \psi(L-0)=1+\mu,\\[2pt]
\, \psi(z)>0\ \ \ (0<z<L),
\end{cases}\\[6pt]
&\label{limit-derivatives}
\varphi'(+0)+\psi'(+0)=\varphi'(L-0)+\psi'(L-0).
\end{eqnarray}
\end{subequations}

Note that \eqref{limit-phi} and \eqref{limit-psi} are separate boundary value problems for $\varphi$ and $\psi$, and they are coupled only through the condition \eqref{limit-derivatives}, which plays a crucial role. 
More precisely, for each $k$ in a certain range, one can solve \eqref{limit-phi} and \eqref{limit-psi} separately, but the resulting pair $(\varphi, \psi)$ satisfies the condition \eqref{limit-derivatives} only for 
a special value of $k$. In this sense, \eqref{limit-problem} can be regarded as an eigenvalue problem with $k$ being the eigenvalue. The following theorem holds:

\begin{thm}[The limit eigenproblem]\label{thm:EP-limit}
	For each $\lambda\in \R $, there is a unique value of $k$, denoted by $k_0(\lambda)$, for which \eqref{limit-problem} has a solution $(\varphi, \psi)$. 
Furthermore, the principal eigenvalue $k(\lambda,\ep,\delta)$ of \eqref{EP} with \eqref{eq:mu-12} converges to $k_0(\lambda)$ as $\ep,\delta\to 0$ locally uniformly in $\lambda$, provided that $\ep=O(\delta^2)$. 
\end{thm}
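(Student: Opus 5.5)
The plan has two parts: uniqueness of $k_0(\lambda)$ for the limit problem \eqref{limit-problem}, then convergence of the principal eigenvalue via the compactness already established.

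\textbf{Uniqueness.} Set $\Phi:=e^{-\lambda z}\varphi$ and $\Psi:=e^{-\lambda z}\psi$. Then \eqref{limit-phi} becomes
\[
\Phi''=(k-1)\Phi,\qquad \Phi(0)=1+\mu,\qquad \Phi(L)=e^{-\lambda L},
\]
and \eqref{limit-psi} becomes
\[
\Psi''=(k-r)\Psi,\qquad \Psi(0)=1,\qquad \Psi(L)=(1+\mu)e^{-\lambda L}.
\]
For each admissible $k$ these two-point problems are uniquely solvable and can be written explicitly with $\cosh$ and $\sinh$ (trigonometric functions when $k<r$). Solvability and positivity of $\psi$ require $r-k<\pi^2/L^2$, which gives a natural range $k\in(\max(1,r-\pi^2/L^2),\infty)$. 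On this range I would define the mismatch function
\[
G(k):=\varphi'(+0)+\psi'(+0)-\varphi'(L-0)-\psi'(L-0).
\]
Note $\varphi'=e^{\lambda z}(\Phi'+\lambda\Phi)$. The terms $\lambda\varphi+\lambda\psi$ coincide at the two ends, since $\varphi+\psi=2+\mu$ at both $z=+0$ and $z=L-0$. Hence
\[
G(k)=\Phi'(0)+\Psi'(0)-e^{\lambda L}\bigl(\Phi'(L)+\Psi'(L)\bigr).
\]

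The key claim is that $G$ is strictly monotone in $k$. Let $\Phi_k:=\partial_k\Phi$. It solves $\Phi_k''-(k-1)\Phi_k=\Phi>0$ with zero boundary values, so $\Phi_k<0$ on $(0,L)$. By Hopf's lemma, $\Phi_k'(0)<0$ and $\Phi_k'(L)>0$. The same argument applies to $\Psi_k$, using positivity of $\Psi$ and the range condition, which makes the operator $-\partial_{zz}+(k-r)$ positive. Therefore $G'(k)<0$. For the limits at the ends of the range: as $k\to\infty$, $\Phi'(0)\approx-\sqrt{k}(1+\mu)$ and $\Phi'(L)\approx\sqrt{k}\,e^{-\lambda L}$, so $G\to-\infty$. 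As $k$ decreases to the lower endpoint, either $\Psi$ blows up positively, sending $\Psi'(0)\to+\infty$ and $\Psi'(L)\to-\infty$, or at $k=1$ with $r-1<\pi^2/L^2$ the value is positive by a sign check. If that sign check fails, existence of a root still follows from the convergence argument below, so only uniqueness is essential. I would present uniqueness through this monotonicity.

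\textbf{Convergence.} Take any sequence $(\ep_n,\delta_n)\to0$ with $\ep_n=O(\delta_n^2)$ and $\lambda_n\to\lambda$. The uniform estimates of Subsection~\ref{ss:analysis-limit} are locally uniform in $\lambda$, since all constants depend continuously on $\lambda$ through \eqref{k-estimate}. So a subsequence converges to a solution $(\varphi_0,\psi_0,k_0)$ of \eqref{limit-problem}. Positivity $\varphi_0,\psi_0\ge0$ passes to the limit, and it is strict by the strong maximum principle together with the positive boundary values. By uniqueness, $k_0=k_0(\lambda)$. Since every subsequence has the same limit, the whole family converges, and the sequential formulation with $\lambda_n\to\lambda$ gives local uniformity in $\lambda$. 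A by-product is that $k_0$ is continuous.

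\textbf{Main obstacle.} The delicate step is making sure the limit $k_0$ lies in the range where the monotonicity argument is valid, namely where $\Psi$ stays positive. Here I would use that $\psi_0>0$ on $(0,L)$: this forces $r-k_0<\pi^2/L^2$, because otherwise a positive solution of $\Psi''=-(r-k)\Psi$ cannot exist on an interval of length $L$ (Sturm comparison). Restricting the definition of $G$ to the set of $k$ for which both positive solutions exist then closes the argument.
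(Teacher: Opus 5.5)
Your proof is correct. The uniqueness part is the paper's argument: the paper also shows that the derivative mismatch $F(k)=f(k)+g(k)$ (your $-G$) is strictly increasing for $r-\pi^2/L^2<k<\infty$, using the strong maximum principle and Hopf's lemma. The only difference there is that the paper compares $\varphi(\cdot;\tilde k)$ with $\varphi(\cdot;k)$ for $\tilde k>k$ directly, whereas you differentiate in $k$; the two are equivalent. The paper likewise takes existence from the compactness argument, so your endpoint analysis of $G$ is unnecessary. So is the restriction $k>1$: Dirichlet positivity of $-\partial_{zz}+(k-1)$ already follows from $k>r-\pi^2/L^2$ and $r>1$, and in any case \eqref{k0-estimate} gives $k_0>\lambda^2+1$.

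The genuine difference is how you obtain local uniformity in $\lambda$.

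\begin{itemize}
\item \textbf{The paper's route.} It proves only pointwise convergence $k(\lambda,\ep,\delta)\to k_0(\lambda)$ at each fixed $\lambda$. It then uses convexity of $\lambda\mapsto k(\lambda,\ep,\delta)$ (Proposition~\ref{prop:k(lambda)}) and the fact that pointwise convergence of convex functions is uniform on compact sets. This gives the uniformity, and convexity (hence continuity) of $k_0$, with no further work.
\item \textbf{Your route.} The diagonal argument with $\lambda_n\to\lambda$ needs every estimate of Subsections~\ref{ss:example}--\ref{ss:analysis-limit} to hold uniformly for $\lambda$ in compact sets. This covers the $O(\delta)$ and $O(\ep/\delta)$ remainders, the bounds on $\varphi,\psi,\varphi',\psi'$, and the bootstrap. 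It also needs continuity of $k_0$, in order to pass from $k(\lambda_n,\ep_n,\delta_n)\to k_0(\lambda)$ to $|k(\lambda_n,\ep_n,\delta_n)-k_0(\lambda_n)|\to0$.
\item \textbf{Assessment.} Both requirements hold. The constants depend on $\lambda$ only through quantities such as $e^{|\lambda|L}$ and the bounds \eqref{k-estimate}. Continuity of $k_0$ follows from your argument, or from continuity of $F(k;\lambda)$ combined with strict monotonicity in $k$. Still, this is bookkeeping the convexity argument avoids. In exchange, your route does not rely on convexity in $\lambda$, so it would survive in settings where that structure is unavailable.
\end{itemize}
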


Before proving the above theorem, let us make some preliminary analysis. 
By summing up the equations in \eqref{limit-phi} and \eqref{limit-psi} and integrating over $0<z<L$ and using \eqref{limit-derivatives}, we obtain
\[
(k_0-\lambda^2-1)\int_0^L \varphi_0(z)\dd z=(\lambda^2+r-k_0)\int_0^L \psi_0(z)\dd z.
\]
Since the two integrals are both positive and since $r>1$, we have 
\begin{equation}\label{k0-estimate}
	\lambda^2+1<k_0<\lambda^2+r\quad ({}^\forall \lambda\in\R ).
\end{equation}
Thus we can retrieve the inequality \eqref{k-estimate} directly from the limit problem \eqref{limit-problem}.

Now we fix $\lambda$ arbitrarily, and let $k$ be a positive constant that is not necessarily the eigenvalue of \eqref{limit-problem}. The equations \eqref{limit-phi}, \eqref{limit-psi} can be rewritten as follows:
\[
\left(e^{-\lambda z}\varphi\right)''=(k-1)\left(e^{-\lambda z}\varphi\right),\ \ \  
\left(e^{-\lambda z}\psi\right)''=(k-r)\left(e^{-\lambda z}\psi\right)\quad (0<z<L).
\]
Therefore, in order for \eqref{limit-phi} and \eqref{limit-psi} to have positive solutions, both $k-1$ and $k-r$ have to be larger than the principal eigenvalue $\nu$ of the following problem, namely $-\pi^2/L^2$\,:
\[
\Phi''=\nu \hspace{1pt}\Phi\ (0<z<L),\quad \ \Phi(0)=\Phi(L)=0.
\]
Hence
\begin{equation}\label{k-range}
k>r-\frac{\pi^2}{L^2}.
\end{equation}
Conversely, for any $k$ satisfying \eqref{k-range}, the boundary value problems \eqref{limit-phi}, \eqref{limit-psi} have unique solutions, which we denote by $\big(\varphi(z;k),\psi(z;k)\big)$. Define
\[
f(k)=\varphi'(L-0;k) - \varphi'(+0;k), \quad g(k)=\psi'(L-0;k) -\psi'(+0,k)
\]
and
\begin{equation}\label{F(k)}
F(k)=f(k)+g(k)=\varphi'(L-0;k)+\psi'(L-0;k) - \varphi'(+0;k)-\psi'(+0,k).
\end{equation}
Then \eqref{limit-derivatives} holds if and only if $F(k)=0$.

\begin{proof}[Proof of Theorem~\ref{thm:EP-limit}]
Fix $\lambda\in\R$ arbitrarily. 
Since we already know the existence of a solution of \eqref{limit-problem} as a limit of the sequence $(\varphi(z,\lambda,\ep_{n_j},\delta_{n_j}), \psi(z,\lambda,\ep_{n_j},\delta_{n_j}))$ and $k(\lambda,\ep_{n_j},\delta_{n_j})$, let us prove the uniqueness of the solution. It suffices to show that $F(k)$ is strictly monotone in $k$.

Choose arbitrary constants $\tilde k>k$ satisfying \eqref{k-range} and let
\[
\varphi(z):=\varphi(z;k),\ \ \widetilde\varphi(z):=\varphi(z,\tilde k),\ \ \psi(z):=\psi(z;k),\ \ \widetilde\psi(z):=\psi(z;\tilde k).
\]
Then
\[
\varphi''-2\lambda \varphi' +(\lambda^2+1)\varphi =k\varphi,\quad
\widetilde\varphi''-2\lambda \widetilde\varphi' +(\lambda^2+1)\widetilde\varphi =\tilde k\widetilde\varphi\geq k\widetilde\varphi.
\]
Therefore $\widetilde\varphi$ is a subsolution of the equation satisfied by $\varphi$, and $\widetilde\varphi(+0)=\varphi(+0)$, $\widetilde\varphi(L-0)=\varphi(L-0)$. 
The function $w=\widetilde\varphi-\varphi$ then satisfies
\[
\left(e^{-\lambda z}w\right)''\geq (k-1)\left(e^{-\lambda z} w\right)\ \ (0<z<L),\quad 
w(+0)=w(L-0)=0.
\]
Since $k-1>-\pi^2/L^2$ by \eqref{k-range}, we can apply the strong maximum principle to show that $w(z)<0\;(0<z<L)$ and that $w'(+0)<0$, $w'(L-0)>0$. In other words,
\[
\widetilde \varphi(z)<\varphi(z)\ \ (0<z<L),\quad \widetilde\varphi'(+0)<\varphi'(+0),\quad 
\widetilde\varphi'(L-0)>\varphi'(L-0).
\]
Hence
\[
f(\tilde k)-f(k)=\widetilde\varphi'(L-0)- \widetilde\varphi'(+0) -\left( \varphi'(L-0)-\varphi'(+0)\right)>0.
\]
This implies that $f(k)$ is strictly monotone increasing. 
Similarly, $g(k)$ is strictly monotone increasing, therefore $F(k)$ is strictly monotone increasing in the range $r-\pi^2/L^2<k<\infty$. 
This proves the uniqueness of the solution of \eqref{limit-problem} for each given $\lambda\in\R$. 

Since any sequence of solutions $(\varphi(z,\lambda,\ep_{n_j},\delta_{n_j}), \psi(z,\lambda,\ep_{n_j},\delta_{n_j})),k(\lambda,\ep_{n_j},\delta_{n_j})$ of \eqref{EP} has a subsequence that converges to a solution of \eqref{limit-problem} and since the solution of \eqref{limit-problem} is unique, we see that the solution  $(\varphi(z;\lambda,\ep,\delta), \psi(z;\lambda,\ep,\delta)\big)$ of \eqref{EP} with \eqref{eq:mu-12} converges to the unique solution of the problem \eqref{limit-problem} as $\ep,\delta\to 0$ without taking a subsequence, and that $k(\lambda,\ep,\delta)\to  k_0(\lambda)$. 
Furthermore, since $k(\lambda,\ep,\delta)$ is convex in $\lambda$, the convergence $k(\lambda,\ep,\delta)\to k_0(\lambda)\; (\ep,\delta\to 0)$ is locally uniform in $\lambda$, 
since a pointwise convergence of a sequence of convex functions is uniform on bounded sets (\cite[Theorem 10.8]{Rockafellar-1970}). This completes the proof of Theorem~\ref{thm:EP-limit}.
\end{proof}

%%%%%%%%%%%
\vskip 8pt
\noindent
{\bf Comparison in the limit problem}

\vskip 5pt
\begin{lem}
Let $k_0(\lambda)$ be as in Theorem~\ref{thm:EP-limit}. Then 
\begin{equation}\label{k(lambda)<k(-lambda)2}
k_0(\lambda)<k_0(-\lambda)\quad \hbox{for all}\ \ \lambda>0.
\end{equation}
\end{lem}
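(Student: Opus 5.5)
The plan is to compute the function $F$ of \eqref{F(k)} explicitly, once for $\lambda$ and once for $-\lambda$, and compare the two. Write $F_\lambda(k)$ to record the dependence on $\lambda$; by Theorem~\ref{thm:EP-limit} its zero is $k_0(\lambda)$. The proof of that theorem shows $F_\lambda$ is strictly increasing on $k>r-\pi^2/L^2$, and the same holds for $F_{-\lambda}$. So it suffices to prove
\[
F_{-\lambda}(k)<F_\lambda(k)\qquad\text{for every } k>r-\pi^2/L^2 .
\]
Evaluating this at $k=k_0(\lambda)$ gives $F_{-\lambda}(k_0(\lambda))<0$, and monotonicity then forces the zero $k_0(-\lambda)$ of $F_{-\lambda}$ to satisfy $k_0(-\lambda)>k_0(\lambda)$.

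To compute $F_\lambda$, set $\Phi=e^{-\lambda z}\varphi$ and $\Psi=e^{-\lambda z}\psi$. These satisfy $\Phi''=(k-1)\Phi$ and $\Psi''=(k-r)\Psi$, with boundary values
\[
\Phi(0)=1+\mu,\quad \Phi(L)=e^{-\lambda L},\quad \Psi(0)=1,\quad \Psi(L)=(1+\mu)e^{-\lambda L}.
\]
Put $a=\sqrt{k-1}$ and $b=\sqrt{k-r}$, where $b$ may be imaginary and is handled by analytic continuation ($\sinh\to\sin$); this is legitimate because $k-r>-\pi^2/L^2$. Then, for instance,
\[
\Phi(z)=\frac{(1+\mu)\sinh(a(L-z))+e^{-\lambda L}\sinh(az)}{\sinh(aL)} .
\]
Using $\varphi'=\lambda\varphi+e^{\lambda z}\Phi'$ and $\varphi(L)-\varphi(0)=-\mu$, we get $f(k)=-\lambda\mu+e^{\lambda L}\Phi'(L)-\Phi'(0)$. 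Likewise $g(k)=\lambda\mu+e^{\lambda L}\Psi'(L)-\Psi'(0)$, so the $\lambda\mu$ terms cancel in $F=f+g$. A short computation then yields
\[
F_\lambda(k)=\frac{a}{\sinh aL}\Big[(2+\mu)\cosh aL-(1+\mu)e^{\lambda L}-e^{-\lambda L}\Big]+\frac{b}{\sinh bL}\Big[(2+\mu)\cosh bL-e^{\lambda L}-(1+\mu)e^{-\lambda L}\Big].
\]
Replacing $\lambda$ by $-\lambda$ only swaps $e^{\lambda L}$ and $e^{-\lambda L}$. Hence
\[
F_{-\lambda}(k)-F_\lambda(k)=2\mu\sinh(\lambda L)\left[\frac{a}{\sinh aL}-\frac{b}{\sinh bL}\right].
\]

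The remaining point is the sign of the bracket. Let $h(s)=\sqrt{s}/\sinh(\sqrt{s}L)$ for $s>-\pi^2/L^2$. Then $1/h(s)=\sinh(\sqrt s L)/\sqrt s$ is a power series in $s$ with positive coefficients. For $s<0$ it equals $\sin(yL)/y$ with $y=\sqrt{-s}$, which is positive and decreasing in $y$ on $(0,\pi/L)$. So $1/h$ is positive and strictly increasing on the whole range, and $h$ is strictly decreasing. Since $r>1$ we have $k-1>k-r$, hence $a/\sinh aL<b/\sinh bL$. Together with $\lambda>0$, $\mu>0$ and $\sinh(\lambda L)>0$, this gives $F_{-\lambda}<F_\lambda$, which finishes the argument.

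The main obstacle is bookkeeping in the explicit formula for $F$: the one-sided limits at $z=+0$ and $z=L-0$, and the sign conventions in $f$ and $g$. These must be done carefully so that the $\lambda\mu$ terms really cancel and the difference $F_{-\lambda}-F_\lambda$ reduces to the clean factor above. A second, minor point is justifying the continuation across $k=r$ and $k<r$ for the $\psi$-part; there the monotonicity of $h$ must be checked on the negative range, which the $\sin(yL)/y$ argument covers.
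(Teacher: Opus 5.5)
Your proposal is correct and follows the paper's own proof: the same explicit formulas for $f$ and $g$ (with the $\lambda\mu$ terms cancelling), the same identity $F(k;\lambda)-F(k;-\lambda)=2\mu\sinh(\lambda L)\bigl[\,b/\sinh bL-a/\sinh aL\,\bigr]$, the same monotonicity of $y\mapsto y/\sinh(yL)$, and the same use of strict monotonicity of $F$ in $k$ to conclude. In addition, your argument that $\sinh(\sqrt{s}L)/\sqrt{s}$ is positive and strictly increasing on the whole range $s>-\pi^2/L^2$ (a positive-coefficient power series for $s\ge 0$, and $\sin(yL)/y$ for $s<0$) covers the cases $k\le r$, including $k<1$, which the paper leaves to the reader.
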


\begin{proof}
For each $\lambda\in\R $ and $k$ satisfying \eqref{k-range}, let $\varphi(z;k,\lambda), \psi(z;k,\lambda)$ denote the solution of \eqref{limit-phi}, \eqref{limit-psi}, and define
\[
f(k;\lambda)=\varphi'(L-0;k,\lambda)-\varphi'(+0;k,\lambda),\quad
g(k;\lambda)=\psi'(L-0;k,\lambda)-\psi'(+0;k,\lambda),
\]
\[
F(k;\lambda)=f(k;\lambda)+g(k;\lambda).
\]
As we have seen above, $F(k;\lambda)$ is strictly increasing in $k$, and $F(k;\lambda)=0$ if and only if $k=k_0(\lambda)$. 
In order to make a finer estimate of $k_0(\lambda)$, we compute $f(k;\lambda)$ and $g(k;\lambda)$ explicitly. 

We start with $f(k;\lambda)$. The characteristic equation for \eqref{limit-phi} is given by
\[
X^2-2\lambda X + \lambda^2+1-k=0. 
\]
In view of \eqref{k0-estimate}, it suffices to consider $k$ in the range $\lambda^2+1<k<\lambda^2+r$. In particular, we may assume $k>1$, hence the solution of the above characteristic equation is given by
\[
X^\pm = \lambda\pm\sqrt{k-1}.
\]
Thus the solution of \eqref{limit-phi} is written in the form
\[
\varphi(z;k,\lambda)=C^+ e^{X^+ z} + C^- e^{X^- z},
\]
with $C^+, C^-$ being determined by the boundary conditions $\varphi(+0)=1+\mu$, $\varphi(L-0)=1$. A simple calculation then shows the following, where $\alpha=\sqrt{k-1}$: 
\begin{equation}\label{f(k)}
f(k;\lambda)=-\mu\lambda+\frac{\alpha}{e^{\alpha L}-e^{-\alpha L}}\left( (\mu+2)\left(e^{\alpha L}+e^{-\alpha L}\right)-2\left((1+\mu)e^{\lambda L}+e^{-\lambda L}\right)\right).
\end{equation}

Next we consider $g(k;\lambda)$. The solution of \eqref{limit-psi} can be expressed by using the roots of $Y^2-2\lambda Y+\lambda^2+r-k=0$, but the expression differs depending on the sign of $k-r$.  

\vskip 5pt
\underbar{(a) The case $k>r$:} In this case, the roots of the characteristic equation are real, so $g(k;\lambda)$ can be computed in the same manner as above, and we have the following, where $\beta=\sqrt{k-r}$:
\begin{equation}\label{g(k)-a}
g(k;\lambda)=\mu\lambda+\frac{\beta}{e^{\beta L}-e^{-\beta L}}\left( (\mu+2)\left(e^{\beta L}+e^{-\beta L}\right)-2\left(e^{\lambda L}+(1+\mu)e^{-\lambda L}\right)\right).
\end{equation}

\vskip 5pt
\underbar{(b) The case $k=r$:} This case is the limit of the above case as $k\to r$ (or $\beta\to 0$), thus
\begin{equation}\label{g(k)-b}
g(k;\lambda)=\mu\lambda+\frac{1}{L}\left(\mu+2-\left(e^{\lambda L}+(1+\mu)e^{-\lambda L}\right)\right).
\end{equation}

\underbar{(c) The case $k<r$:} In this case, we also need use \eqref{k-range} which is a necessary condition for the existence of a positive solution of \eqref{limit-psi}. The solution $\psi$ is expressed by a linear combination of $e^{\lambda z}\cos\omega z$ and $e^{\lambda z}\sin\omega z$, where $\omega=\sqrt{r-k}$, and we get 
\begin{equation}\label{g(k)-c}
g(k;\lambda)=\lambda\mu+\frac{\omega}{\sin\omega L}\left((\mu+2)\cos\omega L - \left(e^{\lambda L}+(1+\mu)e^{-\lambda L}\right)\right).
\end{equation}
Note that the expression of $g(k;\lambda)$ in \eqref{g(k)-c} can also be obtained by replacing $\beta$ in \eqref{g(k)-a} by $i\omega$. So these two expressions are identical.

Now we assume $\lambda>0$, and compare $F(k;\lambda):=f(k;\lambda)+g(k;\lambda)$ and $F(k;-\lambda):=f(k;-\lambda)+g(k;-\lambda)$. In the case where $k>r$, \eqref{f(k)} and \eqref{g(k)-b} give
\[
\begin{split}
F(k,\lambda)& =\frac{\alpha}{e^{\alpha L}-e^{-\alpha L}}\left( (\mu+2)\left(e^{\alpha L}+e^{-\alpha L}\right)-2\left((1+\mu)e^{\lambda L}+e^{-\lambda L}\right)\right)\\
& \hspace{20pt} + \frac{\beta}{e^{\beta L}-e^{-\beta L}}\left( (\mu+2)\left(e^{\beta L}+e^{-\beta L}\right)-2\left(e^{\lambda L}+(1+\mu)e^{-\lambda L}\right)\right),
\end{split}
\]
\[
\begin{split}
F(k,-\lambda)& =\frac{\alpha}{e^{\alpha L}-e^{-\alpha L}}\left( (\mu+2)\left(e^{\alpha L}+e^{-\alpha L}\right)-2\left((1+\mu)e^{-\lambda L}+e^{\lambda L}\right)\right)\\
& \hspace{20pt} + \frac{\beta}{e^{\beta L}-e^{-\beta L}}\left( (\mu+2)\left(e^{\beta L}+e^{-\beta L}\right)-2\left(e^{-\lambda L}+(1+\mu)e^{\lambda L}\right)\right).
\end{split}
\]
Consequently, 
\[
F(k;\lambda)-F(k;-\lambda)=2\mu\left(e^{\lambda L}-e^{-\lambda L}\right)\left(\frac{\beta}{e^{\beta L}-e^{-\beta L}}-\frac{\alpha}{e^{\alpha L}-e^{-\alpha L}}\right).
\]
Since $\lambda>0$, $\alpha>\beta$, and since $y/(e^{y L}-e^{-y L})$ is monotone decreasing in $y$, we have 
\begin{equation}\label{F(lambda)>F(-lambda)}
F(k;\lambda)>F(k;-\lambda)\quad \hbox{for}\ \lambda>0
\end{equation}
if $k>r$. The same inequality also holds for the range $r\geq k> r-\pi^2/L^2$, as one can easily see by using \eqref{g(k)-b} and \eqref{g(k)-c}.  The details are omitted.

Recall that both $F(k;\lambda)$ and $F(k;-\lambda)$ are strictly increasing in $k$, and that  $F(k_0(\lambda),\lambda)=F(k_0(-\lambda),-\lambda)=0$. 
In view of this and \eqref{F(lambda)>F(-lambda)}, we obtain \eqref{k(lambda)<k(-lambda)2}. The lemma is proved.
\end{proof}

We are now in a position to prove Theorem \ref{thm:asymmetric-speeds}.

\begin{proof}[Proof of Theorem \ref{thm:asymmetric-speeds}]
Recall that we have, by \eqref{c*}, 
\begin{equation}\label{c*2}
c^*_R(\ep,\delta)=\min_{\lambda>0} \frac{k(\lambda,\ep,\delta)}{\lambda},\quad
c^*_L(\ep,\delta)=\min_{\lambda>0} \frac{k(-\lambda,\ep,\delta)}{\lambda},
\end{equation}
where $k(\lambda,\ep,\delta)$ denotes the principal eigenvalue of \eqref{EP} with \eqref{eq:mu-12}. 
Furthermore, as shown in \eqref{k-estimate2}, the minima in \eqref{c*2} are attained by $\lambda$ that lies in the following range:
\begin{equation*}\label{lambda-range}
\sqrt{r}-\sqrt{r-1}\leq \lambda \leq \sqrt{r}+\sqrt{r-1}.
\end{equation*}

By Theorem~\ref{thm:EP-limit}, the convergence $k(\lambda,\ep,\delta)\to k_0(\lambda)\;(\ep,\delta\to 0)$ is locally uniform in $\lambda$. 
This, together with \eqref{k(lambda)<k(-lambda)2}, implies that 
\[
k(\lambda,\ep,\delta)<k(-\lambda,\ep,\delta)\quad \hbox{for}\ \,\lambda\in[\sqrt{r}-\sqrt{r-1}, \sqrt{r}+\sqrt{r-1}]
\]
for all sufficiently small $\ep,\delta$ with $\ep=O(\delta^2)$, hence \eqref{eq:speed-asym}. 
This establishes Theorem \ref{thm:asymmetric-speeds}.
\end{proof}
%

%%%%%%%%%%%%%%%%%%%%%%%
%%%%%%%%%%%%%%%%%%%%%%%
\printbibliography
%\bibliographystyle{AIHP_Submission/emss}
%\bibliography{biblio.bib}

%%%%%%%%%%%%%%%%%%%%%%%%
%%%%%%%%%%%%%%%%%%%%%%%%

\end{document}